\documentclass{amsart}
\usepackage{graphicx}

\usepackage[english]{babel}

\usepackage{multicol}
\usepackage{subcaption}

\usepackage{amsmath, amsfonts, amssymb, graphicx, xcolor, enumitem, url, hyperref}
\usepackage{amsthm}
\usepackage[justification=centering, font={sl,small}, labelfont={bf}, margin=1cm]{caption}
\usepackage[font={sl,footnotesize}]{subcaption}
\usepackage{blkarray}
\usepackage{overpic} % to overwrite with latex commands on pictures

\hypersetup{colorlinks=true, citecolor=darkblue, linkcolor=darkblue}

% refer to items in a list
\makeatletter
\def\namedlabel#1#2{\begingroup
   \def\@currentlabel{#2}%
   \label{#1}\endgroup
}
\makeatother

% remove extra space before \pmod
\makeatletter
\renewcommand{\pod}[1]{\mathchoice
  {\allowbreak \if@display \mkern 6mu\else \mkern 6mu\fi (#1)}
  {\allowbreak \if@display \mkern 6mu\else \mkern 6mu\fi (#1)}
  {\mkern4mu(#1)}
  {\mkern4mu(#1)}
}

% remove multiple pdfs with page group... warning
\pdfsuppresswarningpagegroup=1

%Boldface math in section titles
\makeatletter
\g@addto@macro\bfseries{\boldmath}
\makeatother

% This part is necessary when hyperref is needed
\usepackage{hyperref}

% remove indentation from footnotes
\makeatletter
\renewcommand\@makefntext[1]{%
  \noindent\makebox[0em][r]{\@makefnmark}#1}
\makeatother

%%%%%%%%%%%%%%%%%%%%%%%%%%%%%%%%%%%%%%

% newtheorems
\newtheorem{theorem}{Theorem}[section]
\newtheorem{proposition}[theorem]{Proposition}
\newtheorem{corollary}[theorem]{Corollary}
\newtheorem{lemma}[theorem]{Lemma}

\theoremstyle{definition}
\newtheorem{definition}[theorem]{Definition}
\newtheorem{example}[theorem]{Example}

\newtheorem{remark}[theorem]{Remark}
\newtheorem{question}[theorem]{Question}

% 
% % repeat theorems
% \makeatletter
% \newtheorem*{rep@theorem}{\rep@title}
% \newcommand{\newreptheorem}[2]{%
% \newenvironment{rep#1}[1]{%
%  \def\rep@title{#2 \ref{##1}}%
%  \begin{rep@theorem}}%
%  {\end{rep@theorem}}}
% \makeatother
% 
% % new repeated theorems
% \newreptheorem{theorem}{Theorem}

%%%%%%%%%%%%%%%%%%%%%%%%%%%%%%%%%%%%%%
% newcommands

%%%%%%%%%%%%%%%%%%%%%%%%%%%%%%%%%%%%%%
%special letters

%blackboardboldface

\newcommand{\NN}{\mathbb{N}}

\newcommand{\RR}{\mathbb{R}}

\newcommand{\TP}{\mathbb{TP}}

%calligraphic

\newcommand{\cK}{\mathcal{K}}

\newcommand{\cO}{\mathcal{O}}
\newcommand{\cP}{\mathcal{P}}

\newcommand{\cT}{\mathcal{T}}

%boldface

\newcommand{\bfe}{\mathbf{e}}

\newcommand{\bfs}{\mathbf{s}}

%\fraktur

%sf
\newcommand{\sfg}{\mathsf{g}}

\newcommand{\sfN}{\mathsf{N}}
\newcommand{\sfE}{\mathsf{E}}

%%%%%%%%%%%%%%%%%%%%%%%%%%%%%%%%%%%%%%
%math notation

% geometry/simplicial complexes
\DeclareMathOperator{\conv}{conv} % convex hull
\DeclareMathOperator{\vol}{vol} % volume
\DeclareMathOperator{\supp}{supp} % support of polyhedral complex
 % linear span
\DeclareMathOperator{\link}{link} % link of face in simplicial complex

% big join operator
\newcommand{\bigast}{%
  \mathop{ % we want it to be an operator
    \mathchoice{\dobigast\huge}
               {\dobigast\large}
               {\dobigast\normalsize}
               {\dobigast\small}
    }\displaylimits % not necessary, but added for clarity
}

\newcommand{\dobigast}[1]{%
  \vcenter{#1\kern.2ex\hbox{$\ast$}\kern.2ex}}

% catalan combinatorics

 % Catalan number
 % rational Catalan number
\newcommand{\ngonp}[1]{P_{#1}}
\newcommand{\ngon}{\ngonp{n+1}}

\newcommand{\cyctri}[1]{\mathfrak{C}_{#1}}

% triangulations of products of simplices

\newcommand{\catblock}[1]{\mathcal{U}_{#1}}% %"catalan subpolytope" of the product of simplices

\newcommand{\productp}[2]{\Delta_{{#1}}\times\Delta_{{#2}}} % product of simplices with parameters
\newcommand{\product}{\productp{n}{\ol m}} % product of simplices
\newcommand{\productn}{\productp{n}{\ol n}} % product of simplices of the same size
 % this will possibly be removed

 %semi-skeleton with parameters
 % semi-skeleton

 %complete bipartite graph
\newcommand{\bipartitep}[2]{K_{{#1},{\overline #2}}} %complete bipartite graph with parameters
\newcommand{\bipartiteb}{\bipartitebp{n}{m}} %complete bipartite graph with brackets
\newcommand{\bipartitebn}{\bipartitebp{n}{n}} %complete bipartite graph with equal brackets
\newcommand{\bipartitebp}[2]{K_{{[#1]},{[\overline #2]}}} %complete bipartite graph with parameters and brackets
\newcommand{\bipartiteij}{\bipartitep{I}{J}} % complete bipartite graph with parts I, \ol J

 % grid with parameters

 % dyck path triangulation

\newcommand{\asstri}[1]{{\mathfrak{A}}_{#1}} % associahedral triangulation

\newcommand{\asscomp}[1]{{\mathcal{A}}_{#1}}%tamari complex

%cyclohedral triangulation

\newcommand{\cyccomp}[1]{{\mathcal{C}}_{#1}}%cyclic tamari complex

 % a matching
 % non-crossing matching
 % a matching

% \newcommand{\Tam}{\operatorname{Tam}}
\newcommand{\Tam}[1]{\operatorname{Tam}_{#1}}
\newcommand{\Asso}{\operatorname{Asso}}
\newcommand{\AssoIJ}{\Asso_{I,\ol J}}
\newcommand{\pa}{\rho}%The path associated to a tree
\newcommand{\Cyclo}{\operatorname{Cyclo}}
\newcommand{\CycloIJ}{\Cyclo_{I,\ol J}}

\def\horiz{\operatorname{horiz}}

\newcommand{\can}{c}%The cannopy associated to a tree

%The cannopy associated to a tree

\newcommand{\cost}[1]{\kappa({#1})}%Cost of a lattice path

\renewcommand{\ij}{(i,\ol j)}
\renewcommand{\IJ}{(I,\ol {J})}
\newcommand{\nn}{([n],[\ol n])}

\newcommand{\reverse}[1]{\overleftarrow{#1}}%reading the black and white sequence backwards

\newcommand{\ha}{\mathsf{h}}%height function for the associahedral triangulation
\newcommand{\hb}{\mathsf{h}}%height function for the cyclohedral triangulation
\newcommand{\arrgtA}{\mathcal{H}} % arrangement of tropical hyperplanes dual to associahedral triangulation
\newcommand{\arrgtB}{\mathcal{H}} % arrangement of tropical hyperplanes dual to cyclohedral triangulation

\newcommand{\lth}{\ell} % cyclic lenght of an IJ-edge
%%%%%%%%%%%%%%%%%%%%%%%%%%%%%%%%%%%%%%
%other

%overline
\newcommand{\ol}[1]{\overline{#1}}

%rightarrow
\newcommand{\ra}{\rightarrow}

% definitions
\definecolor{darkblue}{rgb}{0,0,0.7} % darkblue color
\newcommand{\darkblue}{\color{darkblue}} % darkblue command
\newcommand{\defn}[1]{\emph{\darkblue #1}} % emphasis of a definition
% \newcommand{\Defn}[1]{\emph{\darkblue #1}} % emphasis of a definition

% todo
\usepackage{todonotes}

% %%%%%%%%%%%%%%%%%%%%%%%%%%%%%%%%%%%%%%
% % figures
% \graphicspath{{}}

%%%%%%%%%%%%%%%%%%%%%%%%%%%%%%%%%%%%%%%%%%%%%%%%%%%
%%%%%%%%%%%%%%%%%%%%%%%%%%%%%%%%%%%%%%%%%%%%%%%%%%%
%%%%%%%%%%%%%%%%%%%%%%%%%%%%%%%%%%%%%%%%%%%%%%%%%%%

\subjclass[2010]{05E45, 05E10, 52B22}
\keywords{Triangulations of products of simplices, Tamari lattices, Associahedra, Cyclohedra, Tropical hyperplanes}
%05E45 Combinatorial aspects of simplicial complexes,
%05E10 Combinatorial aspects of representation theory,
%52B45 Dissections and valuations,
%52B22 Shellability

% \def\lastname{Please list your Lastname here}

\begin{document}

\title[Geometry of $\nu$-Tamari lattices in types $A$ and $B$]{Geometry of $\nu$-Tamari lattices in types $A$ and $B$}

\author{Cesar Ceballos}
\address{Faculty of Mathematics, University of Vienna, Vienna, Austria}

\author{Arnau Padrol}
\address{ 
Sorbonne Universit\'es, Universit\'e Pierre et Marie Curie (Paris 6), Institut de Math\'ematiques de Jussieu - Paris Rive Gauche (UMR 7586), Paris, France} 

\author{Camilo Sarmiento}
\address{Max Planck Institute for Mathematics in the Sciences, Leipzig, Germany}

\thanks{The research of C.C. was supported by the Austrian Science Foundation FWF, grant F 5008-N15, in the framework of the Special Research Program ``Algorithmic and Enumerative Combinatorics''; A.P. was supported by the program PEPS Jeunes Chercheur-e-s 2016 from the INSMI; and C.S. was partially supported by CDS Magdeburg.}

\begin{abstract}
In this paper, we exploit the combinatorics and geometry of triangulations of products of simplices to derive new results in the context of Catalan combinatorics of $\nu$-Tamari lattices.
In our framework, the main role of ``Catalan objects'' is played by $(I,\overline{J})$-trees: bipartite trees associated to a pair $(I,\overline{J})$ of finite index sets that stand in simple bijection with lattice paths weakly above a lattice path $\nu=\nu(I,\overline{J})$. Such trees label the maximal simplices of a triangulation whose dual polyhedral complex 
gives a geometric realization of the $\nu$-Tamari lattice introduced by Pr\'evile-Ratelle and Viennot. 
In particular, we obtain geometric realizations of $m$-Tamari lattices as polyhedral subdivisions of associahedra induced by an arrangement of tropical hyperplanes, giving a positive answer to an open question of F.~Bergeron. 

The simplicial complex underlying our triangulation endows 
the $\nu$-Tamari lattice with a full simplicial complex structure. 
It is a natural generalization of the classical simplicial associahedron, alternative to the rational associahedron of Armstrong, Rhoades and Williams, whose $h$-vector entries are given by a suitable generalization of the Narayana numbers.

Our methods are amenable to cyclic symmetry, which we use to present type $B$ analogues of our constructions. Notably, we define a partial order that generalizes the type $B$ Tamari lattice, introduced independently by Thomas and Reading, along with corresponding geometric realizations.

\end{abstract}

\maketitle

\tableofcontents

\section{Introduction}

The \defn{Tamari lattice} is a partial order on Catalan objects that has been widely studied since it was first introduced by Tamari in his doctoral thesis in 1951~\cite{TamariPhD}. Its covering relation can be described in terms of flips in polygon triangulations, rotations on binary trees and certain elementary transformation on Dyck paths. Tamari lattices appear naturally in many areas of mathematics: algebra, combinatorics, geometry, topology... We refer to the monograph~\cite{TamariFestschrift} (and its references) for a presentation of many of its remarkable properties, connections, applications, and generalizations.

In this paper we are especially interested in two extensions of the Tamari lattice.
The \defn{$m$-Tamari lattice} is a lattice structure on the set of Fuss-Catalan Dyck paths introduced by F.~Bergeron and Pr\'eville-Ratelle in their combinatorial study of higher diagonal coinvariant spaces~\cite{BergeronPrevilleRatelle2012}.
It recovers the classical Tamari lattice for $m=1$, and has attracted considerable attention in other areas such as representation theory and Hopf algebras~\cite{Bergeron2012,BCP2013, NovelliThibon2014, Novelli2014}. 
The enumerative properties of their intervals have an interesting story going back to Chapoton~\cite{chapoton_sur_2005} in the classical case, and followed by F.~Bergeron, Bousquet-M\'elou, Chapuy, Fusy and Pr\'eville-Ratelle~\cite{BergeronPrevilleRatelle2012,BCP2013,BFP2011} in the Fuss-Catalan case. 
The motivation for the study of these intervals comes from algebra, where their enumeration is conjecturally interpreted as the dimension of the alternating component of the trivariate Garsia--Haiman space of the same order; a labeled version of these intervals corresponds to the dimension of the entire trivariate Garsia--Haiman space; we refer to~\cite{BFP2011} for an overview and origin of these conjectures. These remarkable algebraic connections also motivated the introduction of the
\defn{$\nu$-Tamari lattice $\Tam{\nu}$} by Pr\'eville-Ratelle and Viennot~\cite{PrevilleRatelleViennot2017}, a partial order on the set of lattice paths lying weakly above any given lattice path~$\nu$. These broader lattices do not only generalize the classical Tamari lattice, but  bring to light a finer structure by decomposing the latter into smaller pieces~\cite{PrevilleRatelleViennot2017}. The enumeration of intervals purely contained in these pieces coincides with the enumeration of non-separable planar maps~\cite{fang_enumeration_2017}, a formula discovered by Tutte way back in the sixties~\cite{tutte_census_1963}.

This paper presents a more geometric approach to $m$-Tamari lattices and $\nu$-Tamari lattices, as well as partial extensions to other Coxeter groups. We expect that the geometric and combinatorial techniques developed here may be useful for the connections mentioned above. Our results are subdivided in three different directions: We construct geometric realizations, provide a full simplicial complex structure, and consider its analogues in type~$B$.

\subsection{Geometric realizations of the \texorpdfstring{$\nu$}{v}-Tamari lattice}

One of the striking characteristics of the Tamari lattice is that its Hasse diagram can be realized as the edge graph of a polytope, the associahedron. The realization problem of associahedra as polytopes was explicitly posed by Stasheff in 1963~\cite{St63}, who constructed it as a cellular ball. The first constructions as a polytope are due to Haiman (1984)~\cite{Ha84} and Lee (1989)~\cite{Lee2}. Subsequently, many systematic construction methods emerged: as a secondary polytope~\cite{GZK90,GZK91}, from the cluster complexes of root systems of type~$A_n$~\cite{CFZ02,FZ03}, as a generalized permutahedron~\cite{Postnikov2009}, and
several other constructions with quite remarkable geometric properties~\cite{CeballosSantosZiegler2015,hohlweg_realizations_2007,hohlweg_permutahedra_2011, JoswigKulas2010,Lo04,RoteSantosStreinu2003,ShniderSternberg93}.  

It is natural to ask if $\nu$-Tamari lattices admit similar constructions. 
This question was posed by Bergeron, who in~\cite[Figures~4 and~6]{Bergeron2012} presented geometric realizations of a few small $m$-Tamari lattices as the edge graph of a subdivision of an associahedron (reproduced in Figure~\ref{fig:Bergeron} below) and asked if such realizations exist in general. We provide a positive answer to this question. Our approach gives rise to three (equivalent) geometric realizations.

\begin{theorem}[Corollaries~\ref{cor:IJtriang} and~\ref{cor:cayley} and Theorem~\ref{thm:nurealization}]\label{thm:realizations}
Let $\nu$ be a lattice path from $(0,0)$ to $(a,b)$. The Hasse diagram of the $\nu$-Tamari lattice $\Tam{\nu}$ can be realized geometrically as:
\begin{enumerate}
 \item\label{it:triang} the dual of a regular triangulation of a subpolytope of the Cartesian product of two simplices $\productp{a}{b}$;
 \item\label{it:mixed} the dual of a coherent fine mixed subdivision of a generalized permutahedron (in $\RR^a$ and in $\RR^b$);
 \item\label{it:tropical} the edge graph of a polyhedral complex induced by an arrangement of tropical hyperplanes (in $\TP^a\cong \RR^{a}$ and in $\TP^b\cong\RR^{b}$).
\end{enumerate}
\end{theorem}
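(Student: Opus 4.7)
The plan is to prove the three realizations in order, with (\ref{it:mixed}) and (\ref{it:tropical}) following from (\ref{it:triang}) through standard correspondences between triangulations of products of simplices, fine mixed subdivisions of Minkowski sums, and tropical hyperplane arrangements. The combinatorial core of the argument therefore lies in part (\ref{it:triang}), which is where I expect the main obstacle.

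For part (\ref{it:triang}), I would associate to each $(I,\overline{J})$-tree $T$ the simplex in $\productp{a}{b}$ spanned by the vertices $\{e_i \oplus e_{\overline{j}} : (i,\overline{j}) \text{ is an edge of } T\}$; since $T$ has $|I|+|\overline{J}|-1$ edges, this simplex has the correct dimension to be a maximal cell. The steps would then be: (i) single out the subpolytope $U \subseteq \productp{a}{b}$ whose vertex set records exactly those pairs $(i,\overline{j})$ which lie weakly above $\nu$, so that every edge of an $(I,\overline{J})$-tree contributes a vertex of $U$; (ii) verify, presumably by induction on $|I|+|\overline{J}|$ using the recursive leaf-removal structure of $(I,\overline{J})$-trees, that the resulting simplices tile $U$ with pairwise proper intersections; (iii) exhibit a height function on the vertices of $U$ whose lower envelope realizes this triangulation, thereby establishing regularity; and (iv) match single-edge bistellar flips between adjacent maximal simplices with the covering relations of $\Tam{\nu}$, oriented consistently with the lattice order through a suitable monotone choice of heights in (iii).

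Step (iv) is the hard part. It requires transporting the elementary $\nu$-Tamari move on paths above $\nu$ through the bijection with $(I,\overline{J})$-trees into the unique-circuit edge-exchange operation on trees, and then arguing that a generic height function orients these flips in the direction of $\Tam{\nu}$. The subtlety is that a triangulation of $\productp{a}{b}$ carries no intrinsic orientation on its flip graph, so the $\nu$-Tamari order must be recovered from the choice of heights; a natural candidate is the restriction of a generic linear functional that is monotone in a direction distinguishing the two sides of each flip.

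Once part (\ref{it:triang}) is in hand, part (\ref{it:mixed}) follows from the Cayley trick, which provides a coherence-preserving bijection between triangulations of subpolytopes of $\productp{a}{b}$ and fine mixed subdivisions of their projected Minkowski sum; the latter is a generalized permutahedron, and projecting onto the two factors of the Cayley embedding yields the realizations in $\RR^a$ and in $\RR^b$. Part (\ref{it:tropical}) then follows from the Develin--Sturmfels correspondence between coherent fine mixed subdivisions of products of simplices and generic arrangements of tropical hyperplanes, with the apices of the hyperplanes read off directly from the height function constructed in step (iii). Since all three geometric objects share the same dual combinatorial structure indexed by $(I,\overline{J})$-trees, the Hasse diagram of $\Tam{\nu}$ is recovered uniformly in all three realizations.
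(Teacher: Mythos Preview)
Your outline is essentially correct and follows the same high-level architecture as the paper: establish the regular triangulation indexed by $(I,\ol J)$-trees, then derive parts~(\ref{it:mixed}) and~(\ref{it:tropical}) via the Cayley trick and the Develin--Sturmfels correspondence respectively. A few points where your route diverges from the paper are worth flagging.

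First, for step~(ii) the paper does not prove directly that the simplices tile $\catblock{I,\ol J}$ by induction on leaf removal. Instead, it constructs a flag regular triangulation $\asstri{n}$ of the full polytope $\catblock{n}\subset\productn$ (in fact as the restriction of an even larger cyclohedral triangulation of all of $\productn$, proved via a matching argument and a count using unimodularity), and then obtains $\asstri{I,\ol J}$ by restricting to the face $\productp{I}{\ol J}$. Your inductive approach is plausible but would need care to show proper intersection; the paper's route sidesteps this by inheriting everything from a single ambient triangulation.

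Second, for step~(iii) the paper gives explicit height functions: any $\ha(i,\ol j)=f(j-i)$ with $f$ strictly concave works, e.g.\ $\ha(i,\ol j)=-(j-i)^2$. This is more concrete than ``exhibit a height function'' and immediately yields regularity via a short characterization (the non-crossing matching must have strictly smaller total height than the crossing one on every $4$-cycle).

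Third, and most importantly, your concern in step~(iv) about orientation is slightly misplaced. The flip graph does carry an intrinsic combinatorial orientation independent of any height function: a flip replacing $(i,\ol j)$ by $(i',\ol j')$ is declared \emph{increasing} precisely when $i'>i$ (equivalently $\ol j'>\ol j$). The paper then proves directly, via the bijection $\pa$ between $(I,\ol J)$-trees and $\nu$-Dyck paths, that increasing flips correspond exactly to covering relations of $\Tam{\nu}$. No heights are needed for this step. (It is true, and proved separately, that a linear functional orients the edges of the tropical realization consistently with the lattice, but this is a consequence rather than the mechanism.) So the ``hard part'' is purely combinatorial: translating the valley-move on paths into the arc-exchange on trees, which the paper handles by tracking how the horizontal distance $\horiz_\nu$ of a lattice point corresponds to the number of nodes ``jumping over'' the associated arc.
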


\begin{figure}[htbp]
\begin{tabular}{ccccc}
\includegraphics[width= 0.27\textwidth]{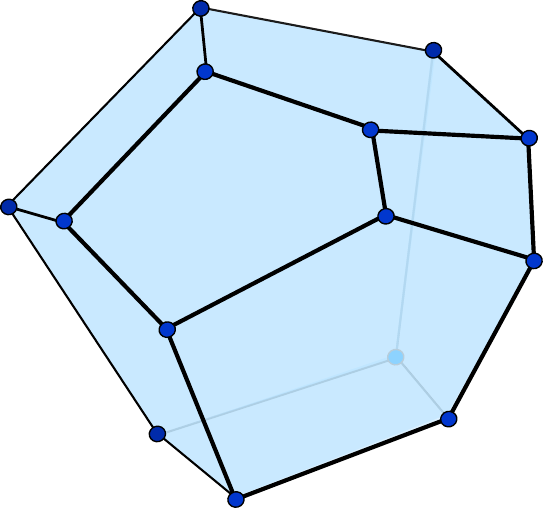}&\qquad& 
\includegraphics[width= 0.25\textwidth]{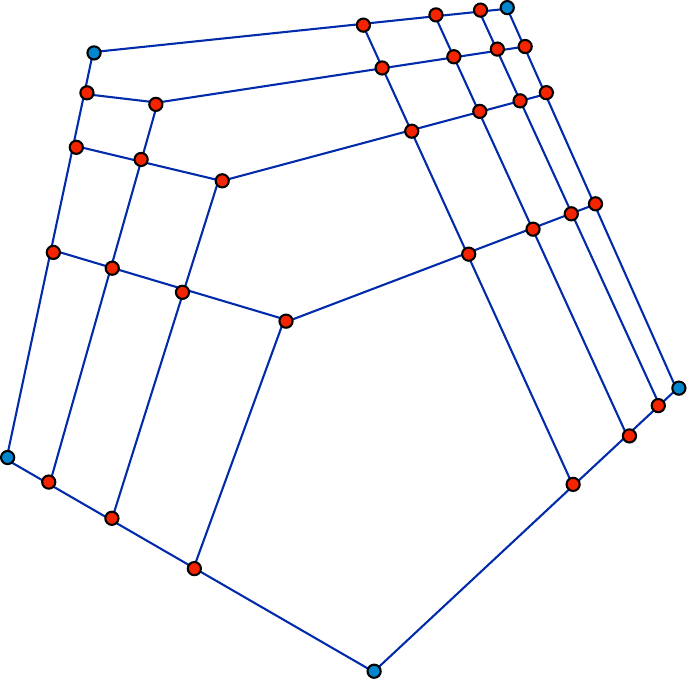}&\qquad&
\includegraphics[width= 0.22\textwidth]{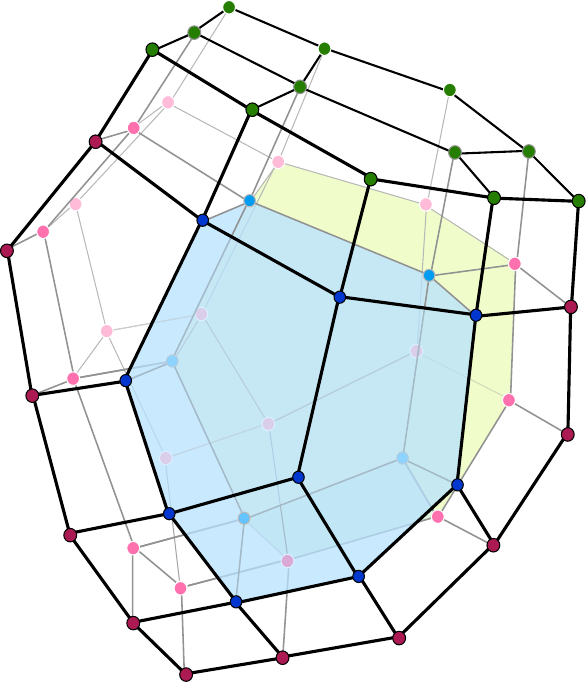}\\
$1$-Tamari $n=4$&&
$4$-Tamari $n=3$&&
$2$-Tamari $n=4$
\end{tabular}
\caption{Bergeron's pictures ``by hand" of $m$-Tamari lattices reproduced with permission from~\cite[Figures 4, 5 and 6]{Bergeron2012}.}
\label{fig:Bergeron}
\end{figure}

\begin{figure}[htbp]
\begin{tabular}{ccccc}
\includegraphics[width= 0.25\textwidth]{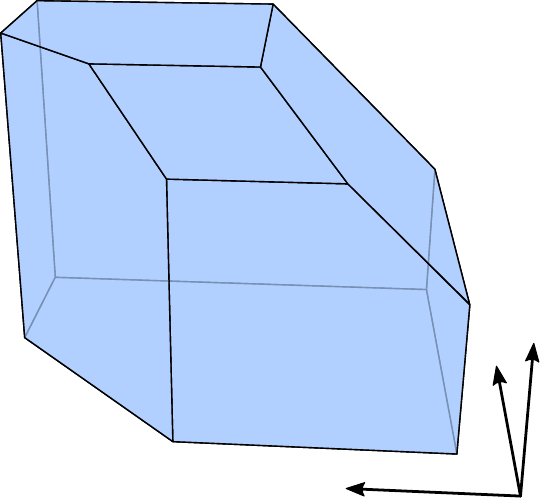}&\qquad& 
\includegraphics[width= 0.22\textwidth]{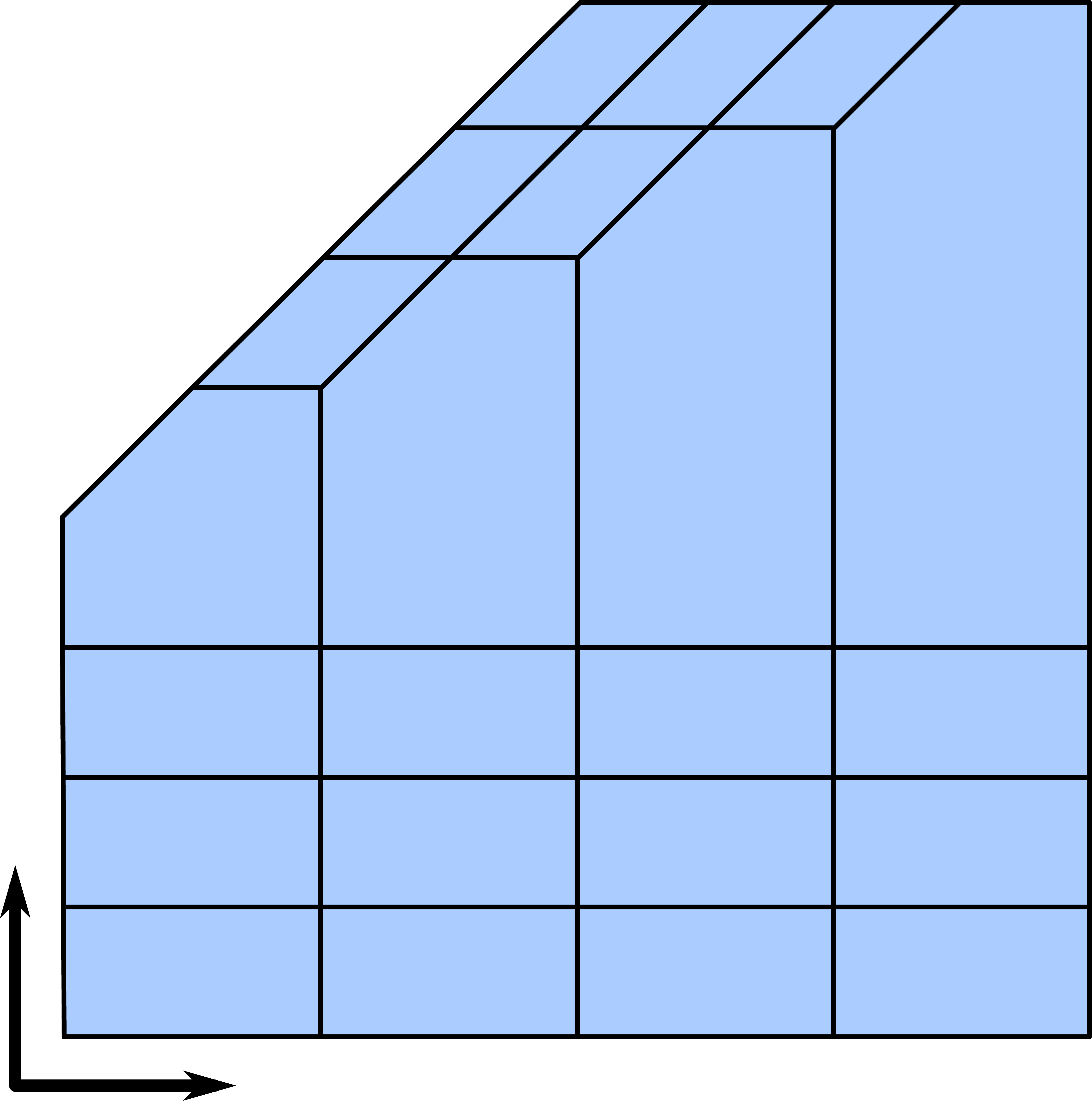}&\qquad&
\includegraphics[width= 0.3\textwidth]{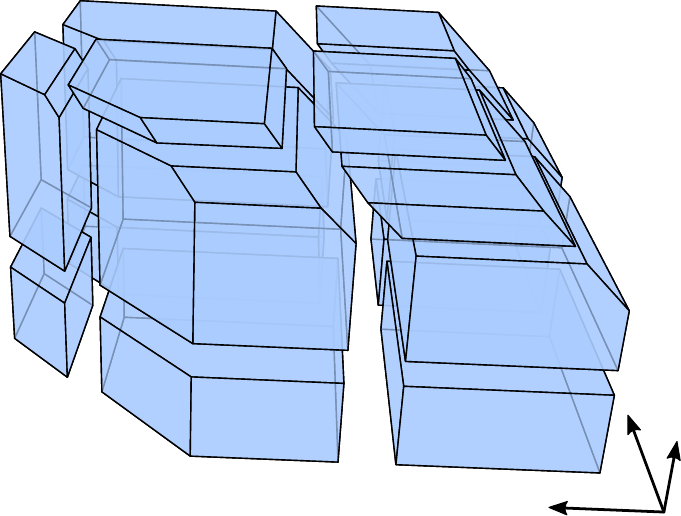}\\
$1$-Tamari $n=4$&&
$4$-Tamari $n=3$&&
$2$-Tamari $n=4$
\end{tabular}
\caption{Geometric realizations of $m$-Tamari lattices by cutting classical associahedra with tropical hyperplanes. Compare with Bergeron's pictures in Figure~\ref{fig:Bergeron}.}
\label{fig:Us}
\end{figure}

Our analogues of Figure~\ref{fig:Bergeron}, depicted in Figure~\ref{fig:Us}, are obtained via the third construction\footnote{Our three-dimensional pictures were produced using \texttt{polymake}~\cite{polymake} and \texttt{SageMath}\cite{sagemath}. Animated versions of few three-dimensional constructions are available in GIF format at~\cite{Web}, and will be included as ancillary files in the final arXiv version.}. 
The polyhedral complex of this tropical realization of $\Tam{\nu}$ is called the \defn{$\nu$-associahedron}~$\Asso_\nu$; in the Fuss-Catalan case we refer to it as the \defn{$m$-associahedron}.

 \begin{remark}\label{rem:othergeometricrealization}
We should point out that there is another simple geometric realization of the $\nu$-Tamari lattice: Since it is an interval of a larger Tamari lattice~(cf. \cite{PrevilleRatelleViennot2017} and Proposition~\ref{prop:interval} below), its Hasse diagram can be obtained as the dual of a subcomplex of the normal fan of Loday's associahedron, which consists of the maximal cones contained in this interval.\footnote{We thank an anonymous referee for pointing this out.} This realization has the disadvantage that its dimension is larger than needed, and no dual convex polytopal subdivision is directly induced. 
For more details on this realization see Remark~\ref{rem:Reading_latticecongruences} and Figure~\ref{fig:Reading_vTamaricongruence}. 
\end{remark}

\begin{theorem}[Corollary~\ref{cor:convex_regsubdivision}
, Proposition~\ref{prop:cells} and Theorem~\ref{thm:linearOrientation}]
\label{thm:subdassociahedra}
The $m$-associahedron is a regular (tropical) subdivision of a classical associahedron into Cartesian products of associahedra. 
More generally, this result holds for any $\nu$-associahedron for which $\nu$ does not contain two (non-initial) consecutive north steps.

The edges of the $\nu$-associahedron can be oriented by a linear functional to give rise to the $\nu$-Tamari lattice.
\end{theorem}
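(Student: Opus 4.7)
The plan is to carry out the argument entirely within the tropical realization of Theorem~\ref{thm:realizations}(\ref{it:tropical}), where $\Asso_\nu$ appears as a polyhedral complex induced by an arrangement of tropical hyperplanes in $\RR^a$ (or equivalently $\RR^b$).

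For the first two assertions (subdivision of a classical associahedron with cells that are products of associahedra), I would proceed in three stages. First, identify a ``reference'' staircase path $\nu_0$ of the appropriate length whose $(I,\overline{J})$-trees coincide with ordinary binary trees, and check that $\Asso_{\nu_0}$ recovers a classical associahedron of the correct dimension; this should reduce to matching the tropical vertices of $\Asso_{\nu_0}$ with a standard vertex description (e.g.\ Loday's). Second, for a general $\nu$ satisfying the no-two-(non-initial)-consecutive-north-steps hypothesis, I would construct a coarsening map $\pi\colon \Asso_\nu \to \Asso_{\nu_0}$ by forgetting the tropical hyperplanes contributed by the ``extra'' east–north blocks of $\nu$ beyond the staircase, and verify that $\pi$ exhibits $\Asso_\nu$ as a refinement: the hypothesis on $\nu$ ensures that each extra hyperplane cuts generically through its cell, so no cell of the ambient associahedron is destroyed. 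Third, I would identify the preimage $\pi^{-1}(C)$ of every cell $C\subset \Asso_{\nu_0}$ as a Cartesian product of smaller associahedra. In the $(I,\overline{J})$-tree model of earlier sections, the partial trees compatible with a fixed $C$ should decompose as a product indexed by the maximal east-runs separated by the north steps of $\nu$, with each factor an independent (staircase, and hence classical) associahedron of the corresponding size. Regularity of the subdivision is then inherited directly from the regularity of the underlying triangulation of the subpolytope of $\productp{a}{b}$.

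For the linear orientation claim, I would exhibit an explicit linear functional, most naturally of the form $\varphi(x)=\sum_i c_i x_i$ on the ambient $\RR^a$, with coefficients $c_1<c_2<\cdots<c_a$ chosen sufficiently generic. The aim is to show that every covering relation of $\Tam{\nu}$ is strictly increasing under $\varphi$, so that orienting the Hasse diagram of $\Asso_\nu$ along $\varphi$ reproduces the $\nu$-Tamari order. The main obstacle is verifying that the edge vectors associated to all covers lie in a common open half-space. I would handle this by translating each covering relation into a local $(I,\overline{J})$-tree flip, computing the corresponding edge vector in the tropical realization, and observing that the sign pattern of such an edge vector (negative in certain earlier coordinates, positive in certain later ones) is uniform across all flips: the rotation move always shifts a single vertex in a consistent ``northwest-to-southeast'' direction. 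Strict monotonicity of the $c_i$ then guarantees $\varphi$ is strictly increasing on every cover simultaneously, yielding the desired linear orientation.
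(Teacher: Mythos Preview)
Your outline for the subdivision claim is close in spirit to the paper's argument: the paper also establishes that $\Asso_\nu$ refines a classical associahedron by iteratively removing the ``extra'' elements of $I$ one at a time (Theorem~\ref{thm:supportAssociahedron}), showing at each step that $\Asso_{I,\ol J}$ is the common refinement of $\Asso_{I\setminus i_0,\ol J}$ with the fan of regions $\sfg(i_0,\ol j)$. But the crucial step is not that the extra hyperplane ``cuts generically''; it is that deleting the arcs incident to $i_0$ from any covering $\IJ$-forest still yields a covering $(I\setminus i_0,\ol J)$-forest. This is exactly where the hypothesis on $\nu$ is used: each $\ol j$ adjacent to $i_0$ is also adjacent to its immediate predecessor in $I$, which exists and is distinct from $i_0$ precisely because $i_0$ is followed by another element of $I$. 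Your proposal does not identify this mechanism, and without it the refinement claim has no content.

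For the cell structure, your preimage approach is off target. The assertion is that \emph{every} cell of $\Asso_\nu$ is a product of associahedra, not that the $\pi$-preimage of each cell of $\Asso_{\nu_0}$ is. The paper obtains this (Proposition~\ref{prop:cells}) by duality from Lemma~\ref{lem:links}: each cell of $\Asso_{I,\ol J}$ is dual to the link of a covering $\IJ$-forest in the Tamari complex, and that link is shown to be a join of boundaries of simplicial associahedra. The decomposition is indexed by the arcs of the forest, not by ``maximal east-runs of $\nu$''.

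The linear-orientation argument has a genuine error. You assert that the edge vector of a cover has a sign pattern ``negative in earlier coordinates, positive in later ones'', which would force strictly increasing weights $c_i$. In fact the paper computes (Theorem~\ref{thm:linearOrientation}) that for an increasing flip $(i_1,\ol j_1)\mapsto(i_2,\ol j_2)$ every coordinate of $\sfg(T')-\sfg(T)$ is either $0$ or equal to the single negative quantity $\ha(i_1,\ol j_2)-\ha(i_2,\ol j_2)+\ha(i_2,\ol j_1)-\ha(i_1,\ol j_1)$, by the non-crossing inequality~\eqref{eq:asso_height}. So the edge vector is coordinatewise nonpositive with at least one strictly negative entry, and the uniform functional $-\sum_{\ol j}x_{\ol j}$ already orients every cover correctly; no genericity or monotonicity of the $c_i$ is needed. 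Your proposed sign analysis would not survive contact with the actual coordinates.
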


Our starting point is a ubiquitous triangulation of a subpolytope $\catblock{n}$ of the Cartesian product of simplices $\productn$, which essentially dates back (at least) to the work of Gelfand, Graev and Postnikov~\cite{GelfandGraevPostnikov1997} and of Pitman and Stanley~\cite{PitmanStanley2002}. It is known (under a different guise) to be flag, regular and, as a simplicial complex, isomorphic to the join of a simplex with the boundary of a simplicial $(n-1)$-associahedron. We call it the \defn{associahedral triangulation $\asstri{n}$}. In Section~\ref{sec:previously} we briefly
review some of its earlier manifestations in the literature. 

The fact that $\asstri{n}$ is embedded in the product of two simplices has several advantages. For instance, one can consider its restriction
to faces of $\productn$, which are also products of simplices. As we will see, for each lattice path $\nu$ there is a pair $I,\ol J\subseteq[n],[\ol {n}] $ such that the restriction of $\asstri{n}$ to its face $\productp{I}{\ol J}$ induces a triangulation $\asstri{I,\ol J}$ dual to $\Tam{\nu}$. 
Its maximal cells are indexed by \defn{$\IJ$-trees}, which are shown to be in bijection with lattice paths lying weakly above $\nu$. Two $\IJ$-trees are related by a flip if and only if the corresponding paths are related by a covering relation of the $\nu$-Tamari lattice.

%%%%%%%%%%%%%%%%%%%%%%%%%%%%%%%%%%%%%%%%%%%%%%%%%%%%%%%%%%%%%%%%%%

\subsection{The \texorpdfstring{$\nu$}{v}-Tamari complex}

The simplicial complex underlying our triangulation in Theorem~\ref{thm:realizations}\eqref{it:triang} gives the $\nu$-Tamari lattice the structure of a 
full simplicial complex, the \defn{$\nu$-Tamari complex}. Its facets correspond to {$\IJ$-trees}, and its lower dimensional faces to \defn{$\IJ$-forests}.
This complex shares several properties with the classical simplicial associahedron and provides definitions for their extensions in the setup of Fuss-Catalan and rational Catalan combinatorics (see Section~\ref{sec:IJcomplex}). 

For example, using a natural shelling order on the $\nu$-Tamari complex, we show that the $\ell$th entry of its $h$-vector is equal to the number of $\nu$-Dyck paths with exactly $\ell$ valleys (Theorem~\ref{thm:thehvector}). These numbers generalize the classical Narayana numbers for classical Dyck paths, and we refer to them as the \defn{$\nu$-Narayana numbers}. In the Fuss-Catalan case, these numbers were considered in~\cite{Athanasiadis2005, AthanasiadisTzanaki2006, FominReading05, Tzanaki2006} (for more general finite Coxeter groups beyond type $A$). In the rational Catalan case, $\nu$-Narayana numbers appeared in work of Armstrong, Rhoades, and Williams~\cite{ARW13}, who introduced a simplicial complex called the rational associahedron, different from ours, whose $h$-vector entries are given by the corresponding $\nu$-Narayana numbers. It would be interesting to understand the differences between the $\nu$-Tamari complex and the rational associahedron.

%%%%%%%%%%%%%%%%%%%%%%%%%%%%%%%%%%%%%%%%%%%%%%%%%%%%%%%%%%%%%%%%%%

\subsection{Type \texorpdfstring{$B$}{B} \texorpdfstring{$\IJ$}{IJ}-Tamari posets, complexes and associahedra}

The associahedral triangulation $\asstri{n}$ can be regarded as a ``non-crossing'' object, whose ``non-nesting'' counterpart is the restriction of the staircase triangulation of $\productn$ to certain subpolytope. The first occurrence of the latter we are aware of is in work of Stanley, who constructed it as the standard triangulation of an order polytope~\cite{Stanley1986}; it has also appeared under different guises alongside $\asstri{n}$ in~\cite{GelfandGraevPostnikov1997, PetersenPylyavskyySpeyer2010, SantosStumpWelker2014, PitmanStanley2002} (cf. Section~\ref{sec:previously}).

In our previous work~\cite{CeballosPadrolSarmiento2015} on partial triangulations of $\product$, we constructed the \defn{Dyck path triangulation} of $\productn$ as the orbit of the non-nesting analogue of $\asstri{n}$ under a cyclic action. It turns out that $\asstri{n}$ itself is also amenable to this cyclic symmetry; its orbit under the same cyclic action brings about a flag regular triangulation of $\productn$. Combinatorially, it is the join of a simplex with the boundary complex of a simplicial $n$-cyclohedron. For this reason we call it the \defn{cyclohedral triangulation $\cyctri{n}$}. 
(A related triangulation has been found recently and independently by Ehrenborg, Hetyei and Readdy~\cite{EHR16}, cf. Section~\ref{sec:previously}.)

There are several connections between associahedra and Coxeter groups (see~\cite{TamariFestschrift} and references therein). The \defn{generalized associahedra} are a family of simple polytopes that encode the mutation graphs of cluster algebras of finite types~\cite{FominZelevinsky-ClusterAlgebrasI,FominZelevinsky-ClusterAlgebrasII,FZ03}, and for which various realizations have been  found~\cite{ChapotonFominZelevinsky,hohlweg_permutahedra_2011,PilaudStump-brickPolytopes,ReadingSpeyer,stella_polyhedral_2011}. 
For the $A_n$ root system, one obtains a classical $n$-dimensional associahedron. The generalized associahedron corresponding to $B_n$ is the $n$-dimensional \defn{cyclohedron}. It had appeared earlier in the work of Bott and Taubes~\cite{bott_selflinking_1994}, and was later realized as a convex polytope by Markl~\cite{Markl1999} and Simion~\cite{Sim03}.

We identify maximal simplices of $\cyctri{n}$ with \defn{cyclic $\IJ$-trees}, which, by analogy with $\IJ$-trees, can be naturally given the structure of a poset that we call the \defn{cyclic $\IJ$-Tamari poset}. This poset is a generalization of the type $B$ Tamari lattice, independently discovered by Thomas~\cite{Thomas06} and Reading~\cite{Reading06}. The \emph{Cambrian lattices} introduced by Reading~\cite{Reading06} extend the Tamari lattice even further in the context of finite Coxeter groups. 
Their Hasse diagrams can be realized geometrically as the edge graphs of the aforementioned {generalized associahedra}. 

Using the same techniques as in type~$A$, we obtain type~$B$ analogues of Theorem~\ref{thm:realizations} (Corollary~\ref{IJcyclic_triangulation}, Theorem~\ref{thm:cyclicnurealization} and Theorem~\ref{thm:cyclicverticesandcoordinates}). Figures~\ref{fig:FussCatalanCyclohedra} and~\ref{fig:FussCatalan3-Cyclohedra} display $\IJ$-cyclohedra corresponding to the first few Fuss-Catalan cases in dimensions two and three. Note that they are polyhedral subdivisions of classical cyclohedra into Cartesian products of associahedra and at most one cyclohedron, see Theorem~\ref{thm:supportCyclohedron} and Proposition~\ref{prop:cells_IJcyclohedron}.

\begin{figure}[h]
\includegraphics[width= \textwidth]{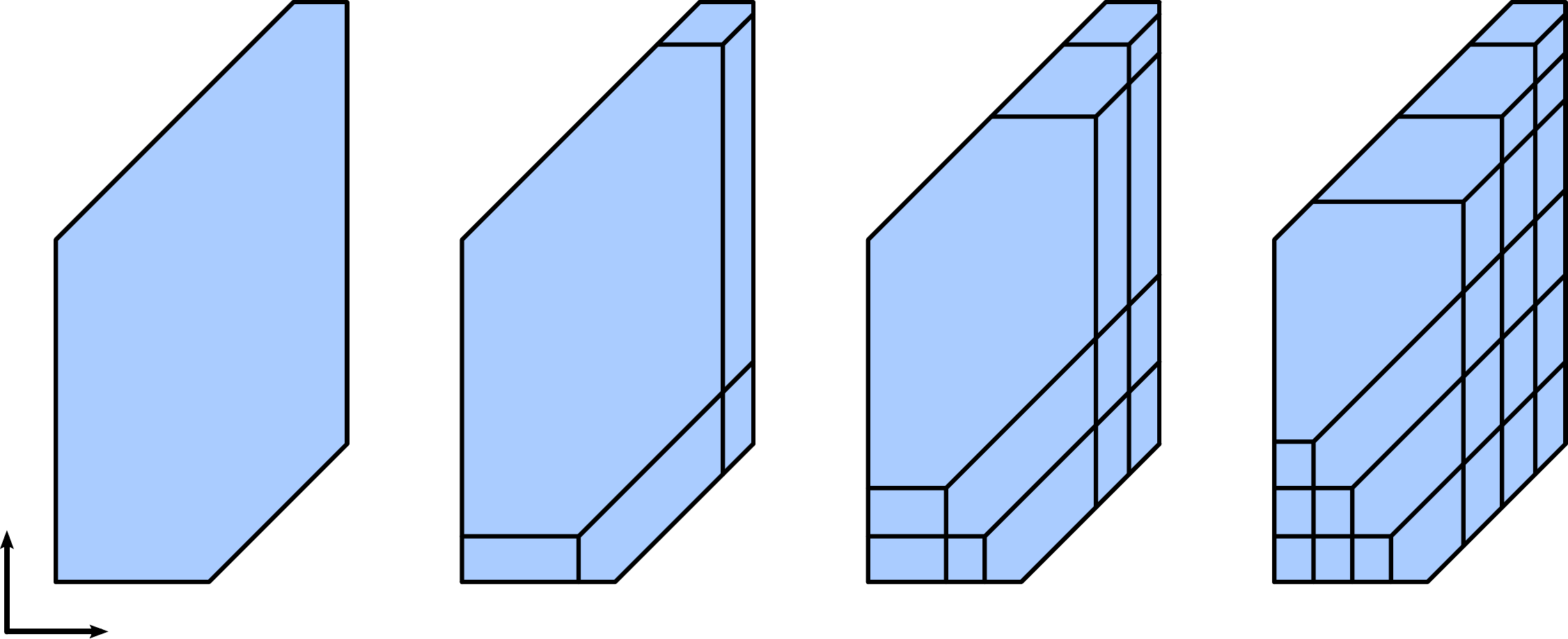}
\caption{Some Fuss-Catalan $\IJ$-cyclohedra in dimension two.}
\label{fig:FussCatalanCyclohedra}
\end{figure}

\begin{figure}[h]
\includegraphics[width= 0.9\textwidth]{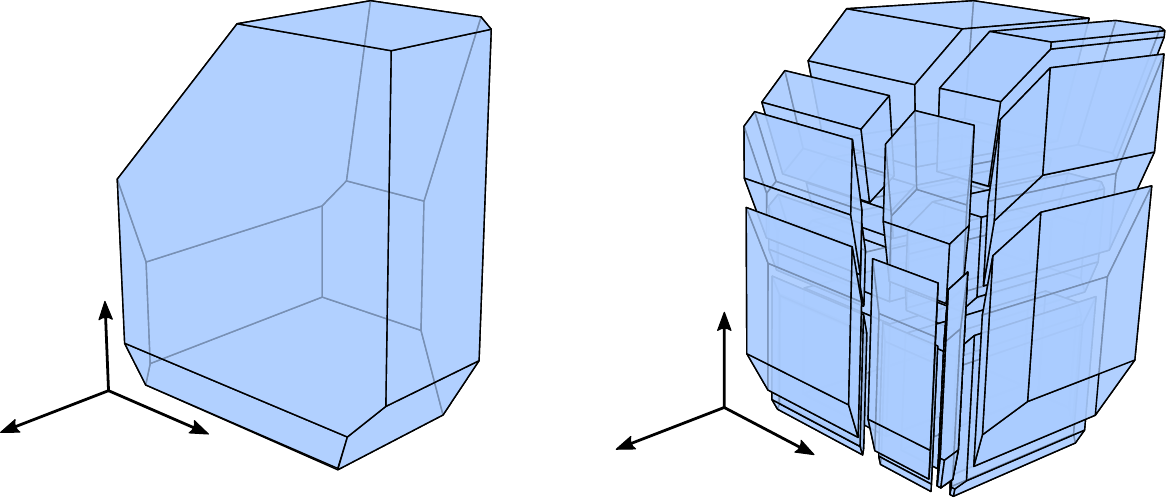}
\caption{Some Fuss-Catalan $\IJ$-cyclohedra in dimension three.}
\label{fig:FussCatalan3-Cyclohedra}
\end{figure}

%%%%%%%%%%%%%%%%%%%%%%%%%%%%%%%%%%%%%%%%%%%%%%%%%%%%%%%%%%%%%%%%%%

\subsection{The associahedral triangulation in the literature}\label{sec:previously}

Relatives of the associahedral triangulation have been found independently several times, under various guises, in a number of different contexts. To the best of our knowledge, it was first constructed by Gelfand, Graev and Postnikov in~\cite{GelfandGraevPostnikov1997} as the triangulation of the polytope $\conv\{0,\bfe_i-\bfe_j\colon 1\leq i<j \leq n\}\subset\RR^n$, also known as the \emph{root polytope} of type $A_{n-1}$. Their triangulation arises from $\asstri{n}$ by projecting $\catblock{n}$ along the subspace spanned by the points $\{(\bfe_i,\bfe_{\ol i})\colon i\in [n]\}$. Independently, in~\cite{PitmanStanley2002}, Pitman and Stanley introduced the \emph{Pitman-Stanley polytope} as a section of the \emph{order polytope} of the product of a $2$-chain with a $n$-chain, along with a polyhedral subdivision of it connected with the associahedron. This order polytope is the image of $\catblock{n}$ under a unimodular transformation, and it is a nice exercise to show that the associahedral triangulation $\asstri{n}$ is essentially the result of applying the Cayley trick from~\cite{HRS00} to their ``associahedral'' subdivision of the Pitman-Stanley polytope\footnote{Incidentally, the similarity between~\cite[Figure~2]{PitmanStanley2002} and~\cite[Figure~6]{CeballosPadrolSarmiento2015} originated this project.}. More recently, Petersen, Pylyavskyy and Speyer considered a family of triangulations of cones of \emph{Gelfand-Tsetlin patterns}, from an alternative viewpoint, of which the associahedral triangulation is a special case~\cite{PetersenPylyavskyySpeyer2010}. Some of their results were also discovered independently by Santos, Stump and Welker in their investigation of non-crossing and non-nesting triangulations of the order polytope of the product of a $k$-chain with a $n$-chain, which are slices of a subfamily of the cones in~\cite{PetersenPylyavskyySpeyer2010}. 

In~\cite{Mes11a}, M\'esz\'aros studied a remarkable connection between subdivisions of \emph{acyclic root polytopes} and reduced forms of monomials in certain associative algebras. These polytopes are defined from a non-crossing tree, and generalize the root polytope of type $A_{n-1}$, which arises as special case when the non-crossing tree is the path graph on $\{1,\ldots,n\}$. Although the subpolytopes of $\catblock{n}$ we consider project to a family of polytopes that strictly contains the acyclic root polytopes of~\cite{Mes11a}, we lack M\'esz\'aros' beautiful algebraic interpretation of subdivisions. It would be interesting to know if such an interpretation is possible in our slightly more general setting.

To the best of our knowledge, the realization of the classical associahedron as a polyhedral cell in a tropical hyperplane arrangement was first constructed by Rote, Santos and Streinu in~\cite{RoteSantosStreinu2003}, although its tropical nature was first pointed out by Joswig and Kulas in~\cite{JoswigKulas2010}. 

While preparing the final version of this manuscript, we became aware of the recent work by Ehrenborg, Hetyei and Readdy in~\cite{EHR16}. There, the authors realize Simion's type~$B$ associahedron as a pulling triangulation of the boundary of the \emph{Legendre polytope} $\conv\{\bfe_i-\bfe_j\colon 1 \leq i, j\leq n,\ i\neq j\}$, also known as the \emph{full root polytope of type $A_{n-1}$}~\cite{EHR16}. We can recover their triangulation by projecting our cyclohedral triangulation of $\productn$ along the span of the vectors $\{(\bfe_i,\bfe_{\ol i})\colon i\in [n]\}$. Our perspective gives a geometric justification for the formula $h_i=\binom{n}{i}^2$ expressing the entries of the $h$-vector of Simion's type $B$ associahedron (see Theorem~\ref{thm:typeBhVector}).

%%%%%%%%%%%%%%%%%%%%%%%%%%%%%%%%%%%%%%%%%%%%%%%%%%%%%%%%%%%%%%%%%%

\subsection{Structure of the paper}

Most of the results of this paper appear in two different versions, for types~$A$ and $B$. To avoid too much repetition, whenever the two proofs are essentially the same, we tried to put it only for type~$A$ and leave type~$B$ for the reader; whenever $A$ is a direct corollary of a more involved type~$B$ argument, we usually put the second; and when each type had its inherent subtlety, we wrote two proofs or sketched the changes for type~$B$. Nevertheless, the first half of the paper is mostly self-contained and can be read with only few jumps to the second.

There are two ways to read the complete paper. One can read it as presented, first type~$A$ and then type~$B$. This has the advantage of having a compact presentation for each type, but while reading the second part it might be necessary to occasionally go back. Alternatively, one can read the sections alternating between the two types: \ref{sec:assTri}$\to$\ref{sec:cyctri}$\to$\ref{sec:IJTamari}$\to$\ref{sec:cycTamari}$\to$\ref{sec:IJcomplex}$\to\cdots$

%%%%%%%%%%%%%%%%%%%%%%%%%%%%%%%%%%%%%%%%%%%%%%%%%%%%%%%%%%%%%%%%%%
%%%%%%%%%%%%%%%%%%%%%%%%%%%%%%%%%%%%%%%%%%%%%%%%%%%%%%%%%%%%%%%%%%
%%%%%%%%%%%%%%%%%%%%%%%%%%%%%%%%%%%%%%%%%%%%%%%%%%%%%%%%%%%%%%%%%%

\part{Type \texorpdfstring{$A$}{A}}

\section{The associahedral triangulation}

\label{sec:assTri}

In this paper, the terms \emph{vertex} and \emph{edge} appear in several contexts. To avoid confusion, we reserve the names \emph{vertices} and \emph{edges} exclusively for simplicial and polyhedral complexes, and call \emph{nodes} and \emph{arcs} the corresponding graph notions.

Let $\NN$ denote the set of natural numbers including the zero, and $\ol\NN$ the same set with numbers decorated with an overline and a total order $<$ inherited from $\NN$. If $n$ is a natural number, define $[n]:=\{0,1,\ldots, n\}$, and likewise $[\ol n]:=\{\ol 0,\ol 1,\ldots, \ol n\}$\footnote{\emph{Caveat:} Note that our convention departs from the more standard definition $[n]=\{1,\ldots, n\}$ used in combinatorics. We have made this choice to avoid other nonstandard variants such as~$[0..n]$, which are notationally cumbersome for our purposes.}. Regard $\NN\sqcup \ol \NN$ as the totally ordered set with covering relations $i\prec\ol i$ and $\ol i \prec i+1$. 

The Cartesian product of two standard simplices is the convex polytope:
\begin{equation*}
\product:=\conv\left\{ (\bfe_i, \bfe_{\ol j} )\colon i \in [n],\ \ol j\in[\ol m]\right\}\subset\RR^{n+m+2},
\end{equation*}
where $\bfe_i$ and $\bfe_{\ol j}$ denote the standard basis vectors of $\RR^{n+1}$ and $\RR^{m+1}$, respectively. 
Overlined indices are introduced to distinguish the labels of the two factors~\footnote{We try to be especially consistent with this notation when we refer to the \emph{value} $j$ of an index $\ol j\in \NN$ as opposed to just its label (see for example Lemma~\ref{lem:non-crossing}, where a function of the index pair $(i,\ol j)$ depends on the difference $j-i$.)}.

The main ingredient for the results in this paper is what we call the \defn{associahedral triangulation} of a \defn{subpolytope} of $\productn$, where subpolytope means that its vertices are vertices of $\productn$. Recall that a triangulation of a polytope~$P$ is a collection of subpolytopes of $P$ that are simplices, intersect properly (in the sense that they intersect in common faces and their relative interiors are disjoint), and cover~$P$ (see~\cite{DeLoeraRambauSantos2010} for a very complete treatment of triangulations of polytopes and point configurations).

In order to describe this triangulation we consider a convex $(n+2)$-gon $P_{n+2}$, 
whose edges we label counterclockwise from~$0$ to~$n+1$, and which we depict with $n+1$ as its single upper edge, as in Figure~\ref{fig:triang2tree}.
To each triangulation $T$ of $P_{n+2}$ we will associate a spanning tree of $\bipartitebn$ as follows. 
First replace each boundary edge $0\leq i\leq n$ of $P_{n+2}$ with an arc $(i,\ol{i})$ of~$\bipartitebn$. 
Then, turn every remaining diagonal of $T$ into an arc $(i,\ol j)$ of $\bipartitebn$, where $i$ and~$j$ are the leftmost and rightmost boundary edges of $P_{n+2}$ covered by the diagonal, respectively. See Figure~\ref{fig:triang2tree}, where nodes from $\NN$ are drawn black and nodes from $\ol \NN$ white, as in the rest of this paper. 

It is not hard to see that the result is always a spanning tree of $\bipartitebn$ (see Lemma~\ref{lem:cyclicIJtreesaretrees} for a proof). 

\begin{figure}[htpb]
\centering 
 \includegraphics[width=\linewidth]{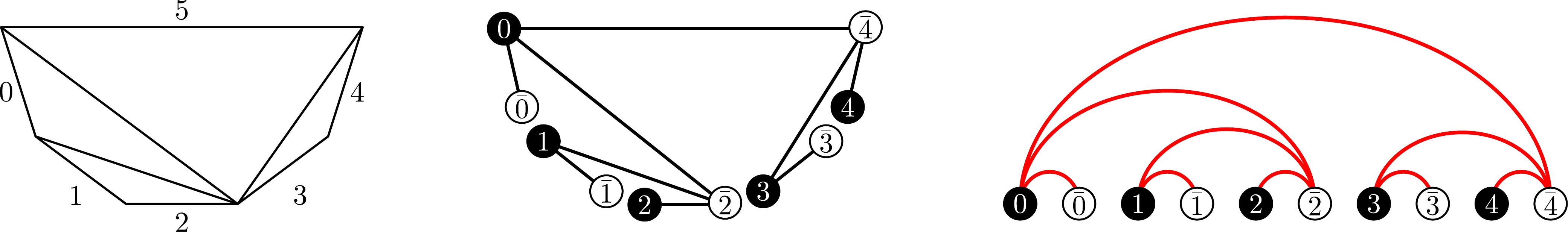}
 \caption{From a triangulation to a non-crossing alternating tree.}\label{fig:triang2tree}
\end{figure}

We call the trees obtained this way \defn{$([n],[\ol n])$-trees}. 
Observe that each $([n],[\ol n])$-tree indexes a simplex of $\productn$ that belongs to the subpolytope $\catblock{n}$:
\begin{equation*}
\catblock{n}:=\conv\left\{ (\bfe_i, \bfe_{\ol j} ) \colon 0\preceq i\prec \ol j\preceq \ol n\right\}\subset\productn.
\end{equation*}

If we take all such trees, we obtain a triangulation of $\catblock{n}$, which we call the \defn{Associahedral triangulation $\asstri{n}$}. The name is motivated by the fact our map from triangulations of $P_{n+2}$ to $\nn$-trees is actually a bijection (see Remark~\ref{rem:bijectionTriTree}); moreover, two maximal cells of $\asstri{n}$
are adjacent if and only if the corresponding triangulations of $P_{n+2}$ differ by a flip. This implies that the dual of $\asstri{n}$ is the (simple) associahedron. 
That $\asstri{n}$ is indeed a triangulation has been discovered in different contexts (see Section~\ref{sec:previously}). It is an immediate corollary of Theorem~\ref{thm:cyclohedrontriangulation}, for which we provide a complete proof. 

\begin{theorem}[{\cite[Theorem 6.3]{GelfandGraevPostnikov1997}, \cite[Theorem 18]{PitmanStanley2002}}]\label{thm:asstriang}
The set of $([n],[\ol n])$-trees indexes the maximal simplices of a flag regular triangulation of $\catblock{n}$, which we call the \defn{$n$-associahedral triangulation $\asstri{n}$}.
\end{theorem}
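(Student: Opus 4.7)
The plan is to establish the theorem by verifying three properties: that each $\nn$-tree $T$ really does index a full-dimensional simplex of $\catblock{n}$, that the resulting collection of simplices is induced as the lower envelope of a convex lifting function, and that the abstract simplicial complex they form is flag. Because the statement arises by restriction from the richer type $B$ result, I would organize the argument so that it ultimately follows from Theorem~\ref{thm:cyclohedrontriangulation}, but prepare the combinatorial content first.

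First I would check that every $\nn$-tree $T$, viewed as an arc set of $\bipartitebn$, yields a simplex of $\catblock{n}$. This relies on the standard fact that, in a product of simplices $\productp{a}{b}$, a set $S$ of vertices $\{(\bfe_i,\bfe_{\ol j})\}$ is affinely independent iff the corresponding arc set in $K_{[a],[\ol b]}$ is a forest. Since $T$ is a spanning tree on $2n+2$ nodes, its $2n+1$ arcs form a forest and so index a $2n$-dimensional simplex; as $\dim\catblock{n}=2n$, this is a maximal simplex. One must also confirm that every arc $(i,\ol j)$ obtained from a diagonal of $P_{n+2}$ satisfies $i\prec \ol j$ (so that $(\bfe_i,\bfe_{\ol j})$ is actually a vertex of $\catblock{n}$), which follows from the leftmost/rightmost convention of the tree construction illustrated in Figure~\ref{fig:triang2tree}.

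Second, for flagness I would introduce the natural compatibility relation on arcs: two arcs $(i_1,\ol j_1)$ and $(i_2,\ol j_2)$ are \emph{non-crossing} if their corresponding chords in $P_{n+2}$ do not cross in the interior of the polygon. A routine case analysis shows this can be expressed purely in terms of the interlacing of the indices $i_1,\ol j_1,i_2,\ol j_2$ in the total order on $\NN\sqcup\ol\NN$. Flagness then reduces to the classical fact that the maximal pairwise non-crossing sets of diagonals of a convex polygon are exactly its triangulations; equivalently, every minimal non-face of $\asstri{n}$ has size two. In particular this also shows that the map from triangulations of $P_{n+2}$ to $\nn$-trees is a bijection and that adjacent maximal simplices correspond to diagonal flips, which retroactively justifies the name $\asstri{n}$.

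Third, for regularity one must produce a height function $w\colon\{(i,\ol j):i\prec\ol j\}\to\RR$ whose lower envelope cuts out exactly $\asstri{n}$. A candidate is $w(i,\ol j):=-f(j-i)$ for $f$ strictly convex (e.g.\ $f(k)=k^2$); the verification amounts to checking that, for every triangulation $T$ of $P_{n+2}$ and every diagonal $(i,\ol j)\notin T$, the lifted vertex lies strictly above the affine span of the lifted vertices of the triangle of $T$ whose convex hull contains the chord, and this reduces to a strict submodularity computation in the lengths of the arcs involved. The main obstacle is this last step, since a direct verification requires comparing every omitted arc against every simplex of every polygon triangulation. The cleanest way around it is to defer to Theorem~\ref{thm:cyclohedrontriangulation}: the associahedral triangulation is precisely the restriction of the cyclohedral triangulation $\cyctri{n}$ to the face of $\productn$ on arcs with $i\prec\ol j$ in the linear order, so any regular, flag lifting certificate for $\cyctri{n}$ restricts to one for $\asstri{n}$, and the present theorem follows as a corollary.
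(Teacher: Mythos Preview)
Your proposal is correct and matches the paper's approach: the paper states explicitly that Theorem~\ref{thm:asstriang} ``is an immediate corollary of Theorem~\ref{thm:cyclohedrontriangulation},'' and your argument lands in exactly the same place, with the preparatory paragraphs on full-dimensionality and flagness being standard checks that the paper leaves implicit. One small imprecision worth fixing: $\catblock{n}$ is not a \emph{face} of $\productn$ but a subpolytope, so the passage from $\cyctri{n}$ to $\asstri{n}$ is not literally ``restriction to a face''; you need the extra observation that $\catblock{n}$ is a union of cells of $\cyctri{n}$ (equivalently, that the $\nn$-trees are exactly the cyclic $\nn$-trees using only increasing arcs, and a volume count as in Proposition~\ref{prop:cycIsTri} shows they fill $\catblock{n}$), after which regularity and flagness are inherited.
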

 
\begin{remark}\label{rmk:triang2dualtree}
 Consider a triangulation $T$ of $P_{n+1}$ and the corresponding $([n],[\ol n])$-tree. Representing the arcs $(i,\ol j)$ of the latter as points of an $n\times \ol n$ square grid, we get a \emph{grid representation} of $T$ (as in the grid representation of triangulations of products of simplices~\cite[Sec.~6.2]{DeLoeraRambauSantos2010}). To $T$ corresponds a dual rooted binary tree $\cT$ on $n$ leaves, as shown in Figure~\ref{fig:triang2dualtree} (left), from which we can also obtain the grid representation of $T$ directly: For each node $v$ of $\cT$, we collect the pair $(i,\ol j)$ where~$i$ is the number of leaves of $\cT$ strictly preceding $v$ when $\cT$ is traversed in \emph{preorder} (Figure~\ref{fig:triang2dualtree} (center)), and ${j+1}$ is the number of leaves of $\cT$ weakly preceding $v$ when $\cT$ is traversed in \emph{postorder} (Figure~\ref{fig:triang2dualtree} (right)).  
\end{remark}

\begin{figure}[htpb]
\centering 
 \includegraphics[width=.8\linewidth]{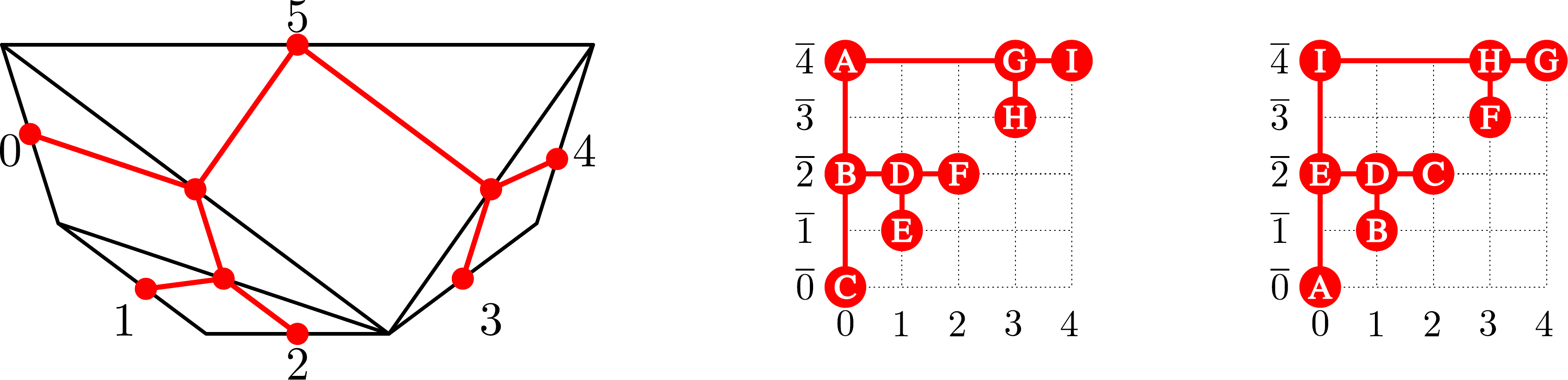}
 \caption{Grid representation of the dual binary tree. Latin letters indicate the preorder (center) and the postorder (right) traversal of the vertices of the binary tree.}\label{fig:triang2dualtree}
\end{figure}

%%%%%%%%%%%%%%%%%%%%%%%%%%%%%%%%%%%%%%%%%%%%%%%%%%%%%%%%%%%%%%%%%%

\subsection{The \texorpdfstring{$\IJ$}{IJ}-associahedral triangulation}

As faces of Cartesian products are Cartesian products of faces, 
faces of $\productn$ are of the form
\begin{equation*}
\productp{I}{\ol J}=\conv\left\{ (\bfe_i, \bfe_{\ol j} )\ :\  i\in I,  \ol j\in\ol J\right\}
\end{equation*}
for pairs $(I,\ol J)$ with $I\subseteq [n]$ and $\ol J\subseteq [\ol n]$.
Since supporting hyperplanes for faces $\productp{I}{\ol J}$ are also supporting for any subpolytope of $\productn$, we see that $\catblock{n}\cap \productp{I}{\ol J}$ is a face of $\catblock{n}$. We write it as follows:
\begin{equation*}
 \catblock{I,\ol J}:=\catblock{n}\cap \productp{I}{\ol J}=\conv\left\{ (\bfe_i, \bfe_{\ol j} )\ \colon\ i\in I, \ol j\in \ol J \text{ and } i\prec \ol j\right\}.
\end{equation*}
Notice that those $\ol j\in\ol J$ preceding $\min(I)$ and those $i\in I$ larger than $\max(\ol J)$ are irrelevant in the definition of $\catblock{I,\ol J}$. For this reason, through the type~$A$ part of the paper (Sections~\ref{sec:assTri} to \ref{sec:nuassociahedron}), we will implicitly assume that $\min(I\sqcup\ol J)\in I$ and $\max(I\sqcup \ol J)\in \ol J$. 

Every (regular) triangulation of a polytope induces a (regular) triangulation on each of its faces. In particular, the associahedral triangulation $\asstri{n}$ of $\catblock{n}$ induces a regular triangulation $\asstri{I,\ol J}$ of $\catblock{I,\ol J}$, whose cells are the intersection of the cells of $\asstri{n}$ with $\productp{I}{\ol J}$. 
We call this restricted triangulation the \defn{$\IJ$-associahedral triangulation}.
Its maximal cells are given by $\IJ$-trees, which can be characterized as follows.

Given nonempty finite sets $I\subset\NN$ and $\ol J\subset \ol \NN$, denote by \defn{$\bipartiteij$} the complete bipartite graph with node set $I\sqcup \ol J$ and arc set $\{(i,\ol j)\colon i\in I,\ \ol j\in\ol J\}$. 

\begin{definition}\label{def:IJtree}
Let $I$ and $\ol J$ be nonempty finite subsets of $\NN$ and $\ol \NN$, respectively, such that $\min(I\sqcup\ol J)\in I$ and $\max(I\sqcup \ol J)\in \ol J$. An \defn{$\IJ$-forest} is a subgraph of~$\bipartiteij$ that is
\begin{enumerate}
 \item \textbf{Increasing:} each arc $(i,\ol j)$ fulfills $i\prec \ol j$ (i.e. $i\leq j$); and
 \item \textbf{Non-crossing:} it does not contain two arcs $(i,\ol j)$ and $(i', \ol j')$ satisfying ${i\prec  i'\prec \ol j \prec \ol j '}$  (i.e. $i<  i'\leq j <  j '$).
\end{enumerate} 
An \defn{$\IJ$-tree} is a maximal $\IJ$-forest.
\end{definition}

\begin{remark}
\label{rem:bijectionTriTree}
The $\nn$-trees arising from this definition are exactly those that we obtained with the construction of Figure~\ref{fig:triang2tree}, and are in bijection with triangulations of a convex $(n+2)$-gon $P_{n+2}$. Indeed, it is straightforward to see that the aforementioned construction provides $\nn$-trees. For the converse, notice that each $\nn$-tree is a tree in the graph theoretical sense (cf. Lemma~\ref{lem:cyclicIJtreesaretrees}), and therefore it contains exactly $2n+1$ arcs, corresponding to the $n+2$ boundary edges and a set of $n-1$ non-crossing diagonals of $P_{n+2}$; that is, a triangulation.
\end{remark}

An example of $\IJ$-tree is depicted in Figure~\ref{fig:IJtrees}, as well as its ``completion" to an $\nn$-tree where all nodes not in $I\sqcup \ol J$ are added as leaves.

\begin{figure}[htpb]
\centering 
 \includegraphics[width=.7\linewidth]{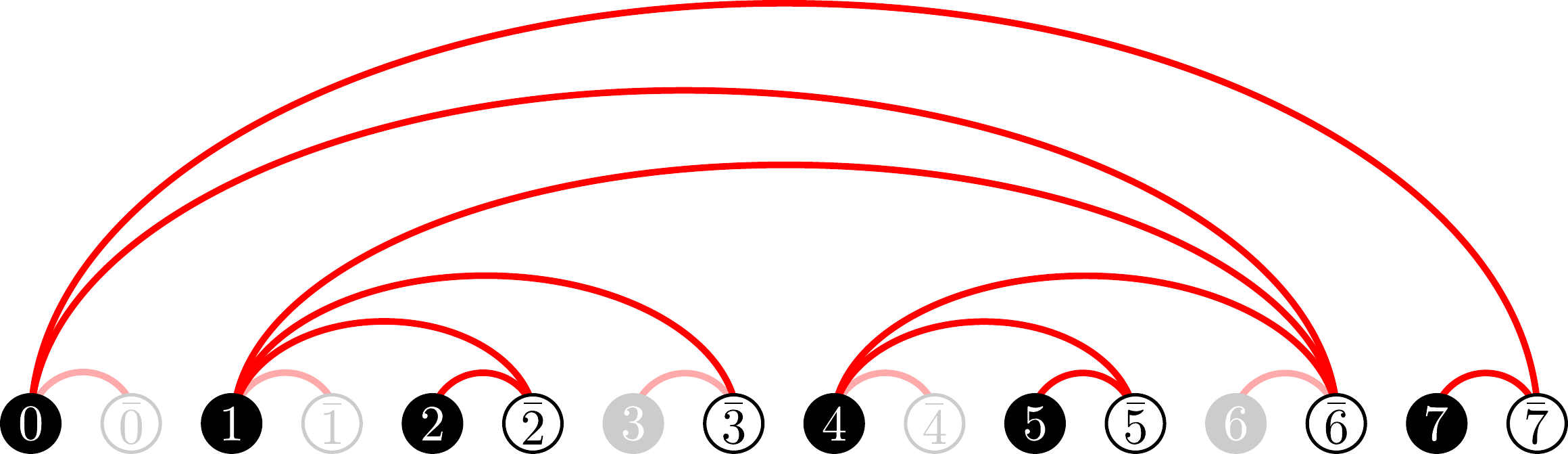}
 \caption{An $\IJ$-tree for $I=\{0,1,2,4,5,7\}$ and $\ol J=\{\ol 2, \ol 3, \ol 5, \ol 6, \ol 7\}$. Adding shaded vertices gives its completion to an $\nn$-tree.}\label{fig:IJtrees}
\end{figure}

\begin{corollary}
\label{cor:IJtriang}
The set of $\IJ$-trees indexes the maximal simplices of a flag regular triangulation of $\catblock{I,\ol J}$. We call it the \defn{$\IJ$-associahedral triangulation $\asstri{I,\ol J}$}.
\end{corollary}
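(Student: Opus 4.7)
The plan is to derive Corollary~\ref{cor:IJtriang} directly from Theorem~\ref{thm:asstriang} by restricting the associahedral triangulation $\asstri{n}$ to the face $\catblock{I,\ol J}=\catblock{n}\cap\productp{I}{\ol J}$ of $\catblock{n}$, followed by a combinatorial identification of the maximal simplices of this restriction as $\IJ$-trees. First I would invoke the standard fact that the restriction of a regular (respectively flag) triangulation of a polytope to any of its faces is regular (respectively flag): regularity descends by restricting the lifting function witnessing regularity of $\asstri{n}$ to the vertices of the face, while flagness descends because minimal non-faces of the restriction are inherited from minimal non-faces of the ambient triangulation. Applied to $\asstri{n}$ and $\catblock{I,\ol J}$, this immediately yields a regular flag triangulation of $\catblock{I,\ol J}$ whose maximal simplices are the full-dimensional intersections $\sigma_T\cap\productp{I}{\ol J}$, where $\sigma_T$ ranges over the maximal simplices of $\asstri{n}$ (indexed by $\nn$-trees $T$).

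For each $\nn$-tree $T$, let $T|_{I,\ol J}$ denote the subgraph induced on the node set $I\sqcup\ol J$; then $\sigma_T\cap\productp{I}{\ol J}$ is precisely the simplex spanned by the arcs of $T|_{I,\ol J}$, and this subgraph is visibly an $\IJ$-forest, since restriction to an induced subset preserves both increasingness and non-crossingness. Conversely, the core combinatorial step is to show that every $\IJ$-tree $F$ arises as $T|_{I,\ol J}$ for some $\nn$-tree $T$. I would do this by extending $F$ to an $\nn$-tree via a careful attachment of each missing node $k\in[n]\setminus I$ (and symmetrically each $\ol k\in[\ol n]\setminus\ol J$) as a leaf, choosing as neighbour the appropriate endpoint in $I\sqcup\ol J$ (for instance, the next overlined node in the cyclic order around $k$ consistent with the existing arcs of $F$) so that the resulting spanning subgraph of $\bipartitebn$ remains increasing and non-crossing and is maximal with respect to these properties. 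The shaded vertices in Figure~\ref{fig:IJtrees} already depict such a completion.

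Combining the two directions, $T\mapsto T|_{I,\ol J}$ induces a surjection from $\nn$-trees onto $\IJ$-forests arising as restrictions, and the simplex spanned by $T|_{I,\ol J}$ has full dimension in $\catblock{I,\ol J}$ precisely when $T|_{I,\ol J}$ is maximal as an $\IJ$-forest, i.e.\ an $\IJ$-tree. This provides the required bijection between maximal simplices of $\asstri{I,\ol J}$ and $\IJ$-trees. The main obstacle I anticipate is verifying that the attachment procedure produces no crossings and is genuinely maximal, which ultimately hinges on $\IJ$-trees being graph-theoretic spanning trees of $\bipartiteij$ with exactly $|I|+|\ol J|-1$ arcs---matching the expected dimension $|I|+|\ol J|-2$ of maximal simplices in $\catblock{I,\ol J}$. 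This is a direct analogue of the classical tree property for $\nn$-trees and will be handled uniformly in Lemma~\ref{lem:cyclicIJtreesaretrees}, from which the dimension count and validity of the completion follow without further ado.
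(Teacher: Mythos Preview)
Your proposal is correct and follows essentially the same approach as the paper: the paper obtains the corollary by restricting the regular flag triangulation $\asstri{n}$ (Theorem~\ref{thm:asstriang}) to the face $\catblock{I,\ol J}$ of $\catblock{n}$, noting that regularity and flagness are inherited, and then characterizing the maximal cells of the restriction via Definition~\ref{def:IJtree}. You supply more detail on the combinatorial identification (the surjection $T\mapsto T|_{I,\ol J}$ and the completion of an $\IJ$-tree to an $\nn$-tree, as illustrated in Figure~\ref{fig:IJtrees}) than the paper, which leaves this step implicit, but the logic is the same and your anticipated reliance on Lemma~\ref{lem:cyclicIJtreesaretrees} for the spanning-tree property matches the paper's forward reference.
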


Recall that every height function $\ha\colon \{(i,\ol j)\in I\times \ol J\colon i\prec \ol j \}\to \RR$ induces a regular subdivision of $\catblock{I,\ol J}$ as the projection of the lower envelope of $\conv\{(\bfe_i,\bfe_{\ol j},\ha(i,\ol j))\colon (\bfe_i,\bfe_{\ol j})\in \catblock{I,\ol J}\}$, which is a triangulation whenever $\ha$ is sufficiently generic (see~\cite[Section~4.3]{DeLoeraRambauSantos2010} for a treatment of regular triangulations of point configurations). Regularity has many important implications, and is key for our subsequent development. Below is a characterization of the polyhedral cone of height functions on the vertices of $\catblock{I,\ol J}$  that induce $\asstri{I,\ol J}$ as a regular triangulation. It follows as a direct application of \cite[Lemma~3.3]{SantosStumpWelker2014}, which we have included as Lemma~\ref{lem:regularitycharacterization}, specialized to triangulations of $\product$. 

\begin{proposition}\label{prop:assheights}
Let $\ha:\{(i,\ol j)\in I\times \ol J\colon i\prec \ol j \}\to \RR$ be a height function. 
The regular triangulation of $ \catblock{I,\ol J}$ induced by $\ha$ is $\asstri{I,\ol J}$ if and only if 
\begin{equation}\label{eq:propnoncrossing}
\ha(i,\ol j)+\ha(i',\ol{j'})<\ha(i',\ol j)+\ha(i,\ol{j'}) 
\end{equation}
whenever $i<i'<j'<j$. We say any such height function is \defn{non-crossing}. 
\end{proposition}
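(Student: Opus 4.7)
The approach is to apply Lemma~\ref{lem:regularitycharacterization} (the regularity criterion of Santos--Stump--Welker), which characterizes the height functions inducing a given regular triangulation by a system of strict inequalities indexed by the circuits of the underlying point configuration. The plan is then to (a) identify the circuits supported on the vertex set of $\catblock{I,\ol J}$, (b) determine which Radon half of each circuit must belong to $\asstri{I,\ol J}$, and (c) translate the resulting inequalities into the form of~\eqref{eq:propnoncrossing}.

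For step (a), since $\catblock{I,\ol J}$ sits inside $\productp{I}{\ol J}$, every circuit of its vertex set is a $2\times 2$ ``square'' circuit of the ambient product of simplices: for $p<p'$ in $I$ and $\ol q<\ol r$ in $\ol J$ we have the Radon relation
\[
(\bfe_p,\bfe_{\ol q})+(\bfe_{p'},\bfe_{\ol r})=(\bfe_p,\bfe_{\ol r})+(\bfe_{p'},\bfe_{\ol q}).
\]
All four vertices lie in $\catblock{I,\ol J}$ precisely when $p'\prec \ol q$. For step (b), the two diagonal decompositions of such a square correspond to the pairs of arcs $\{(p,\ol q),(p',\ol r)\}$ and $\{(p,\ol r),(p',\ol q)\}$. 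A direct check using Definition~\ref{def:IJtree} shows the first pair satisfies the crossing condition $p<p'\preceq \ol q<\ol r$, while the second is nested, hence non-crossing.

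Since a maximal simplex of $\asstri{I,\ol J}$ is an $\IJ$-tree and therefore consists of non-crossing arcs, the triangulation $\asstri{I,\ol J}$ must use the nested diagonal $\{(p,\ol r),(p',\ol q)\}$ of every such square. Applying Lemma~\ref{lem:regularitycharacterization}, this means $\ha$ induces $\asstri{I,\ol J}$ iff the lower envelope of the lifted configuration selects the nested diagonal on each square, i.e.
\[
\ha(p,\ol r)+\ha(p',\ol q)<\ha(p,\ol q)+\ha(p',\ol r)
\]
for all admissible $p<p'$ in $I$ and $\ol q<\ol r$ in $\ol J$. Relabeling $i:=p$, $i':=p'$, $j:=r$, $j':=q$ yields exactly the inequality~\eqref{eq:propnoncrossing}.

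The only real obstacle is the bookkeeping in step (b): orienting the Radon decomposition so that the \emph{nested} diagonal is the pair whose lifted heights should be \emph{smaller} (rather than larger), and matching this with the sign convention of Lemma~\ref{lem:regularitycharacterization}. Everything else reduces to rewriting the enumerating conditions, and no other circuits need to be considered.
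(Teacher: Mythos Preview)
Your proposal is correct and follows the same route as the paper, which simply states that the proposition is a direct application of Lemma~\ref{lem:regularitycharacterization}; you have usefully filled in the details by explicitly identifying the circuits of $\catblock{I,\ol J}$ and checking which Radon half lies in $\asstri{I,\ol J}$. One small observation: your derivation naturally produces the range $i<i'\leq j'<j$ (from $p'\prec\ol q$), whereas the proposition is stated with the strict inequality $i'<j'$; the boundary case $i'=j'$ is a genuine circuit of $\catblock{I,\ol J}$ whenever $i'\in I$ and $\ol{i'}\in\ol J$, so your version is in fact the sharper one (compare the formulation in~\eqref{eq:asso_height}, where the paper itself uses the non-strict $i\prec i'\prec\ol j\prec\ol{j'}$).
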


\begin{lemma}\label{lem:non-crossing}
The height function $\ha(i,\ol j)= f(j-i)$ is non-crossing for any strictly concave function $f$. 
Explicit examples of non-crossing height functions are $\ha({i,\ol j})=-{(j-i)^2}$, $\ha({i,\ol j})=\sqrt{j-i}$,  
or ${\ha({i,\ol j})=-c^{i-j}}$ for some $c>1$. 
\end{lemma}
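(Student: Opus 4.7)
The plan is to apply directly the characterization of non-crossing height functions given by Proposition~\ref{prop:assheights}. Fix any $i<i'<j'<j$; I need to verify
\[
f(j-i) + f(j'-i') < f(j-i') + f(j'-i).
\]
Setting $\alpha := j-i$, $\beta := j'-i'$, $\gamma := j-i'$ and $\delta := j'-i$, the key observation is that $\alpha+\beta = \gamma+\delta$, while the strict inequalities $i<i'$ and $j'<j$ force $\beta < \gamma, \delta < \alpha$. In other words, the unordered pair $\{\gamma,\delta\}$ strictly majorizes (in the sense of equal sums with narrower spread) the pair $\{\alpha,\beta\}$.

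From here, since $\alpha > \beta$, I can write $\gamma$ and $\delta$ as \emph{proper} convex combinations of $\alpha$ and $\beta$: there is a unique $t \in (0,1)$ with $\gamma = t\alpha + (1-t)\beta$, and then automatically $\delta = (1-t)\alpha + t\beta$. Applying strict concavity of $f$ to each combination gives
\[
f(\gamma) > tf(\alpha) + (1-t)f(\beta), \qquad f(\delta) > (1-t)f(\alpha) + tf(\beta),
\]
and summing these two inequalities yields $f(\gamma) + f(\delta) > f(\alpha) + f(\beta)$, exactly the required inequality.

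For the three explicit examples, it suffices to check strict concavity on the relevant domain (the values $j-i$ appearing are positive integers, so $(0,\infty)$ is more than enough) via the second derivative: $(-x^2)'' = -2$, $(\sqrt{x})'' = -\tfrac14 x^{-3/2}$, and $(-c^{-x})'' = -(\ln c)^2 c^{-x}$, all negative on $(0,\infty)$ — the last precisely because $c > 1$ forces $\ln c \neq 0$. I do not anticipate any real obstacle here: the lemma is essentially the classical equivalence between strict concavity and the four-point condition $f(\alpha)+f(\beta) < f(\gamma)+f(\delta)$ for equal-sum pairs with $\{\gamma,\delta\} \subsetneq (\beta,\alpha)$, and the only matter to check carefully is that the strict chain $i<i'<j'<j$ indeed forces $\alpha > \beta$ (so that $t \in (0,1)$ rather than $t \in \{0,1\}$), which it does.
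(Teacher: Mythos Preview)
Your proof is correct and follows essentially the same approach as the paper's: both reduce the inequality $f(j-i)+f(j'-i')<f(j-i')+f(j'-i)$ directly to a standard characterization of strict concavity applied to the four differences. The only cosmetic difference is that the paper invokes the ``decreasing differences'' form $f(a+x)-f(a)>f(b+x)-f(b)$ for $a<b$ (with $a=j'-i'$, $b=j-i'$, $x=i'-i$), whereas you use the equivalent majorization/convex-combination form; neither buys anything over the other here.
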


In particular, the height function ${\ha({i,\ol j})=-c^{i-j}}$ shows that $\asstri{I,\ol J}$ is the pulling triangulation of $\catblock{I,\ol J}$ with respect to every order of $\{(i,\ol j)\in I\times \ol J\colon i\prec \ol j \}$ that extends the partial order 
${(i,\ol j)<(i',\ol j') \Leftrightarrow j-i > j'-i'}$ (see~\cite[Section~4.3.2]{DeLoeraRambauSantos2010}). 

\begin{proof}
If $f$ is strictly concave then $f(a+x)-f(a)>f(b+x)-f(b)$ for any $a<b$ and $x\in \RR$. 
Let $i<i'<j'<j$ and consider $a=j'-i'$, $b=j-i'$ and $x=i'-i$. We have that 
\begin{align*}
f(a+x)-f(a)&>f(b+x)-f(b), \\
f(j'-i)-f(j'-i')&>f(j-i)-f(j-i'), \\
\ha(i, \ol j')-\ha(i', \ol j')&>\ha(i,\ol j)-\ha(i',\ol j).
\end{align*}
This shows that $\ha$ satisfies Equation~\eqref{eq:propnoncrossing} as desired.
\end{proof}

%%%%%%%%%%%%%%%%%%%%%%%%%%%%%%%%%%%%%%%%%%%%%%%%%%%%%%%%%%%%%%%%%%

\subsection{Geometric realization as a subdivision of a generalized permutahedron}

To conclude this section, we present an alternative representation of triangulations of subpolytopes of $\product$ via \emph{fine mixed subdivisions of generalized permutahedra}. It allows us to make pictures of high dimensional triangulations in fewer dimensions. This is done via the ``Cayley trick''~\cite{HRS00}, that provides a bijection between triangulations of Cayley polytopes and mixed subdivisions of Minkowski sums.
We only include the definitions needed for our purposes, and refer the reader to~\cite{Postnikov2009} for a thorough treatment.

\begin{definition}
A \defn{generalized permutahedron} is a polytope whose normal fan coarsens that of the standard permutahedron. One particular family of generalized permutahedra can be obtained in terms of Minkowski sums of faces of a simplex. 
Given a nonempty subset $I\subseteq[n]$, define $\Delta_{I}:=\conv\{\bfe_i\colon i\in I\}\subset\RR^{n+1}$ where, as before $\bfe_i$ ranges over the standard basis vectors of $\RR^{n+1}$. 
Any Minkowski sum of the form:
\begin{equation*}
\sum_{I\subseteq [n]} y_I \Delta_I,
\end{equation*}
where the $y_I$ are nonnegative real numbers, is a generalized permutahedron.
\end{definition}

Let $I\subset\NN$, $\ol J\subset\ol \NN$ be nonempty finite sets. We associate to $\catblock{I,\ol J}$ and to an $\IJ$-tree $T$ the generalized permutahedra $\cP_{I,\ol J}$ and $\cP_T$, respectively, as follows:
\begin{align*}
\cP_{I,\ol J}:=\sum_{i\in I} \Delta_{\{\ol j \in \ol J\colon i\prec \ol j\}}, && \cP_T:=\sum_{i\in I} \Delta_{\{\ol j\colon (i,\ol j)\in T\}}
\end{align*}

\begin{corollary}
\label{cor:cayley}
The polyhedral cells $\cP_T$ as $T$ ranges over $\IJ$-trees 
form a coherent fine mixed subdivision of $\cP_{I,\ol J}$ (see Figure~\ref{fig:mixed35}).
\end{corollary}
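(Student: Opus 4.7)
The proof plan is to deduce this as a direct application of the Cayley trick (as in~\cite{HRS00}) to the regular triangulation $\asstri{I,\ol J}$ already established in Corollary~\ref{cor:IJtriang}. The main work is bookkeeping: identifying $\catblock{I,\ol J}$ as the correct Cayley polytope and checking that, under the Cayley correspondence, the simplex indexed by an $\IJ$-tree $T$ maps precisely to the polyhedral cell $\cP_T$.

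First I would observe that, writing the ambient space $\RR^{n+1}\times\RR^{n+1}$ with the second factor playing the role of Cayley coordinates indexed by $I$, the polytope
\[
\catblock{I,\ol J}=\conv\bigl\{(\bfe_i,\bfe_{\ol j}):i\in I,\ \ol j\in \ol J,\ i\prec \ol j\bigr\}
\]
is the Cayley embedding of the family of simplices $\{\Delta_{\{\ol j\in \ol J:i\prec \ol j\}}\}_{i\in I}$. Indeed, the vertex set of $\catblock{I,\ol J}$ breaks into groups indexed by $i\in I$, and the group corresponding to $i$ is exactly $\{\bfe_{\ol j}:\ol j\in \ol J,\ i\prec \ol j\}\times\{\bfe_i\}$. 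Consequently the Minkowski sum produced by the Cayley trick is
\[
\sum_{i\in I}\Delta_{\{\ol j\in \ol J:i\prec \ol j\}}=\cP_{I,\ol J}.
\]

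Next I would invoke the Cayley trick, which gives an inclusion-preserving bijection between (regular) polyhedral subdivisions of the Cayley polytope and (coherent) mixed subdivisions of the Minkowski sum, and which restricts to a bijection between (regular) triangulations and (coherent) \emph{fine} mixed subdivisions. Under this bijection, a simplex spanned by the vertices $\{(\bfe_i,\bfe_{\ol j}):i\in I,\ \ol j\in S_i\}$ of $\catblock{I,\ol J}$ corresponds to the Minkowski cell $\sum_{i\in I}\Delta_{S_i}$. Specializing to the maximal simplex of $\asstri{I,\ol J}$ labeled by an $\IJ$-tree $T$, we take $S_i=\{\ol j:(i,\ol j)\in T\}$, and the resulting cell is
\[
\sum_{i\in I}\Delta_{\{\ol j:(i,\ol j)\in T\}}=\cP_T,
\]
exactly as defined. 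Since $\asstri{I,\ol J}$ is regular by Corollary~\ref{cor:IJtriang}, the resulting mixed subdivision is coherent, and since $\asstri{I,\ol J}$ is a triangulation, the mixed subdivision is fine. There is no essential obstacle beyond carefully matching indices; the only point requiring a little care is the verification that the Cayley factors are precisely the simplices $\Delta_{\{\ol j\in\ol J:i\prec \ol j\}}$ (and not larger simplices $\Delta_{\ol J}$), which is automatic from the defining condition $i\prec \ol j$ in $\catblock{I,\ol J}$.
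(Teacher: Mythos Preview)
Your proposal is correct and matches the paper's intended argument: the corollary is stated without proof precisely because it is meant as an immediate application of the Cayley trick~\cite{HRS00} to the regular triangulation $\asstri{I,\ol J}$ from Corollary~\ref{cor:IJtriang}, exactly as you lay out. The only (inconsequential) slip is that in the paper's notation the vertex $(\bfe_i,\bfe_{\ol j})$ has the Cayley coordinate $\bfe_i$ in the \emph{first} factor, not the second.
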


\begin{figure}[tbhp]
\centering 
 \includegraphics[width=\linewidth]{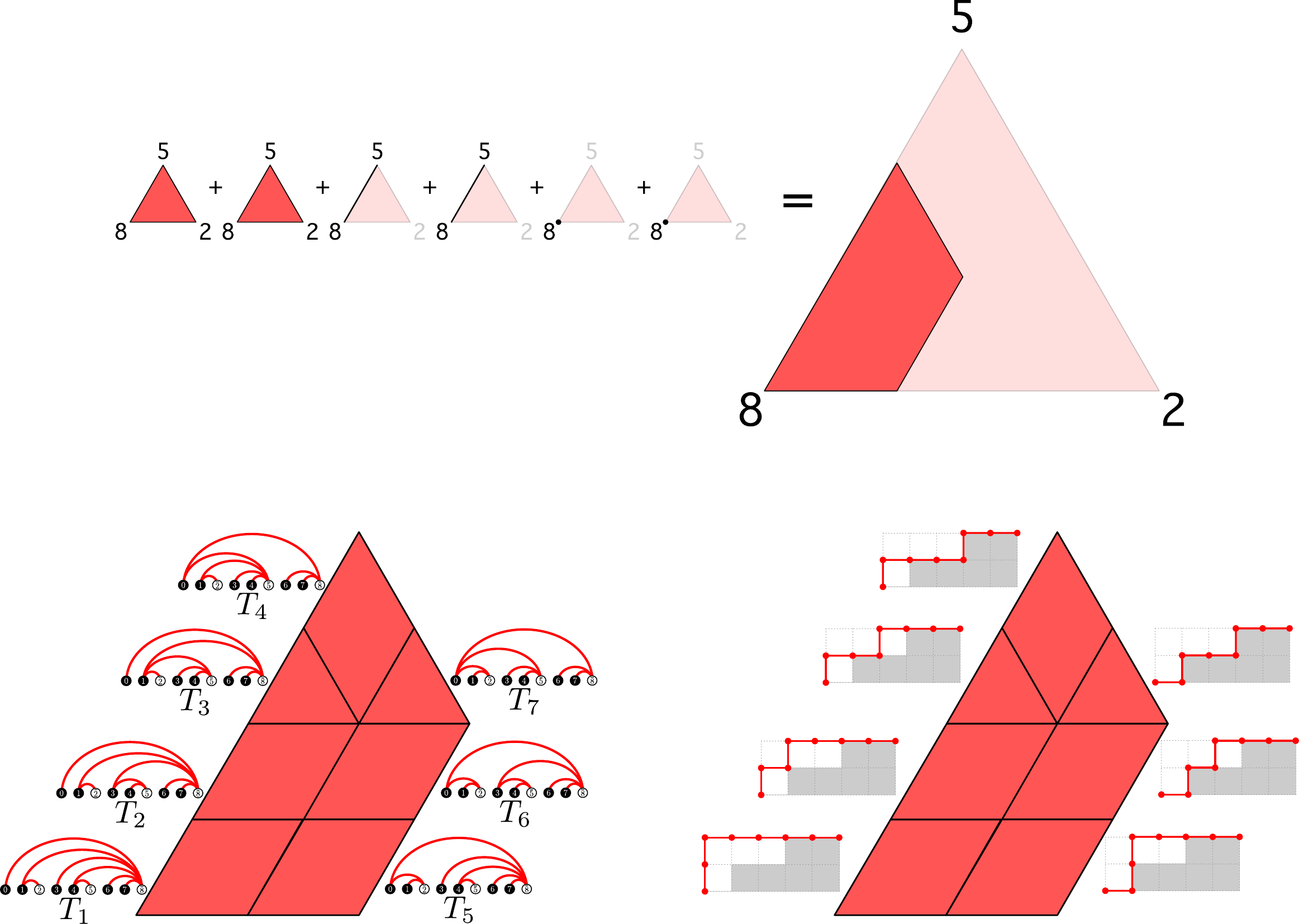}
 \caption{Top: The generalized permutahedron $\cP_{I,\ol J}$, for $I=\{0,1,3,4,6,7\}$, $\ol J=\{\ol 2, \ol 5, \ol 8\}$. Bottom left: Its fine mixed subdivision corresponding to the $\IJ$-associahedral triangulation. Bottom right: The same subdivision labeled by lattice paths (see Section~\ref{sec:IJTamari}).}\label{fig:mixed35}
\end{figure}

%%%%%%%%%%%%%%%%%%%%%%%%%%%%%%%%%%%%%%%%%%%%%%%%%%%%%%%%%%%%%%%%%%
%%%%%%%%%%%%%%%%%%%%%%%%%%%%%%%%%%%%%%%%%%%%%%%%%%%%%%%%%%%%%%%%%%

\section{The \texorpdfstring{$\protect{\IJ}$}{IJ}-Tamari lattice and \texorpdfstring{$\nu$}{v}-Dyck paths}
\label{sec:IJTamari}

In this section, we endow $\IJ$-trees with a partial order structure $\Tam{I,\ol J}$. We will present then a natural bijection between $\IJ$-trees and lattice paths lying above a certain fixed path $\nu=\nu\IJ$, which are known as $\nu$-Dyck paths. With it, we show that
$\Tam{I,\ol J}$ is isomorphic to the $\nu$-Tamari lattice from~\cite{PrevilleRatelleViennot2017}.
%We mimic the proof in~\cite{PrevilleRatelleViennot2017} to show directly in terms of $\IJ$-trees that the $\Tam{I,\ol J}$ is an interval of the classical Tamari lattice, and hence a lattice. 
This lattice structure will be exploited later in Section~\ref{sec:Narayana} to construct a shelling of $\asstri{I,\ol J}$ and compute its $h$-vector. 

%%%%%%%%%%%%%%%%%%%%%%%%%%%%%%%%%%%%%%%%%%%%%%%%%%%%%%%%%%%%%%%%%%

\subsection{Flips and the \texorpdfstring{$\IJ$}{IJ}-Tamari lattice}

We say that two $\IJ$-trees $T$ and $T'$ are related by a \defn{flip} if they share all the arcs but one; that is, there are arcs $\ij$ and $(i',\ol j')$ such that $T':=T\setminus \ij\cup (i',\ol j')$.
In such a case, we say that the arc $\ij$ is \defn{flippable}. It is easy to see that $\ij$ is flippable if and only if it is neither a leaf (some endpoint has degree~$1$) nor $(\min I, \max \ol J)$. 

Flips come in two flavors, depending on the position of $(i',\ol j')$: a flip is called \defn{increasing} if $i'>i$ and $\ol j'>\ol j$, and \defn{decreasing} if $i'<i$ and $\ol j'<\ol j$. Thus, in an 
increasing flip, $i'$ is the smallest node of $I$ such that $i'>i$ and  $(i',\ol j)\in T$, and $j'$ is the smallest node of $\ol J$ such that $\ol j'>\ol j$ and $(i,\ol j')\in T$. The description of $i'$ and $\ol j'$ for a decreasing flip is analogous.  
Notice in particular that a flippable arc $(i,\ol j)$ supports an increasing flip if and only if $i$ is the smallest neighbor of $\ol j$.
An increasing flip is depicted both schematically and for a particular example in Figure~\ref{fig:IJflipschema}.

\begin{figure}[htpb]
\centering 
 \includegraphics[width=\linewidth]{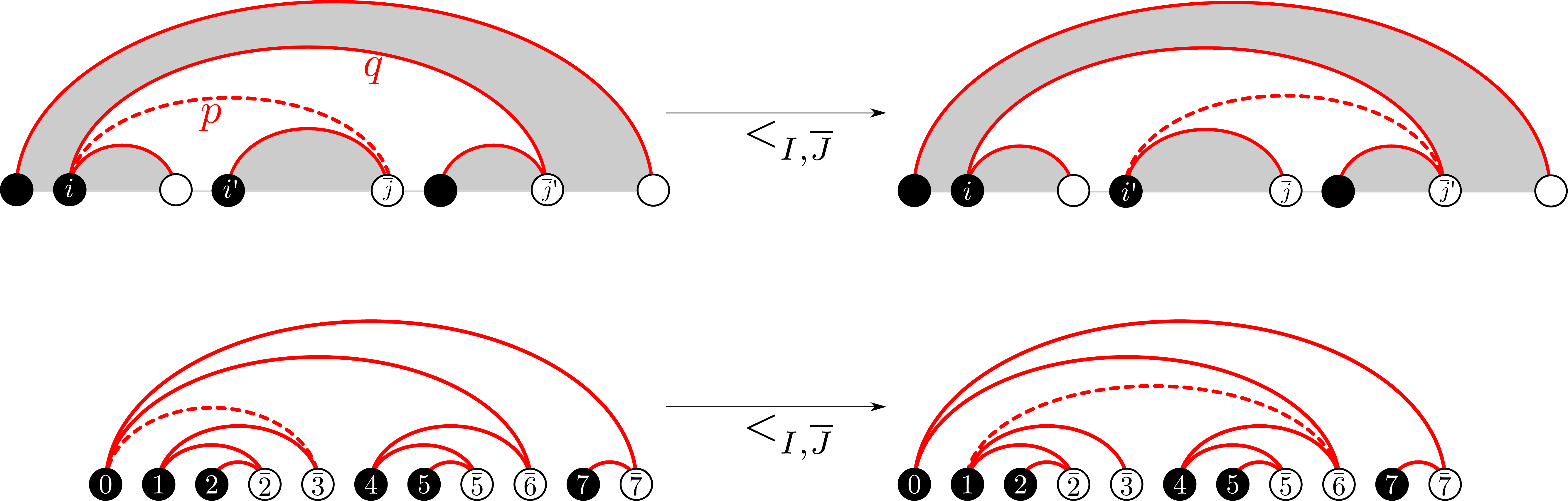}
 \caption{An increasing $\IJ$-flip schematically (top) and in a particular $\IJ$-tree (bottom). The flipped arc is shown dashed.}\label{fig:IJflipschema}
\end{figure}

Let $T,T'$ be $\IJ$-trees. Introduce a cover relation $T<_{I,\ol J} T'$ whenever $T'$ is obtained from $T$ by an increasing flip. 

\begin{lemma}\label{lem:IJposet}
The transitive closure of the relation $T <_{I,\ol J} T'$ is a partial order on the set of $(I,\ol J)$-trees. 
\end{lemma}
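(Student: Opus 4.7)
The plan is to verify antisymmetry; transitivity of the transitive closure is automatic, and reflexivity can be tacked on by adjoining the diagonal. Equivalently, I must show that the directed graph on $\IJ$-trees whose arcs are the covering increasing flips contains no directed cycles.

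The strategy is to exhibit a monotone integer-valued statistic on $\IJ$-trees that strictly increases under every increasing flip. The obvious candidate is
\[
\phi(T) := \sum_{(i,\ol j)\in T} j,
\]
the sum, over all arcs of $T$, of the right-hand coordinate. By definition of a flip, $T$ and $T'$ differ in exactly one arc, say $(i,\ol j)\in T$ is replaced by $(i',\ol j')\in T'$; and by definition of an \emph{increasing} flip, $i'>i$ and $j'>j$. Since all remaining arcs are common to $T$ and $T'$, one reads off $\phi(T')-\phi(T)=j'-j>0$, so $\phi$ strictly increases along every covering relation. (The alternative statistics $\sum i$, $\sum(i+j)$, or $\sum(j-i)$ work equally well; any of them suffices.)

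A directed cycle of increasing flips would now force $\phi$ to return to its initial value after a sequence of strict increases, which is impossible. Hence the cover relation is acyclic, its transitive closure is antisymmetric, and together with the built-in transitivity this gives the claimed partial order on $\IJ$-trees. I do not expect any real obstacle: once one observes that a flip changes exactly one arc, the very definition of \emph{increasing} flip hands us monotonicity for free, and the argument is essentially a one-line verification.
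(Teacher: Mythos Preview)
Your argument is correct and follows the same idea as the paper: exhibit an integer statistic that strictly increases along every increasing flip, thereby ruling out directed cycles. The paper (via the cyclic version in Lemma~\ref{lem:cyclicIJposet}) uses $\sum_{(i,\ol j)\in T} i$, while you use $\sum_{(i,\ol j)\in T} j$; both work for the same reason.

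One small slip in your parenthetical: the statistic $\sum(j-i)$ does \emph{not} work in general. In an increasing flip replacing $(i,\ol j)$ by $(i',\ol j')$ one only knows $i'>i$ and $j'>j$ separately, with no control over $(j'-j)-(i'-i)$. For instance, with $I=\{0,1,2\}$ and $\ol J=\{\ol 2,\ol 3,\ol{10}\}$, the tree containing $(0,\ol 2),(0,\ol 3),(0,\ol{10}),(1,\ol 2),(2,\ol 2)$ admits the increasing flip $(0,\ol 2)\mapsto(1,\ol 3)$, under which $\sum(j-i)$ is unchanged. This does not affect your main proof, which stands as written.
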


Lemma~\ref{lem:IJposet} is a direct corollary of Lemma~\ref{lem:cyclicIJposet}. We call this partially ordered set the \defn{$(I,\ol J)$-Tamari lattice}, and denote it by \defn{$\Tam{I,\ol J}$} (that it is a lattice will be proved in Proposition~\ref{prop:interval}). Two examples are illustrated in Figure~\ref{fig:IJTamaris}.

\begin{figure}[htpb]
\centering 
 \includegraphics[width=.9\linewidth]{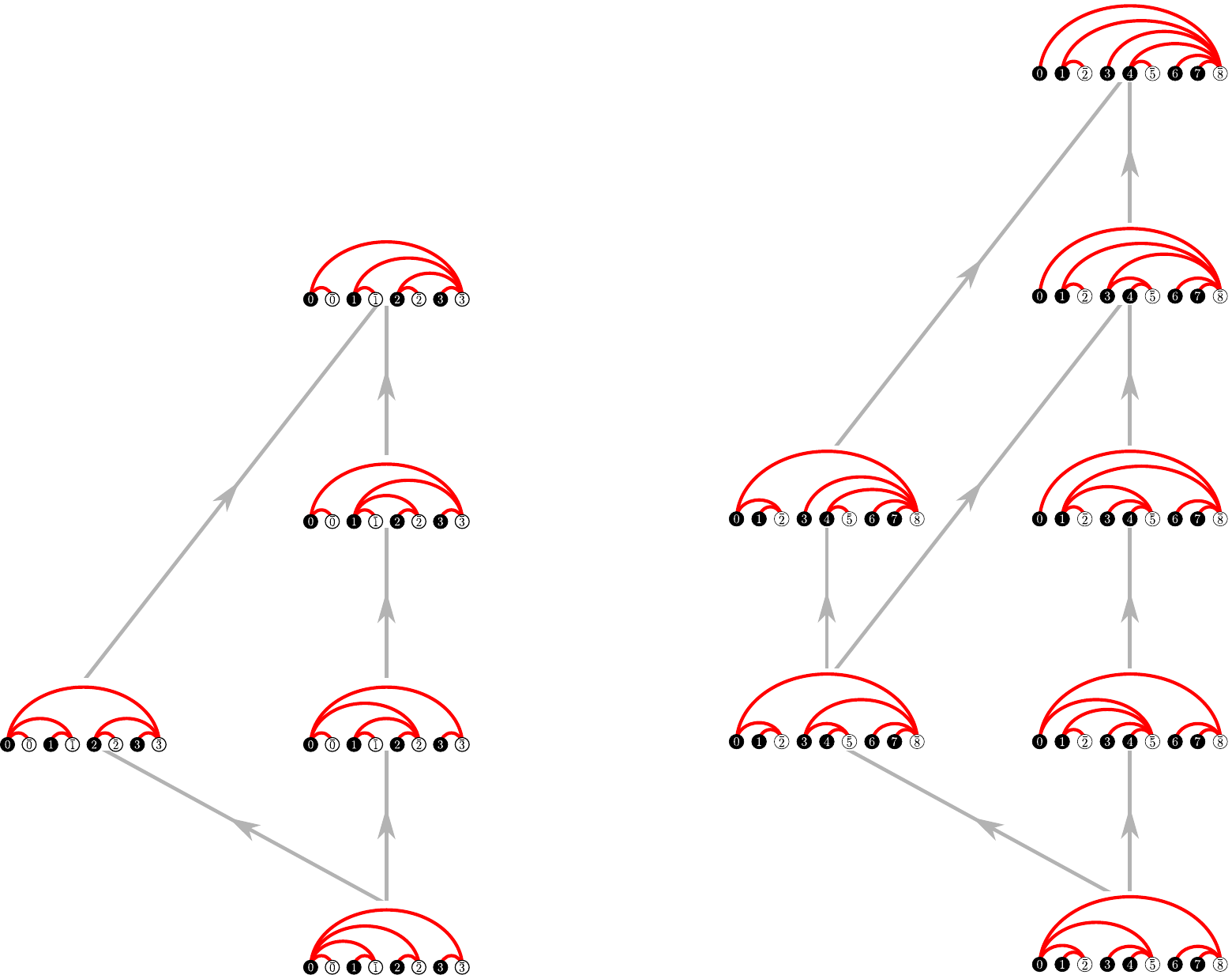}
 \caption{The $\IJ$-Tamari lattices for $\IJ=(\{0,1,2,3\},\{\ol 0, \ol 1, \ol 2, \ol 3\})$ (left) and
  for $\IJ=(\{0,1,3,4,6,7\},\{\ol 2,\ol 5,\ol 8\})$ (right).}\label{fig:IJTamaris}
\end{figure}

\subsection{\texorpdfstring{$\nu$}{v}-Dyck paths and the \texorpdfstring{$\nu$}{v}-Tamari lattice}
\label{subsec:vTamari}

We identify \defn{lattice paths} that start at $(0,0)$ and consist of a finite number of north and east unit steps with words on the alphabet $\{\sfN,\sfE\}$. 
For a fixed lattice path~$\nu$, a \defn{$\nu$-Dyck} path is a lattice path that is weakly above $\nu$ and finishes at the end of $\nu$. In terms of words, a $\nu$-Dyck path is a word that contains the same number of occurrences of $\sfN$ and $\sfE$ as $\nu$, and such that each prefix has at least as many $\sfN$ steps as the corresponding prefix in $\nu$. Note that, if $\nu$ is the path $(\sfN\sfE)^n$, this definition recovers classical Dyck paths, and if $\nu$ is $(\sfN\sfE^ m)^n$ one obtains their Fuss-Catalan generalization, $m$-Dyck paths. Rational Dyck paths~\cite{ARW13} can also be interpreted as special cases of $\nu$-Dyck paths.

Recently, Pr\'eville-Ratelle and Viennot~\cite{PrevilleRatelleViennot2017} endowed the set of $\nu$-Dyck paths with a poset structure that generalizes the Tamari and the $m$-Tamari lattices. It is induced by the following covering relation:

Let $\mu$ be a $\nu$-Dyck path. For a lattice point $p$ on $\mu$ define the distance $\horiz_\nu(p)$ to be the maximum number of horizontal steps that can be added to the right of $p$ without crossing $\nu$. Suppose that $p$ is a \defn{valley}, i.e. it is preceded by an east step $\sfE$ and followed by a north step $\sfN$. Let $q$ be the first lattice point in $\mu$ after $p$ such that $\horiz_\nu(q)=\horiz_\nu(p)$, and let $\mu_{[p,q]}$ be the subpath of $\mu$ that starts at $p$ and finishes at $q$. Finally, let $\mu '$ be the path obtained from $\mu$ by switching $\sfE$ and $\mu_{[p,q]}$. The covering relation is defined by $\mu <_\nu \mu'$. 
The poset \defn{$\Tam{\nu}$} is the transitive closure~$<_\nu$ of this relation. 

An example of the covering relation is depicted in Figure~\ref{fig:vFlip}. The lattice paths $\mu=\mathsf{ENENENENE}$ and $\mu'=\mathsf{ENNENEENE}$ are both $\nu$-paths, where $\nu=\mathsf{EENNEENNE}$. Moreover, $\mu$ is covered by $\mu_2$ under $<_\nu$. Indeed, the fourth point of $\mu$, denoted $p$, is a valley. It is at distance $0$ from $\nu$, and the next point at this distance is $q$. Switching the east step preceding $p$ with the interval $\mu_{[p,q]}$ gives rise to $\mu'$. 
\begin{figure}[htpb]
\centering 
 \includegraphics[width=.8\linewidth]{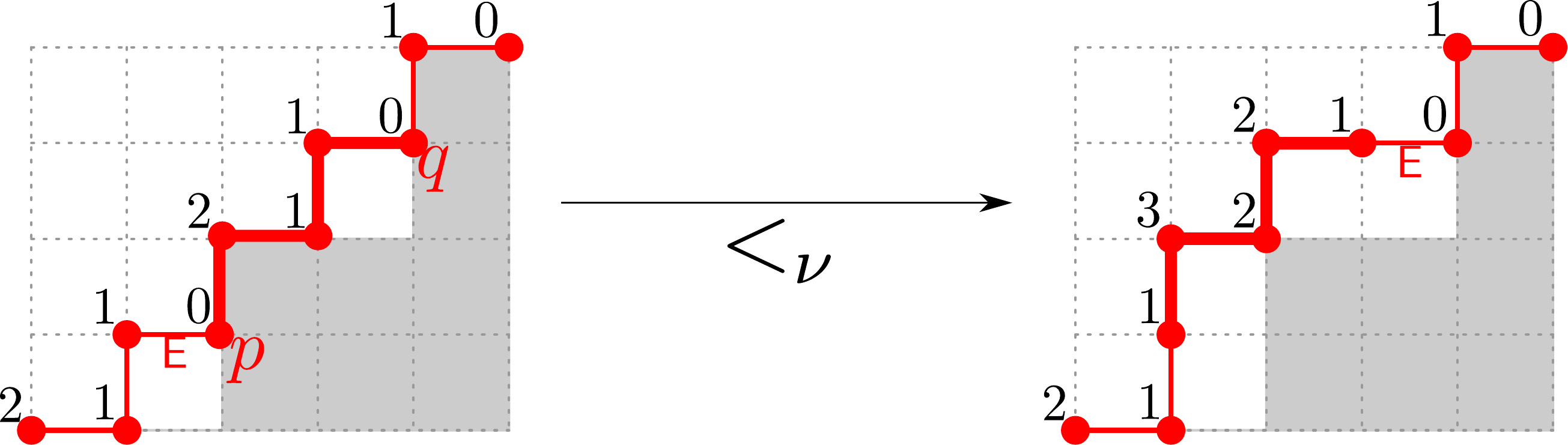}
 \caption{Two $\nu$-paths related by a covering for $<_\nu$, where the region below $\nu$ is colored gray. The distance $\horiz_\nu$ is marked on top of each point.}\label{fig:vFlip}
\end{figure}

Pr\'eville-Ratelle and Viennot showed that this poset is a lattice~\cite[Theorem~1.1]{PrevilleRatelleViennot2017} by showing that it is isomorphic to a certain interval in the classical Tamari lattice~\cite[Theorem~1.3 and Proposition~5.2]{PrevilleRatelleViennot2017} (cf. Proposition~\ref{prop:interval}). Consequently, $\Tam{\nu}$ is called the \defn{$\nu$-Tamari lattice}. Two examples are illustrated in Figure~\ref{fig:tamarispaths} (note the similarity with Figure~\ref{fig:IJTamaris}).

\subsection{Bijection between \texorpdfstring{$(I,\ol J)$}{IJ}-trees and \texorpdfstring{$\nu$}{v}-Dyck paths}
\label{subsec:TreesToPaths}

The $\nu$-Tamari lattice is intimately related to $(I,\ol J)$-trees, as we will now show. To this end, 
we will associate a lattice path $\nu(I,\ol J)$ to each pair $(I,\ol J)$ of nonempty subsets of $\NN$ with $\min (I\sqcup \ol J)\in I$ and $\max (I\sqcup \ol J)\in \ol J$. 
The lattice path $\nu(I,\ol J)$ is from $(0,0)$ to $(|I|-1,|\ol J|-1)$. Its $k$th step of $\nu(I,\ol J)$ is east if the $(k+1)$st element of $I\sqcup \ol J$ (ordered according to $\preceq$) belongs to $I$, and north otherwise. 

If $I$ and $\ol J$ form a partition of $[n]$ then this becomes slightly simpler: The $k$th step of $\nu(I,\ol J)$ is east if
$k\in I$ and north if $\ol k\in \ol J$ for $1\leq k\leq n-1$. For example, 
$\nu(\{0,1,2,5,6,9\},\{\ol 3,\ol 4, \ol 7,\ol 8, \ol {10}\})=\sfE\sfE\sfN\sfN\sfE\sfE\sfN\sfN\sfE$.

\begin{figure}[htpb]
\centering 
 \includegraphics[width=\linewidth]{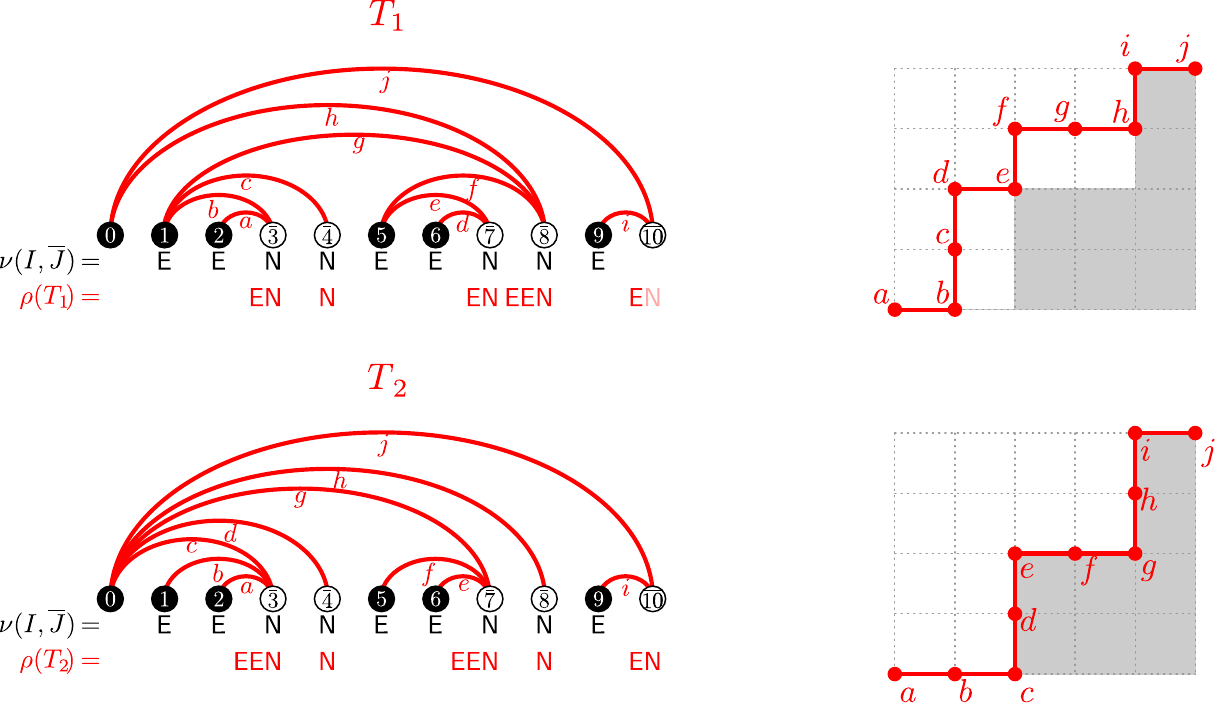}
 \caption{Two $(I,\ol J)$-trees for $I=\{0,1,2,5,6,9\}$ and $\ol J=\{\ol 3,\ol 4, \ol 7,\ol 8, \ol {10}\}$, and the corresponding $\nu(I,\ol J)$-paths. The arc-point correspondence is marked with letters.}\label{fig:IJtree2path}
\end{figure}

We also associate to each $(I,\ol J)$-tree $T$ a lattice path $\pa(T)$ from $(0,0)$ to $(|I|-1,|\ol J|-1)$ as follows: For each $\ol j\in \ol J$, do $d_T({\ol j})-1$ east steps and one north step, where
$d_T({\ol j})$ is the in-degree of $\ol j$ in $T$. Then remove the last north step. That is, if $\ol J=\{\ol j_1, \dots, \ol j_k\}$, then 
\[\pa(T)=\underbrace{\sfE\cdots\sfE}_{d_T({\ol j_1})-1} \sfN\underbrace{\sfE\cdots\sfE}_{d_T({\ol j_2})-1}\sfN\cdots \underbrace{\sfE\cdots\sfE}_{d_T({\ol j_k})-1}.\]
An alternative description is to say that $\pa(T)$ is the unique path such that the number of lattice points at height $k$ coincides with the in-degree of the $k$th element of $\ol J$. This induces a natural correspondence between arcs of $T$ and lattice points of~$\pa(T)$. For each $\ol j\in \ol J$, order its incident arcs with decreasing order of $I$-neighbors (from right to left in our pictures). Then associate the $\ell$th lattice point at height $k$ of $\pa(T)$ with the $\ell$th arc adjacent to $\ol j_k$, the $k$th element of $\ol J$. This can be seen in Figure~\ref{fig:IJtree2path}.

The tree $T_2$ of Figure~\ref{fig:IJtree2path} has the property that $\pa(T_2)=\nu(I,\ol J)$. This generalizes to arbitrary pairs $(I,\ol J)$: 
\begin{lemma}
 \label{lem:Tmin}
Let \defn{$T_{\min}$}
 be the $(I,\ol J)$-tree that contains all the arcs $(i,\min \{\ol j: i\prec \ol j\in \ol J\})$ together with the arcs of the form $(\min I,\ol j)$. Then $\pa(T_{\min})=\nu(I,\ol J)$.
\end{lemma}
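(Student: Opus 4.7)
The plan is to show that $T_{\min}$ is indeed a valid $(I,\ol J)$-tree and then match $\pa(T_{\min})$ with $\nu(I,\ol J)$ run-by-run between consecutive elements of $\ol J$.

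First, I would verify that $T_{\min}$ really is an $(I,\ol J)$-tree. All arcs are increasing by construction. For the non-crossing property, write $f(i) := \min\{\ol j\in \ol J\colon i\prec \ol j\}$. If $i\prec i'$ in $I$ then $f(i)\preceq f(i')$, so the arcs of the form $(i,f(i))$ cannot cross; and every other arc of $T_{\min}$ emanates from $\min I$, which is the smallest element of $I\sqcup \ol J$ and thus cannot be the inner endpoint of a crossing. Maximality follows by counting: there are $|I|$ arcs $(i,f(i))$ and $|\ol J|$ arcs $(\min I, \ol j)$, with the arc $(\min I, \ol j_1)$ appearing in both families, giving $|I|+|\ol J|-1$ arcs, the correct size for an $(I,\ol J)$-tree.

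Next I would compute the in-degree $d_{T_{\min}}(\ol j_k)$ for each $\ol j_k\in \ol J$. An arc $(i,\ol j_k)$ lies in $T_{\min}$ iff either $i=\min I$ or $\ol j_k=f(i)$. The second condition is equivalent to $i\in I$ with $\ol j_{k-1}\prec i\prec \ol j_k$ for $k\geq 2$, and to $i\prec \ol j_1$ for $k=1$. Setting $a_k:=d_{T_{\min}}(\ol j_k)-1$, this yields
\[
a_1 = |\{i\in I\colon \min I\prec i\prec \ol j_1\}|, \qquad a_k = |\{i\in I\colon \ol j_{k-1}\prec i \prec \ol j_k\}| \text{ for } k\geq 2,
\]
where for $k=1$ the $-1$ precisely removes $\min I$ from the count (it is the only arc counted twice by the two defining clauses). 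By the definition of $\pa$,
\[
\pa(T_{\min}) = \sfE^{a_1}\sfN\sfE^{a_2}\sfN\cdots\sfN\sfE^{a_{|\ol J|}}.
\]

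Finally, I would inspect $\nu(I,\ol J)$ directly from its definition. Listing $I\sqcup \ol J$ in $\prec$-order and reading off the second through second-to-last entries, one obtains an $\sfE$ for each element of $I\setminus\{\min I\}$ and an $\sfN$ for each element of $\ol J\setminus\{\max \ol J\}$. The elements of $I\setminus\{\min I\}$ lying before $\ol j_1$ number $a_1$; those lying strictly between $\ol j_{k-1}$ and $\ol j_k$ for $2\le k\le |\ol J|$ number $a_k$; and the north steps occur precisely at the positions of $\ol j_1,\ldots,\ol j_{|\ol J|-1}$. Hence $\nu(I,\ol J)=\sfE^{a_1}\sfN\sfE^{a_2}\sfN\cdots\sfN\sfE^{a_{|\ol J|}}=\pa(T_{\min})$.

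The only real subtlety is the bookkeeping at the two boundaries: accounting for the double-counted arc $(\min I,\ol j_1)$ in the degree of $\ol j_1$ on the tree side, and correctly excluding the first element $\min I$ and the last element $\max \ol J$ on the path side (the latter being responsible for the stripped terminal $\sfN$ in the definition of $\pa$). Both are straightforward once one lines up the conventions.
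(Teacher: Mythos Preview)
Your argument is correct. The paper in fact states this lemma without proof, treating it as an immediate consequence of the definitions; you have simply supplied the routine verification that the paper omits. Your degree count and the run-by-run comparison with $\nu(I,\ol J)$ are exactly the intended check, and your handling of the boundary cases (the double-counted arc $(\min I,\ol j_1)$ and the exclusion of $\min I$ and $\max\ol J$ from the step sequence) is accurate.
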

\begin{proposition}\label{prop:IJtree2vpath}
Let $I\subset \NN$, $\ol J\subset \ol \NN$ be nonempty finite subsets with $\min (I\sqcup \ol J)\in I$ and $\max (I\sqcup \ol J)\in \ol J$, and let $\nu=\nu(I,\ol J)$. 
Then $\pa$ is a bijection from the set of $(I,\ol J)$-trees to the set of $\nu$-paths.
 Moreover, for each path $\nu$ from $(0,0)$ to $(a,b)$ there is a partition $I\sqcup J$ of $[a+b+1]$ such that $\nu(I,\ol J)= \nu$.
\end{proposition}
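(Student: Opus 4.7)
The ``moreover'' statement is direct. Given $\nu$ from $(0,0)$ to $(a,b)$, I would set $I=\{0\}\cup\{k\in\{1,\ldots,a+b\}\colon\text{the }k\text{-th step of }\nu\text{ is }\sfE\}$ and $\ol J=\{\ol k\colon\text{the }k\text{-th step of }\nu\text{ is }\sfN\}\cup\{\ol{a+b+1}\}$. Then $I\sqcup\ol J=[a+b+1]$ satisfies $\min(I\sqcup\ol J)\in I$ and $\max(I\sqcup\ol J)\in\ol J$, and unwinding the definition of $\nu(I,\ol J)$ shows its $k$-th step agrees with that of $\nu$ for every $k$.

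For the bijection $\pa$, I would first show $\pa(T)$ is a $\nu(I,\ol J)$-path and then prove bijectivity by induction on $|\ol J|$. Since an $\IJ$-tree is a spanning tree of $\bipartiteij$ (the type $A$ case of Lemma~\ref{lem:cyclicIJtreesaretrees}) it has $|I|+|\ol J|-1$ arcs, which makes $\pa(T)$ end at $(|I|-1,|\ol J|-1)$. For the weakly-above condition, fix $k\in\{1,\ldots,|I|+|\ol J|-2\}$ and let $m$ be the number of $\sfN$-steps in the prefix of $\nu$ of length $k$: these are precisely $\ol j_1,\ldots,\ol j_m$, the $m$ smallest elements of $\ol J$. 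The required inequality translates to $\sum_{s=1}^{m}d_T(\ol j_s)\leq k$, and the left-hand side counts arcs of $T$ incident to $\{\ol j_1,\ldots,\ol j_m\}$. Every such arc has its $I$-endpoint $i$ satisfying $i\preceq\ol j_m$, hence lying in the intersection of $I$ with the first $k+1$ elements of $I\sqcup\ol J$; that intersection has size $k+1-m$. The arcs in question therefore span at most $k+1$ vertices of the tree $T$ and form a forest with at most $k$ edges, as required.

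For bijectivity, the base case $|\ol J|=1$ is trivial: the unique $\IJ$-tree is a star and the unique $\nu$-path is $\sfE^{|I|-1}$. For the inductive step, put $\ol j_1=\min\ol J$, $d_1=d_T(\ol j_1)$, and $I_1=\{i\in I\colon i\prec\ol j_1\}$. The structural heart of the argument is the claim that $N_T(\ol j_1):=\{i\colon(i,\ol j_1)\in T\}$ is exactly the top $d_1$ elements of $I_1$, and that every $i'\in N_T(\ol j_1)$ other than $c:=\min N_T(\ol j_1)$ is adjacent only to $\ol j_1$. Both follow from non-crossing: if $i\in N_T(\ol j_1)$, $i'\in I_1$ with $i<i'$, and $(i',\ol j_t)\in T$ for some $t>1$, then $i<i'\leq j_1<j_t$ shows the arcs $(i,\ol j_1)$ and $(i',\ol j_t)$ cross; an analogous argument, using the arc $(c,\ol j_1)$, proves the second claim. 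Consequently, removing $\ol j_1$ and the $d_1-1$ isolated nodes $N_T(\ol j_1)\setminus\{c\}$ leaves an $(I'',\ol J')$-tree $T'$, where $I''=(I\setminus N_T(\ol j_1))\cup\{c\}$ and $\ol J'=\ol J\setminus\{\ol j_1\}$, and the construction is reversible: $(d_1,T')\mapsto T'\cup\{(i,\ol j_1)\colon i\in\text{top }d_1\text{ of }I_1\}$.

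On the path side, any $\nu$-path $\mu$ decomposes uniquely as $\sfE^{d_1-1}\sfN\mu''$ with $1\leq d_1\leq|I_1|$, because $\nu$ begins with $\sfE^{|I_1|-1}\sfN$. A short calculation shows $\nu(I'',\ol J')=\sfE^{|I_1|-d_1}\nu^{*}$ where $\nu=\sfE^{|I_1|-1}\sfN\nu^{*}$, from which $\mu''$ turns out to be a $\nu(I'',\ol J')$-path. Since $\pa(T)=\sfE^{d_1-1}\sfN\pa(T')$, the inductive hypothesis applied to $(I'',\ol J')$ completes the proof. The main obstacle, as I see it, is the careful matching of the pruning of $T$ with the decomposition of $\mu$, requiring the algebra of the lengths $|I_1|$, $d_1$, and $|I_1|-d_1$ to be tracked consistently; the underlying structural lemma about $N_T(\ol j_1)$ being a top segment of $I_1$ is comparatively clean.
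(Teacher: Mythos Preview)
Your proof is correct, but it takes a genuinely different route from the paper's for the bijectivity claim. The paper establishes that $\pa(T)$ lies weakly above $\nu$ by an acyclicity argument essentially identical to yours, but then concludes bijectivity by a counting argument: the staircase triangulation of $\catblock{I,\ol J}$ has its maximal cells naturally indexed by $\nu$-paths, and since $\productp{I}{\ol J}$ is unimodular, it has the same number of maximal cells as the associahedral triangulation $\asstri{I,\ol J}$, whose maximal cells are indexed by $\IJ$-trees. Thus the two sets are equinumerous, and since $\pa$ has already been shown to land in the set of $\nu$-paths and has image containing $\nu=\pa(T_{\min})$, a short check gives the bijection.

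Your inductive argument instead builds an explicit recursive inverse: you peel off $\ol j_1$ together with the top segment of its neighbors in $I_1$, reduce to a smaller $(I'',\ol J')$-tree, and match this with the decomposition $\mu=\sfE^{d_1-1}\sfN\mu''$ on the path side. This is more elementary (it avoids invoking unimodularity and the staircase triangulation) and more constructive, effectively proving along the way the description of $\pa^{-1}$ that the paper states without proof immediately after the proposition. The paper's approach, on the other hand, is shorter and highlights the geometric reason for the equality of cardinalities, which fits the broader theme of the article. Both the ``moreover'' construction and the weakly-above argument are essentially the same in the two proofs.
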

\begin{proof}
The second statement is straightforward, take $I=\{0\}\cup \{i\ :\ i\text{th step of }\nu=\sfE\}$ and $\ol J=\{\ol{a+b+1}\}\cup \{\ol j\ :\  j\text{th step of }\nu=\sfN\}$.
For the first statement we have already seen that there is an $(I,\ol J)$-tree $T_{\min}$ such that $\pa(T_{\min})=\nu$. To see that every lattice path associated to an $(I,\ol J)$-tree lies above $\nu$, observe that it suffices to show that, for each $k<|\ol J|$, $\nu$ has the maximal possible number of east steps before the $k$th north step. This is the number of arcs $(i,\ol j)\in T$ with $\ol j\prec \ol j_k$, where $\ol j_k$ is the $k$th element of $\ol J$. Since the subgraph induced by these arcs is acyclic, the maximal number of arcs is attained when it is connected, which happens for~$T_{\min}$.

We conclude that $\pa$ is a bijection by noting that the number of $(I,\ol J)$-trees coincides with the number of $\nu$-paths. One way to see this is to note that the restriction of the staircase triangulation of $\productp{I}{\ol J}$ (see~\cite[Thm.~6.2.13]{DeLoeraRambauSantos2010}) to $\catblock{I,\ol J}$ is a triangulation of $\catblock{I,\ol J}$ whose maximal cells are indexed by $\nu$-paths. 
Since $\productp{I}{\ol J}$ is a unimodular polytope (all simplices spanned by their vertices have the same volume, see~\cite[Prop.~6.2.11]{DeLoeraRambauSantos2010}), this triangulation has the same number of maximal cells as $\asstri{I,\ol J}$, whose maximal cells are indexed by $(I,\ol J)$-trees.
\end{proof}

The inverse of the map $\pa$ can be easily described as follows. Let $\mu$ be a $\nu$-path with $\nu=\nu\IJ$, and regard the ordered sequence of nodes from $I$ and $\ol J$ as a graph without arcs. We add arcs connecting the nodes in $\ol J$ in increasing order: For $k$ varying from 1 to $|\ol J|$, we include as many arcs to the $k$th node $\ol j\in\ol J$ as lattice points at height $k-1$ in $\mu$, such that the endpoints $i\in I$ ($i\prec \ol j$) are as right-most as possible and no crossings are formed. The resulting tree is the inverse $\pa^{-1}(\mu)$.

%%%%%%%%%%%%%%%%%%%%%%%%%%%%%%%%%%%%%%%%%%%%%%%%%%%%%%%%%%%%%%%%%%

\subsection{\texorpdfstring{$\IJ$}{IJ}-Tamari lattices and \texorpdfstring{$\nu$}{v}-Tamari lattices are equivalent}
We are now ready to prove that $\IJ$-Tamari lattices and $\nu$-Tamari lattices are equivalent.

\begin{theorem}\label{prop:IJTamarivTamari}
An $(I,\ol J)$-tree $T'$ is obtained from $T$ via an increasing flip if and only if $\pa(T')$ covers $\pa(T)$ in the partial order $<_{\nu(I,\ol J)}$. 
Consequently, the partial orders $\Tam{I,\ol J}$ and $\Tam{\nu(I,\ol J)}$ are isomorphic.
\end{theorem}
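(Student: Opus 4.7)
The plan is to use the bijection $\pa$ from Proposition~\ref{prop:IJtree2vpath} and verify that it transports the covering relations. Given an increasing flip $T\to T'=T\setminus(i,\ol j)\cup(i',\ol j')$, I will identify the valley $p$ of $\pa(T)$ corresponding to $(i,\ol j)$, track how $\pa(T)$ transforms into $\pa(T')$, and match this with a $\nu$-Tamari valley move.

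First, since $i$ is the smallest $I$-neighbor of $\ol j$ in $T$ (the condition for $(i,\ol j)$ to support an increasing flip), the lattice point $p$ of $\pa(T)$ corresponding to $(i,\ol j)$ is the rightmost lattice point at the height of $\ol j$. Flippability yields $d_T(\ol j)\ge 2$, so $p$ is preceded by an $\sfE$ step; the existence of $\ol j'\succ\ol j$ forces $\ol j\neq\max\ol J$, so $p$ is followed by an $\sfN$ step. Hence $p$ is a valley of $\pa(T)$. The in-degree changes $d_{T'}(\ol j)=d_T(\ol j)-1$ and $d_{T'}(\ol j')=d_T(\ol j')+1$ (other in-degrees unchanged) translate into moving one $\sfE$ in the word from the segment at the height of $\ol j$ to the segment at the height of $\ol j'$. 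As words, there is a lattice point $q$ on $\pa(T)$ after $p$ such that $\pa(T)$ and $\pa(T')$ agree outside $[p,q]$, while on $[p,q]$ the $\sfE$ just before $p$ gets swapped with the subpath $\mu_{[p,q]}$: this is precisely the shape of a $\nu$-Tamari valley move.

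The hard part will be verifying that this $q$ equals the first lattice point after $p$ with $\horiz_\nu(q)=\horiz_\nu(p)$. I plan to use the formula $\horiz_\nu(r)=b_{y_r}-x_r$, where $b_y$ is the rightmost $x$-coordinate of $\nu$ at height $y$, together with the evolution rules that $\horiz_\nu$ decreases by $1$ along each $\sfE$ step and increases by the number of east steps of $\nu$ at the new height along each $\sfN$ step. A short non-crossing argument (an arc $(i'',\ol j')\in T$ with $i''<i$ would satisfy $i''\prec i\prec\ol j\prec\ol j'$ together with $(i,\ol j)\in T$) shows that $i$ is also the smallest $I$-neighbor of $\ol j'$ in $T$, and more generally that every $I$-node strictly between $\ol j$ and $\ol j'$ connects exclusively to $\ol j'$ in $T$. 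This structural description pins down the arcs corresponding to the lattice points of $\pa(T)$ between $p$ and $q$, and from it one can check that $\horiz_\nu$ stays strictly above $\horiz_\nu(p)$ on that range and first returns to the value $\horiz_\nu(p)$ precisely at the $q$ identified above. The converse direction follows by reversing the identification: every valley on $\pa(T)$ arises from an arc of the prescribed form, and the non-crossing structure guarantees the existence of the $i'$ and $\ol j'$ needed for an increasing flip. Combining both directions, $\pa$ preserves and reflects the cover relations, so $\Tam{I,\ol J}\cong\Tam{\nu(I,\ol J)}$.
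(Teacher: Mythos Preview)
Your strategy matches the paper's: identify the valley $p$ with the flippable arc $(i,\ol j)$, identify the target point $q$ with the arc $(i,\ol j')$, and verify that $q$ is the first return of $\horiz_\nu$ to the value $\horiz_\nu(p)$. The first two steps are fine, but your argument for the third contains a false structural claim.

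You assert that ``every $I$-node strictly between $\ol j$ and $\ol j'$ connects exclusively to $\ol j'$ in $T$.'' This is not true in general. Take $I=\{0,1,2,3\}$, $\ol J=\{\ol 0,\ol 1,\ol 2,\ol 3\}$ and the $\IJ$-tree with arcs $(0,\ol 0),(0,\ol 1),(0,\ol 3),(1,\ol 1),(2,\ol 2),(2,\ol 3),(3,\ol 3)$. The arc $(0,\ol 1)$ supports an increasing flip with $i'=1$ and $\ol j'=\ol 3$. The $I$-node $2$ lies strictly between $\ol 1$ and $\ol 3$ but is adjacent to both $\ol 2$ and $\ol 3$, not exclusively to $\ol 3$. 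What is true is only that such $I$-nodes have all their neighbors in the interval $(\ol j,\ol j']$; there can be an arbitrary nested $\IJ$-subtree sitting strictly between $\ol j$ and $\ol j'$. Since you use this claim to ``pin down the arcs corresponding to the lattice points of $\pa(T)$ between $p$ and $q$,'' the verification that $\horiz_\nu$ stays strictly above $\horiz_\nu(p)$ on $(p,q)$ does not go through as written.

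The paper avoids this by computing $\horiz_\nu$ differently: for the lattice point corresponding to an arc $(i,\ol j)$, it equals the number of nodes $i''<i$ having some neighbor $\ol j''\geq \ol j$ (nodes that ``jump over'' $(i,\ol j)$). With this description, one checks directly that every node jumping over $(i,\ol j)$ also jumps over every arc with $\ol J$-endpoint in $(\ol j,\ol j']$, that $i$ itself is an additional jumper for arcs with $\ol J$-endpoint in $(\ol j,\ol j')$, and that the jumpers of $(i,\ol j)$ and $(i,\ol j')$ coincide. Replacing your incorrect structural lemma with this counting argument repairs the proof; the rest of your outline is correct.
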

\begin{proof}
 Let $T$ be an $(I,\ol J)$-tree and $\nu=\nu(I,\ol J)$. 
 We need to see how the concepts used in the definition of the covering relations get translated under the correspondence between arcs of $T$ and lattice points of the $\nu$-Dyck path $\pa(T)$.
 
 First, we say that a node $i'$ \defn{jumps over} the arc $\ij$ if $i'<i$ and it has a neighbor $\ol j'\geq \ol j$.
 It is not hard to see that if $p\in \pa(T)$ corresponds to $(i,\ol j)$ then $\horiz_\nu(p)$ coincides with the number of 
 nodes jumping over $(i,\ol j)$ (they are to the east steps present in $T_{\min}$ and not in $T$).
 
 Second, the point associated to $(i,\ol j)$ is a valley if and only if $\ol j\neq \max \ol J$, there is some $i'>i$ incident to~$\ol j$ ($p$ is preceded by $\sfE$) and there is no $i'<i$ incident to~$\ol j$ ($p$ is not followed by $\sfE$). Notice that this is equivalent to the arc being flippable and supporting an increasing flip.

 So let $(i,\ol j)$ be such an arc, associated to the valley~$p$. Consider the smallest $\ol j'>\ol j$ such that $(i,\ol j')\in T$. This element must exist: Since the graph is connected, the subgraph below the arc $(i,\ol j)$ can only be connected to the rest of the graph through such a $\ol j'$.

 We claim that $(i,\ol j')$ corresponds to $q$, the first lattice point in $\pa(T)$ after $p$ with $\horiz_\nu(q)=\horiz_\nu(p)$. 
 Observe first that every node jumping over $\ij$ also jumps over any arc $(i'',\ol j'')$ with $\ol j< \ol j''\leq \ol j'$, because $i''\geq i$ or otherwise it would cross $(i,\ol j')$. 
 If moreover  $\ol j''<\ol j'$, then it is jumped over by at least one extra node, namely $i$. Finally, every node jumping over $(i,\ol j')$ also jumps over $(i,\ol j)$ (see the schematic flip in Figure~\ref{fig:IJflipschema}). 
  
 Flipping $(i,\ol j)$ switches an arc adjacent to $\ol j$ for an arc adjacent to $\ol j'$. That is, it removes a
 horizontal step at $p$'s height and adds it at $q$'s height, which is precisely the transformation defining the covering relation $<_\nu$.
\end{proof}

\begin{figure}[htpb]
\centering 
\includegraphics[width=0.8\textwidth]{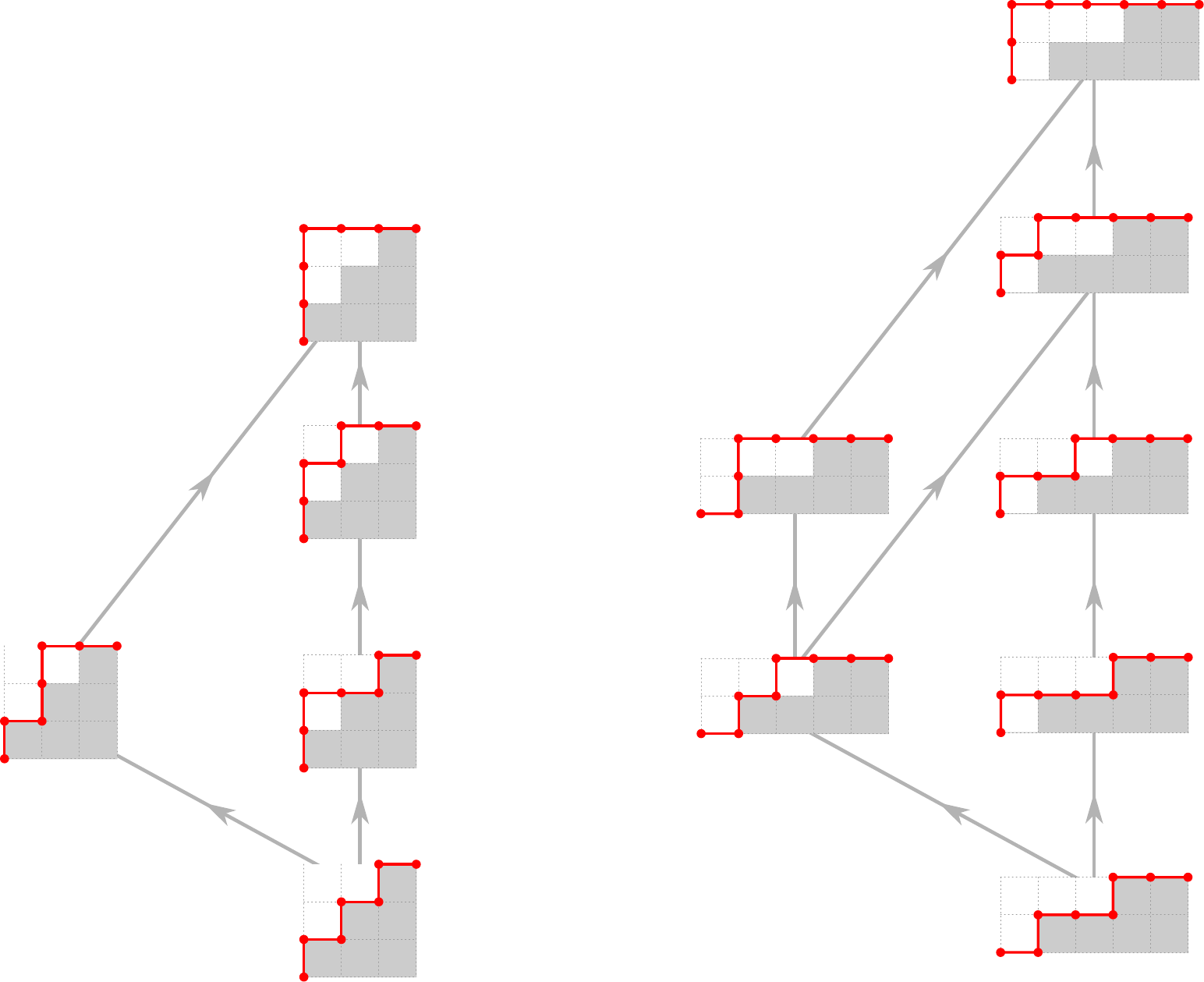}
 \caption{Representation of the $\IJ$-Tamari lattices of Figure~\ref{fig:IJTamaris} in terms of $\nu\IJ$-paths.}
 \label{fig:tamarispaths}
\end{figure}

\subsection{The canopy and combinatorial duality}\label{sec:canopy}

Pr\'eville-Ratelle and Viennot introduced the partial order $\Tam{\nu}$ (that is, $\Tam{I,\ol J}$) in~\cite{PrevilleRatelleViennot2017}, where they showed that it is an interval of the Tamari lattice. 
Their main tool is the concept of canopy, which has an easy interpretation from the point of view of $(I,\ol J)$-trees. Define $I\subseteq [n],\ol J\subseteq [\ol n]$ to be an \defn{$[n]$-canopy} if they partition $[n]$ ($I\cap J=\varnothing$ and $I\sqcup J=[n]$), $0\in I$ and $\ol n\in \ol J$. 
We will usually call it a \defn{canopy} when $[n]$ is clear from the context. 

We associate an $[n]$-canopy \defn{$\can(T)$} to each $\nn$-tree $T$ with $n>0$: The node set of the graph obtained after removing the leaves of $T$. To see that this is indeed a canopy, observe that for each $i\in [n]$, $(i,\ol i)$ must be an arc of $T$ since it cannot cross any other arc. Moreover, one of the two must be a leaf since otherwise it would induce a crossing and, unless $n=0$, it cannot be both because the graph is connected. It is straightforward to observe that, after removing $0$ and $\ol n$, this provides the canopy of the binary tree associated to $T$ (cf. Remark~\ref{rmk:triang2dualtree}), as defined in~\cite[Section~2]{PrevilleRatelleViennot2017} (originally from \cite{LodayRonco1998}). 

\begin{proposition}[{{\cite[Proposition~4.3 and Theorem~1.1]{PrevilleRatelleViennot2017}}}]\label{prop:interval}
 The set of $\nn$-trees with $[n]$-canopy $(I,\ol J)$ is an interval of the ordinary Tamari lattice isomorphic to $\Tam{I,\ol J}$. In particular, $\Tam{I,\ol J}$ is a lattice for every $I,\ol J$ and $\Tam{\nu}$ is a lattice for every lattice path $\nu$.
\end{proposition}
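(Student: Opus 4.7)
The plan is to realize $\Tam{I,\ol J}$ as an interval of the classical Tamari lattice via a canonical completion of $\IJ$-trees to $\nn$-trees. When $(I,\ol J)$ is an $[n]$-canopy, define the \emph{completion} $T\mapsto \hat T$ by adjoining, for each $k\in [n]\setminus I$, the node $k$ as a leaf attached to $\ol k\in \ol J$, and dually each $\ol k\in [\ol n]\setminus \ol J$ as a leaf attached to $k\in I$. A short verification—using non-crossingness and the fact that canopy partitions force exactly one of $\{k,\ol k\}$ to be a leaf in any $\nn$-tree with canopy $(I,\ol J)$—shows that $\hat T$ is an $\nn$-tree, that $\can(\hat T) = (I,\ol J)$, and that leaf-removal inverts the construction. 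Hence $T\leftrightarrow \hat T$ is a bijection between $\IJ$-trees and the fiber $\can^{-1}(I,\ol J)$.

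Next, I would match the covering relations. An increasing $\IJ$-flip of $(i,\ol j)\in T$ corresponds to the increasing Tamari flip of the same arc in $\hat T$: because $i,\ol j$ are non-leaves in $\hat T$ and the relevant ``closest larger neighbors'' determining the replacement arc $(i',\ol j')$ are the same in $T$ and $\hat T$, both endpoints $i',\ol j'$ lie in $I\sqcup \ol J$. Conversely, a Tamari flip in $\hat T$ involving a completion leaf as $(i',\ol j')$ (or as an endpoint of the flipped arc) necessarily changes the leaf/non-leaf status of some pair $\{k,\ol k\}$, hence changes the canopy; all other Tamari flips of $\hat T$ descend to $\IJ$-flips of $T$. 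Thus the restriction of the Tamari order to $\can^{-1}(I,\ol J)$ is isomorphic to $\Tam{I,\ol J}$ under the completion bijection.

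The remaining step is to show that $\can^{-1}(I,\ol J)$ is actually an interval. The natural route is to invoke the classical fact, due to Loday--Ronco~\cite{LodayRonco1998} and made explicit in this context by Pr\'eville-Ratelle--Viennot, that the canopy defines a surjective lattice congruence of the ordinary Tamari lattice; fibers of a lattice congruence are automatically intervals. Explicit minimum and maximum elements are furnished by $\hat T_{\min}$, the completion of $T_{\min}$ from Lemma~\ref{lem:Tmin}, and its dually defined counterpart $\hat T_{\max}$. Since every interval of a lattice is a lattice, $\Tam{I,\ol J}$ inherits a lattice structure, and this transfers to $\Tam{\nu(I,\ol J)}$ via Theorem~\ref{prop:IJTamarivTamari}; as every lattice path $\nu$ arises as some $\nu(I,\ol J)$ by Proposition~\ref{prop:IJtree2vpath}, $\Tam{\nu}$ is a lattice for every $\nu$.

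The main obstacle I anticipate is the congruence property of the canopy map. A self-contained proof would either describe join and meet operations for binary trees sharing a canopy directly in combinatorial terms, or appeal to Reading's criterion for Tamari congruences in terms of contractible arcs of the Hasse diagram and check that the arcs collapsed to identify a canopy form an equivalence compatible with both lattice operations. Once this is in hand, everything else is bookkeeping with the completion bijection.
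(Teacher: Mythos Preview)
Your approach is correct and closely parallels the paper's. The paper does not give a full proof either; it cites the result from Pr\'eville-Ratelle--Viennot and then singles out the key ingredient, namely \cite[Corollary~4.2]{PrevilleRatelleViennot2017}: the canopy is \emph{monotone} with respect to the Tamari order, i.e., $T<_{[n],[\ol n]}T'$ implies $\can(T)\preceq\can(T')$. The paper even rederives this monotonicity from its own flip description.

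The one point worth flagging is that you reach for a heavier tool than necessary. You propose to show the fiber is an interval by invoking that the canopy map is a lattice congruence, and you correctly identify this as your ``main obstacle.'' But monotonicity alone already gives convexity of each fiber: if $\hat T\leq S\leq \hat T'$ in the Tamari order with $\can(\hat T)=\can(\hat T')=(I,\ol J)$, then $(I,\ol J)\preceq\can(S)\preceq(I,\ol J)$, forcing $\can(S)=(I,\ol J)$. Combined with your completion bijection, your matching of covering relations, and the explicit minimum $\hat T_{\min}$ and maximum $\hat T_{\max}$, convexity immediately yields $\can^{-1}(I,\ol J)=[\hat T_{\min},\hat T_{\max}]$. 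Convexity also ensures that any saturated Tamari chain between two fiber elements stays in the fiber, so your cover-matching upgrades to a genuine poset isomorphism. Thus the obstacle you anticipate evaporates: you never need the full congruence property (which the paper relegates to Remark~\ref{rem:Reading_latticecongruences} as an alternative, Reading-style explanation), only the much easier monotonicity.
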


Hence, the $\nn$-Tamari lattice can be partitioned into the intervals corresponding to the canopies.
The main ingredient for its proof is \cite[Corollary~4.2]{PrevilleRatelleViennot2017}, which states that $\can (T)\prec \can (T')$ whenever $T<_{[n],[\ol n]} T'$. Here, $\can (T)\prec \can (T')$ refers to the partial order on the set of $[n]$-canopies induced by the relations $(I,\ol J)\prec (I',\ol J')$ when there is some $i\in[n]$ such that $I'=I\setminus i$ and $\ol J'= \ol J\cup \ol i$. The monotonicity of the canopy with respect to the Tamari order can be seen from our description of flips: if there is an increasing flip that changes the $k$th element of the canopy, then it must replace an arc $(i,\ol k)$ with an arc $(k,\ol j)$ (cf. Figure~\ref{fig:IJflipschema}). In the canopy, $k$ is replaced by $\ol k$, which is an increasing change.

\begin{remark}\label{rem:Reading_latticecongruences}
It is worth mentioning that the fact that $\Tam{\nu}$ is a lattice follows from earlier work of Reading on lattice congruences of the weak order.\footnote{We thank an anonymous referee for pointing out this important observation.} 
Indeed, the decomposition of the ordinary Tamari lattice into intervals of fixed canopy~\cite[Theorem~1.3 and Proposition~4.3]{PrevilleRatelleViennot2017} coincides with an interval decomposition already considered by Reading in a slightly different context~\cite{reading_lattice_2005}. Reading's decomposition is determined by a lattice congruence which gives rise to the Boolean lattice as a lattice quotient of the Tamari lattice. Geometrically, this can be explained as the inclusion map from the maximal normal cones of Loday's associahedron to the maximal normal cones of a combinatorial cube. The vertices of this cube represent the possible canopies~$\nu$; the preimage (or orbit) of $\nu$ is the congruence class of elements in the Tamari lattice with fixed canopy $\nu$. In particular, each $\nu$-congruence class is an interval in the Tamari lattice isomorphic to $\Tam{\nu}$, and therefore has the structure of a lattice. See Figure~\ref{fig:Reading_vTamaricongruence} for an example. We refer to the nice survey article~\cite{reading_tamari_2012} and to~\cite{reading_lattice_2005} for more detailed information about these topics.

\begin{figure}[htbp]
\begin{center}
\includegraphics[width=\textwidth]{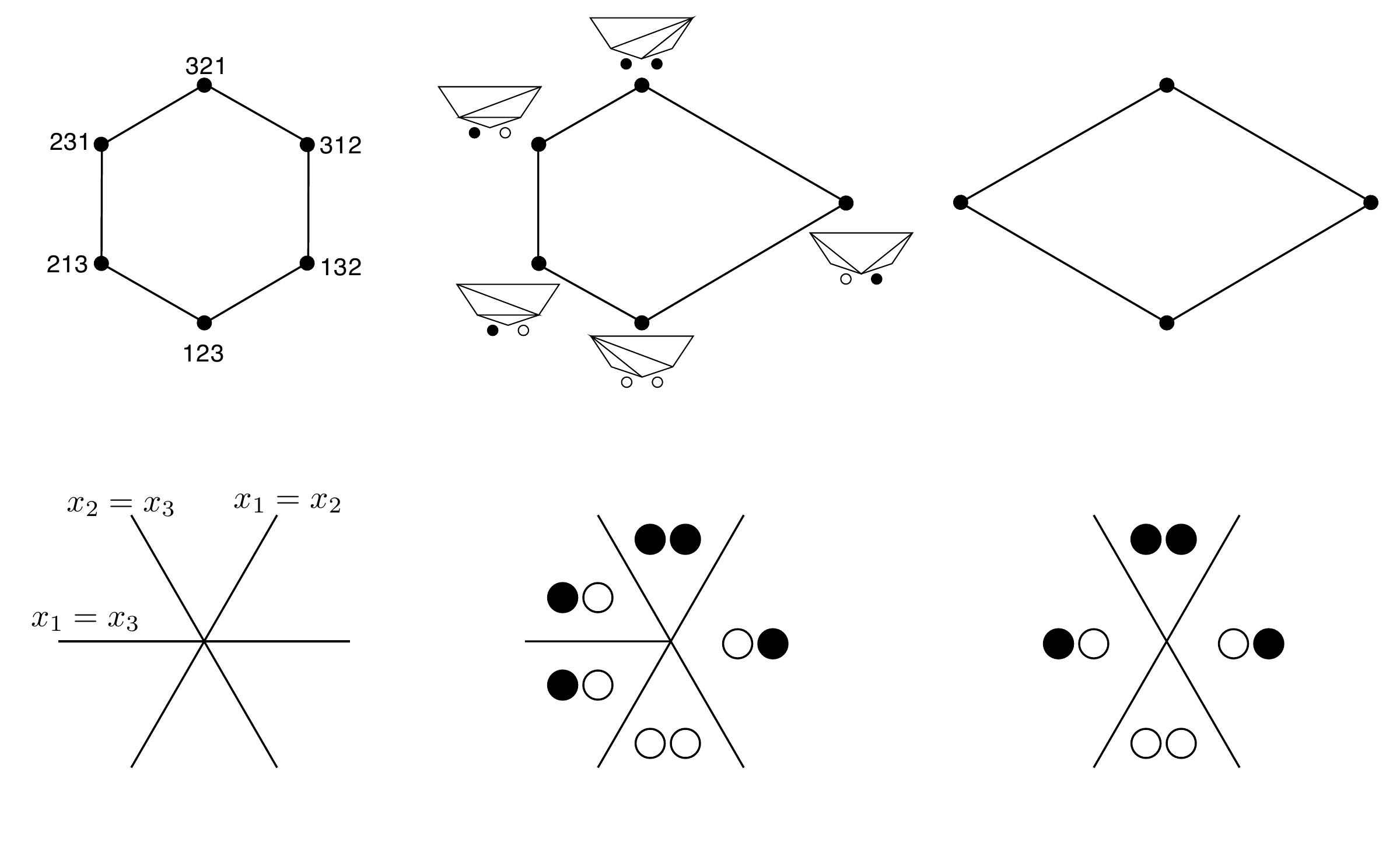}
\caption{Reading's lattice congruence description of the interval decomposition of the Tamari lattice into canopy classes. The two dimensional polytopes and their normal fans are: the permutahedron (left), the Loday associahedron (middle), and a combinatorial cube (right). The black and white balls encode the canopy. The path $\nu$ is obtained by replacing each $\bullet$ by an east step and each $\circ$ by a north step.}
\label{fig:Reading_vTamaricongruence}
\end{center}
\end{figure}

\end{remark}

Another result of Pr\'eville-Ratelle and Viennot is that $\Tam{\nu}$ is isomorphic to the dual of $\Tam{\reverse\nu}$~\cite[Theorem~1.2]{PrevilleRatelleViennot2017}, where $\reverse \nu$ is the path obtained by reading $\nu$ backwards and replacing the east steps by north steps and vice versa. This amounts to ``transposing'' the path.

This has a nice geometric interpretation in our setup as the exchange of the two factors of the product $\productp{I}{\ol J}$ in the triangulation $\asstri{I,\ol J}$. 

In terms of $\IJ$-trees, denote by $\reverse\IJ$ the pair $(n-J, \ol{n-I})$, where $n-J:=\{n-j\ \colon\ j\in J\}$, $n-I:=\{n- i\ \colon\ i\in  I\}$, and $n=\max(I\sqcup J)$. 
Of course, $\reverse\IJ$-trees are just mirror images of $\IJ$-trees, while clearly the covering relation gets reversed.

\begin{proposition}[{\cite[Theorem~1.2]{PrevilleRatelleViennot2017}}]\label{prop:reverse}
As a simplicial complex, $\asstri{I,\ol J}$ is isomorphic to $\asstri{\reverse{I,\ol J}}$, and as a lattice $\Tam{I,\ol J}$ is isomorphic to the dual of $\Tam{\reverse{I,\ol J}}$. 
\end{proposition}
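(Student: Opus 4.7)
The plan is to construct an explicit bijection between $\IJ$-trees and $\reverse{I,\ol J}$-trees that realizes the desired isomorphism geometrically. Set $n = \max(I\sqcup J)$ and define on the level of arcs the map
\[
\phi\colon (i,\ol j)\longmapsto (n-j,\ol{n-i}),
\]
which sends an arc of $\bipartiteij$ to an arc of $\bipartite_{\reverse{I,\ol J}}=\bipartite_{n-J,\ol{n-I}}$. Geometrically, $\phi$ is induced by the involution $(\bfe_i,\bfe_{\ol j})\mapsto (\bfe_{n-j},\bfe_{\ol{n-i}})$ on the vertices of $\productp{I}{\ol J}$, which simultaneously swaps the two simplex factors and reverses the order within each of them.

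First, I would check that $\phi$ is a bijection between arcs of $\bipartiteij$ satisfying $i\prec \ol j$ and arcs of $\bipartite_{\reverse{I,\ol J}}$ satisfying the analogous inequality: $i\leq j$ if and only if $n-j\leq n-i$, so the ``increasing'' condition of Definition~\ref{def:IJtree} is preserved. Next I would verify that $\phi$ transforms crossings into crossings: if $(a,\ol b)$ and $(a',\ol{b'})$ form a crossing in $\bipartite_{\reverse{I,\ol J}}$, i.e.\ $a<a'\leq b<b'$, then their preimages $(n-b,\ol{n-a})$ and $(n-b',\ol{n-a'})$ satisfy $n-b'<n-b\leq n-a'<n-a$, which is precisely a crossing after relabelling. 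Hence $\phi$ sends $\IJ$-forests bijectively to $\reverse{I,\ol J}$-forests, and since it preserves inclusion and cardinality it sends trees to trees. This gives the isomorphism $\asstri{I,\ol J}\cong \asstri{\reverse{I,\ol J}}$ at the level of simplicial complexes.

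For the lattice statement, suppose $T\lessdot_{I,\ol J} T'$ via an increasing flip, so $T'=T\setminus(i,\ol j)\cup (i',\ol{j'})$ with $i<i'$ and $j<j'$. Applying $\phi$ yields
\[
\phi(T')=\phi(T)\setminus(n-j,\ol{n-i})\cup(n-j',\ol{n-i'}),
\]
and since $n-j'<n-j$ and $n-i'<n-i$, this is a \emph{decreasing} flip in the $\reverse{I,\ol J}$-Tamari lattice. Thus $\phi$ reverses the cover relations, so $\Tam{I,\ol J}$ is isomorphic to the dual of $\Tam{\reverse{I,\ol J}}$.

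The only delicate point is keeping track of the ``color swap'': an arc originating at a black node of $I$ lands at a black node of $n-J$, which used to be the white label of $\ol J$. One has to be careful that the non-crossing condition really is symmetric under this interchange, but this is exactly what the symmetric form of condition~(2) in Definition~\ref{def:IJtree} ensures. Beyond this bookkeeping, no additional ingredients are needed.
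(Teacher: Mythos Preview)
Your proof is correct and follows essentially the same approach as the paper. The paper does not give a formal proof of this proposition; it simply remarks in the paragraph preceding it that ``$\reverse\IJ$-trees are just mirror images of $\IJ$-trees, while clearly the covering relation gets reversed,'' which is exactly the content of your map $\phi(i,\ol j)=(n-j,\ol{n-i})$ spelled out in detail.
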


%%%%%%%%%%%%%%%%%%%%%%%%%%%%%%%%%%%%%%%%%%%%%%%%%%%%%%%%%%%%%%%%%%
%%%%%%%%%%%%%%%%%%%%%%%%%%%%%%%%%%%%%%%%%%%%%%%%%%%%%%%%%%%%%%%%%%

\section{The \texorpdfstring{$\IJ$}{IJ}-Tamari complex and \texorpdfstring{$\IJ$}{IJ}-Narayana numbers}

\label{sec:IJcomplex}

We define the \defn{$\IJ$-Tamari complex $\asscomp{I,\ol J}$} as the underlying simplicial complex of the $\IJ$-associahedral triangulation\footnote{There are two objects that are referred to as `associahedron' in the literature: the simple polytope whose graph is the Tamari lattice and the simplicial complex dual to its boundary. To avoid confusion, we reserve the term `$\IJ$-associahedron' for our analogue of the simple polytope, and use `$\IJ$-Tamari complex' for the simplicial complex.}. That is, the simplicial complex of $\IJ$-forests (see Definition~\ref{def:IJtree}). In this section, we will see that it shares several interesting properties with the classical simplicial associahedron. In particular, in view of the bijection with $\nu$-Dyck paths presented in Section~\ref{subsec:TreesToPaths}, this provides alternative definitions for simplicial associahedra in the setup of Fuss-Catalan and rational Dyck paths (see~\cite{ARW13} for a different approach to define them). 
This is complemented by the computation of the $h$-vector of $\asscomp{I,\ol J}$, 
whose entries are a natural generalization of the Narayana numbers: $\nu$-Dyck paths with $k$ valleys.

\begin{definition}\label{def:TamariComplex}
The \defn{$\IJ$-Tamari complex $\asscomp{I,\ol J}$} is the flag simplicial complex on $\{(i,\ol j)\in I\times \ol J\ \colon\ i\prec \ol j\}$ whose minimal non-faces are the pairs
 $\{(i,\ol j),(i', \ol j')\}$ with $i\prec  i'\prec \ol j \prec \ol j '$.
\end{definition}

These complexes generalize the boundary complexes of simplicial associahedra, the polar of (simple) associahedra. Indeed, $\asscomp{[n],[\ol n]}$ is an $(n+2)$-fold pyramid over the simplicial complex of non-crossing inner diagonals of an $(n+2)$-gon. These cone points account for the arc $(0,\ol n)$ and the $n+1$ arcs of the form $(i,\ol i)$ which are present in every $\nn$-tree. (A \defn{cone point} of a simplicial complex is an element of the ground set that belongs to all maximal faces.)

Recall that, for simplicial complexes $\cK$ and $\cK'$ with disjoint support their 
\defn{join} is the simplicial complex 
\begin{equation*}
\cK\ast \cK':=\{F\cup F'\colon F\in \cK, F'\in \cK'\}.
\end{equation*}

\begin{proposition}
\label{prop:bijectionpolygon}
$\asscomp{[n],[\ol n]}$ is isomorphic to the join of an $(n+1)$-dimensional simplex and the boundary complex of a simplicial $(n-1)$-associahedron.
\end{proposition}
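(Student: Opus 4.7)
The plan is to decompose $\asscomp{[n],[\ol n]}$ as a join by first isolating its cone points via the flag property, and then to identify the resulting non-cone factor with the complex of non-crossing diagonals of a convex $(n+2)$-gon.

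First, I will determine the cone points. By Definition~\ref{def:TamariComplex}, the minimal non-faces of $\asscomp{[n],[\ol n]}$ are the crossing pairs $\{(i,\ol j),(i',\ol j')\}$ with $i\prec i'\prec\ol j\prec\ol j'$. Since the complex is flag, a vertex is a cone point exactly when it belongs to no minimal non-face, i.e.\ when it is not crossed by any other increasing arc. A direct case analysis shows that this holds precisely for the elements of
\[
C:=\{(i,\ol i)\colon 0\le i\le n\}\cup\{(0,\ol n)\}:
\]
any arc $(i,\ol j)$ with $i<j$ and $(i,j)\ne(0,n)$ is crossed by $(i+1,\ol{j+1})$ when $j<n$, and by $(i-1,\ol{j-1})$ when $i>0$, while arcs in $C$ admit no crossing partner. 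The $n+2$ vertices in $C$ therefore span an $(n+1)$-simplex $\Delta_C$ inside $\asscomp{[n],[\ol n]}$.

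Next, let $D:=\{(i,\ol j)\colon 0\le i<j\le n,\ (i,j)\ne(0,n)\}$. Because every minimal non-face of $\asscomp{[n],[\ol n]}$ lies inside $D$, a subset $F\subseteq C\sqcup D$ is a face if and only if $F\cap D$ consists of pairwise non-crossing arcs, with $F\cap C$ arbitrary. This is exactly the statement that
\[
\asscomp{[n],[\ol n]}\ \cong\ \Delta_C * \cK,
\]
where $\cK$ is the flag simplicial complex on $D$ whose edges are the non-crossing pairs of arcs.

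It remains to identify $\cK$ with the boundary complex of the simplicial $(n-1)$-associahedron via the polygon correspondence of Figure~\ref{fig:triang2tree}. Labelling the vertices of $P_{n+2}$ counterclockwise as $v_0,v_1,\dots,v_{n+1}$, so that edge $k$ connects $v_k$ and $v_{k+1}$ modulo $n+2$, the map $(i,\ol j)\mapsto v_iv_{j+1}$ sends $D$ bijectively onto the set of inner diagonals of $P_{n+2}$; a direct comparison of inequality chains shows that $i\prec i'\prec\ol j\prec\ol j'$ if and only if the diagonals $v_iv_{j+1}$ and $v_{i'}v_{j'+1}$ cross properly inside the polygon. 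Hence $\cK$ coincides with the flag complex of non-crossing inner diagonals of $P_{n+2}$, which by definition is the boundary complex of a simplicial $(n-1)$-associahedron. The only substantive bookkeeping is the index shift $\ol j\leftrightarrow v_{j+1}$ needed to align the combinatorial crossing with the geometric one; the cone-point classification and the join decomposition are immediate formal consequences of flagness.
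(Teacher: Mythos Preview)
Your argument is correct and follows exactly the approach the paper takes: the paragraph immediately preceding Proposition~\ref{prop:bijectionpolygon} already identifies the $n+2$ cone points as the arcs $(i,\ol i)$ and $(0,\ol n)$ and observes that $\asscomp{[n],[\ol n]}$ is the resulting $(n+2)$-fold pyramid over the complex of non-crossing inner diagonals of the $(n+2)$-gon, which is precisely the join decomposition you spell out. The paper does not give a separate formal proof beyond that sentence; your version simply fills in the verification that the crossing condition $i\prec i'\prec \ol j\prec \ol j'$ matches the geometric crossing of diagonals under the map $(i,\ol j)\mapsto v_iv_{j+1}$.
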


%%%%%%%%%%%%%%%%%%%%%%%%%%%%%%%%%%%%%%%%%%%%%%%%%%%%%%%%%%%%%%%%%%

\subsection{Faces, facets, interior faces}

The faces of $\asscomp{I,\ol J}$ are given by $\IJ$-forests. Our first goal is to study which of these faces are interior.
The boundary of $\asscomp{I,\ol J}$ is the simplicial complex induced by the codimension-$1$ faces that are contained in exactly one facet.
The faces of $\asscomp{I,\ol J}$ not contained in the boundary are called \defn{interior}, or \defn{interior simplices}.

\begin{lemma}
\label{lem:interiorsimplex}
The interior simplices of $\asscomp{I,\ol J}$ are the \defn{$\IJ$-forests} that include the arc $(\min I,\max \ol J)$ and have no isolated nodes. We refer to such {$\IJ$-forests} as \defn{covering $\IJ$-forests}.
\end{lemma}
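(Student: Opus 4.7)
The plan is to interpret $\asscomp{I,\ol J}$ geometrically as the triangulation $\asstri{I,\ol J}$ of the polytope $\catblock{I,\ol J}$ and invoke the standard fact that, for a triangulation of a polytope, the combinatorial boundary defined in the paper (codimension-$1$ faces belonging to a unique facet) agrees with the subcomplex of faces lying in proper faces of the polytope. Consequently, a face $F$ is interior iff no non-trivial supporting hyperplane of $\catblock{I,\ol J}$ contains $F$. Writing such a hyperplane as $\{\sum_i a_ix_i+\sum_{\ol j}b_{\ol j}y_{\ol j}=M\}$ (with $M$ the maximum over the vertices) and substituting $c_i:=M-a_i$ and $c_{\ol j}:=b_{\ol j}$, containment of $F$ amounts to: (a) $c$ is constant on every connected component of $F$; and (b) $c_{\ol j}\le c_i$ for every valid pair $i\prec \ol j$. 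Non-triviality of the hyperplane is equivalent to $c$ being non-constant on $I\sqcup \ol J$, since the bipartite graph of valid pairs is connected (for example through $\min I$ and $\max \ol J$). The task thus reduces to showing: $F$ contains $v_0:=(\min I,\max \ol J)$ and has no isolated node iff every such $c$ is constant.

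For the ``interior'' direction, I would assume $v_0\in F$ and no isolated nodes. Condition (a) forces $c_{\min I}=c_{\max \ol J}=:\alpha$, and (b) applied to the valid pairs $(\min I,\ol j)$ and $(i,\max \ol J)$ gives $c_{\ol j}\le \alpha\le c_i$ for every $i\in I$, $\ol j\in \ol J$. Since no node is isolated, every $i\in I$ shares an $F$-component with some $\ol j\in \ol J$, which by (a) forces $c_i=c_{\ol j}\le\alpha$ and hence $c_i=\alpha$; a symmetric argument yields $c_{\ol j}=\alpha$, so $c$ is constant.

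For the converse, I would produce a non-constant $c$ in each boundary case. If a node $v$ is isolated in $F$, set $c_v:=+1$ when $v\in I$, $c_v:=-1$ when $v\in \ol J$, and $c\equiv 0$ elsewhere; conditions (a) and (b) are then immediate. The more delicate case is $v_0\notin F$ combined with no isolated node. Since $\min I$ is non-isolated, define $\ol j^{*}:=\max\{\ol j:(\min I,\ol j)\in F\}$, and note that $\ol j^{*}\prec \max \ol J$ because $v_0\notin F$. The heart of the argument is the following structural split: every arc of $F$ has both endpoints in $L:=\{x\preceq \ol j^{*}\}$ or both in $R:=\{x\succ \ol j^{*}\}$. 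Indeed, an arc $(i,\ol j)\in F$ with $i\in L$ and $\ol j\in R$ would either contradict the maximality of $\ol j^{*}$ (if $i=\min I$) or cross $(\min I,\ol j^{*})\in F$ (if $i>\min I$), while an arc with $i\in R$ and $\ol j\in L$ contradicts the validity condition $i\preceq \ol j$. With this split in hand, $c_x:=[x\in L]$ is constant on every component of $F$, satisfies (b) for every valid $(i,\ol j)$ (the only case that could fail is precisely the one just ruled out), and is non-constant since $\min I\in L$ and $\max \ol J\in R$.

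The main obstacle is the structural ``$L$/$R$ split'' in the last case, where the maximality of $\ol j^{*}$, the non-crossing property of $F$, and the assumption $v_0\notin F$ must all be leveraged simultaneously; the remaining pieces---the hyperplane/potential translation and the equality-propagation argument for the interior direction---are short and essentially mechanical.
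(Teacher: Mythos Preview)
Your argument is correct and follows a genuinely different route from the paper's. The paper works directly with flips: a codimension-$1$ face $T\setminus e$ belongs to a unique facet exactly when $e$ is non-flippable, i.e.\ when $e$ is a leaf arc or $e=(\min I,\max\ol J)$, so the codimension-$1$ boundary faces are precisely those that isolate a node or omit $(\min I,\max\ol J)$, and the characterization follows. You instead exploit the realization of $\asscomp{I,\ol J}$ as the triangulation $\asstri{I,\ol J}$ of the polytope $\catblock{I,\ol J}$, identify ``boundary'' with ``lies in a proper face of the polytope'', and translate this into the existence of a non-constant potential $c$ that is constant on the components of $F$ and satisfies $c_{\ol j}\le c_i$ on every valid pair. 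The equality-propagation step for the interior direction and the $L/R$-split for the case $v_0\notin F$ are both clean; the only mild subtlety is the case split in the converse direction, which you handle correctly. The paper's argument is shorter and highlights the flip combinatorics (used again in later sections), while yours makes the facial structure of $\catblock{I,\ol J}$ transparent and in particular avoids the implicit step in the paper's direction ``isolated node $\Rightarrow$ boundary'', namely that any forest with an isolated node can be completed to a tree in which that node is a leaf.
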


\begin{proof}
A codimension-$1$ boundary simplex of $\asscomp{I,\ol J}$ is obtained from an $\IJ$-tree by removing a non-flippable arc; that is, a leaf or the arc $(\min I,\max \ol J)$. In the former case, the boundary simplex represents an $\IJ$-forest with an isolated node, whereas in the latter it represents an $\IJ$-forest missing the arc $(\min I,\max \ol J)$. Therefore, a simplex of $\asscomp{I,\ol J}$ is interior if and only if it corresponds to a covering $\IJ$-forest.
\end{proof}

Our next goal is to study the links of the faces of $\asscomp{I,\ol J}$. Recall that the \defn{link} of a face $F$ in the simplicial complex $\cK$ is the simplicial complex
\begin{equation*}
\link_{\cK}(F):=\left\{ G\setminus F\ \colon \ \ F\subseteq G\in \cK \right\}.
\end{equation*}

Links of faces of simplicial associahedra are joins of simplicial associahedra. A similar behavior happens for $\IJ$-Tamari complexes. Denote by $\cK\setminus v$ the simplicial complex obtained
by deleting a ground set element $v$ from the simplicial complex~$\cK$. 
We will only use this operation when $v$ is a cone point of $\cK$, and in this case $\cK\setminus v=\link_{\cK}(v)$.

\begin{lemma}
\label{lem:links}
Up to cone points, the link of any $\IJ$-forest $F$ in $\asscomp{I,\ol J}$ is a join of Tamari complexes (see Figure~\ref{fig:IJfaces}). Precisely:
\begin{equation}\label{eq:link}
\link_{\asscomp{I,\ol J}}(F)\ast F\cong \left(\bigast_{\substack{(i,\ol j)\in F\\ (i,\ol j)\neq (i_0,\ol j_0)}} \asscomp{I_{(i,\ol j)},\ol{J_{(i,\ol j)}}}\setminus (i,\ol j)\right)\ast\asscomp{I_{(i_0,\ol j_0)},\ol{J_{(i_0,\ol j_0)}}}
\end{equation}
where $(i_0,\ol j_0)=(\min I,\max \ol J)$, $I_{(i,\ol j)}$ and $\ol{J_{(i,\ol j)}}$ are the restrictions of $I,\ol J$ to
\begin{align*}
[i,\ol j]\setminus\left(\bigcup_{\substack{(i',\ol j')\in F\\ [i',\ol j']\subsetneq [i,\ol j]}}]i',\ol j'[\right),%
\end{align*}
and $[i,\ol j]$ and $]i,\ol j[$ represent the corresponding closed and open intervals of $\NN\sqcup \ol \NN$, respectively.

In particular, the link of an arc $\ij$ in $\asscomp{I,\ol J}$ is is a join of Tamari complexes: 
\begin{equation}\label{eq:vertexlink}
\link_{\asstri{I,\ol J}}\ij \cong \left(\asscomp{I',\ol {J'}}\setminus \ij\right) \ast \left(\asscomp{I'',\ol {J''}}\setminus \ij\right),
\end{equation}
where $I'=I\cap [i,\ol j]$ and $\ol {J'}= \ol J \cap [i,\ol j]$, $I''=I\setminus\, ]i,\ol j[$, $\ol {J''}=\ol J \setminus\, ]i,\ol j[$.

If moreover $F$ is a covering $\IJ$-forest, then $\link_{\asscomp{I,\ol J}}(F)$ is a join of boundary complexes of simplicial associahedra.
\end{lemma}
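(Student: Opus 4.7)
The plan is to decompose the arcs of $\bipartiteij$ that do not cross $F$ into \emph{pockets} indexed by the arcs of $F$, and to show that the non-crossing relation factors cleanly across these pockets. Note first that $(i_0,\ol j_0)=(\min I,\max \ol J)$ does not cross any arc in $\bipartiteij$, so it is a cone point of $\asscomp{I,\ol J}$ (and of $\link_{\asscomp{I,\ol J}}(F)$ whenever $(i_0,\ol j_0)\notin F$). Replacing $F$ by $F\cup\{(i_0,\ol j_0)\}$ preserves both sides of \eqref{eq:link} up to this cone point, so we may assume $(i_0,\ol j_0)\in F$. For each arc $(i',\ol j')$ of $\bipartiteij$ non-crossing with every arc of $F$, define its \emph{pocket} to be the unique minimal $(i,\ol j)\in F$ with $[i',\ol j']\subseteq [i,\ol j]$: existence comes from $(i_0,\ol j_0)\in F$, and uniqueness from the fact that two $F$-arcs whose intervals share a common subinterval must be nested.

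Two claims drive the proof. \emph{Claim (i)}: the arcs in pocket $(i,\ol j)$ are exactly those of $\bipartitep{I_{(i,\ol j)}}{J_{(i,\ol j)}}$. Indeed, if $i'\in\,]i'',\ol j''[$ for some strictly-nested $F$-arc $(i'',\ol j'')$, then non-crossing of $(i',\ol j')$ with $(i'',\ol j'')$ forces $\ol j'\in [i'',\ol j'']$, hence $[i',\ol j']\subseteq [i'',\ol j'']$, contradicting minimality of $(i,\ol j)$; the converse is a direct case-check. \emph{Claim (ii)}: arcs in distinct pockets never cross, since any two $F$-arcs are nested or interval-disjoint, and in either case the open-interior exclusion defining $I_{(i,\ol j)},\ol{J_{(i,\ol j)}}$ forces pocket arcs to stay on one side of any separating $F$-arc. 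Crucially, each $F$-arc $(i,\ol j)\neq (i_0,\ol j_0)$ is itself an arc of the parent pocket $(i^p,\ol j^p)$ (the smallest $F$-arc strictly containing $(i,\ol j)$): its endpoints survive the open-interior removals and are consecutive in $I_{(i^p,\ol j^p)}\sqcup\ol{J_{(i^p,\ol j^p)}}$. This is why $(i,\ol j)$ is excised from its own pocket's factor on the RHS of \eqref{eq:link}, while $(i_0,\ol j_0)$ has no parent pocket and remains in its own pocket's complex.

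These claims produce a bijection between the ground sets of both sides of \eqref{eq:link} that extends to a simplicial isomorphism: within-pocket compatibility is intrinsic to each factor, and cross-pocket compatibility is automatic. To deduce \eqref{eq:vertexlink}, apply \eqref{eq:link} to $F=\{\ij\}$ (augmented with $(i_0,\ol j_0)$): the two pockets are $(i,\ol j)$ with $(I_{(i,\ol j)},\ol{J_{(i,\ol j)}})=(I',\ol{J'})$, and $(i_0,\ol j_0)$ with $(I_{(i_0,\ol j_0)},\ol{J_{(i_0,\ol j_0)}})=(I'',\ol{J''})$. In $\asscomp{I'',\ol{J''}}$, the arc $\ij$ is a cone point since its endpoints are consecutive in $I''\sqcup\ol{J''}$; factoring out this cone point and cancelling $(i_0,\ol j_0)$ and $\ij$ from both sides recovers \eqref{eq:vertexlink}. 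For the covering-$F$ case, observe that a node in the open interior of a pocket gap cannot be incident to any $F$-arc without producing a crossing, so covering forces each $I_{(i,\ol j)}\sqcup\ol{J_{(i,\ol j)}}$ to consist solely of the endpoints of top-level nested $F$-arcs. Proposition~\ref{prop:bijectionpolygon} then identifies each pocket complex with a simplicial associahedron together with its cone points, where the cone points coincide precisely with the $F$-arcs present in that pocket (namely $(i,\ol j)$ and its top-level nested $F$-arcs). Extracting $\link_{\asscomp{I,\ol J}}(F)$ from \eqref{eq:link} amounts to removing all these cone points, leaving the required join of boundary complexes of simplicial associahedra.

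The main obstacle is the case analysis underpinning Claims~(i) and~(ii), especially when endpoints of arcs coincide with endpoints of nested $F$-arcs: one must carefully distinguish endpoint elements (which survive the open-interior removals) from open-interior elements (which do not), and track how this distinction interacts with the crossing condition. A secondary subtlety in the covering case is verifying that the pocket interleavings are alternating (so Proposition~\ref{prop:bijectionpolygon} applies) and that the cone points to be removed correspond exactly to the $F$-arcs present in each pocket.
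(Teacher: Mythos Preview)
Your proof is correct and rests on the same combinatorial observation as the paper's---that an arc non-crossing with $F$ is trapped in a unique ``region'' determined by the nesting structure of $F$---but you organize the argument in the opposite order. The paper first establishes the single-arc case \eqref{eq:vertexlink} directly (an arc $(i,\ol j)$ separates every compatible arc into ``inside $[i,\ol j]$'' versus ``outside $]i,\ol j[$''), and then obtains \eqref{eq:link} by inducting on $|F|$. You instead prove \eqref{eq:link} in one shot via your global pocket decomposition and then specialize to recover \eqref{eq:vertexlink}. Your route is slightly more conceptual and avoids the induction, at the cost of the extra bookkeeping you flag in your last paragraph (verifying that the factor ground sets are disjoint, which hinges on showing that an $F$-arc appears only in its own pocket and its parent's---a point you state but do not fully spell out). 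For the covering case, your argument that each pocket's index set is alternating up to endpoint duplications matches the paper's exactly; both then invoke Proposition~\ref{prop:bijectionpolygon}.
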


\begin{figure}[htpb]
 \centering
 \includegraphics[width=\linewidth]{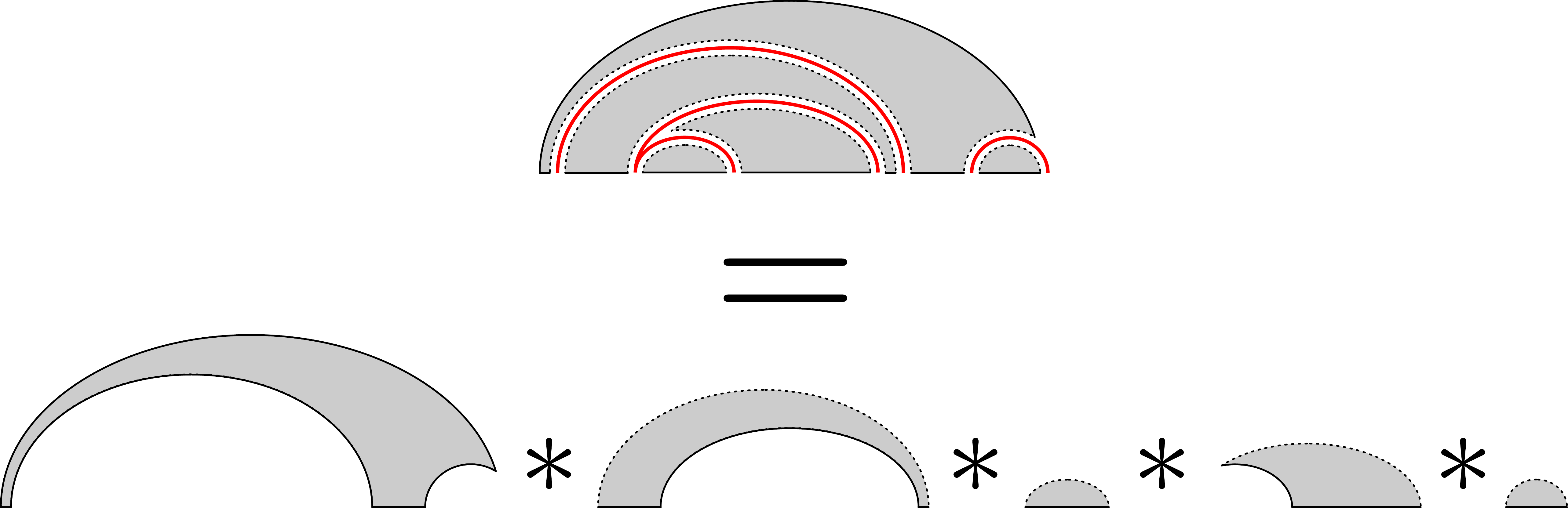}
 \caption{The link of an $\IJ$-forest is a join of Tamari complexes. Dashed arcs represent removed cone points.} 
 \label{fig:IJfaces}
\end{figure}

 \begin{proof}
 Equation~\eqref{eq:link} is a straightforward consequence of Equation~\eqref{eq:vertexlink}, by induction. The underlying idea is schematically depicted in Figure~\ref{fig:cyclicIJfaces}.
 
 To prove~\eqref{eq:vertexlink}, observe that if two different arcs $\ij$ and $(i',\ol j')$ are non-crossing, then either $[i',\ol j']\subsetneq [i,\ol j]$, $[i,\ol j]\subsetneq [i',\ol j']$ or $[i',\ol j']\cap[i,\ol j]=\emptyset$. Hence, any $\IJ$-forest containing $\ij$ can be written up to cone points as the join of an $(I',\ol{J'})$-forest and an $(I'',\ol{J''})$-forest. Conversely, up to cone points, the join of an $(I',\ol{J'})$-forest and an $(I'',\ol{J''})$-forest gives a $\IJ$-forest containing~$\ij$.

 For the last statement, assume from now on that $F$ is a covering $\IJ$-forest, and let $\ij\in F$. To understand the contribution of $\asscomp{I_{(i,\ol j)},\ol{J_{(i,\ol j)}}}$ to the join in~\eqref{eq:link}, it suffices to characterize the restricted subsets $I_{\ij},\ol{J_{\ij}}$.  Concretely, the statement will follow once we realize that $I_{\ij}\sqcup\ol{J_{\ij}}$ alternates between elements of $I$ and $\ol J$, up to removing removing $\min I_{\ij}, \max \ol{J_{\ij}}$ or both, if necessary.
 
 Denote by $F_{\ij}$ the restriction of $F$ to $I_{\ij}\sqcup\ol{J_{\ij}}$. By construction, $F_{\ij}$ is a covering forest of $\asscomp{I_{(i,\ol j)},\ol{J_{(i,\ol j)}}}$. Let $i_1\prec i_2\in I_{\ij}$ be two consecutive elements of $I_{\ij}\sqcup\ol{J_{\ij}}$, and consider the arcs $(i_1,\ol j_1), (i_2,\ol j_2)\in F_{\ij}$  with $\ol j_1$ and $\ol j_2$ maximal in $\ol{J_{\ij}}$. Since $F_{\ij}$ is non-crossing, we have $[i_2,\ol j_2]\subsetneq [i_1,\ol j_1]$. Further, $i_1$ must equal $\min I_{\ij}$, because otherwise $[i_2,\ol j_2]\subsetneq [i_1,\ol j_1]\subsetneq[i,\ol j]$ and $i_2\in ]i_1,\ol j_1[$, so $i_2\notin I_{\ij}$. By the same token, we necessarily have $\ol j_1=\max \ol{J_{\ij}}$. Therefore, $I_{\ij}\sqcup\ol{J_{\ij}}$ has at most two consecutive elements from $I_{\ij}$, which are then the least elements of $I_{\ij}\sqcup\ol{J_{\ij}}$. An analogous argument shows that $I_{\ij}\sqcup\ol{J_{\ij}}$ has at most two consecutive elements from $\ol{J_{\ij}}$, which are the maximum elements of $I_{\ij}\sqcup\ol{J_{\ij}}$.
 
 Setting $n=\max\{|I_{\ij}|,|\ol{J_{\ij}}|\}-1$, it follows that $\asscomp{I_{(i,\ol j)},\ol{J_{(i,\ol j)}}}\cong \asscomp{[n],[\ol n]}$ (up to cone points if $I_{\ij}\sqcup\ol{J_{\ij}}$ has two consecutive elements from $I_{\ij}$ or from $\ol{J_{\ij}})$, which is up to cone points a simplicial associahedron by Proposition~\ref{prop:bijectionpolygon}. Removing all these cone points from the join in~\eqref{eq:link}, we deduce that $\link_{\asscomp{I,\ol J}}(F)$ is a join of boundary complexes of simplicial associahedra.
 \end{proof}

%%%%%%%%%%%%%%%%%%%%%%%%%%%%%%%%%%%%%%%%%%%%%%%%%%%%%%%%%%%%%%%%%%

\subsection{Shellings, the \texorpdfstring{$h$}{h}-vector and the \texorpdfstring{$\IJ$}{IJ}-Narayana numbers}\label{sec:Narayana}

We will now compute the $h$-vectors of $\asstri{I,\ol J}$ using shellings, and relate them to certain lattice path enumeration.

Recall that a \defn{shelling} of a simplicial complex is an ordering $\cO=(F_1,F_2,\ldots, F_r)$ of its facets such that for every $\ell<m$ there is some $k<m$ such that $F_\ell\cap F_m\subseteq F_k\cap F_m = F_m\setminus e$ for some $e\in F_m$~\cite{Ziegler95}. That is, the intersection of $F_m$ with the subcomplex generated by the facets preceding it is pure and of codimension $1$. 

Simplicial complexes admitting a shelling are called \defn{shellable}.  These have nonnegative $h$-vectors whose entries can be computed combinatorially from the shelling as follows. For a fixed shelling $\cO$, define the restriction set of facet $F_m$ as $R(F_m):=\{e\in F_m\colon F_m\setminus e \subseteq F_\ell \text{ for some $\ell<m$}\}$; then:
\begin{equation}
\label{eq:hshelling}
h_k=\left|\left\{m\colon |R(F_m)|=k\right\}\right|.
\end{equation}

Our first observation states that the order relation of the $\IJ$-Tamari lattice is amenable to shellings for the $\IJ$-associahedral complex.

\begin{lemma}\label{lem:shellingorder}
Let $\cO=(T_1,T_2,\ldots, T_r)$ be an ordering of the $\IJ$-trees that is a linear extension of the $\IJ$-Tamari lattice or of its opposite lattice. Then $\cO$ is a shelling order for $\asscomp{I,\ol J}$.
\end{lemma}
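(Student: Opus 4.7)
Plan. We handle the case where $\cO$ is a linear extension of $\Tam{I,\ol J}$ itself; the opposite case follows by the duality of Proposition~\ref{prop:reverse}. For fixed $\ell<m$, the shelling condition asks for an arc $e\in T_m$ and some $k<m$ with $T_\ell\cap T_m\subseteq T_k\cap T_m=T_m\setminus\{e\}$. Since $T_k$ and $T_m$ then differ in exactly one arc, they are related by a single flip of $e$; and $k<m$ under the linear extension forces $T_k<T_m$ in $\Tam{I,\ol J}$, so this flip must be a \emph{decreasing} flip. Writing $D(T_m)$ for the set of arcs of $T_m$ that support a decreasing flip, the lemma reduces to showing that $D(T_m)\not\subseteq T_\ell$ whenever $\ell<m$. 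Contrapositively, it suffices to establish the following lattice-theoretic claim: \emph{if $T$ and $T'$ are $\IJ$-trees with $D(T)\subseteq T'$, then $T\leq T'$ in $\Tam{I,\ol J}$}.

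I would prove this claim by showing the stronger assertion that $T$ is the minimum (in the $\IJ$-Tamari order) among all $\IJ$-trees containing the forest $D(T)$. The main tool is Lemma~\ref{lem:links}: the star of $D(T)$ in $\asscomp{I,\ol J}$ decomposes, modulo cone vertices, as a join of smaller Tamari complexes $\asscomp{I^{(a)},\ol{J^{(a)}}}$. Within this star, the $\IJ$-Tamari order should factor as a product of the smaller Tamari orders, because an increasing flip of an arc $e\notin D(T)$ on a tree $T''\supseteq D(T)$ preserves $D(T)$ (as $e$ itself is not removed from it) and acts within a single block of the decomposition. In each block, the restriction of $T$ has no arcs supporting a \emph{local} decreasing flip, for any such would also support a decreasing flip in the ambient $T$ and would therefore lie in $D(T)$, which has already been absorbed into the block structure. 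Consequently, in each block $T$ coincides with the minimum of the corresponding smaller Tamari lattice, making $T$ the minimum of the star, and so $T\leq T'$ for any $T'\supseteq D(T)$.

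The main obstacle I expect is verifying rigorously that the $\IJ$-Tamari order factorizes along the join decomposition of Lemma~\ref{lem:links}---that is, that increasing flips on trees in the star of $D(T)$ genuinely act within single blocks. This is where the core technical work lies: it requires combining the non-crossing constraint that separates the blocks with the local characterization of flips (a flippable arc $(i,\ol j)$ of $T''$ supports an increasing flip exactly when $i$ is the smallest neighbor of $\ol j$ in $T''$). Once this product structure is in place, the block-wise minimality of $T$, and hence the whole argument, follows directly from the definition of $D(T)$.
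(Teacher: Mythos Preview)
Your approach is correct and close in spirit to the paper's, though the execution differs. Both arguments rely on the product structure of Lemma~\ref{lem:links}, but they decompose along different forests: the paper uses $F=T_\ell\cap T_m$, whereas you use $F=D(T_m)$. The paper then explicitly constructs an increasing chain from the block-wise minimum $T$ (which by construction contains $T_\ell\cap T_m$ and satisfies $T\leq T_\ell\wedge T_m$) up to $T_m$, and takes $T_k$ to be the penultimate tree in this chain. Your route is more structural: having reformulated the shelling condition as $D(T_m)\not\subseteq T_\ell$, you show that $T_m$ is the minimum of the star of $D(T_m)$ by block-wise minimality. Your version has the advantage of being independent of $T_\ell$ and of making the restriction sets $R(T_m)=D(T_m)$ explicit from the start (the paper only extracts this later, in the proof of Theorem~\ref{thm:thehvector}). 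The paper's route is marginally lighter because it uses the product structure only to produce chains of increasing flips within each block, rather than the full order factorization. Your acknowledged ``main obstacle'' is real but minor: the compatibility of flips with the block decomposition follows once one notes that flipping an arc $e\notin D(T)$ in a tree $T''\supseteq D(T)$ removes and inserts arcs lying entirely inside a single block, since the new arc cannot cross any arc of $D(T)\subseteq T''$; the increasing/decreasing direction is determined by the same criterion ($i<i'$) in the block as in the ambient complex.
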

\begin{proof}
We consider only the case when $\cO$ extends the $\IJ$-Tamari lattice, the other case being equivalent by Proposition~\ref{prop:reverse}. Let $T_i,T_j$ be $\IJ$-trees with $i<j$ with respect to the ordering $\cO$. 
It is not hard to see that there is some $\IJ$-tree $T\leq T_i\wedge T_j$ such that $T\supset T_i\cap T_j$. Indeed, we can look at each arc of $T_i\cap T_j$ as dividing the support $\IJ$ in two pieces (as in the proof of Lemma~\ref{lem:links}); and then it suffices to take the minimum in each of the pieces. 
Now, consider a sequence $\bfs=(T_{s_0}=T, T_{s_1},\ldots, T_{s_w}=:T_j)$ of $\IJ$-trees, where each tree is obtained from the preceding one by an \emph{increasing flip} that does not change the arcs in $T_i\cap T_j$. Again such a sequence exists because we can do it separately in every piece. 
Finally, take the previous to last tree $T_k:=T_{s_{w-1}}$ in the sequence $\bfs$, so that in particular $k<j$ in the linear order $\cO$. We conclude that $T_i\cap T_j\subseteq T_k\cap T_j=T_j\setminus e$ for some arc $e\in T_j$, so $\cO$ actually gives a shelling order.  
\end{proof}

\begin{theorem}
\label{thm:thehvector}
The $h$-vector $(h_0,h_1,\ldots)$ of the $\IJ$-Tamari complex is determined by:
\begin{align*}
\label{eq:thehvector}
h_\ell&=\left|\left\{ T\colon \text{ $\IJ$-tree with exactly $\ell$ non-leaf nodes in $\ol J\setminus\{\ol j_{\max}\}$} \right\}\right| \\
&=\text{number of $\nu\IJ$-paths with exactly $\ell$ valleys},
\end{align*}
where a valley of a path is an occurrence of $\sfE\sfN$.
\end{theorem}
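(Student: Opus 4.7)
The plan is to apply the shelling from Lemma~\ref{lem:shellingorder} coming from a linear extension of the \emph{opposite} $\IJ$-Tamari lattice (so that larger trees in $\Tam{I,\ol J}$ appear earlier in the shelling), and then compute $h_\ell$ via Equation~\eqref{eq:hshelling} by describing the restriction sets combinatorially.

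The first step is to identify $R(T)$ for each facet $T$. If $e\in T$ and $T\setminus e$ is contained in an earlier facet $T_k$, then by purity $T_k=(T\setminus e)\cup e'$ for a unique arc $e'\neq e$, meaning $T_k$ is obtained from $T$ by a flip at $e$. Since $T_k$ precedes $T$ in the shelling, $T_k > T$ in $\Tam{I,\ol J}$, forcing the flip to be increasing. Conversely, any increasing flip at $e\in T$ yields such an earlier $T_k$. Thus $R(T)$ is precisely the set of arcs of $T$ supporting an increasing flip.

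Next I would invoke the analysis carried out in the proof of Theorem~\ref{prop:IJTamarivTamari}: under the natural correspondence between arcs of $T$ and lattice points of $\pa(T)$, an arc $(i,\ol j)\in T$ supports an increasing flip if and only if the associated lattice point of $\pa(T)$ is a valley. Consequently $|R(T)|$ equals the number of valleys of $\pa(T)$, so combining Equation~\eqref{eq:hshelling} with the bijection $\pa$ of Proposition~\ref{prop:IJtree2vpath} yields the second equality in the statement.

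For the first equality, the same characterization describes valley arcs $(i,\ol j)$ as those with $\ol j\neq \max \ol J$, $i=\min\{i'\in I : (i',\ol j)\in T\}$, and $\ol j$ having at least one further $I$-neighbor. This provides a bijection $\ol j\mapsto (\min\{i'\in I : (i',\ol j)\in T\},\,\ol j)$ between non-leaf nodes of $\ol J\setminus\{\ol j_{\max}\}$ in $T$ and valleys of $\pa(T)$, which delivers the first equality. I expect the main subtlety to lie in choosing the correct shelling order: using a linear extension of $\Tam{I,\ol J}$ itself would count \emph{decreasingly}-flippable arcs, whose total does not equal the number of valleys tree-by-tree, so it is essential to reverse the order first.
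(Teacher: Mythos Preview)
Your proof is correct and follows essentially the same approach as the paper: use the shelling from Lemma~\ref{lem:shellingorder} induced by a linear extension of the \emph{opposite} $\IJ$-Tamari lattice, identify $|R(T)|$ with the number of increasing flips on $T$, and then invoke the valley characterization from the proof of Theorem~\ref{prop:IJTamarivTamari}. The only cosmetic differences are that you establish the second equality before the first and make the bijection $\ol j\mapsto(\min\{i':(i',\ol j)\in T\},\ol j)$ explicit, whereas the paper just cites the relevant passage of Theorem~\ref{prop:IJTamarivTamari}.
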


\begin{proof}
For the first equality, let $\cO=(T_1,T_2,\ldots, T_r)$ be a linear ordering of the $\IJ$-trees that extends the opposite of the $\IJ$-Tamari lattice, and $T_m\in \cO$. We have that $|R(T_m)|$ equals the number $\IJ$-trees covering $T_m$ in the $\IJ$-Tamari lattice, which is just the number of increasing flips that can be performed on $T_m$. As seen in the proof of Proposition~\ref{prop:IJTamarivTamari}, the latter is precisely the number of nodes in $\ol J\setminus\{\ol j_{\max}\}$ that are not leaves of $T_m$, which proves the equality. 

To prove the second equality, recall from Proposition~\ref{prop:IJTamarivTamari} that increasing flips of an $\IJ$-tree $T$ are in bijection with occurrences of $\sfE\sfN$ in its associated $\nu\IJ$-path $\pa(T)$, that is, with valleys of $\pa(T)$.
\end{proof}

Classical Narayana numbers are defined as the number of paths above $\{\sfE\sfN\}^n$ with $\ell$ valleys. Similarly, {rational Narayana numbers} count the number of lattice paths from $(0,0)$ to $(b,a)$ that stay above the line of slope $\frac ab$ and have $\ell$ valleys. In the same vein, we call the number of $\nu$-paths with exactly $\ell$ valleys the \defn{$\nu$-Narayana numbers} (or \defn{$\IJ$-Narayana numbers} if $\nu=\nu\IJ$).

\begin{question}
 The rational associahedron introduced in~\cite{ARW13} is a simplicial complex whose $h$-vector is also counted by $\nu$-Narayana numbers for certain $\nu$. Which is its relation with the corresponding $\asstri{I,\ol J}$?
\end{question}

%%%%%%%%%%%%%%%%%%%%%%%%%%%%%%%%%%%%%%%%%%%%%%%%%%%%%%%%%%%%%%%%%%
%%%%%%%%%%%%%%%%%%%%%%%%%%%%%%%%%%%%%%%%%%%%%%%%%%%%%%%%%%%%%%%%%%

\section{A tropical realization of the \texorpdfstring{\protect{$\IJ$}}{IJ}-associahedron}
\label{sec:nuassociahedron}

Recall that the facets of the $\IJ$-associahedral triangulation $\asstri{I,\ol J}$ are encoded by $\IJ$-trees and that two facets are adjacent if and only if the corresponding trees are connected by a flip. As a consequence, the Hasse diagram of the $\IJ$-Tamari lattice is realized as the dual graph of~$\asstri{I,\ol J}$. 
Via the Cayley trick~\cite{HRS00},~$\asstri{I,\ol J}$ can be interpreted as a fine mixed subdivision of a generalized permutahedron $\cP_{I,\ol J}\subset \RR^{|\ol J|-1}$, which is explicitly described in Corollary~\ref{cor:cayley}. 
Combining this with the fact that $\IJ$-Tamari lattice and $\nu$-Tamari lattices are equivalent (Theorem~\ref{prop:IJTamarivTamari}), we conclude the first two geometric realizations in our main geometric result Theorem~\ref{thm:realizations}. The third realization is obtained via the duality between regular triangulations of~$\product$ and tropical hyperplane arrangements, conceived by Develin and Sturmfels in~\cite{DevelinSturmfels2004} and further developed in~\cite{ArdilaDevelin2009, FinkRincon2015}.

In this section we examine this tropical realization in detail. In particular, we completely describe its polyhedral cells and present explicit coordinates for its vertices, as well as some remarkable properties.
Our presentation assumes no previous knowledge of tropical geometry and focuses on the realization in the Euclidean space.

Throughout the section, we assume that $I\subset\NN$, $\ol J\subset \ol\NN$ are nonempty finite subsets such that $\min (I\sqcup \ol J)\in I$ and ${\max (I\sqcup \ol J)\in \ol J}$; that is, that the lattice path
$\nu=\nu(I,\ol J)$ from Section~\ref{subsec:TreesToPaths} is well defined. 
We also consider a non-crossing height function $\ha$ from the vertex set of $\catblock{I,\ol J}$ to~$\RR$, that induces $\asstri{I,\ol J}$ as a regular triangulation (as in Proposition~\ref{prop:assheights}). That is, a function $\ha$ defined on the pairs $\ij$ with $i\prec \ol j$ 
such that, for every quadruple $i\prec i'\prec \ol j\prec\ol j'$, it assigns less weight to the non-crossing matching:
\begin{equation}\label{eq:asso_height}
\ha({i,\ol j'})+\ha({i',\ol j})< \ha({i,\ol j})+\ha({i',\ol j'}).  
\end{equation}
We extend $\ha$ to the whole $I\times \ol J$ by setting $\ha(i,\ol j)=\infty$ whenever $i\succ\ol j$.

In tropical geometry it is convenient to work in the \emph{tropical projective space} 
\[\TP^{|\ol J|-1}=\left((\RR\cup \infty)^{|\ol J|}\setminus (\infty,\infty,\dots,\infty)\right)/\RR(1,1,\dots,1).\]

A \emph{tropical hyperplane} is the ``tropical vanishing locus" of a linear equation in the tropical semiring $(\mathbb R\cup \infty, \oplus, \odot)$ where the tropical addition $\oplus$ and tropical multiplication~$\odot$ are defined by $a \oplus b = \min(a, b)$ and $a \odot b = a + b$. 
The main ingredient in our realization is the arrangement $\arrgtA=(H_i)_{i\in I}$ of \emph{inverted tropical hyperplanes} at each point $v_i=\left(\ha(i,\ol j) \right)_{\ol j\in \ol J}$ for $i\in I$. These hyperplanes can be explicitly defined as  
\begin{equation}\label{eq:tropical_hyp_vertices}
H_i=\left\{x\in \TP^{{|\ol J|}-1}: \max_{\ol j\in \ol J}\{ -\ha(i,\ol j)+x_{\ol j} \} \ \text{is attained twice}\right\}.
\end{equation}

When some of the $\ha(i,\ol j)$ equal $\infty$, such $H_i$ corresponds to a ``degenerate'' tropical hyperplane~\cite{FinkRincon2015}.
This arrangement induces a polyhedral decomposition of $\TP^{|\ol J|-1}$ whose poset of bounded cells is anti-isomorphic to the poset of interior cells of the triangulation (see~\cite[Proof of Theorem~1]{DevelinSturmfels2004}).

Restricting to the points with finite coordinates, this polyhedral complex can be interpreted as a polyhedral decomposition of $\RR^{|\ol J|}$ in which every cell contains the vector $(1,1,\dots, 1)$ in its linearity space. Intersecting with the hyperplane $x_{\ol j_{\max}}=0$, where $\ol j_{\max}=\max \ol J$, we obtain a polyhedral decomposition of $\RR^{|\ol J|-1}$. We explicitly present this realization, which is the one giving rise to the pictures presented throughout the paper. (This dehomogenization could have been performed by intersecting with other hyperplanes, giving rise to different projectively equivalent realizations.)

\begin{definition}\label{def:nuassociahedronH}
The \defn{$\IJ$-associahedron $\Asso_{I,\ol J}(\ha)$} is the polyhedral complex of bounded cells induced by the arrangement of inverted tropical hyperplanes~$\arrgtA$.    
For a lattice path~$\nu$, we define the \defn{$\nu$-associahedron} \defn{$\Asso_\nu(\ha)$} as the $\IJ$-associahedron, where $I,\ol J$ are such that $\nu=\nu\IJ$, as in Subsection~\ref{subsec:TreesToPaths}.
To simplify notation, some times we omit~$\ha$ when it is clear from the context.
\end{definition}

By the discussion above, which follows from results in~\cite{DevelinSturmfels2004} and~\cite{FinkRincon2015}, we deduce the following result (which could be considered as a combinatorial definition of the $\IJ$-associahedron). 

\begin{theorem}
\label{thm:nurealization}
The $\IJ$-associahedron $\Asso_{I,\ol J}(\ha)$ is a polyhedral complex whose poset of cells is anti-isomorphic to the poset of interior faces of the $\IJ$-Tamari complex. 
In particular, 
\begin{enumerate}
\item Its faces correspond to \emph{covering} $\IJ$-forests.
\item Its vertices correspond to $\IJ$-trees.
\item Two vertices are connected if and only if the corresponding trees are connected by a flip. That is, the edge graph of $\Asso_{I,\ol J}(\ha)$ is isomorphic to the Hasse diagram of the $\IJ$-Tamari lattice.
\end{enumerate}
\end{theorem}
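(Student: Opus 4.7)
The plan is to invoke the Develin--Sturmfels duality between regular triangulations of products of simplices and tropical hyperplane arrangements, as extended by Fink--Rinc\'on to handle degenerate cases, and then translate the dual object using Lemma~\ref{lem:interiorsimplex}.

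The first step is to extend the height function $\ha$ from $\{(i,\ol j) : i\prec \ol j\}$ to all of $I\times\ol J$ by declaring $\ha(i,\ol j)=+\infty$ whenever $i\succ \ol j$. With this convention, $\ha$ becomes a (partial) height function on the vertices of $\productp{I}{\ol J}$, inducing a regular polyhedral subdivision of $\productp{I}{\ol J}$. Because the finite heights satisfy the non-crossing inequality~\eqref{eq:asso_height} (by Proposition~\ref{prop:assheights}) and the vertices carrying infinite heights are precisely those lying outside $\catblock{I,\ol J}$, this regular subdivision coincides with~$\asstri{I,\ol J}$ on $\catblock{I,\ol J}$, and its only cells that do not use any infinite-height vertex are exactly the simplices of $\asstri{I,\ol J}$.

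The second step is the duality itself. By the Develin--Sturmfels correspondence (in the non-degenerate case) and by its extension to degenerate tropical hyperplanes in Fink--Rinc\'on, the bounded cells of the arrangement $\arrgtA=(H_i)_{i\in I}$ defined by~\eqref{eq:tropical_hyp_vertices} form a polyhedral complex whose face poset is anti-isomorphic to the poset of \emph{interior} faces of the regular subdivision of $\productp{I}{\ol J}$ induced by~$\ha$ (``interior'' meaning not contained in the boundary of $\productp{I}{\ol J}$). One checks that these interior faces are exactly the interior simplices of $\asstri{I,\ol J}$ in the sense of Section~\ref{sec:IJcomplex}: a simplex is interior to $\catblock{I,\ol J}$ if and only if it is interior to $\productp{I}{\ol J}$, since the two polytopes share their supporting hyperplanes at such faces. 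Dehomogenizing by intersecting with the hyperplane $x_{\ol j_{\max}}=0$ is an affine isomorphism on the bounded complex, so no cells are lost.

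Combining these two steps gives the cellular anti-isomorphism claimed in the theorem, and specializing yields (1)--(3): interior faces are the covering $\IJ$-forests by Lemma~\ref{lem:interiorsimplex}, proving (1); top-dimensional interior simplices of $\asstri{I,\ol J}$ are the $\IJ$-trees, which thus index the vertices of $\Asso_{I,\ol J}(\ha)$, proving (2); and two $\IJ$-trees share an interior codimension-one face if and only if the common face is a covering $\IJ$-forest obtained by deleting a single flippable arc, which is precisely the condition that the two trees be related by a flip, proving (3).

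The main technical obstacle is the careful handling of the degenerate tropical hyperplanes arising from vertices with $\ha(i,\ol j)=+\infty$, since the classical Develin--Sturmfels setup assumes finite heights and a triangulation of the full product $\productp{I}{\ol J}$. One must verify that Fink--Rinc\'on's extension applies in our setting and that the resulting bounded complex has exactly the claimed cells; equivalently, that no spurious unbounded cells become bounded and no interior cells of $\asstri{I,\ol J}$ are mistakenly sent to unbounded cells. Once this is established the rest of the argument is a direct translation via Lemma~\ref{lem:interiorsimplex} and the description of flips from Section~\ref{sec:IJTamari}.
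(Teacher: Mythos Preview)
Your proposal is correct and follows essentially the same approach as the paper: the paper's argument is simply ``By the discussion above, which follows from results in~\cite{DevelinSturmfels2004} and~\cite{FinkRincon2015}, we deduce the following result,'' and you have spelled out what that discussion entails, namely extending $\ha$ with infinite values, invoking the Develin--Sturmfels/Fink--Rinc\'on duality between bounded cells of the tropical arrangement and interior cells of the triangulation, and then reading off (1)--(3) via Lemma~\ref{lem:interiorsimplex}. Your account is in fact more explicit than the paper's, which defers the concrete verification of which cells are bounded to the subsequent lemmas (Lemmas~\ref{lem:emptycrossing}, \ref{lem:verticesgF}, \ref{lem:coordinates} and the theorem following them).
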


Some examples of $\IJ$-associahedra are depicted in Figure~\ref{fig:Us}.

%%%%%%%%%%%%%%%%%%%%%%%%%%%%%%%%%%%%%%%%%%%%%%%%%%%%%%%%%%%%%%%%%%

\subsection{Defining inequalities and coordinates of the vertices}

We now give a more precise description of the polyhedral cells and vertices of the $\IJ$-associahedron.
Our description follows directly from~\cite{DevelinSturmfels2004} and~\cite{FinkRincon2015} and will be essential to present some geometric properties in the upcoming sections. The construction is illustrated in Example~\ref{ex:ExampleIJtrees}.

\begin{definition}\label{def:cells}
 To each arc $(i,\ol j)$ with $i\prec\ol j$ we associate the following polyhedron \defn{$\tilde \sfg{\ij}$} in~$\RR^{\ol J}$
 \[\tilde \sfg{\ij}=\left\{x\in\RR^{\ol J}\ \colon\ x_{\ol k}-x_{\ol j}\leq \ha(i,\ol k)-\ha(i,\ol j) \text{ for each }\ol k\in \ol J\right\}.\]
 For a subgraph $G$ of $K_{I,\ol J}$, define the polyhedron \defn{$\tilde\sfg(G)$} in~$\RR^{\ol J}$ as
\[\tilde\sfg(G):=\bigcap_{\ij\in G}\tilde \sfg{\ij}.\]
It is easy to see that it is not empty if and only if it is an $\IJ$-forest (cf. Lemma~\ref{lem:emptycrossing}), and then it always contains the vector $(1,\dots,1)$ in its linearity space. We dehomogenize by intersecting with the hyperplane $x_{\ol j_{\max}}=0$, to obtain the 
polyhedral cell \defn{$\sfg(G)$} in $\RR^{\ol J\setminus \{\ol j_{\max}\}}$:
\[\sfg(G):=\tilde\sfg(G)\cap\{x_{\ol j_{\max}}=0\}=\bigcap_{\ij\in G} \sfg{\ij},\]
where $\sfg\ij:=\tilde\sfg\ij\cap\{x_{\ol j_{\max}}=0\}$.
\end{definition}

\begin{lemma}\label{lem:emptycrossing}
For a subgraph $G$ of $K_{I,\ol J}$, the polyhedron \defn{$\tilde\sfg(G)$} is non-empty if and only if $G$ is an $(I,\ol J)$-forest (i.e. it is increasing and non-crossing, cf.~Definition~\ref{def:IJtree}).
\end{lemma}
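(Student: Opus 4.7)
The plan is to verify both directions of the equivalence. The easier direction (any $(I,\ol J)$-forest gives a non-empty polyhedron) reduces to the case where $G$ is an $(I,\ol J)$-tree $T$, since every $(I,\ol J)$-forest extends to some tree $T$ and $\tilde\sfg(G) \supseteq \tilde\sfg(T)$. Because $\ha$ is non-crossing, Proposition~\ref{prop:assheights} and Corollary~\ref{cor:IJtriang} identify the regular triangulation of $\catblock{I,\ol J}$ induced by $\ha$ with $\asstri{I,\ol J}$; via the Develin--Sturmfels duality~\cite{DevelinSturmfels2004} between regular triangulations of products of simplices and their dual inverted tropical hyperplane arrangements, each maximal simplex $T$ corresponds to a vertex of $\arrgtA$, which lifts to a point in $\tilde\sfg(T)\neq\emptyset$.

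For the other direction (non-emptiness forces $G$ to be an $(I,\ol J)$-forest), I would argue by contrapositive. First, if $(i,\ol j) \in G$ is non-increasing ($i \succ \ol j$), then $\ha(i,\ol j) = \infty$. The standing hypothesis $\max(I\sqcup \ol J) \in \ol J$ gives $\max \ol J \succ i$, so $\ha(i, \max \ol J)$ is finite, and the defining inequality $x_{\max \ol J} - x_{\ol j} \leq \ha(i, \max \ol J) - \ha(i,\ol j)$ becomes infeasible. Second, if $G$ contains a crossing $(a,\ol b),(a',\ol{b'})$ with $a \prec a' \prec \ol b \prec \ol{b'}$, that is, $a < a' \leq b < b'$, summing the inequality $x_{\ol{b'}} - x_{\ol b} \leq \ha(a, \ol{b'}) - \ha(a, \ol b)$ (from $(a,\ol b)$, taking $\ol k = \ol{b'}$) with $x_{\ol b} - x_{\ol{b'}} \leq \ha(a', \ol b) - \ha(a', \ol{b'})$ (from $(a',\ol{b'})$, taking $\ol k = \ol b$) yields
\begin{equation*}
\ha(a,\ol b) + \ha(a',\ol{b'}) \leq \ha(a,\ol{b'}) + \ha(a',\ol b),
\end{equation*}
contradicting the non-crossing property of $\ha$ (Proposition~\ref{prop:assheights}) applied to the quadruple $(i, i', j', j) = (a, a', b, b')$.

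The delicate point is the degenerate crossing with $a' = b$: the quadruple then has $i' = j'$, violating the strict condition $i < i' < j' < j$ in Proposition~\ref{prop:assheights}. I expect to bridge this either by reading that condition with non-strict $i' \leq j'$ (which the duality above absorbs uniformly), or by recycling the concavity argument of Lemma~\ref{lem:non-crossing}: for a strictly concave $f$ and positive reals $x, y$, one has $f(x) + f(y) > f(x+y) + f(0)$, which with $x = b - a$ and $y = b' - b$ supplies the required strict inequality for the explicit non-crossing height functions of Lemma~\ref{lem:non-crossing}; the combinatorial invariance of $\asstri{I,\ol J}$ (Corollary~\ref{cor:IJtriang}) then transfers the conclusion to any non-crossing~$\ha$.
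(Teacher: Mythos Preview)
Your proof is correct and, for the direction ``not a forest $\Rightarrow$ empty'', essentially identical to the paper's: both exhibit incompatible inequalities, one from each arc of a crossing pair, and both dispose of the non-increasing case via $\ha(i,\ol j)=\infty$ (you are slightly more explicit in choosing $\ol k=\max\ol J$).

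The one substantive difference is in the converse. Where you invoke the Develin--Sturmfels duality abstractly to assert that each maximal simplex $T$ of $\asstri{I,\ol J}$ corresponds to a vertex of the arrangement lying in $\tilde\sfg(T)$, the paper instead forward-references the explicit coordinates of $\sfg(T)$ given by equation~\eqref{eq:g_tree} in Lemma~\ref{lem:coordinates}, and verifies directly (in Lemma~\ref{lem:proofcoordinates}) that this point satisfies all defining inequalities. Both routes are valid; yours is cleaner if one grants the duality as a black box, while the paper's is more self-contained given that this section is precisely where the duality is being unpacked.

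You are also more scrupulous than the paper about the degenerate crossing $a'=b$. The paper's proof simply cites~\eqref{eq:asso_height}, which is stated for quadruples $i\prec i'\prec\ol j\prec\ol j'$ (allowing $i'=j$), without reconciling this with Proposition~\ref{prop:assheights}, whose hypothesis $i<i'<j'<j$ is strict. Your first proposed fix is the right one: by Lemma~\ref{lem:regularitycharacterization} the height inequalities characterizing a flag regular triangulation range over all edges of the complex, and for $\asstri{I,\ol J}$ these include the nested pairs with $i'=j'$ (equivalently, crossings with $a'=b$ on the other matching). So~\eqref{eq:asso_height} does hold in the degenerate case for any non-crossing $\ha$, and Proposition~\ref{prop:assheights} should really be read with $i<i'\leq j'<j$. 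Your second fix via concavity and combinatorial transfer also works but is unnecessarily heavy.
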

\begin{proof}
 If $G$ is not increasing, then there is $(i,\ol j)\in G$ with $\ol j\prec i$. In this case $\ha(i,\ol j)=\infty$ and the inequality $x_{\ol k}-x_{\ol j}\leq \ha(i,\ol k)-\ha(i,\ol j)$ has no feasible point.
 
 If $G$ has a crossing, that is $(i,\ol j), (i',\ol j')\in G$ for $i\prec i'\prec \ol j\prec\ol j'$, then by \eqref{eq:asso_height} the definition of $\sfg(G)$ includes the following pair of incompatible inequalities
 \begin{align*}
 x_{\ol j'}-x_{\ol j}&\leq \ha(i,\ol j')-\ha(i,\ol j)\stackrel{\eqref{eq:asso_height}}{<}\ha(i',\ol j')-\ha(i',\ol j),\\  
 x_{\ol j}-x_{\ol j'}&\leq \ha(i',\ol j)-\ha(i',\ol j').  
\end{align*}

It remains to see that if it is an $\IJ$-forest then it is not empty. Notice that it suffices to see it for $\IJ$-trees, since every $\IJ$-forest can be completed into a tree. In equation~\eqref{eq:g_tree} of the upcoming Lemma~\ref{lem:coordinates} we present an explicit point that lies in $\sfg(T)$ for each $\IJ$-tree $T$.
\end{proof}

\begin{lemma}\label{lem:verticesgF}
For each covering $(I,\ol J)$-forest $F$, $\sfg(F)$ is a (bounded) convex polytope of dimension one less than the number of connected components of $F$ and whose 
vertices correspond to the $\IJ$-trees containing $F$:
\begin{equation*}
\sfg(F)=\conv\left\{ \sfg(T)\colon \text{ $T$ is $(I,\ol J)$-tree containing $F$} \right\}.
\end{equation*}
\end{lemma}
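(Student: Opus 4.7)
The plan is to combine the statement of Theorem~\ref{thm:nurealization} (which already identifies $\sfg(F)$ as a cell of the $\IJ$-associahedron, so in particular bounded) with a direct dimension count, and then use the anti-isomorphism of posets to identify the vertices.

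First I would compute $\dim \tilde\sfg(F)$. Let $F_1,\dots,F_c$ be the connected components of $F$; since $F$ is covering, these partition $I\sqcup \ol J$ and each $F_\ell$ is a tree on its node set $I_\ell\sqcup \ol J_\ell$. For each $i\in I_\ell$ of degree $d_i$ in $F$, the inequalities indexed by the arcs $(i,\ol j),(i,\ol k)\in F$ of Definition~\ref{def:cells} pair up into the equalities
\[
x_{\ol j}-x_{\ol k}=\ha(i,\ol j)-\ha(i,\ol k).
\]
Summing over $i\in I_\ell$ yields $\sum_{i\in I_\ell}(d_i-1)=|F_\ell|-|I_\ell|=|\ol J_\ell|-1$ independent equations on the variables $\{x_{\ol j}\}_{\ol j\in \ol J_\ell}$, reducing the affine span in these coordinates to a single line in the direction $(1,\dots,1)$. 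Since the components are supported on disjoint sets of $\ol J$-coordinates, the overall affine subspace determined by the equalities has dimension $c$. The remaining inequalities are strict at interior points (guaranteed by Lemma~\ref{lem:emptycrossing} applied to a maximal $\IJ$-tree $T\supseteq F$, whose defining point lies in $\tilde\sfg(F)$), so $\dim\tilde\sfg(F)=c$ and, after dehomogenizing by $x_{\ol j_{\max}}=0$, $\dim\sfg(F)=c-1$.

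Next I would identify the vertices. By Theorem~\ref{thm:nurealization} the assignment $F\mapsto \sfg(F)$ is an anti-isomorphism from the poset of covering $\IJ$-forests to the poset of cells of $\Asso_{I,\ol J}$; in particular $\sfg(F')\subseteq \sfg(F)$ precisely when $F\subseteq F'$. Taking $F'$ to be a maximal $\IJ$-forest containing $F$ (i.e.\ an $\IJ$-tree containing $F$) gives the zero-dimensional faces of $\sfg(F)$. Therefore the vertex set of $\sfg(F)$ is exactly $\{\sfg(T)\colon T\text{ is an }\IJ\text{-tree containing }F\}$.

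Finally, boundedness of $\sfg(F)$ follows because $\sfg(F)$ is a cell of the $\IJ$-associahedron, which by Definition~\ref{def:nuassociahedronH} consists only of bounded cells of the tropical hyperplane arrangement $\arrgtA$. (If one prefers an independent argument, one can show boundedness directly: the hypothesis that $F$ is covering means that $(\min I,\max \ol J)\in F$ and that every node of $I\sqcup \ol J$ is incident to an arc of $F$, which via the equalities above pins every variable $x_{\ol j}$ inside a bounded interval determined by the finitely many $\IJ$-trees $T\supseteq F$.) A bounded convex polyhedron equals the convex hull of its vertices, so
\[
\sfg(F)=\conv\bigl\{\sfg(T)\colon T\text{ is an }\IJ\text{-tree containing }F\bigr\},
\]
completing the proof. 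The main obstacle is the dimension count in the first step; the rest is essentially bookkeeping using Theorem~\ref{thm:nurealization}.
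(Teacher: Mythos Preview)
Your argument has a circularity problem. Theorem~\ref{thm:nurealization} asserts that the poset of bounded cells of the tropical arrangement is anti-isomorphic to the poset of covering $\IJ$-forests, but it does \emph{not} assert that this anti-isomorphism is realized by the explicit map $F\mapsto\sfg(F)$ of Definition~\ref{def:cells}. That identification is exactly the content of the subsection you are in: Lemma~\ref{lem:verticesgF} is one of the ingredients that feed into Theorem~5.8, which is where the cells of $\Asso_{I,\ol J}(\ha)$ are finally shown to be the polytopes $\sfg(F)$. So when you invoke Theorem~\ref{thm:nurealization} to conclude that $\sfg(F)$ is bounded (``because it is a cell'') and to read off its vertices from the abstract anti-isomorphism, you are assuming what this lemma is meant to help establish.

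A second, smaller gap: your dimension count correctly gives $\dim\tilde\sfg(F)\leq c$ via the forced equalities, but the lower bound needs more than Lemma~\ref{lem:emptycrossing}. That lemma only says $\tilde\sfg(F)$ is nonempty; it does not supply a point at which all remaining inequalities are strict. You would need to exhibit such a relative-interior point explicitly, or argue otherwise that no further inequality collapses to an equality on all of $\tilde\sfg(F)$.

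The paper avoids both issues by working directly from the definition of $\sfg$. Boundedness is proved by sandwiching each coordinate $x_{\ol j}$ between $\ha(i,\ol j)-\ha(i,\ol j_{\max})$ and $\ha(i_{\min},\ol j)-\ha(i_{\min},\ol j_{\max})$, using only that $(i_{\min},\ol j_{\max})\in F$ and that no $\ol j$ is isolated. For the dimension the paper simply cites~\cite[Prop.~17]{DevelinSturmfels2004}. The face structure is obtained from the elementary identity $\sfg(F)\cap\sfg(F')=\sfg(F\cup F')$ together with the observation that every facet of $\sfg\ij$ is of the form $\sfg\ij\cap\sfg(i,\ol j')$; this forces every face of $\sfg(F)$ to be $\sfg(F')$ for some $\IJ$-forest $F'\supseteq F$, and hence the vertices to be the $\sfg(T)$ for $\IJ$-trees $T\supseteq F$. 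None of this appeals to Theorem~\ref{thm:nurealization}.
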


\begin{proof}
 This follows essentially from~\cite[Prop.~4.1]{FinkRincon2015} (and \cite[Sec.~3]{DevelinSturmfels2004}), but we include the proof because the translation might not be straightforward. 
 
 If $F$ is covering, then by definition $(i_{\min},\ol j_{\max})\in F$, and no $\ol j\in \ol J$ is isolated. Let $\ol j\in \ol J\setminus \{\ol j_{\max}\}$, and let $i\in I$ such that $(i,\ol j)\in F$. 
 Then the following inequalities hold for $\tilde \sfg(F)$
 \[ \ha(i,\ol j)-\ha(i,\ol j_{\max})\leq x_{\ol j}-x_{\ol j_{\max}}\leq \ha(i_{\min},\ol j)-\ha(i_{\min},\ol j_{\max}),\]
 where all the heights are of the form $\ha(i',\ol j')$ with $i'\prec \ol j'$ and hence both the upper and the lower bound on $x_{\ol j}-x_{\ol j_{\max}}$ are finite. This shows the boundedness of $\sfg(F)$.
 
 The statement concerning the dimension follows directly from~\cite[Prop.~17]{DevelinSturmfels2004}. 
 Moreover, it is straightforward to check that $\sfg(F)\cap \sfg(F')=\sfg(F\cup F')$. Since every facet of $\sfg\ij$ is of the form $\sfg\ij\cap \sfg(i,\ol j')$ for some $\ol j'\in \ol J$, and every face 
 of $\sfg(F)$ is an intersection of such facets, we conclude that every face of $\sfg(F)$ is of the form $\sfg(F')$ for some $\IJ$-forest $F'$ containing $F$. Maximal such forests are $\IJ$-trees, which implies that the vertices of $\sfg(F)$ are of the form $\sfg(T)$ for $\IJ$-trees containing $F$.
\end{proof}

It only remains to provide the actual coordinates of the vertices $\sfg(T)$. 
\begin{lemma}\label{lem:coordinates}
For an $\IJ$-tree $T$ and for $\ol k\in\ol J\setminus\{\ol j_{\max}\}$, consider the sequence $P_T({\ol k})$ of arcs traversed in the unique oriented path from~$\ol k$ to~$\ol j_{\max}$ in $T$.
Then the coordinate $\ol k$ of $\sfg(T)$ is:
\begin{equation}\label{eq:g_tree}
\sfg(T)_{\ol k}:=\sum_{(i,\ol j)\in P_T(\ol k)}\pm \ha({i,\ol j}),
\end{equation}
where the sign of each summand is positive if $(i,\ol j)$ is traversed from $\ol j$ to $i$ and negative otherwise. 
\end{lemma}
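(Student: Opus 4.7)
My plan is to isolate, among the inequalities defining $\tilde\sfg(T)$, a system of equalities that must hold on $\sfg(T)$, and then solve it by telescoping along the path $P_T(\ol k)$. Whenever two arcs $(i,\ol j),(i,\ol k)\in T$ share a common $I$-node $i$, the inequality $x_{\ol k}-x_{\ol j}\leq \ha(i,\ol k)-\ha(i,\ol j)$ (contributed by $(i,\ol j)$) and $x_{\ol j}-x_{\ol k}\leq \ha(i,\ol j)-\ha(i,\ol k)$ (contributed by $(i,\ol k)$) sum to $0\leq 0$. Hence both must be equalities at every point of $\tilde\sfg(T)$, yielding the key relation
\[x_{\ol k}-x_{\ol j}\;=\;\ha(i,\ol k)-\ha(i,\ol j). \qquad (\star)\]

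With $(\star)$ in hand, I would write the path $P_T(\ol k)$ as $\ol k=\ol k_0,i_0,\ol k_1,i_1,\ldots,i_{\ell-1},\ol k_\ell=\ol j_{\max}$; since $T$ is bipartite, this path alternates between $\ol J$- and $I$-nodes. For each $s=0,\ldots,\ell-1$, both arcs $(i_s,\ol k_s)$ and $(i_s,\ol k_{s+1})$ lie in $T$, so $(\star)$ gives
\[x_{\ol k_{s+1}}-x_{\ol k_s}\;=\;\ha(i_s,\ol k_{s+1})-\ha(i_s,\ol k_s).\]
Telescoping these equations from $s=0$ to $\ell-1$ and applying the dehomogenization $x_{\ol j_{\max}}=0$ produces
\[\sfg(T)_{\ol k}\;=\;\sum_{s=0}^{\ell-1}\bigl(\ha(i_s,\ol k_s)-\ha(i_s,\ol k_{s+1})\bigr),\]
which matches \eqref{eq:g_tree}: each arc $(i_s,\ol k_s)$ is traversed from $\ol J$ to $I$ and contributes $+\ha$, while each $(i_s,\ol k_{s+1})$ is traversed from $I$ to $\ol J$ and contributes $-\ha$. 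A quick linear-algebra count confirms that the $|\ol J|-1$ independent equalities obtained from $(\star)$, together with $x_{\ol j_{\max}}=0$, pin down the point uniquely, so this is indeed the only candidate for $\sfg(T)$.

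To finish the argument, one must check that the point $x$ thus defined genuinely lies in $\tilde\sfg(T)$, i.e., that it satisfies $x_{\ol k}-x_{\ol j}\leq \ha(i,\ol k)-\ha(i,\ol j)$ even when $(i,\ol j)\in T$ but $(i,\ol k)\notin T$ (with $i\prec\ol k$). I expect this to be the main technical obstacle. The tool is the non-crossing condition \eqref{eq:asso_height} on $\ha$: since $T$ is a maximal non-crossing $\IJ$-forest, adding such a missing arc $(i,\ol k)$ creates a crossing with some $(i',\ol j')\in T$; iterating \eqref{eq:asso_height} in combination with the equalities $(\star)$ along the subpath of $T$ linking the endpoints of the two crossing arcs produces the required inequality, completing the proof.
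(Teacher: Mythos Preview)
Your proposal is correct and, at its technical core, coincides with the paper's approach: both reduce the lemma to verifying that the point defined by~\eqref{eq:g_tree} satisfies all the defining inequalities of $\tilde\sfg(T)$, which is exactly the content of Lemma~\ref{lem:proofcoordinates}. Your acknowledged ``main technical obstacle'' is precisely that lemma, and your sketch of its proof (iterate~\eqref{eq:asso_height} along the tree path) is the right idea, though the paper carries it out more explicitly by analyzing the monotone shape of $P_T(\ol k,\ol j)$ and applying two concrete cost-reducing moves.

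Where you differ is in motivation: the paper simply \emph{posits} formula~\eqref{eq:g_tree} and then verifies it, whereas you first \emph{derive} it by observing that any two arcs of $T$ sharing an $I$-node force the corresponding inequality pair to collapse to the equality $(\star)$, and then telescope along $P_T(\ol k)$. This is a genuine expository improvement: it explains where the formula comes from and simultaneously shows uniqueness of the candidate point (which the paper instead obtains from the dimension count in Lemma~\ref{lem:verticesgF}). On the other hand, the paper's route makes clearer that feasibility is the entire content---and hence that Lemma~\ref{lem:coordinates} also supplies the missing non-emptiness in Lemma~\ref{lem:emptycrossing}---so you should be aware that your ``finishing'' step is not a minor check but carries the full weight of the argument.
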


The result follows from the following lemma, which shows that the point described in~\eqref{eq:g_tree} fulfills the desired constraints. 

\begin{lemma}\label{lem:proofcoordinates}
If $(i,\ol j)\in T$ then $\sfg(T)_{\ol k}-\sfg(T)_{\ol j}\leq \ha(i,\ol k)-\ha(i,\ol j)$ for all $\ol k \in \ol J$. Equality holds if and only if $(i,\ol k)\in T$.
\end{lemma}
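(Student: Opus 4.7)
The plan is to build on the explicit formula from Lemma~\ref{lem:coordinates}, which I first restate in telescoped form:
\[
\sfg(T)_{\ol k} - \sfg(T)_{\ol j} \;=\; \sum_{(a,\ol b)\in P_T(\ol k\to\ol j)} \pm\, \ha(a,\ol b),
\]
where $P_T(\ol k\to\ol j)$ is the unique path in $T$ from $\ol k$ to $\ol j$ and the signs are the ones of Equation~\eqref{eq:g_tree}. This identity follows because the paths $P_T(\ol k)$ and $P_T(\ol j)$ to $\ol j_{\max}$ share a common suffix whose contributions cancel in the difference.

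The equality statement is then immediate: if $(i,\ol k)\in T$, then since $T$ is a tree containing both $(i,\ol k)$ and $(i,\ol j)$, the unique path $P_T(\ol k\to\ol j)$ must be the two-arc path $\ol k\to i\to\ol j$ (or empty when $\ol k=\ol j$), and the signed sum evaluates to exactly $\ha(i,\ol k)-\ha(i,\ol j)$.

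For strict inequality I first dispense with the trivial case $i\succ\ol k$, where $\ha(i,\ol k)=+\infty$, and then induct on the (even) length $s\geq 2$ of $P_T(\ol k\to\ol j)$. In the base case $s=2$, writing the path as $\ol k\to i_1\to\ol j$ with $i_1\neq i$, the non-crossing property of $T$ applied to the arcs $(i,\ol j),(i_1,\ol k),(i_1,\ol j)\in T$ forces the quadruple $(i,i_1,\ol k,\ol j)$ into exactly one of two admissible configurations, $i<i_1\leq k<j$ or $i_1<i\leq j<k$; in either, Proposition~\ref{prop:assheights} directly yields the desired strict inequality $\ha(i_1,\ol k)-\ha(i_1,\ol j)<\ha(i,\ol k)-\ha(i,\ol j)$. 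For the inductive step, splitting $P_T(\ol k\to\ol j)$ at its first intermediate $\ol J$-node $\ol\ell$ combines the equality case applied to the length-two initial subpath (since $(i_1,\ol k),(i_1,\ol\ell)\in T$) with the induction hypothesis on the remainder; when $(i,\ol\ell)\in T$ the argument closes cleanly by instead applying the base case $s=2$ to $\ol k,\ol\ell$ with $(i,\ol\ell)$ playing the role of $(i,\ol j)$.

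The main obstacle is the remaining subcase of the inductive step, in which $(i,\ol\ell)\notin T$: there the induction gives strict inequality on the $\ol\ell$-to-$\ol j$ piece, but combining it with the equality on the initial piece requires the auxiliary four-point inequality $\ha(i_1,\ol k)+\ha(i,\ol\ell)\leq\ha(i_1,\ol\ell)+\ha(i,\ol k)$, which Proposition~\ref{prop:assheights} gives directly in only two of the four possible position configurations of $(i,i_1)$ versus $(\ol k,\ol\ell)$. The remaining configurations will have to be excluded using the pairwise non-crossing property of the arcs $(i,\ol j),(i_1,\ol k),(i_1,\ol\ell)\in T$ (and, where necessary, the arcs $(i_r,\ol\ell_{r-1}),(i_r,\ol\ell_r)$ appearing later in $P_T(\ol k\to\ol j)$), which forces those configurations to produce crossings that contradict $T$ being non-crossing.
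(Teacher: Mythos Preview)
Your telescoping formula, equality case, and base case $s=2$ are all fine and match the paper's setup. The gap is exactly where you flag it: the subcase $(i,\ol\ell)\notin T$ of the inductive step. You need the four-point inequality $\ha(i_1,\ol k)+\ha(i,\ol\ell)\leq \ha(i_1,\ol\ell)+\ha(i,\ol k)$, but this can genuinely fail. For instance, if $i<i_1\leq \ell<k$ (which is not excluded by the non-crossing of $(i,\ol j),(i_1,\ol k),(i_1,\ol\ell)$ alone), then $(i,\ol\ell)$ and $(i_1,\ol k)$ cross and the inequality goes the \emph{wrong} way. Worse, if $\ol\ell\prec i$ (also not locally excluded) then $\ha(i,\ol\ell)=\infty$ and the inductive bound on the tail is vacuous. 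You say these configurations ``will have to be excluded'' using later arcs of the path, and that is true, but you have not done it --- and doing it amounts to proving the monotone structure of $P_T(\ol k\to\ol j)$ that the paper establishes directly.

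The paper's proof bypasses the induction entirely: it observes that in a non-crossing tree the path $P_T(\ol k\to\ol j)$ has an explicit two-phase shape (indices first move one way, then the other, in a precise nested order), and then applies~\eqref{eq:asso_height} twice to collapse two consecutive arcs at a time, monotonically increasing the cost until only the two-arc path $\ol k\to i'_s\to\ol j$ remains, where $i'_s$ is the last $I$-node of the path. A final application of~\eqref{eq:asso_height} using $i'_s\prec i$ finishes. Your induction, once the bad configurations are excluded, would implicitly reprove this structure; the paper's approach just uses it up front.
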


\begin{proof}
Let $P_T({\ol k},\ol j)$ be the sequence of arcs traversed in the unique oriented path from~$\ol k$ to~$\ol j$ in $T$ (so $P_T(\ol k)$ in Lemma~\ref{lem:coordinates} corresponds to $P_T(\ol k,\ol j_{\max})$.
Then 
\begin{equation}\label{eq:cost}
\sfg(T)_{\ol k}-\sfg(T)_{\ol j}=\sum_{(i,\ol j)\in P_T(\ol k,\ol j)}\pm \ha({i,\ol j}),
\end{equation}
where the sign of each summand is positive if $(i,\ol j)$ is traversed from $\ol j$ to $i$ and negative otherwise. We call the right hand side of \eqref{eq:cost} the \defn{cost} of the path~$P_T(\ol k,\ol j)$, and we denote it by \defn{$\cost{P_T(\ol k,\ol j)}$}.

It is clear that, if $(i,\ol k)\in T$, then $\cost{P_T(\ol k,\ol j)} = \cost{\ol k\rightarrow i\rightarrow \ol j} = \ha(i,\ol k)-\ha(i,\ol j)$ as desired. If $\ol k\prec i$ then the strict inequality holds because $\ha(i,\ol k)=\infty$.  
We need to prove that,
if $i \prec \ol k$ and $(i,\ol k)\notin T$, then $\cost{P_T(\ol k,\ol j)} < \cost{\ol k\rightarrow i\rightarrow \ol j}$. We do it for the case when $\ol j \prec \ol k$. The case $i \prec \ol k \prec \ol j$
follows similar ideas. 

It is not hard to see that $P_T(\ol k,\ol j)$ is always of the form 
\[\ol k=\ol j_0\ra i_1\ra \ol j_1\ra \cdots\ra \ol j_r\ra i_1'\ra \ol j_1'\ra \cdots\ra i_s' \ra \ol j_s'=\ol j,\]
where
\[i'_1\prec  \cdots \prec i'_s\prec  \ol j_s' \prec \cdots \prec \ol j_1'\prec i_r\prec \cdots \prec  i_1\prec  \ol j_0'\prec \cdots \prec \ol j_r,\]
$r\geq 0$ and $s\geq 1$.
From \eqref{eq:asso_height} one deduces that 
\begin{align*}
 \cost{\ol j_{r-1}\ra i_r\ra\ol j_r\ra i'_1}&< \cost{\ol j_{r-1}\ra i'_1},\text{ and }\\
 \cost{\ol j_{r}\ra i_1'\ra\ol j_1'\ra i'_2}&< \cost{\ol j_{r}\ra  i'_2}.
\end{align*}
These two operations can be performed to simplify the path to $\ol k\rightarrow i_s'\rightarrow \ol j$, for the last $i_s'\in I$ in the path from $\ol k$ to $\ol j$. Since these operations increase the cost at every step 
\[
\cost{P_T(\ol k,\ol j)} < \cost{\ol k\rightarrow i_s'\rightarrow \ol j}.
\] 
It is not hard to see that $i_s'\prec i$. Using Equation~\eqref{eq:asso_height} for $i_s' \prec i \prec \ol j \prec \ol k$ we get that $\cost{P_T(\ol k,\ol j)} < \cost{\ol k\rightarrow i\rightarrow \ol j}$.
\end{proof}

\begin{theorem}
The polyhedral cells of the $\IJ$-associahedron $\Asso_{I,\ol J}(\ha)$ are the convex polytopes $\sfg(F)$ ranging through all covering $(I,\ol J)$-forests~$F$. The coordinates of its vertices are given by Equation~\eqref{eq:g_tree} ranging over all $\IJ$-trees $T$.
\end{theorem}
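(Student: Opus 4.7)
The plan is to assemble the statement from the structural lemmas already established for the polyhedra $\sfg(G)$, and verify that these concrete polytopes are precisely the bounded cells of the arrangement $\arrgtA$ appearing in Definition~\ref{def:nuassociahedronH}.

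First I would unpack the defining inequality~\eqref{eq:tropical_hyp_vertices} of each inverted tropical hyperplane $H_i$. The hyperplane partitions $\TP^{|\ol J|-1}$ into closed sectors indexed by $\ol j \in \ol J$: a point $x$ lies in the $\ol j$-sector of $H_i$ if and only if $-\ha(i,\ol j) + x_{\ol j} \geq -\ha(i,\ol k) + x_{\ol k}$ for every $\ol k \in \ol J$. Rearranging these inequalities recovers exactly the defining system of $\tilde\sfg(i,\ol j)$ in Definition~\ref{def:cells}. A face of the arrangement $\arrgtA$ is obtained by intersecting, for each $i \in I$, some collection of sectors of $H_i$; recording the chosen pairs as a subgraph $G \subseteq K_{I,\ol J}$, the corresponding face is precisely $\tilde\sfg(G)$. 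After dehomogenizing by $x_{\ol j_{\max}} = 0$, the cells of the arrangement inside $\RR^{|\ol J|-1}$ are exactly the polyhedra $\sfg(G)$.

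Next I would determine which subgraphs produce non-empty bounded cells. Lemma~\ref{lem:emptycrossing} gives the first half: $\sfg(G) \neq \emptyset$ if and only if $G$ is an $\IJ$-forest. For boundedness, Lemma~\ref{lem:verticesgF} shows that $\sfg(F)$ is a bounded polytope whenever $F$ is covering; conversely, if $F$ is an $\IJ$-forest that either omits the arc $(\min I, \ol j_{\max})$ or leaves some $\ol j \in \ol J \setminus \{\ol j_{\max}\}$ isolated, then the pairing with the missing inequality shows that the coordinate $x_{\ol j}$ (for some $\ol j$) is unbounded in $\sfg(F)$. This matches the combinatorial prediction of Theorem~\ref{thm:nurealization}, where bounded faces of the arrangement are anti-isomorphic to interior faces of $\asscomp{I,\ol J}$, which by Lemma~\ref{lem:interiorsimplex} are exactly the covering $\IJ$-forests.

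Finally, for the vertex coordinates, Lemma~\ref{lem:verticesgF} identifies the vertices of every cell $\sfg(F)$ with the $\sfg(T)$ for $\IJ$-trees $T \supseteq F$, so the vertices of $\Asso_{I,\ol J}(\ha)$ are exactly the points $\sfg(T)$ as $T$ ranges over all $\IJ$-trees. The explicit formula~\eqref{eq:g_tree} is then Lemma~\ref{lem:coordinates}, whose content is that the coordinates $\sfg(T)_{\ol k} := \sum_{(i,\ol j)\in P_T(\ol k)} \pm \ha(i,\ol j)$ satisfy the tree equalities $\sfg(T)_{\ol k} - \sfg(T)_{\ol j} = \ha(i,\ol k) - \ha(i,\ol j)$ for every $(i,\ol j)\in T$, which uniquely determines the vertex once the gauge $x_{\ol j_{\max}} = 0$ is fixed (because $T$ is a spanning tree of $I \sqcup \ol J$ and the equalities propagate values along its arcs). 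The main obstacle in the overall argument is really housed in Lemma~\ref{lem:proofcoordinates}, namely that for $(i,\ol j)\in T$ and $\ol k \in \ol J$ the strict inequality $\sfg(T)_{\ol k} - \sfg(T)_{\ol j} < \ha(i,\ol k) - \ha(i,\ol j)$ holds when $(i,\ol k) \notin T$; this is handled by repeatedly applying the non-crossing inequality~\eqref{eq:asso_height} along the unique path in $T$ from $\ol k$ to $\ol j$, and no further work is needed here.
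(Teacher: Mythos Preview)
Your proposal follows essentially the same approach as the paper: identify the arrangement cells with the polyhedra $\sfg(G)$, invoke Lemma~\ref{lem:emptycrossing} for non-emptiness, Lemma~\ref{lem:verticesgF} for boundedness of covering forests, and Lemmas~\ref{lem:coordinates} and~\ref{lem:proofcoordinates} for the explicit vertex coordinates. The paper fills in two details you gloss over: the unboundedness when $(\min I,\ol j_{\max})\notin F$ is not witnessed by a single coordinate but by a ray that simultaneously increases every $x_{\ol j}$ with $\ol j\preceq \ol k$ (where $\ol k$ is the largest neighbor of $\min I$), and the paper explicitly verifies that distinct covering forests yield distinct cells via $\sfg(F)\cap\sfg(F')=\sfg(F\cup F')$.
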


\begin{proof}
The associahedron $\Asso_{I,\ol J}(\ha)$ is defined as the polyhedral complex of bounded cells induced by the arrangement of inverted tropical hyperplanes ${\arrgtA=(H_i)_{i\in I}}$ determined by Equation~\eqref{eq:tropical_hyp_vertices}. Each such $H_i$ subdivides the space $\RR^{\ol J}$ into at most $|\ol J|$ regions $\tilde \sfg{\ij}$ for $\ol j\in \ol J$ (region $\tilde \sfg{\ij}$ is empty when $\ha(i,\ol j)=\infty$). 
By construction, each cell induced by $\arrgtA$ in $\RR^{\ol J\setminus \{\ol j_{\max}\}}$ is the intersection of at least one region from each $H_i$. Hence, the cells induced by $\arrgtA$ are the polyhedra $g(G)$ for all subgraphs $G$ of~$K_{I,\ol J}$ with no isolated nodes from $I$.

By Lemma~\ref{lem:emptycrossing}, the cell $g(G)$ is non-empty if and only if $G$ is an $(I,\ol J)$-forest. 
To see that it is covering, we first show that if $F$ is an $\IJ$-forest with no isolated nodes in $I$ but at least one isolated node $\ol j\in\ol J$ then $g(F)$ is not bounded. For this, note that all the inequalities involved in the definition of $\tilde g(F)$ that contain the variable $x_{\ol j}$ are of the form $x_{\ol j} - x_{\ol j'}\leq \ha(i,\ol j)-\ha(i,\ol j')$ for some non isolated vertices~$\ol j'$. Decreasing the value of the $\ol j$th entry of any feasible point in $\tilde g(F)$ creates an infinite ray contained in $\tilde g(F)$, and so $g(F)$ is not bounded. 

A similar argument works for the case where $(i_{\min},\ol j_{\max})\notin F$ and $i_{\min}$ is not isolated. Indeed, let $\ol k\prec \ol j_{\max}$ be the largest neighbor of $i_{\min}$. For $x\in g(F)$ and $\lambda>0$, let $x'\in \RR^{\ol J\setminus \{\ol j_{\max}\}}$ be such that $x_{\ol j}'=x_{\ol j}+\lambda$ for $\ol j\preceq \ol k$ and $x_{\ol j}'=x_{\ol j}$ for $\ol j\succ \ol k$. Note that the only inequalities of $g(F)$ that $x'$ might violate are those of the form $x'_{\ol j} - x'_{\ol j'}\leq \ha(i,\ol j)-\ha(i,\ol j')$ with $\ol j\preceq \ol k$. But if $\ol j'\preceq k$ then $x'_{\ol j} - x'_{\ol j'}=x_{\ol j} - x_{\ol j'}$, and if $\ol j'\succ \ol k$, then $i\succ \ol k\succeq \ol j$ (otherwise $(i_{\min},\ol k)$ and $(i,\ol j')$ would cross) and thus $\ha(i,\ol j)=\infty$ and the inequality is trivially verified. 

Since we have already seen that if $F$ is a covering $\IJ$-forest then $g(F)$ is a bounded polytope (Lemma~\ref{lem:verticesgF}), this shows 
that the cells $\Asso_{I,\ol J}(\ha)$ are the convex polytopes $\sfg(F)$ ranging through all covering $(I,\ol J)$-forests~$F$.
To see that different covering $(I,\ol J)$-forests give rise to different cells, consider two different covering $(I,\ol J)$-forests $F$ and $F'$ and assume without loss of generality that $F\nsubseteq F'$. The intersection $g(F)\cap g(F')=g(F\cup F')$ is either empty (if $F\cup F'$ has a crossing) or of lower dimension than $g(F)$ (adding a non-crossing arc reduces the dimension by Lemma~\ref{lem:verticesgF}).

Finally, the vertices of~$\Asso_{I,\ol J}(\ha)$ correspond to $\IJ$-trees $T$ by Lemma~\ref{lem:verticesgF}. Their coordinates $g(T)$ are determined by Equation~\eqref{eq:g_tree} in Lemma~\ref{lem:coordinates}.
\end{proof}

In particular, we recover a tropical realization of the classical associahedron when we take $\IJ=([n],[\ol n])$ (tropical realizations of associahedra had already been found, see~\cite{JoswigKulas2010,RoteSantosStreinu2003}).
\begin{corollary}\label{cor:classicalassociahedron}
$\Asso([n],[\ol n])$ is a classical $(n-1)$-dimensional associahedron.
\end{corollary}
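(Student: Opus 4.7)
The plan is to check that the face poset of the polyhedral complex $\Asso_{[n],[\ol n]}(\ha)$ agrees with the face lattice of the simple $(n-1)$-dimensional associahedron $K_{n+1}$, and then observe that $\Asso_{[n],[\ol n]}(\ha)$ must consist of a single top-dimensional $(n-1)$-polytope together with its faces. By Theorem~\ref{thm:nurealization}, this face poset is anti-isomorphic to the poset of covering $\nn$-forests ordered by inclusion, so the task is to put covering $\nn$-forests in inclusion-preserving bijection with partial triangulations of a convex $(n+2)$-gon (i.e.\ with sets of pairwise non-crossing diagonals).

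The main step is to show that every covering $\nn$-forest contains the $n+2$ cone arcs $\frC:=\{(0,\ol n)\}\cup\{(i,\ol i)\colon i\in[n]\}$ of Proposition~\ref{prop:bijectionpolygon}. The arc $(0,\ol n)$ is required by Lemma~\ref{lem:interiorsimplex}; the arcs $(0,\ol 0)$ and $(n,\ol n)$ are forced because they are, respectively, the unique increasing arcs incident to $\ol 0$ and to $n$. For $0<i<n$, if a covering forest $F$ did not contain $(i,\ol i)$, then the no-isolated-node condition would yield arcs $(i',\ol i)\in F$ with $i'<i$ and $(i,\ol k)\in F$ with $k>i$; these satisfy $i'<i\leq i<k$ and so cross in the sense of Definition~\ref{def:IJtree}, a contradiction. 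Removing $\frC$ from any covering forest therefore leaves an arbitrary non-crossing subset $D$ of the non-cone arcs, and via the construction recalled in Figure~\ref{fig:triang2tree} and Remark~\ref{rem:bijectionTriTree} these non-cone arcs biject with the diagonals of a convex $(n+2)$-gon in a crossing-preserving way. Hence covering forests correspond bijectively, and compatibly with inclusions, to the non-crossing diagonal sets of the $(n+2)$-gon.

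To close the argument I will match dimensions using Lemma~\ref{lem:verticesgF}. Since $\nn$-trees are trees in the graph-theoretic sense on $2n+2$ vertices (Remark~\ref{rem:bijectionTriTree}), every covering forest is itself a forest in the graph-theoretic sense, and a covering forest $\frC\cup D$ has exactly $2n+2-((n+2)+|D|)=n-|D|$ connected components. Thus the associated cell $\sfg(\frC\cup D)$ has dimension $n-1-|D|$. In particular $\sfg(\frC)$ is the unique cell of top dimension $n-1$, and the rest of $\Asso_{[n],[\ol n]}(\ha)$ forms its face complex, so the polyhedral complex is the face complex of a single convex $(n-1)$-polytope combinatorially equivalent to $K_{n+1}$. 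The only delicate point is the forcing of the cone arcs $(i,\ol i)$ for $0<i<n$ in every covering forest; once this is in place, the remainder is bookkeeping with Lemma~\ref{lem:verticesgF} and the polygon/tree correspondence.
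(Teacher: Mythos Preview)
Your argument is correct and follows essentially the same route the paper has in mind: the corollary is meant to be read off directly from Theorem~\ref{thm:nurealization} together with Proposition~\ref{prop:bijectionpolygon} and Lemma~\ref{lem:interiorsimplex}, and you have simply made explicit the key step (that every covering $\nn$-forest must contain all $n+2$ cone arcs $\frC$) which the paper leaves implicit in its statement that $\asscomp{[n],[\ol n]}$ is the join of an $(n+1)$-simplex with the boundary of a simplicial $(n-1)$-associahedron. Your dimension count via Lemma~\ref{lem:verticesgF} and the identification of $\sfg(\frC)$ as the unique maximal cell are exactly the right way to close the argument.
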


\begin{example}\label{ex:ExampleIJtrees}
 We want to illustrate this construction with a concrete example. Take $I=\{0,1,3,4,6,7\}$ and $\ol J=\{\ol 2,\ol 5,\ol 8\}$. There are seven $\IJ$-trees, depicted in Figure~\ref{fig:IJTamaris} (right), together with 
 their $\IJ$-Tamari ordering.
We use the non-crossing height function $\ha(i,\ol j)=-(j-i)^2$, which produces the following values
\[
\ha(i,\ol j)= 
\begin{blockarray}{ccccccc}
   & 0&1&3&4&6&7 \\
    \begin{block}{c(cccccc)}
    \ol 8 & -64 & -49 & -25 & -16 & -4 & -1 \\
    \ol 5 & -25 & -16 & -4 & -1 & \infty & \infty \\
    \ol 2 & -4 & -1 & \infty & \infty & \infty & \infty\\
    \end{block}
  \end{blockarray}
\]
For the tree $T_1$, we get that the path from $\ol 2$ to $\ol 8$ is $\ol 2\to 1\to \ol 8$, and the path from $\ol 5$ to $\ol 8$ is $\ol 5\to 4\to \ol 8$. Therefore,
\begin{align*}
 \sfg(T_1)_{\ol 2}&=\ha(1,\ol 2)-\ha(1,\ol 8)=-1+49=48, \text{ and }\\
 \sfg(T_1)_{\ol 5}&=\ha(4,\ol 5)-\ha(4,\ol 8)=-1+16=15.
\end{align*}
Hence, $\sfg(T_1)=(48,15)$. If we do the same procedure for the remaining six $\IJ$-trees, we obtain:
\begin{align*}
\sfg(T_2)&=(48,21),&
\sfg(T_3)&=(48,33),&
\sfg(T_4)&=(54,39),\\
\sfg(T_5)&=(60,15),&
\sfg(T_6)&=(60,21),&
\sfg(T_7)&=(60,39).
\end{align*}
Plotting these points, and all the cells corresponding to covering $\IJ$-forests, we get our realization of the $\IJ$-associahedron in Figure~\ref{fig:ExampleIJasso}. In terms of tropical hyperplanes we get $H_0,H_1,H_3,H_4,H_6$ and $H_7$ centered at the following vertices:
{
\begin{align*}
v_0&=(-4,-25,-64)\sim (60,39,0),&
v_1&=(-1,-16,-49)\sim (48,33,0),\\
v_3&=(\infty, -4,-25)\sim (\infty, 21, 0),&
v_4&=(\infty,-1,-16)\sim(\infty,15,0),\\
v_6&=(\infty,\infty,-4)\sim(\infty,\infty,0),&
v_7&=(\infty,\infty,-1)\sim(\infty,\infty,0).
\end{align*}
}
Note that since some of the entries are infinity we get degenerate tropical hyperplanes. For instance $H_4$ and $H_6$ are just horizontal lines while $H_6$ and $H_7$ do not appear because they lie at infinity.

 \begin{figure}[hptb]
  \includegraphics[width=\linewidth]{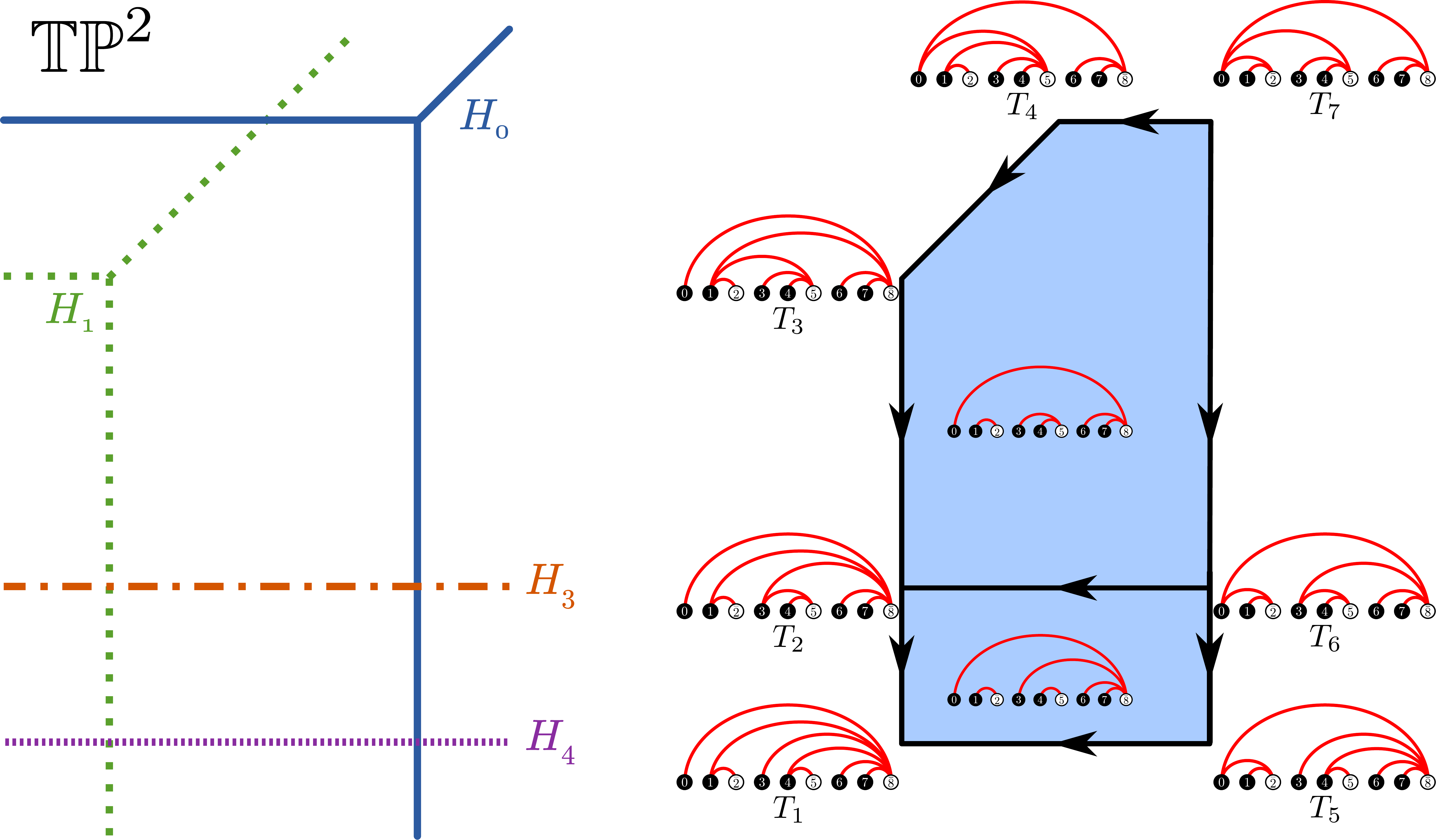}
  \caption{Left: Arrangement of tropical hyperplanes whose bounded faces constitute the $\IJ$-associahedron from Example~\ref{ex:ExampleIJtrees} (hyperplanes $H_6,H_7$ lie at infinity, and are not included). Right: $\IJ$-associahedron with its vertices labeled by $\IJ$-trees, its $2$-dimensional faces labeled by minimal covering $\IJ$-forests, and its edges oriented according to the $\IJ$-Tamari order.}\label{fig:ExampleIJasso}
 \end{figure}
\end{example}

%%%%%%%%%%%%%%%%%%%%%%%%%%%%%%%%%%%%%%%%%%%%%%%%%%%%%%%%%%%%%%%%%%

\subsection{Orientation of the edges}

We will now study some combinatorial and geometric properties of $\Asso_{I,\ol J}(\ha)$. A first interesting property of the classical associahedron that is preserved in this setup is that we can use a linear functional to orient its graph according to the $\IJ$-Tamari lattice. (In retrospect, this provides a geometric proof of Lemma~\ref{lem:shellingorder}, because linear functionals can be used to define shelling orders.)
\begin{theorem}\label{thm:linearOrientation}
 The linear functional $-\sum_{\ol j\in \ol J\setminus \ol j_{\max}} x_{\ol j} $ orients the edges of $\Asso_{I,\ol J}(\ha)$ according to the $\IJ$-Tamari covering relations.
\end{theorem}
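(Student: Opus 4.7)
The plan is to compute $L(T') - L(T)$ directly, where $L(T) := -\sum_{\ol k \in \ol J \setminus \{\ol j_{\max}\}} \sfg(T)_{\ol k}$, and show it is strictly positive for every increasing flip, using the non-crossing condition~\eqref{eq:asso_height} on $\ha$. By connectedness of the Hasse diagram of $\Tam{I,\ol J}$, showing $L(T')>L(T)$ for each covering relation $T<_{I,\ol J} T'$ suffices to orient all edges correctly.

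First I would fix an increasing flip $T \to T'$ replacing the arc $(i,\ol j)$ by $(i',\ol j')$. By the description of increasing flips both $(i',\ol j)$ and $(i,\ol j')$ already belong to $T$, and since these are increasing arcs one has $i \prec i' \prec \ol j \prec \ol j'$. Removing $(i,\ol j)$ partitions $T$ into two subtrees $T_{\ol j}$ (containing $\ol j$, hence also $i'$) and $T_i$ (containing $i$, hence also $\ol j'$ and $\ol j_{\max}$), and the new arc $(i',\ol j')$ reconnects them in $T'$, so the two induced subgraphs on $T_{\ol j}$ and on $T_i$ are identical in $T$ and in $T'$.

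The next step is to observe that for $\ol k \in T_i$, the unique path from $\ol k$ to $\ol j_{\max}$ lies inside $T_i$ in both $T$ and $T'$, so $\sfg(T)_{\ol k} = \sfg(T')_{\ol k}$. For $\ol k \in T_{\ol j}$, I would decompose each of the two paths through its respective bridging arc using the signed-cost formula~\eqref{eq:g_tree} and exploit the Steiner-point structure inside each subtree: the difference of the costs of two paths starting at $\ol k$ inside $T_{\ol j}$, one ending at $\ol j$ and the other at $i'$, equals the cost of the path from $i'$ to $\ol j$ inside $T_{\ol j}$. Since $(i',\ol j)$ is already an arc of $T_{\ol j}$, this collapses to $-\ha(i',\ol j)$; an analogous collapse inside $T_i$ through the arc $(i,\ol j')$ contributes $-\ha(i,\ol j')$. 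Putting everything together one obtains
\begin{equation*}
\sfg(T)_{\ol k} - \sfg(T')_{\ol k} = \ha(i,\ol j) + \ha(i',\ol j') - \ha(i',\ol j) - \ha(i,\ol j'),
\end{equation*}
independently of $\ol k \in T_{\ol j}$.

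By the non-crossing inequality~\eqref{eq:asso_height} applied to $i \prec i' \prec \ol j \prec \ol j'$, the right-hand side is strictly positive. Since $\ol j \in \ol J \cap T_{\ol j}$ and $\ol j \neq \ol j_{\max}$, the sum defining $L(T') - L(T)$ has at least one strictly positive term and no negative ones, giving $L(T') > L(T)$. The main obstacle is the telescoping step inside each subtree, which requires a careful Steiner-point analysis to see that the cost differences collapse to the costs of single arcs of $T$; once that identity is in hand, the conclusion is immediate from the characterization of height functions inducing $\asstri{I,\ol J}$.
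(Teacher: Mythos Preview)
Your proof is correct and follows essentially the same approach as the paper's: both compute the coordinate-wise difference $\sfg(T)_{\ol k}-\sfg(T')_{\ol k}$, show it vanishes for $\ol k$ on the $i$-side and equals the constant $\ha(i,\ol j)+\ha(i',\ol j')-\ha(i',\ol j)-\ha(i,\ol j')>0$ for $\ol k$ on the $\ol j$-side, invoking the non-crossing inequality~\eqref{eq:asso_height}. The only difference is bookkeeping: the paper observes directly that $P_{T'}(\ol k)$ is the symmetric difference of $P_T(\ol k)$ with the $4$-cycle $\ol j\to i\to\ol j'\to i'\to\ol j$, which immediately yields the identity without the Steiner-point telescoping you carry out; your route is slightly longer but entirely equivalent.
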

\begin{proof}
 Let $i_1\prec i_2\prec\ol j_1\prec\ol j_2$, and let $T,T'$ be two $\IJ$-trees such that $T'$ is obtained from $T$ by flipping $(i_1,\ol j_1)$ to $(i_2,\ol j_2)$. That is, $T'$ covers $T$ in the $\IJ$-Tamari order. We want to study the vector $\sfg(T')-\sfg(T)$.
 
 Let us focus on the $\ol k$th coordinate (see Equation~\eqref{eq:g_tree}). If the path $P_T(\ol k)$ from $\ol k$ to $\ol j_{\max}$ in~$T$ does not use the arc $(i_1,\ol j_1)$, then $P_T(\ol k)=P_{T'}(\ol k)$ and $\sfg(T')_{\ol k}-\sfg(T)_{\ol k}=0$. 
 
 If $P_T(\ol k)$ contains the arc $(i_1,\ol j_1)$, then $P_{T'}(\ol k)$ is the symmetric difference of $P_{T}(\ol k)$ with the $4$-cycle $\ol j_1\rightarrow i_1 \rightarrow \ol j_2\rightarrow i_2 \rightarrow \ol j_1$. From the fact that $(i_1,\ol j_1)$ induces an increasing flip, we know that in any path to $\ol j_{\max}$ it should be traversed from $\ol j_1$ to $i_1$ (compare Figure~\ref{fig:IJflipschema}). Hence, we obtain that
 \[\sfg(T')_{\ol k}-\sfg(T)_{\ol k}=\ha(i_1,\ol j_2)-\ha(i_2,\ol j_2)+\ha(i_2,\ol j_1)-\ha(i_1,\ol j_1)\stackrel{\eqref{eq:asso_height}}{<}0,
 \]
 where we use that $\ha$ is non-crossing height function (cf.~Proposition~\ref{prop:assheights}).
 
 Therefore, on an $\IJ$-increasing flip, no coordinate of $\sfg(T)$ increases and at least one decreases, which finishes our claim.
\end{proof}

%%%%%%%%%%%%%%%%%%%%%%%%%%%%%%%%%%%%%%%%%%%%%%%%%%%%%%%%%%%%%%%%%%

\subsection{The support}

The \defn{support $\supp(\cK)$} of a polyhedral complex $\cK$ embedded in~$\RR^d$ is defined as the union of all polyhedral cells in $\cK$.
Bergeron's pictures of $m$-Tamari lattices~\cite[Figures~4 and 6]{Bergeron2012} suggest that the supports of $\IJ$-associahedra are classical associahedra. 
We show that this holds for an important family of pairs $\IJ$, including those corresponding to $m$-Tamari lattices.

\begin{theorem}\label{thm:supportAssociahedron}
Let $\ol J'=\{\ol j\in \ol J\ \colon\ \exists\ i_1\prec i_2 \prec \ol j\}$.
Then $\supp(\AssoIJ(\ha))$ is convex if and only if $I\sqcup \ol J'\setminus \max {\ol J}$ does not have a consecutive pair of elements of $\ol J$.
In this case, $\AssoIJ(\ha)$ is a regular polyhedral  subdivision of a classical associahedron of dimension $(|\ol J'|-1)$.
\end{theorem}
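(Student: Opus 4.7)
The plan is to prove the ``if'' direction by completing $I$ to a larger set $\widetilde I$ so that $\widetilde\IJ=(\widetilde I,\ol J)$ corresponds to a classical associahedron, then showing $\AssoIJ(\ha)$ arises as a coarsening of $\Asso_{\widetilde\IJ}(\widetilde\ha)$ for a suitably extended height function. The ``only if'' direction is handled by a local geometric argument showing non-convexity when the condition fails.

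First I would reformulate the hypothesis combinatorially. Each element $\ol j\in\ol J\setminus\ol J'$ has at most one predecessor in $I$, so in every covering $\IJ$-forest it must be a leaf with a uniquely determined neighbor; these arcs are cone points of $\asscomp{I,\ol J}$ in the sense of Proposition~\ref{prop:bijectionpolygon}, and their contribution to $\sfg(T)$ is an affine function of the coordinates indexed by $\ol J'$. Hence, after projecting away cone coordinates, we may restrict attention to $\ol J=\ol J'$, in which case the hypothesis says that no two elements of $\ol J$ other than $\max\ol J$ are adjacent in $I\sqcup\ol J$, i.e., the lattice path $\nu(I,\ol J)$ contains no non-initial consecutive north steps.

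For sufficiency, I would construct $\widetilde I\supseteq I$ by inserting fresh symbols $i^\ast$ between consecutive elements of $\ol J'$ in $I\sqcup\ol J'\setminus\max\ol J$ and, if necessary, between $\max\ol J'$ and $\max\ol J$; by hypothesis only boundedly many insertions are needed, and the resulting sequence $\widetilde I\sqcup\ol J$ alternates. Thus $\Asso_{\widetilde\IJ}$ identifies (up to cone points) with a classical $(|\ol J'|-1)$-dimensional associahedron by Corollary~\ref{cor:classicalassociahedron}. Extend $\ha$ to a non-crossing height $\widetilde\ha$ on $\widetilde\IJ$ by using Lemma~\ref{lem:non-crossing} applied to a strictly concave function that agrees with $\ha$ on pairs in $I\times\ol J$ and assigns arcs incident to the new symbols $i^\ast$ sufficiently extreme values that the new arcs are ``visible'' in the regular triangulation. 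Then each $\IJ$-tree $T$ lifts to a family of $\widetilde\IJ$-trees $\widetilde T$ obtained by attaching each $i^\ast$ as a leaf in every consistent way; their cells $\sfg(\widetilde T)$ share the coordinates in $\ol J$ with $\sfg(T)$ and, by Lemma~\ref{lem:verticesgF}, collectively partition $\sfg(T)$. Consequently $\Asso_{\widetilde\IJ}(\widetilde\ha)$ is a regular polyhedral refinement of $\AssoIJ(\ha)$, so the common support is convex and is indeed a classical associahedron of dimension $|\ol J'|-1$.

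For necessity, suppose $\ol j_1\prec \ol j_2\in\ol J'$ are consecutive in $I\sqcup\ol J'\setminus\max\ol J$, so no $i\in I$ lies strictly between them. I would locate a codimension-one face of $\supp(\AssoIJ)$ on the boundary that witnesses a concavity: starting from a tree containing both $(i_1,\ol j_1)$ and $(i_2,\ol j_2)$ for appropriate $i_1\prec i_2$, the analysis in Lemma~\ref{lem:proofcoordinates} produces two vertices $\sfg(T),\sfg(T')$ whose segment projects, in the coordinate $\ol j_2-\ol j_1$, to a value not attained by any $\IJ$-tree because the intermediate ``flip direction'' is prevented by the absence of a mediating $i$. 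The main obstacle will be the sufficiency argument, specifically verifying that the cells $\sfg(\widetilde T)$ for the different ways of attaching the $i^\ast$ fit together to refine $\sfg(T)$ without gaps or overlaps; this requires a careful case analysis, but for each new $i^\ast$ the attachment choices correspond precisely to the facets of a sub-associahedron, which guarantees compatibility with the recursive product structure noted in Theorem~\ref{thm:subdassociahedra}.
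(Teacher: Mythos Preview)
Your sufficiency argument has a genuine gap. You propose to enlarge $I$ to $\widetilde I\supseteq I$ by inserting elements $i^\ast$ between consecutive elements of $\ol J'$ so that $\widetilde I\sqcup\ol J$ alternates. But the hypothesis is precisely that $I\sqcup\ol J'\setminus\max\ol J$ has \emph{no} consecutive pair from $\ol J$, so there is nothing to insert there; meanwhile the hypothesis places no restriction on consecutive elements of $I$, and adding more elements to $I$ cannot eliminate those. For instance, if $I=\{0,1,3,4,6,7\}$ and $\ol J=\{\ol 2,\ol 5,\ol 8\}$ (Example~\ref{ex:ExampleIJtrees}), the sequence already has no consecutive $\ol J$'s, yet it has three consecutive pairs from $I$, and no $\widetilde I\supseteq I$ makes $\widetilde I\sqcup\ol J$ alternate. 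So your construction does not produce a pair to which Corollary~\ref{cor:classicalassociahedron} applies.

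The paper's argument runs in the opposite direction: it \emph{removes} elements from $I$, one at a time, until the sequence alternates. Concretely, if $i_0\in I\setminus\{\min I\}$ is immediately followed by another element of $I$, set $I'=I\setminus i_0$; by induction $\Asso_{I',\ol J}$ is already a regular subdivision of a classical associahedron, and one shows that $\AssoIJ$ is the common refinement of $\Asso_{I',\ol J}$ with the decomposition of $\RR^{|\ol J|-1}$ into the sectors $\sfg(i_0,\ol j)$ for $\ol j\in\ol J$. The key step is that deleting the arcs incident to $i_0$ from any covering $\IJ$-forest leaves a covering $(I',\ol J)$-forest, which uses that the element following $i_0$ lies in $I$ (so each $\ol j$ adjacent to $i_0$ has another $I$-neighbor). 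This is exactly the combinatorial content your approach was missing: the induction must peel off redundant $I$-elements, not add new ones.

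Your necessity sketch is headed in the right direction but is too vague to assess. The paper's argument is explicit: given consecutive $\ol j_1,\ol j_2\in\ol J'$ it shows that every cell $\sfg(F)$ satisfies at least one of two linear inequalities (depending on whether $(i_1,\ol j_2)\in F$ or not, with $i_1=\min I$), and then exhibits two specific $\IJ$-trees $T_1,T_2$ such that the midpoint of $\sfg(T_1)$ and $\sfg(T_2)$ violates both. You would need to produce this kind of concrete obstruction rather than invoke a missing ``flip direction'' informally.
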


\begin{figure}[htpb]
\centering 
 \includegraphics[width=.5\linewidth]{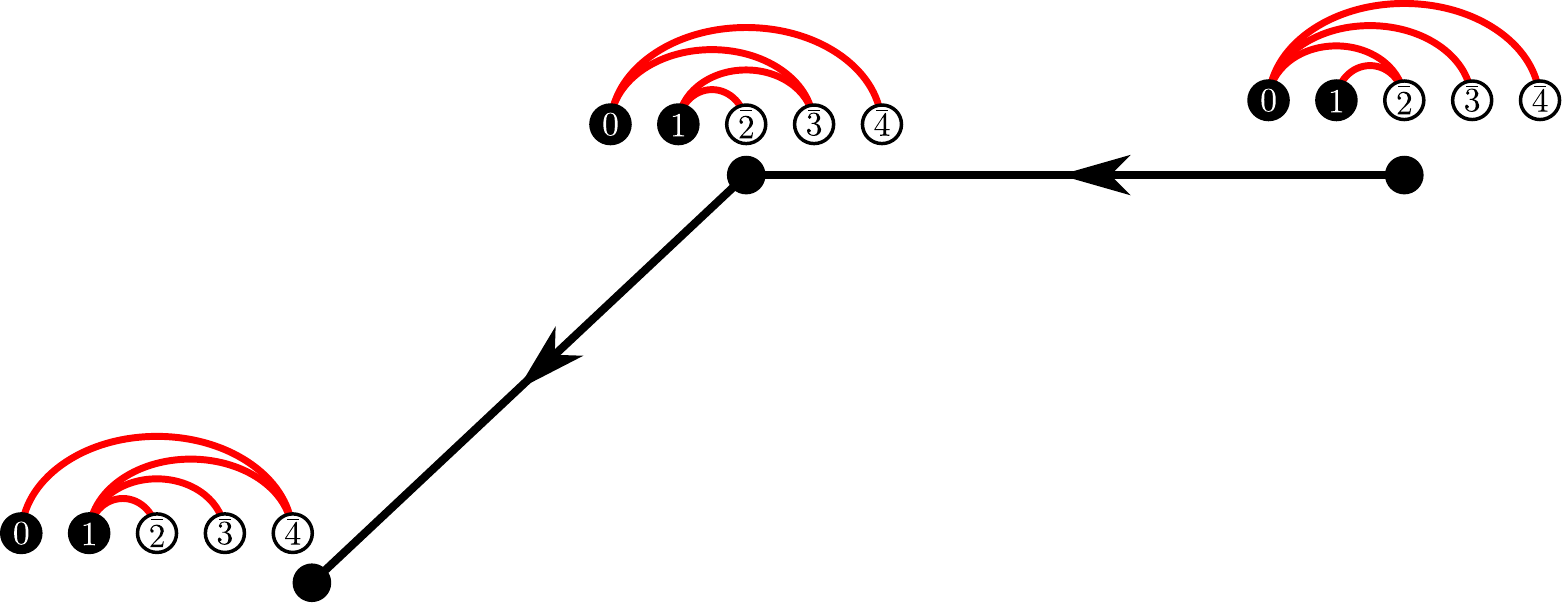}
 \caption{The $(\{0,1\},\{\ol2,\ol3,\ol 4\})$-associahedron.}\label{fig:folded}
\end{figure}

\begin{proof}
Throughout the proof we omit $\ha$ in the notation for $\AssoIJ(\ha)$ and write~$\AssoIJ$ for simplicity.
\item
\emph{Convexity:} We show first that if $\ol j_1,\ol j_2$ are two consecutive elements of $\ol J$ in $I\sqcup \ol J'\setminus \max {\ol J}$, then $\supp(\AssoIJ)$ is not convex. The proof is based in the non-convex example in Figure~\ref{fig:folded} (where $j_1=2$ and $j_2=3$). 

Let $i_1\prec i_2$ be the two smallest elements of $I$, and let $F$ be a covering $\IJ$-forest. If $(i_1,\ol j_2)\in F$, then every point in $\sfg(F)$ fulfills
\begin{equation}\label{eq:forestcase1}
 x_{\ol j_2}\geq \ha(i_1,\ol j_2)-\ha(i_1,\ol j_{\max}).
\end{equation}
Otherwise, $(i,\ol j_2)\in F$ for some $i \prec \ol j_1\prec \ol j_2$ (because $\ol j_1$ and $\ol j_2$ are consecutive and $i\prec \ol j_2$). In this case, every point in $\sfg(F)$ fulfills
\begin{equation}\label{eq:forestcase2}
 x_{\ol j_1}-x_{\ol j_2}\leq \ha(i,\ol j_1)-\ha(i,\ol j_{2})\leq \ha(i_2,\ol j_1)-\ha(i_2,\ol j_{2}),
\end{equation}
where for the second inequality we combine \eqref{eq:asso_height} together with $i_2\preceq i \prec \ol j_1\prec \ol j_2$.

Now consider the non-crossing arcs $(i_1,\ol j_{\max})$, $(i_1,\ol j_2)$, $(i_1,\ol j_1)$ and $(i_2,\ol j_1)$ and complete them to an $\IJ$-tree $T_1$. The following relations hold for the point $\sfg(T_1)$:
\begin{align}
 \sfg(T_1)_{\ol j_2}&=\ha(i_1,\ol j_2)-\ha(i_1,\ol j_{\max}),\text{ and }\\
 \sfg(T_1)_{\ol j_1}-\sfg(T_1)_{\ol j_2}&=
 \ha(i_1,\ol j_1)-\ha(i_1,\ol j_2) \stackrel{\eqref{eq:asso_height}}{>}\ha(i_2,\ol j_1)-\ha(i_2,\ol j_2).
\end{align}
And consider also an $\IJ$-tree $T_2$ containing the arcs $(i_2,\ol j_{\max})$, $(i_2,\ol j_2)$, $(i_2,\ol j_1)$ and $(i_1,\ol j_{\max})$. The following relations hold for the point $\sfg(T_2)$:
\begin{align}
 \sfg(T_2)_{\ol j_2}&=\ha(i_2,\ol j_2)-\ha(i_2,\ol j_{\max})\stackrel{\eqref{eq:asso_height}}{<}\ha(i_1,\ol j_2)-\ha(i_1,\ol j_{\max}),\text{ and }\\
 \sfg(T_2)_{\ol j_1}-\sfg(T_2)_{\ol j_2}&=\ha(i_2,\ol j_1)-\ha(i_2,\ol j_2).
\end{align}
Therefore, the midpoint between $\sfg(T_1)$ and $\sfg(T_2)$ does not fulfill neither \eqref{eq:forestcase1} nor~\eqref{eq:forestcase2}, and hence does not belong to $\supp(\AssoIJ)$.

\item \emph{Associahedral support:}
Assume now that there are no two consecutive elements of~$\ol J$, we will show that the support is an associahedron. The proof is by induction on the number of consecutive pairs of elements of $I$. If there is only one, the one involving $\min I$, then up to redundant elements (see part (1) in Remark~\ref{rem:affineasso}) $\IJ$ is an alternating sequence
 and hence $\AssoIJ$ is a $(|\ol J'|-1)$-associahedron by Corollary~\ref{cor:classicalassociahedron}. 
 
 Assume that $i_0\in I\setminus\{\min I\}$ is immediately followed by an element in $I$, and let $I':=I\setminus i_0$. Then, by induction hypothesis, $\Asso_{I', \ol J}$ is a 
 polyhedral subdivision of a full-dimensional classical associahedron. We consider also the set of regions $S_{\ol j}=\sfg(i_0,\ol j)$ with $\ol j$ ranging through $\ol J$. These are the maximal
 cells of a polyhedral subdivision $S$ of $\RR^{|\ol J|-1}$. We will prove that $\AssoIJ$ is the refinement of $\Asso_{I', \ol J}$ with respect to $S$. 
 Since the common refinement of regular subdivisions is regular --- because the sum of the piecewise affine convex functions supporting the corresponding liftings is a regular lifting of the refinement --- this will directly imply that this subdivision of $\AssoIJ$ is regular.
 
 On the one hand, consider an arbitrary cell of $\Asso_{I', \ol J}$, which is of the form~$\sfg(F)$ for a covering $(I', \ol J)$-forest $F$, and a region $S_{\ol j}$.
 By Lemma~\ref{lem:emptycrossing} $\sfg(F)\cap S_{\ol j}={\sfg(F\cup (i_0,\ol j))}$ is not empty if and only if $F\cup (i_0,\ol j)$ is
 an $\IJ$-forest. This means that $\sfg(F\cup (i_0,\ol j))$ is a cell of $\AssoIJ$, and similarly that every cell in the refinement of~$\Asso_{I', \ol J}$ with respect to $S$ is a cell of~$\AssoIJ$.
 
It remains to see that every cell of $\AssoIJ$ arises this way. 
This is equivalent to showing that removing the arcs incident to $i_0$ in any covering $\IJ$-forest always produces a covering $(I', \ol J)$-forest.  
To show this, note that the removal of an arc $(i_0,\ol j)$ involving $i_0$ does not isolate any element of $\ol J$ because $\ol j$ is connected to another element of $I$: its immediately preceding element or $\min I$. This element is always different from $i_0$ since it is not $\min I$ and is not followed by an element in~$\ol J$. Hence, the $(I',\ol J)$-forest obtained after removing the incident arcs to $i_0$ is still covering, which finishes the proof.
\end{proof}

Translating the previous result to lattice paths, we obtain the following corollary. 

\begin{corollary}\label{cor:convex_regsubdivision}
The $\nu$-associahedron~$\Asso_\nu(\ha)$ is convex only if $\nu$ does not have two non-initial consecutive north steps. In this case, $\Asso_\nu(\ha)$ is a regular polyhedral subdivision of a classical associahedron of dimension equal to the number of non-initial north steps in $\nu$. 
\end{corollary}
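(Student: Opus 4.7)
The plan is to deduce Corollary~\ref{cor:convex_regsubdivision} directly from Theorem~\ref{thm:supportAssociahedron} by matching each piece of the statement through the dictionary between lattice paths and pairs $\IJ$ established in Subsection~\ref{subsec:TreesToPaths}. Given $\nu$ from $(0,0)$ to $(a,b)$, Proposition~\ref{prop:IJtree2vpath} lets me fix a pair $(I,\ol J)$ with $\min(I\sqcup \ol J)\in I$, $\max(I\sqcup \ol J)\in \ol J$, and $\nu(I,\ol J)=\nu$; by Definition~\ref{def:nuassociahedronH}, $\Asso_\nu(\ha)=\AssoIJ(\ha)$, so I only need to translate the hypothesis and conclusion of Theorem~\ref{thm:supportAssociahedron} into path language.

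First, I would set up the correspondence between north steps of $\nu$ and elements of $\ol J$. Since $\min I$ contributes no step and every later element of $I \sqcup \ol J$ produces one step --- east for $I$, north for $\ol J$ --- the north steps of $\nu$ biject with $\ol J \setminus \max \ol J$. A given north step is preceded by at least one east step in $\nu$ precisely when the corresponding $\ol j$ is preceded by at least two elements of $I$ in $I\sqcup \ol J$, that is, $\ol j \in \ol J'$. Interpreting ``non-initial north step'' as a north step that is not part of the initial (possibly empty) maximal streak of $N$'s at the start of $\nu$, the non-initial north steps therefore correspond bijectively to $\ol J' \setminus \max \ol J$. In the non-degenerate case $|I|\geq 2$ one has $\max \ol J \in \ol J'$, so this set has cardinality $|\ol J'|-1$, matching the dimension count of Theorem~\ref{thm:supportAssociahedron}; the degenerate case where $\nu$ is an all-north path is trivial, since $\Asso_\nu(\ha)$ reduces to a single point.

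Second, I would translate the consecutive-pair condition. The key monotonicity observation is that once an element of $\ol J'$ appears in the linear order on $I\sqcup \ol J$, every subsequent $\ol J$ element is automatically in $\ol J'$ (the set of preceding $I$ elements only grows), so elements of $\ol J\setminus \ol J'$ are confined to the prefix of $I\sqcup \ol J$ before its second $I$-element. Consequently, between two elements $\ol j_1\prec \ol j_2$ of $\ol J'$ there can be no elements of $\ol J\setminus \ol J'$, and therefore being consecutive in $I\sqcup \ol J'\setminus \max \ol J$ agrees with being consecutive in $I\sqcup \ol J$, which in turn is the condition that the associated north steps of $\nu$ are consecutive. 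Combining with the previous step, the hypothesis of Theorem~\ref{thm:supportAssociahedron} translates verbatim to ``$\nu$ has two non-initial consecutive north steps'', and invoking the theorem yields both the convexity characterization and the fact that the support is a regular subdivision of a classical associahedron of the asserted dimension.

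The main --- but ultimately mild --- obstacle is the bookkeeping around $\max \ol J$ and the initial streak of $N$'s: the removal of $\max \ol J$ in the theorem precisely compensates for the fact that $\max \ol J$ produces no step in $\nu$, and the monotonicity argument above is what guarantees that deleting elements of $\ol J\setminus \ol J'$ does not spuriously create or destroy consecutive pairs in $I\sqcup \ol J'\setminus \max \ol J$. Once these correspondences are checked, no further geometric input is required beyond Theorem~\ref{thm:supportAssociahedron} itself.
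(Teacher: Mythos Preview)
Your proposal is correct and follows exactly the approach of the paper, which states the corollary as an immediate translation of Theorem~\ref{thm:supportAssociahedron} into path language without spelling out the details. Your careful bookkeeping --- identifying non-initial north steps with elements of $\ol J'\setminus\max\ol J$, and using the monotonicity of membership in $\ol J'$ to show that consecutive pairs in $I\sqcup\ol J'\setminus\max\ol J$ coincide with consecutive pairs in $I\sqcup\ol J$ --- is precisely the dictionary the paper leaves implicit.
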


This result holds in particular for the Fuss-Catalan and rational Catalan cases, as well as for any path $\nu$ lying above a line. 

\begin{remark}[Non-convex and non-pure $\IJ$-associahedra]
As noticed in Theorem~\ref{thm:supportAssociahedron} the $\IJ$-associahedron is not always convex. For example, for $(I,\ol J)=(\{0,1,2,3\},\{\ol 4, \ol 5,\ol 6\})$ the $\IJ$-associahedron is neither  convex nor pure; see Figure~\ref{fig:degenerate}.
\end{remark}

\begin{figure}[htpb]
\centering 
 \includegraphics[width=\linewidth]{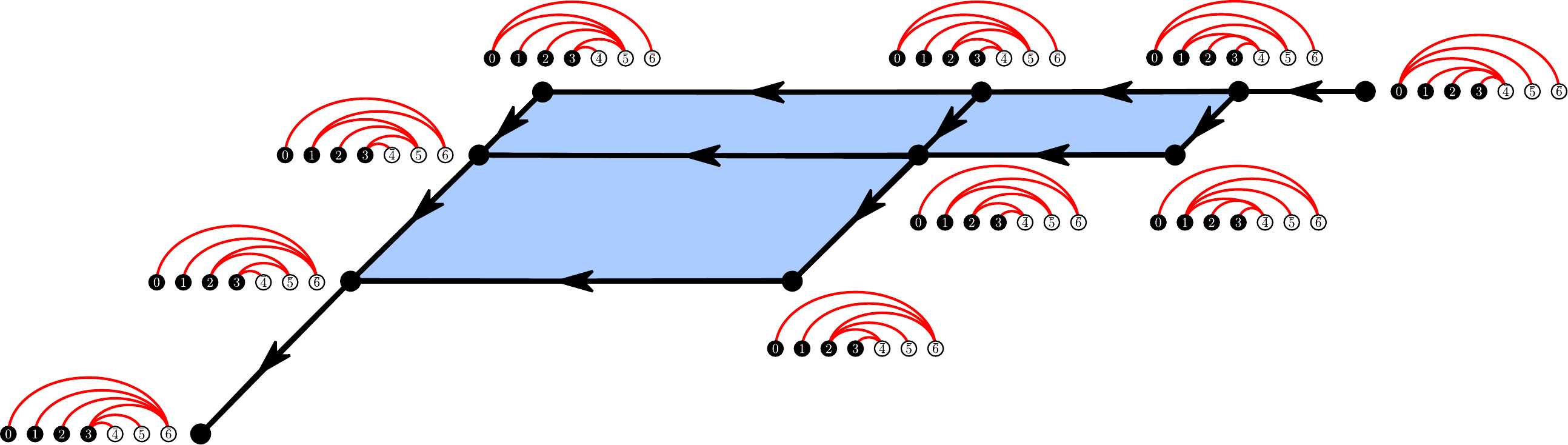}
 \caption{The $(\{0,1,2,3\},\{\ol 4, \ol 5,\ol 6\})$-associahedron is not pure.}\label{fig:degenerate}
\end{figure}

\begin{remark}[Affinely equivalent $\IJ$-associahedra]
\label{rem:affineasso}
The are two operations that do not change much the associahedron:

\noindent
(1) \emph{Removing redundant elements:}  
Observe that if we add north steps at the beginning (or east steps at the end) of a lattice path~$\nu$, the corresponding $\nu$-Tamari lattice does not change. This reflects the fact that
all the elements of $\ol J$ immediately after the first element $i_{\min}$ are always leaves connected to $i_{\min}$ in every $\IJ$-tree, and analogously for the elements of $I$ immediately
before the last element $\ol j_{\max}$. In the corresponding associahedra, this gets reflected in an affine isomorphism. 

\noindent
(2) \emph{The reverse operation~$\reverse\IJ$:}  this operation corresponds to switching the role of the two simplices of $\productp{I}{\ol J}$. The symmetry of the factors induces a duality between arrangements of $n+1$ tropical hyperplanes in $\TP^{m}$ and arrangements of $m+1$ tropical hyperplanes in $\TP^n$. If we apply \cite[Lemma~22]{DevelinSturmfels2004} we obtain that $\Asso_{I,\ol J}(\ha)$ is piecewise-affinely isomorphic to $\Asso_{\reverse{I,\ol J}}(\ha)$.

The piecewise-affine isomorphism cannot be replaced by an affine isomorphism, since when they are not full dimensional, $\IJ$-associahedra appear sometimes ``folded''. For example, Figure~\ref{fig:folded}
shows the $(\{0,1\},\{\ol2,\ol 3,\ol 4\})$-associahedron (that is, the $\sfE\sfN\sfN$-associahedron), which is a `folded' image in $\RR^2$ of the $(\{0,1,2\},\{\ol 3,\ol 4\})$-associahedron (that is, the $\sfE\sfE\sfN$-associahedron), which is a subdivision of a $1$-dimensional associahedron.
\end{remark}

%%%%%%%%%%%%%%%%%%%%%%%%%%%%%%%%%%%%%%%%%%%%%%%%%%%%%%%%%%%%%%%%%%

\subsection{Cells and faces}

Every face of the associahedron is a Cartesian product of associahedra. This fact has a combinatorial and a geometric counterpart reflected in $\AssoIJ$.

The first concerns the combinatorial type of the cells of $\AssoIJ$. The following proposition
follows directly from Lemma~\ref{lem:links}, after noticing that the cells of $\AssoIJ$ are combinatorially dual to links of covering $\IJ$-forests in~$\asstri{I,\ol J}$.

\begin{proposition}\label{prop:cells}
The polyhedral cells of the polyhedral complex $\AssoIJ$ are isomorphic to Cartesian products of (classical) associahedra.
\end{proposition}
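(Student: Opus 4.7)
The plan is to identify the face lattice of each cell of $\AssoIJ$ with that of a Cartesian product of simple associahedra, via Lemma~\ref{lem:links} and standard polytope duality. I would fix a cell of $\AssoIJ$: by Theorem~\ref{thm:nurealization} and Lemma~\ref{lem:verticesgF}, it has the form $\sfg(F)$ for some covering $\IJ$-forest $F$, and its proper faces are exactly the polytopes $\sfg(F')$ for $\IJ$-forests $F'\supsetneq F$, ordered by reverse inclusion. Since $F$ is covering (it contains $(\min I,\max\ol J)$ and has no isolated nodes), both properties are preserved under enlarging $F$; hence every such $F'$ is automatically covering, and the face poset of $\sfg(F)$ is anti-isomorphic to the link $\link_{\asscomp{I,\ol J}}(F)$.

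The next step is to invoke the last assertion of Lemma~\ref{lem:links}: because $F$ is covering, $\link_{\asscomp{I,\ol J}}(F)$ is exactly a join $\partial\Delta_1 \ast \cdots \ast \partial\Delta_r$ of boundary complexes of simplicial associahedra. To conclude I would dualize this description in the standard way. For simplicial polytopes $\Delta_1,\dots,\Delta_r$, the simplicial join $\partial\Delta_1 \ast \cdots \ast \partial\Delta_r$ is the boundary complex of their polytope join $\Delta_1\ast\cdots\ast\Delta_r$, whose polar dual is the Cartesian product $\Delta_1^{\ast}\times\cdots\times\Delta_r^{\ast}$ of the simple polar duals. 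Combining this with the anti-isomorphism of the previous paragraph shows that the face lattice of $\sfg(F)$ coincides with that of a Cartesian product of simple associahedra, which is the desired conclusion.

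The only technical point that requires real care is the equality between the face poset of $\sfg(F)$ and $\link_{\asscomp{I,\ol J}}(F)$. The inclusion that every face of $\sfg(F)$ arises as $\sfg(F')$ for some $F'\supseteq F$ is already established in Lemma~\ref{lem:verticesgF}; the reverse inclusion reduces to the monotonicity observation that any $\IJ$-forest containing a covering forest is itself covering, which is immediate from the definition. Once these two points are secured, the proposition follows by direct application of Lemma~\ref{lem:links} and the join/product duality.
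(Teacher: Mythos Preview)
Your proposal is correct and follows essentially the same approach as the paper: the paper's proof is a single sentence stating that the result follows directly from Lemma~\ref{lem:links} once one observes that the cells of $\AssoIJ$ are combinatorially dual to links of covering $\IJ$-forests. You have simply spelled out this duality and the join/product polarity in explicit detail.
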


For the second consequence of Lemma~\ref{lem:links}, which has a more geometric flavor, we restrict to the setup of Theorem~\ref{thm:supportAssociahedron} where the support is an associahedron. We want
to understand the restriction of $\AssoIJ(\ha)$ to the faces of this associahedron.
That is, the polyhedral subdivisions $H\cap\AssoIJ(\ha)$ for supporting hyperplanes $H$ of the associahedron. These are those such that for every $\IJ$-tree $T$, $\sfg(T)$ is contained in the closed halfspace $H^\leq$. 

\begin{proposition}
Assume that $|I|\geq |\ol J|$, $I\sqcup \ol J\setminus (\max\ol J)$ has no consecutive pair of elements of $\ol J$, 
and that for every $\ol j\in \ol J$ there are $i_1, i_2\in I$ such that $i_1\prec i_2\prec \ol j$.
Then, for every supporting hyperplane $H$ of $\AssoIJ(\ha)$, $\AssoIJ(\ha)\cap H$ is a Cartesian product of $\IJ$-associahedra.
\end{proposition}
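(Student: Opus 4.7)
Under the stated hypotheses, the condition that every $\ol j\in\ol J$ has at least two predecessors in $I$ forces $\ol J'=\ol J$ in the notation of Theorem~\ref{thm:supportAssociahedron}. Combined with the no-consecutive-$\ol J$-elements hypothesis, that theorem guarantees $\supp(\AssoIJ(\ha))$ is a classical $(|\ol J|-1)$-dimensional associahedron. Hence every supporting hyperplane $H$ cuts out a face $\cF$ of this ambient associahedron, and the restriction $\AssoIJ(\ha)\cap H$ is the restriction of the subdivision $\AssoIJ(\ha)$ to $\cF$.

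The first step is to translate $\cF$ into combinatorial data. Faces of the ambient associahedron correspond to partial dissections, so via Theorem~\ref{thm:nurealization} there is a non-crossing $\IJ$-forest $S$, common to every $\IJ$-tree $T$ with $\sfg(T)\in\cF$, such that the vertices of $\AssoIJ(\ha)$ lying on $H$ are exactly those $\sfg(T)$ with $S\subseteq T$. The arcs of $S$ are nested and split $(I,\ol J)$ into smaller subpairs $(I_k,\ol{J_k})_{k=1}^r$ exactly as in the right-hand side of Equation~\eqref{eq:link}; Lemma~\ref{lem:links} then identifies covering $\IJ$-forests containing $S$ with tuples of covering $(I_k,\ol{J_k})$-forests (modulo cone-point arcs). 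This yields the combinatorial product structure $\AssoIJ(\ha)\cap H\cong\prod_k\Asso_{I_k,\ol{J_k}}$ at the level of posets of cells.

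The second step is to promote this combinatorial factorization to an affine one using the explicit coordinates of Lemma~\ref{lem:coordinates}. For a tree $T=S\sqcup T_1\sqcup\cdots\sqcup T_r$ and a vertex $\ol k\in\ol{J_k}$, the unique oriented path $P_T(\ol k)$ from $\ol k$ to $\ol j_{\max}$ factors into an internal portion lying inside the $k$-th piece (contributing $\sfg(T_k)_{\ol k}$, up to a shift to the local ``top'' of the piece) followed by an external portion traversing the arcs of $S$ (contributing a constant $c_k$ depending only on $S$ and the $T_\ell$ for $\ell\neq k$). Thus
\begin{equation*}
\sfg(T)_{\ol k}=\sfg(T_k)_{\ol k}+c_k,
\end{equation*}
and a linear change of coordinates subtracting these offsets realizes the vertex set of $\AssoIJ(\ha)\cap H$ as the Cartesian product of the vertex sets of $\Asso_{I_k,\ol{J_k}}(\ha_k)$, where $\ha_k$ is the restriction of $\ha$ to arcs inside $(I_k,\ol{J_k})$. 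Since $\ha$ is non-crossing, so is each $\ha_k$, and Proposition~\ref{prop:assheights} ensures each factor is bona fide. The same decomposition applies to the description of cells $\sfg(F)$ for arbitrary covering $\IJ$-forests containing $S$ (Lemma~\ref{lem:verticesgF}), so the factorization extends to the whole polyhedral complex.

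The last step is to check that each $(I_k,\ol{J_k})$ still satisfies the hypotheses of Theorem~\ref{thm:supportAssociahedron} (or is one of the trivial pieces where a factor degenerates to a point), so that each $\Asso_{I_k,\ol{J_k}}(\ha_k)$ is itself a genuine $\IJ$-associahedron. The main obstacle is the bookkeeping around the cone-point arcs appearing in Equation~\eqref{eq:link}: combinatorially they are ``extra'' vertices in $\asscomp{I_k,\ol{J_k}}$ common to every maximal face, but geometrically they correspond to degenerate or ``lineality'' directions that must be absorbed into the affine change of coordinates. Once this is carefully sorted out, the identification $\AssoIJ(\ha)\cap H\cong\prod_k\Asso_{I_k,\ol{J_k}}(\ha_k)$ follows.
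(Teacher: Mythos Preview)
Your approach is correct in spirit and shares the core combinatorial input (Lemma~\ref{lem:links}) with the paper, but it is considerably more elaborate than necessary. The paper's argument is much shorter because of two simplifications you did not make.

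First, the paper reduces immediately to the case where $H$ supports a \emph{facet} of the ambient associahedron. Since every face is an intersection of facets and a Cartesian product of Cartesian products is again a Cartesian product, this reduction is free. It replaces your general forest $S$ by a single arc, so only the vertex-link case~\eqref{eq:vertexlink} of Lemma~\ref{lem:links} is needed, and the bookkeeping with cone points and nested pieces that you flag as ``the main obstacle'' essentially disappears.

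Second, rather than decomposing the path $P_T(\ol k)$ and tracking how the offsets $c_k$ depend on the other pieces, the paper identifies the facet-supporting hyperplanes explicitly as $H_{\ol k,i,\ol j}=\{x_{\ol k}-x_{\ol j}=\ha(i,\ol k)-\ha(i,\ol j)\}$ for an unavoidable arc $(i,\ol j)$, and then invokes Lemma~\ref{lem:proofcoordinates} directly: $\sfg(T)\in H_{\ol k,i,\ol j}$ if and only if $(i,\ol k)\in T$. This single observation does all the work your ``second step'' was meant to do, and feeds straight into~\eqref{eq:vertexlink}.

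One comment on your affine factorization: your $c_k$ is not a constant (it depends on the $T_\ell$ with $\ell\neq k$), so you cannot simply ``subtract these offsets''. What actually happens in the two-piece case is that for $\ol j$ inside the arc $(i,\ol k)$ one has $\sfg(T)_{\ol j}-\sfg(T)_{\ol k}$ depending only on the inner piece, while $\sfg(T)_{\ol k}$ depends only on the outer piece; this is a linear shear (subtract the coordinate $x_{\ol k}$, not a constant), and does yield an affine product. Your plan can be made to work, but the paper sidesteps all of this by staying at the combinatorial level.
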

\begin{proof}
It suffices to consider the case when $H$ supports a facet of the associahedron $\supp(\AssoIJ(\ha))$. In this case, $H$ is of the form $H_{\ol k,i,\ol j}=\{x_{\ol k}-x_{\ol j}= \ha(i,\ol k)-\ha(i,\ol j)\}$
for an unavoidable arc $\ij$ of $\AssoIJ(\ha)$ (that is, either $i$ and $\ol j$ are consecutive in $I\sqcup \ol J$ or  $(i,\ol j)=(\min I,\max \ol J)$), and an element $i\prec \ol k\in \ol J$.
 
By Lemma~\ref{lem:proofcoordinates}, $\sfg(T)\in H_{\ol k,i,\ol j}$ if and only if $(i,\ol k)\in T$. The result is then a direct consequence of Proposition~\ref{lem:links}, since $\AssoIJ(\ha)\cap \in H_{\ol k,i,\ol j}$ would be dual to the link of $(i,\ol k)$ in $\asscomp{I,\ol J}$.
\end{proof}

%%%%%%%%%%%%%%%%%%%%%%%%%%%%%%%%%%%%%%%%%%%%%%%%%%%%%%%%%%%%%%%%%%
%%%%%%%%%%%%%%%%%%%%%%%%%%%%%%%%%%%%%%%%%%%%%%%%%%%%%%%%%%%%%%%%%%
%%%%%%%%%%%%%%%%%%%%%%%%%%%%%%%%%%%%%%%%%%%%%%%%%%%%%%%%%%%%%%%%%%

\part{Type \texorpdfstring{$B$}{B}}

\section{The cyclohedral triangulation}\label{sec:cyctri}

Whereas the (type $A$) associahedron concerns triangulations of a polygon, its type $B$ analogue, the cyclohedron, deals with centrally symmetric triangulations of a centrally symmetric polygon. Our definitions of $\nn$-trees and the $\nn$-associahedral triangulation can be extended to produce type~$B$ analogues using a cyclic symmetry of $\productn$ (compare~\cite{CeballosPadrolSarmiento2015}).
% (which was exploited in our study of partial triangulations of $\product$ in~\cite{CeballosPadrolSarmiento2015}).

Let $P_{2n+2}$ be a convex $(2n+2)$-gon, with its edges labeled counterclockwise from $0$ to~${2n+1}$ modulo $n+1$, as in Figure~\ref{fig:cstriang2tree}, and let $T$ be a \emph{centrally symmetric triangulation of $P_{2n+2}$} (henceforth abbreviated cs-triangulation). We associate to~$T$ a spanning tree of $\bipartitebp{n}{n}$ according to the following procedure:
First, replace each boundary edge $i$ of $P_{2n+2}$ with an arc $(i,\ol i)$ of $\bipartitebn$, such that $i$ comes before~$\ol i$ when the boundary of $P_{2n+2}$ is traversed in the counterclockwise order. We will have this order  in mind throughout. Second, replace each diagonal of $T$ of $P_{2n+2}$ with an arc $(i,\ol j)$, where $i$ and $\ol j$ are the first and last edges of $P_{2n+2}$ that are covered by the diagonal after a radial projection from the center of $P_{2n+2}$ to its boundary (we implicitly assume $P_{2n+2}$ is a regular polygon). Here, the main diagonal of $P_{2n+2}$ gets replaced with two arcs of the form $(i,\ol{i+n}) \pmod{n+1}$, corresponding to the two opposite directions of radial projection from the center. We end with two copies of a subgraph of $\bipartitebp{n}{n}$, which is actually a spanning tree of $\bipartitebp{n}{n}$, as we show in Lemma~\ref{lem:cyclicIJtreesaretrees}. 

Trees obtained with this procedure are called \defn{cyclic $\nn$-trees}. An example is shown in Figure~\ref{fig:cstriang2tree}. We draw cyclic $\nn$-trees on (the surface of) a cylinder to make the parallel with $\nn$-trees more evident. Observe that we may draw all arcs increasingly by having them \emph{wind} around the cylinder, as necessary.

\begin{figure}[htpb]
 \centering 
 \includegraphics[width=.9\linewidth]{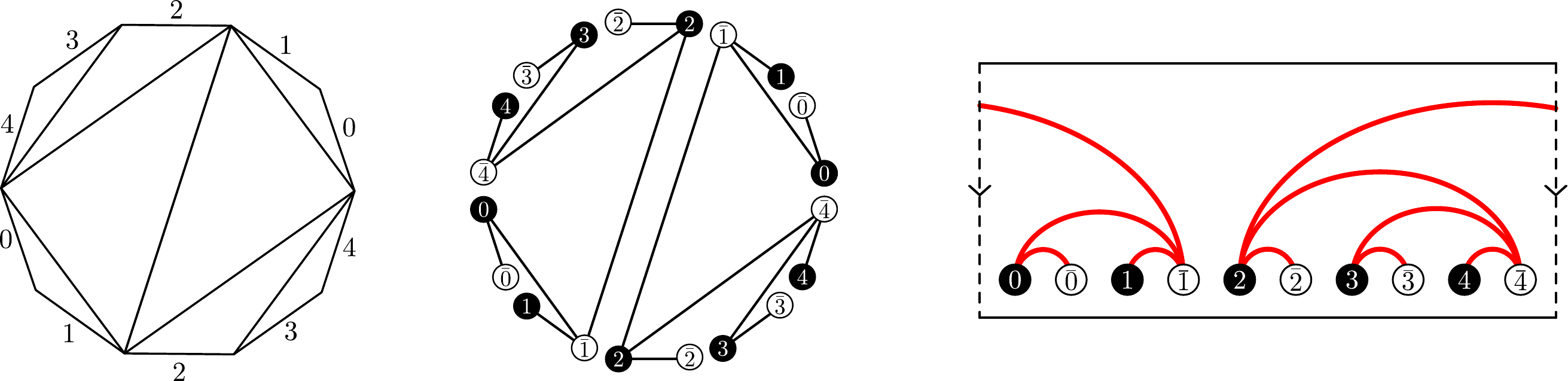}
 \caption{From a cs-triangulation to a non-crossing alternating tree on the cylinder.}\label{fig:cstriang2tree}
\end{figure}

Recall that, when seen as subsimplices of $\productn$, ordinary $([n],[\ol n])$-trees cover the subpolytope~$\catblock{n}\subset\productn$. In contrast, if we take all cyclic $([n],[\ol n])$-trees we obtain a full triangulation of $\productn$. We call it the \defn{$n$-cyclohedral triangulation~$\cyctri{n}$} because two maximal simplices of $\cyctri{n}$ are adjacent if and only if the corresponding cs-triangulations differ by a flip. Hence the dual of $\cyctri{n}$ is the (simple) cyclohedron.

\begin{theorem}\label{thm:cyclohedrontriangulation}
The set of cyclic $\nn$-trees indexes the maximal simplices of a flag regular triangulation of $\productn$, which we call the \defn{$n$-cyclohedral triangulation~$\cyctri{n}$}.
\end{theorem}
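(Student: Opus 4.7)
The plan is to mirror the proof of Theorem~\ref{thm:asstriang} (which the paper in fact reduces to this theorem), but working in the full polytope $\productn$ rather than its subpolytope $\catblock{n}$, and using the cyclic analogue of the non-crossing condition. I split the argument into three stages.

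First, a combinatorial setup. Call a pair of arcs $(i,\ol j),(i',\ol j')$ of $K_{[n],[\ol n]}$ \emph{cyclically crossing} if, when drawn on the cylinder as in Figure~\ref{fig:cstriang2tree}, they properly cross; equivalently, the four endpoints alternate along the cyclic order $0\prec\ol 0\prec 1\prec\ol 1\prec\cdots\prec n\prec\ol n$ with compatible winding numbers. The construction of Figure~\ref{fig:cstriang2tree} then matches cyclic $\nn$-trees with the maximal cyclically non-crossing subgraphs of $K_{[n],[\ol n]}$, and Lemma~\ref{lem:cyclicIJtreesaretrees} (which I take as stated) ensures such subgraphs really are graph-theoretic trees on $[n]\sqcup[\ol n]$.

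Second, I produce a height function $\ha:[n]\times[\ol n]\to\RR$ whose induced regular subdivision of $\productn$ is $\cyctri{n}$. The natural candidate, mimicking Lemma~\ref{lem:non-crossing}, is $\ha(i,\ol j)=f\bigl((j-i)\bmod (n+1)\bigr)$ for a strictly concave $f$ on $\{0,1,\dots,n\}$, augmented by a small generic perturbation that resolves the main diagonal of $P_{2n+2}$. The verification required is the cyclic analogue of Proposition~\ref{prop:assheights}: for every four endpoints forming a cyclic crossing, the uncrossed matching selected by the corresponding cs-triangulation has strictly smaller $\ha$-sum than the crossing one. Strict concavity and cyclic symmetry dispose of the generic cases, while the perturbation takes care of the ties arising from diameters of $P_{2n+2}$.

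Third, I assemble the triangulation. The lower envelope of the lifted point configuration $\{(\bfe_i,\bfe_{\ol j},\ha(i,\ol j))\}$ projects to a regular polyhedral subdivision $\cT$ of $\productn$, and by the inequalities of the previous step every simplex of $\cT$ is spanned by a cyclically non-crossing set of arcs; in particular each facet is contained in some cyclic $\nn$-tree. Because $\productn$ is unimodular of normalized volume $\binom{2n}{n}$, and this number coincides with the count of cs-triangulations of $P_{2n+2}$, a simplex count forces $\cT$ to be a triangulation whose facets are exactly the cyclic $\nn$-trees. Flagness of $\cyctri{n}$ follows from the easy observation that any cyclically non-crossing subgraph extends to a cyclic $\nn$-tree, obtained by completing the corresponding collection of symmetric non-crossing chords of $P_{2n+2}$ to a cs-triangulation.

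The main obstacle is the height function step: one must ensure the non-crossing inequality holds uniformly over \emph{all} cyclic crossings, including the degenerate configurations involving the main diagonal of $P_{2n+2}$ and pairs of arcs with different winding numbers, where naive concavity does not directly apply. Once a suitable $\ha$ is in hand, the rest of the proof is a routine adaptation of the type~$A$ argument.
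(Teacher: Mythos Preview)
Your overall architecture---produce a height function, read off a regular subdivision, then count simplices against the $\binom{2n}{n}$ cs-triangulations---is sound, and in fact close to what the paper does. But the height-function step, which you correctly flag as the crux, has a real gap.

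Strict concavity of $f$ alone does \emph{not} suffice. Write $\lth(i,\ol j)=(j-i)\bmod(n+1)$. For a cyclically non-crossing pair $(i,\ol j),(i',\ol j')$ there are essentially two configurations (see the paper's Lemma~\ref{lem:cyc-non-crossing}). In the ``type~$A$'' configuration the four lengths satisfy $\lth(i,\ol j)+\lth(i',\ol j')=\lth(i',\ol j)+\lth(i,\ol j')$ and concavity yields the required inequality, exactly as in Lemma~\ref{lem:non-crossing}. In the other, genuinely cyclic configuration, the modular reduction makes the two matching sums differ by $n+1$, and one has $\lth(i,\ol j)<\lth(i',\ol j)$ and $\lth(i',\ol j')<\lth(i,\ol j')$ \emph{strictly}. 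Here the desired inequality $f(\lth(i,\ol j))+f(\lth(i',\ol j'))<f(\lth(i',\ol j))+f(\lth(i,\ol j'))$ follows from $f$ being strictly \emph{increasing}, not from concavity; indeed a strictly concave but decreasing $f$ gives the inequality in the wrong direction. A small generic perturbation cannot rescue a strict inequality pointing the wrong way, so your proposed fix does not work. The ``ties from diameters'' and ``different winding numbers'' you worry about are not the issue; once $f$ is strictly increasing and strictly concave (e.g.\ $f(\ell)=\sqrt{\ell}$ or $f(\ell)=-c^{-\ell}$), no perturbation is needed at all.

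For comparison, the paper separates the two halves. It first proves that the cyclic $\nn$-trees form a triangulation (Proposition~\ref{prop:cycIsTri}) without any height function, using the characterization of triangulations of $\product$ via alternating cycles (Lemma~\ref{lem:bipartite}) together with the key combinatorial fact that there is a \emph{unique} cyclically non-crossing perfect matching on any balanced support (Lemma~\ref{lem:singlematching}); the volume count $\binom{2n}{n}$ then finishes it. Regularity is handled afterwards via Lemma~\ref{lem:regularitycharacterization} and the height function $\hb(i,\ol j)=f(\lth(i,\ol j))$ with $f$ strictly increasing and strictly concave. Your route (regularity first, triangulation as a corollary of the simplex count) is perfectly viable once you correct the hypothesis on~$f$.
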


Theorem~\ref{thm:cyclohedrontriangulation} lies in effect at the quintessence of our constructions and results. To prove it, we split it into two statements. We show in Section~\ref{sec:triangulation} that $\cyctri{n}$ is a flag triangulation, and in Section~\ref{sec:regularity} that $\cyctri{n}$ is regular.

For the moment, let us present some facts relating to the cyclohedral triangulation $\cyctri{n}$ that parallel results of the $n$-associahedral triangulation in Section~\ref{sec:assTri}.

As with the associahedral triangulation, it is natural to consider the restriction of the cyclohedral triangulation $\cyctri{n}$ to faces $\productp{I}{\ol J}$ of $\productn$, where $I\subseteq[n]$, $\ol J\subseteq[\ol n]$. This setting gives rise to the notion of \defn{cyclic $\IJ$-trees}. As the adjective cyclic suggests, it amounts to dropping the condition of being increasing and adapting the definition of non-crossing to a cyclic setup.

\begin{definition}\label{def:cyclicIJtree}
Let $I\subseteq [n]$ and $\ol J\subseteq [\ol n]$ be nonempty subsets, for some $n\in\NN$. A \defn{cyclic $(I,\ol J)$-forest} is a subgraph $F$ of $\bipartitep{I}{J}$ that is \textbf{cyclically non-crossing}, where two arcs $(i,\ol j),\ (i',\ol j') \in F$ \defn{cyclically cross} if one of the following conditions holds:
\begin{align}
\label{eq:cycliccross1}
j-i'<j-i \ &\text{ and }\ j-i' < j'-i'   \pmod{n+1}, \\
\label{eq:cycliccross2} \text{or }\ j'-i<j-i \ &\text{ and }\ j'-i < j'-i'  \pmod{n+1},
\end{align}
(see Figure~\ref{fig:cycliccrossings}). A \defn{cyclic $(I,\ol J)$-tree} is a maximal cyclic $(I,\ol J)$-forest.
\end{definition}

\begin{figure}[htpb]
\centering 
 \includegraphics[width=\linewidth]{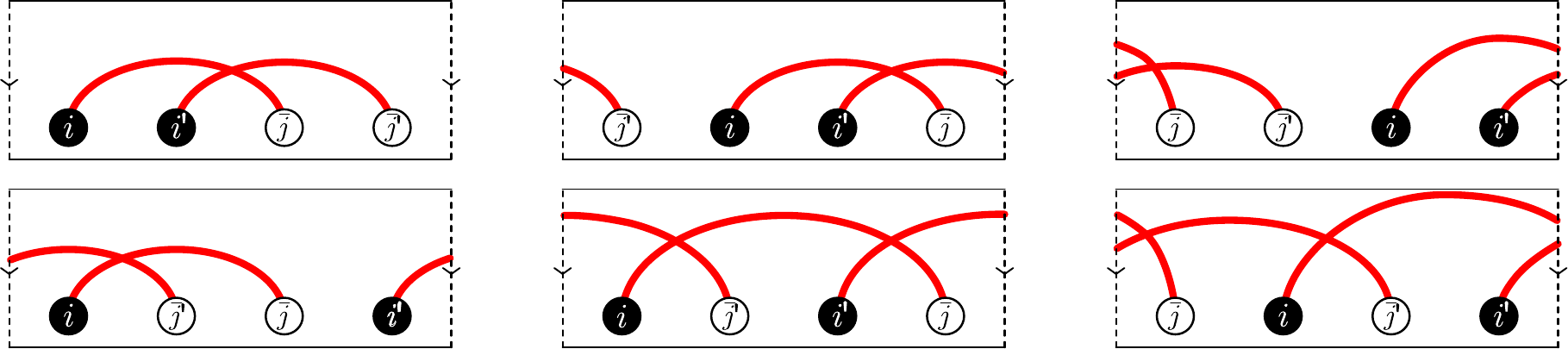}
 \caption{Pairs of arcs that infringe Conditions~\eqref{eq:cycliccross1} or~\eqref{eq:cycliccross2}, and therefore cyclically cross.}\label{fig:cycliccrossings}
\end{figure}

An argument parallel to the one in Remark~\ref{rem:bijectionTriTree} shows that cyclic $\nn$-trees are indeed in bijection with cs-triangulations of a $(2n+2)$-gon $\ngon{2n+2}.$

\begin{corollary}\label{IJcyclic_triangulation}
Let $I\subset \NN$ and $\ol J\subset \ol \NN$ be nonempty finite subsets. The set of cyclic $(I,\ol J)$-trees indexes the maximal simplices of a flag regular triangulation of $\productp{I}{J}$. We call it the \defn{$\IJ$-cyclohedral triangulation $\cyctri{I,\ol J}$} 
\end{corollary}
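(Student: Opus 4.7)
The plan is to realize $\cyctri{I,\ol J}$ as the restriction of the $n$-cyclohedral triangulation $\cyctri{n}$ of Theorem~\ref{thm:cyclohedrontriangulation} to the face $\productp{I}{\ol J}$ of $\productn$, for any $n$ large enough that $I\subseteq[n]$ and $\ol J\subseteq[\ol n]$. This mirrors exactly the way $\asstri{I,\ol J}$ is obtained from $\asstri{n}$ in Corollary~\ref{cor:IJtriang}, so once Theorem~\ref{thm:cyclohedrontriangulation} is granted, the corollary should follow by face restriction plus a definition check.

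Once this setup is in place, the three properties claimed in the statement transfer from $\cyctri{n}$ automatically. The restriction of any polyhedral triangulation to a face is again a triangulation; a regular lifting function for $\cyctri{n}$ restricts to a regular lifting function on the vertices of the face, so regularity is inherited; and the induced subcomplex of a flag simplicial complex on any subset of its vertices is itself flag, which takes care of flagness.

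It then remains to identify the facets of the restriction with cyclic $(I,\ol J)$-trees. By flagness of $\cyctri{n}$, its simplicial structure is completely determined by its $1$-skeleton, and its minimal non-faces are exactly the pairs of arcs of $\bipartitebn$ satisfying condition \eqref{eq:cycliccross1} or \eqref{eq:cycliccross2} of Definition~\ref{def:cyclicIJtree} applied to $(I,\ol J)=([n],[\ol n])$. Since those conditions are phrased modulo the same $n+1$, they restrict verbatim to pairs of arcs in $I\times\ol J$; consequently, the simplices of the restricted triangulation are precisely the cyclic $(I,\ol J)$-forests, and its facets are the maximal such, namely the cyclic $(I,\ol J)$-trees. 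The main (indeed only) obstacle in this plan is not internal to the corollary itself but lies in Theorem~\ref{thm:cyclohedrontriangulation}, whose proof is postponed; once that is established, the corollary reduces to the standard inheritance of regular flag triangulations along faces combined with the above bookkeeping of cyclic crossing conventions.
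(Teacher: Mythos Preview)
Your proposal is correct and follows exactly the paper's intended route: the paragraph preceding the corollary explicitly says that $\cyctri{I,\ol J}$ arises as the restriction of $\cyctri{n}$ to the face $\productp{I}{\ol J}$, and the paper gives no further argument beyond this (mirroring Corollary~\ref{cor:IJtriang}). Your observation that the cyclic crossing conditions \eqref{eq:cycliccross1}--\eqref{eq:cycliccross2} restrict verbatim to $I\times\ol J$ is the only bookkeeping needed, and it is sound.
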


%%%%%%%%%%%%%%%%%%%%%%%%%%%%%%%%%%%%%%%%%%%%%%%%%%%%%%%%%%%%%%%%%%

\subsection{The \texorpdfstring{$n$}{n}-cyclohedral triangulation is a flag triangulation}
\label{sec:triangulation}

We start by verifying that the name \emph{tree} in Definition~\ref{def:cyclicIJtree} (and hence also in Definition~\ref{def:IJtree}) is justified.

\begin{lemma}
\label{lem:cyclicIJtreesaretrees}
Let $I\subseteq [n]$ and $\ol J\subseteq [\ol n]$ be nonempty subsets, for some $n\in\NN$. Then cyclic $\IJ$-trees are spanning trees of $\bipartitep{I}{J}$.
\end{lemma}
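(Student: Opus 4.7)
The strategy is to verify directly that every cyclic $\IJ$-tree $T$ has the three properties characterizing a spanning tree of $\bipartitep{I}{\ol J}$: it is connected, it is spanning (every element of $I\sqcup\ol J$ has positive degree), and it has exactly $|I|+|\ol J|-1$ arcs.

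The main tool I would set up is a geometric model for cyclic non-crossing. Place the elements of $I\sqcup\ol J$ on the boundary of a closed disk $D$ in their natural cyclic order inherited from the $\prec$-order on $[n]\sqcup[\ol n]$, and draw each arc $(i,\ol j)\in T$ as a simple curve in the interior of $D$ whose isotopy class is determined by the cyclic length $j-i\pmod{n+1}$ (concretely, an arc of length $k$ is drawn so that the counterclockwise boundary path from $i$ to $\ol j$ passes through $k$ other nodes of $I\sqcup \ol J$). A direct reading of \eqref{eq:cycliccross1}--\eqref{eq:cycliccross2} in Definition~\ref{def:cyclicIJtree} shows that two arcs cyclically cross if and only if their drawings intersect transversely in the interior of $D$; hence any cyclic $\IJ$-forest yields a planar bipartite graph embedded in $D$ with all vertices on $\partial D$.

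Given this model, the upper bound $|T|\le |I|+|\ol J|-1$ follows from Euler's formula applied to such an embedding. For the opposite inequality, the maximality of $T$ is used to rule out both isolated vertices and multiple connected components. Indeed, if some $v\in I\sqcup \ol J$ is isolated, then the arc joining $v$ to its nearest neighbor of the opposite type on $\partial D$ can be drawn in a tiny neighborhood of $v$, hence crosses no existing arc of $T$, contradicting maximality; similarly, if $T$ were disconnected, a non-crossing arc between two components can be drawn through any face of the embedding separating them. Combining these gives a connected spanning subgraph of $\bipartitep{I}{\ol J}$ on $|I|+|\ol J|$ vertices with at most $|I|+|\ol J|-1$ arcs, which is therefore a spanning tree.

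The main obstacle will be to formalize the geometric model, especially the winding convention: one needs a careful check that the combinatorial condition in Definition~\ref{def:cyclicIJtree}, with its $\pmod{n+1}$ arithmetic, corresponds exactly to transverse crossing of curves in $D$, including degenerate cases (arcs of length $0$ or $n$, or arcs sharing an endpoint). If this geometric translation proves awkward, a fallback is a purely combinatorial induction on $|I|+|\ol J|$: identify a leaf of $T$ as an endpoint of an arc of minimum cyclic length $j-i\pmod{n+1}$ in $T$ (such an arc cannot be crossed at that endpoint without violating the cyclic non-crossing condition), peel it off, and apply the inductive hypothesis to the smaller cyclic tree that remains.
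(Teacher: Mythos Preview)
Your main approach has a genuine gap: the disk model does not capture the cyclic crossing condition of Definition~\ref{def:cyclicIJtree}. In a disk, any two simple arcs joining the same pair of boundary points are isotopic, so there is no ``winding'' to specify, and your parenthetical description of the isotopy class in terms of $j-i\pmod{n+1}$ has no content. Once arcs are drawn as chords, two of them cross iff their endpoints interleave on $\partial D$ --- but this is \emph{not} equivalent to cyclic crossing. For instance, with $n=2$, $I=\{0,1\}$, $\ol J=\{\ol 0,\ol 1\}$, the arcs $(0,\ol 1)$ and $(1,\ol 0)$ satisfy~\eqref{eq:cycliccross1} (since $\lth(1,\ol 1)=0<\lth(0,\ol 1)=1$ and $0<\lth(1,\ol 0)=2$), so they cyclically cross; yet on the circle with cyclic order $0,\ol 0,1,\ol 1$ their endpoints do not interleave. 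A correct geometric model needs a cylinder or the centrally symmetric $(2n+2)$-gon, and then both the Euler-formula bound and your ``draw an arc through a separating face'' argument for connectedness would have to be redone in that setting.

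Your fallback, however, is essentially the paper's proof. The paper inducts on $|I|+|\ol J|$: it picks $i\in I$ and $\ol j\in\ol J$ \emph{consecutive in the cyclic order of $I\sqcup\ol J$}, observes that $(i,\ol j)$ cannot be crossed by any arc (hence lies in $T$ by maximality) and that one of its endpoints is a leaf (otherwise $(i,\ol{j'})$ and $(i',\ol j)$ would cross), then peels off the leaf. Your choice of a minimum-length arc \emph{in $T$} also produces a leaf by the same reasoning, but it makes the inductive step --- that $T\setminus(i,\ol j)$ is again maximal on the smaller vertex set --- harder, since such an arc can in general be crossed by arcs outside $T$. The paper's consecutive pair is globally uncrossable, so removing it manifestly preserves maximality; switching to that choice repairs your fallback immediately.
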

\begin{proof}
 Let $G$ be a maximal cyclically non-crossing subgraph of $\bipartitep{I}{J}$. The proof that $G$ is a spanning tree is by induction on $|I|+|\ol J|$, and trivial when $|I|=1$ or $|\ol J|=1$.

 Assume that $|I|,|\ol J|\geq 2$ and consider elements $i\in I$ and $\ol j\in \ol J$ that are consecutive in the cyclic order of the elements of $I\sqcup \ol J$. One easily verifies that the arc $(i,\ol j)$ cannot cross any other arc, and hence it belongs to $G$ by maximality. Moreover, either $i$ or $\ol j$ is a leaf of $G$. Indeed, the condition of being consecutive implies that for each $i'\neq i$ and $\ol j'\neq \ol j$ the arcs $(i',\ol j)$ and $(i,\ol j')$ would cross, since then $j-i<j-i' \pmod{n+1}$ and $j-i<j'-i \pmod{n+1}$, which is Condition~\eqref{eq:cycliccross1}. Assume without loss of generality that $i$ is a leaf. We conclude by induction after observing that $G\setminus (i,\ol j)$ is a maximal cyclically non-crossing subgraph of $\bipartitep{I\setminus i}{J}$. 
\end{proof}

\begin{proposition}
\label{prop:cycIsTri}
 The set of cyclic $([n],[\ol n])$-trees indexes the maximal simplices of a triangulation $\cyctri{n}$ of $\productn$ which is a flag simplicial complex
\end{proposition}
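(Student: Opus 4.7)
The plan is to derive the proposition from Theorem~\ref{thm:asstriang} via the cyclic symmetry of $\productn$. Introduce the affine automorphism $\sigma\colon\productn\to\productn$ defined on vertices by $\sigma(\bfe_i,\bfe_{\ol j}):=(\bfe_{(i+1)\bmod(n+1)},\bfe_{\overline{(j+1)\bmod(n+1)}})$. This shift has order $n+1$ and, under the correspondence between arcs of $\bipartitebn$ and vertices of $\productn$, it preserves the cyclic non-crossing relation of Definition~\ref{def:cyclicIJtree}. A first observation is that ordinary $\nn$-trees are exactly the cyclic $\nn$-trees all of whose arcs $(i,\ol j)$ satisfy $i\preceq\ol j$: when both arcs under comparison are increasing, the cyclic crossing conditions~\eqref{eq:cycliccross1}--\eqref{eq:cycliccross2} collapse to the ordinary crossing condition of Definition~\ref{def:IJtree}. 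Consequently, $\asstri{n}$ is precisely the sub-collection of candidate simplices supported on $\catblock{n}$, and each rotate $\sigma^k(\asstri{n})$ is the sub-collection supported on $\sigma^k(\catblock{n})$.

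Next I would establish that every cyclic $\nn$-tree lies in some orbit $\sigma^k(\asstri{n})$. Given a cyclic $\nn$-tree $T$, I would exhibit a rotation $k\in\{0,\ldots,n\}$ such that $\sigma^{-k}(T)$ is an ordinary (increasing) $\nn$-tree. This is done combinatorially by picking $k$ so that a selected arc of $T$ is sent to $(0,\ol\ell)$, which by maximality of $T$ together with the cyclic non-crossing condition forces all other arcs of $\sigma^{-k}(T)$ to become increasing.

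With this identification, it remains to verify that the union $\bigcup_{k=0}^{n}\sigma^k(\asstri{n})$ is a triangulation of $\productn$. For covering, every vertex $(\bfe_i,\bfe_{\ol j})$ of $\productn$ lies in $\sigma^i(\catblock{n})$, since its preimage under $\sigma^i$ is $(\bfe_0,\bfe_{\overline{(j-i)\bmod(n+1)}})\in\catblock{n}$; a count by normalised volumes, using that each $\sigma^k(\catblock{n})$ has volume $\catnum{n}$ and $\productn$ has volume $\binom{2n}{n}=(n+1)\catnum{n}$, confirms that the $n+1$ shifted copies tile $\productn$ exactly. For intersection in common faces, the intersection $\sigma^k(\catblock{n})\cap\sigma^{k'}(\catblock{n})$ is the product face $\productp{I}{\ol J}$ spanned by the common vertices, and by Corollary~\ref{cor:IJtriang} both shifted triangulations restrict on this face to the $\IJ$-associahedral triangulation $\asstri{I,\ol J}$, so they glue consistently. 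Flagness then follows directly from Definition~\ref{def:cyclicIJtree}: a subgraph of $\bipartitebn$ is a cyclic $\nn$-forest if and only if every pair of its arcs is cyclically non-crossing, so the minimal non-faces of the resulting complex are pairs of cyclically crossing arcs.

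The main obstacle I expect is the compatibility of the shifted triangulations on the overlaps $\sigma^k(\catblock{n})\cap\sigma^{k'}(\catblock{n})$. The key insight is that an arc supported on both $\sigma^k(\catblock{n})$ and $\sigma^{k'}(\catblock{n})$ must be increasing with respect to both cyclic orderings, which identifies the overlap as a product face $\productp{I}{\ol J}$ and forces both restricted triangulations to coincide with the uniquely determined $\asstri{I,\ol J}$; after this, the type-$A$ theorem and the definitional flagness finish the argument.
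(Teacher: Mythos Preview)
Your approach---assembling $\cyctri{n}$ as the orbit of $\asstri{n}$ under the cyclic shift $\sigma$---is different from the paper's. The paper argues directly via Lemma~\ref{lem:bipartite}: two spanning trees of $\bipartitebn$ index properly intersecting simplices of $\productn$ if and only if they support no alternating cycle, and Lemma~\ref{lem:singlematching} (uniqueness of the cyclically non-crossing perfect matching on any balanced support) rules such cycles out between any two cyclic $\nn$-trees. The count $\binom{2n}{n}$ together with unimodularity then forces a triangulation, and flagness is read off from the definition. Note also that within the paper's logic Theorem~\ref{thm:asstriang} is \emph{deduced} from Theorem~\ref{thm:cyclohedrontriangulation}, so invoking it here is circular, although the result is of course independently known.

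The gluing step in your outline has a genuine gap: the assertion that $\sigma^k(\catblock{n})\cap\sigma^{k'}(\catblock{n})$ is a product face $\productp{I}{\ol J}$ is false, so Corollary~\ref{cor:IJtriang} does not apply. Already for $n=2$, $k=0$, $k'=1$ the common vertices are $\{(0,\ol 0),(1,\ol 1),(1,\ol 2),(2,\ol 2)\}$, which contains $(0,\ol 0)$ and $(1,\ol 1)$ but not $(0,\ol 1)$, hence is not of the form $I\times\ol J$; the overlap here is the hyperplane section $\{x_0=y_0\}$, a common facet of the two rotates but not a face of $\productn$ and not a $\catblock{I,\ol J}$. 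For $n\geq 3$ this overlap is not even a simplex, so the agreement of the two restricted triangulations is genuinely at stake. To repair the argument you would need to show (i) that the rotates $\sigma^k(\catblock{n})$ form a polyhedral subdivision of $\productn$ (the volume identity alone does not yield disjoint interiors), and (ii) that the restrictions of $\asstri{n}$ and $\sigma^{k'-k}(\asstri{n})$ to each shared facet coincide; step (ii) is essentially equivalent to Lemma~\ref{lem:singlematching}. Finally, ``a selected arc'' is too vague: the rotation making a cyclic $\nn$-tree increasing is the one sending its unique arc of length $n$ to $(0,\ol n)$, and the existence of that arc must be argued.
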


Our proof of Proposition~\ref{prop:cycIsTri} makes use of the following two lemmas. 

\begin{lemma}[{cf.~\cite[Lemma 6.2.8]{DeLoeraRambauSantos2010}}]\label{lem:bipartite}
 Let $G$ be a connected subgraph of $\bipartiteb$, and $P_G=\conv\{(\bfe_i,\bfe_{\ol j})\ :\ (i,\ol j)\in G\}\subset \product$ the associated subpolytope of $\product$. Triangulations of $P_G$ are in bijection with families $\cT$ of $\frac{\vol(P_G)}{(n+m)!}$ spanning trees of $G$ such that 
 there is no cycle of $G$ that alternates between two trees $T_1$ and $T_2$ of~$\cT$.
\end{lemma}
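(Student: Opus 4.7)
The plan is to derive the lemma from classical properties of triangulations of $\product$, specialized to the subpolytope $P_G$.

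First, I would recall the standard correspondence: a collection of vertices $\{(\bfe_i,\bfe_{\ol j})\colon (i,\ol j)\in E\}\subseteq\product$ is affinely independent precisely when the corresponding arc set $E\subseteq\bipartiteb$ forms a forest. Consequently, the simplices spanned by vertex subsets of $P_G$ are in bijection with subforests of $G$, and the full-dimensional ones (those with $n+m+1$ vertices, matching $\dim\product=n+m$) are indexed by spanning trees of $G$. Writing $P_T$ for the simplex associated to a spanning tree $T$, a direct computation (or an iterative edge-exchange argument) shows that every spanning-tree simplex is unimodular in the lattice generated by the vertices of $\product$; in particular, all spanning-tree simplices share a common volume $V_0$, and every maximal simplex in any triangulation of $P_G$ must be some $P_T$. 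By the usual volume argument, the number of simplices in any triangulation of $P_G$ is $\vol(P_G)/V_0$, which equals $\vol(P_G)/(n+m)!$ in the normalization chosen.

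The substantive step is the characterization of proper intersection: for spanning trees $T_1,T_2$ of $G$, the simplices $P_{T_1}$ and $P_{T_2}$ meet in a common face of both (rather than overlapping in their relative interiors) if and only if $T_1\cup T_2$ contains no \emph{alternating cycle}, i.e., no cycle whose arcs alternate between $T_1\setminus T_2$ and $T_2\setminus T_1$. The ``only if'' direction follows from the observation that an alternating cycle $(i_1,\ol j_1,i_2,\ol j_2,\ldots,i_k,\ol j_k)$ produces the non-trivial affine relation
\[
\sum_{\ell=1}^{k}(\bfe_{i_\ell},\bfe_{\ol j_\ell})\;=\;\sum_{\ell=1}^{k}(\bfe_{i_\ell},\bfe_{\ol j_{\ell-1\bmod k}}),
\]
which certifies a point lying in the relative interior of both $P_{T_1}$ and $P_{T_2}$, incompatible with a common-face intersection. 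The ``if'' direction proceeds by induction on $|T_1\triangle T_2|$: given a common point $x\in P_{T_1}\cap P_{T_2}$, the absence of an alternating cycle allows one to locate a leaf of $T_1\triangle T_2$ whose incident vertex must carry coefficient zero in any convex-combination expression of $x$ supported on $T_1$ or on $T_2$; this reduces the problem to the symmetric difference shrunk by one arc.

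Assembling these pieces, a family $\cT$ of spanning trees of $G$ indexes a triangulation of $P_G$ exactly when its spanning-tree simplices (i) pairwise intersect in common faces---equivalent by the preceding paragraph to the no-alternating-cycle condition on pairs---and (ii) cover $P_G$, which under (i) reduces to the cardinality condition $|\cT|=\vol(P_G)/(n+m)!$ by the volume count of the first paragraph. The main obstacle I anticipate is the ``if'' direction of the intersection criterion: it requires careful bookkeeping of how coefficients cancel along $T_1\triangle T_2$, and it is where the full strength of the alternating-cycle hypothesis is brought to bear.
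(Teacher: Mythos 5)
The paper does not prove this lemma at all: it is imported verbatim from De Loera--Rambau--Santos (Lemma 6.2.8), so there is no internal proof to compare against. Your outline reconstructs the standard argument from that source: forests of $\bipartiteb$ index the affinely independent vertex subsets of $\product$, spanning trees index the full-dimensional simplices, unimodularity forces all of these to have equal volume so that the cardinality of $\cT$ is determined, and properness of pairwise intersections is governed by alternating cycles (equivalently, by the circuits of $\product$, which are exactly the sign-alternating cycles of $\bipartiteb$). Your ``only if'' direction, via the affine relation carried by an alternating cycle, is correct.

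The step I would not accept as written is the ``if'' direction. You propose an induction that locates ``a leaf of $T_1\triangle T_2$,'' but $T_1\triangle T_2$ need not have a leaf even in the absence of an alternating cycle: a four-cycle on $i_1,\ol j_1,i_2,\ol j_2$ with $(i_1,\ol j_1),(i_2,\ol j_1)\in T_1\setminus T_2$ and $(i_1,\ol j_2),(i_2,\ol j_2)\in T_2\setminus T_1$ lies entirely in the symmetric difference, is \emph{not} alternating between the two trees, and gives every node on it degree two, so the induction has no arc to peel off. The robust version of the argument goes through circulations rather than leaves: write a common point $x\in P_{T_1}\cap P_{T_2}$ as $x=\sum_{(i,\ol j)\in T_1}\lambda_{i\ol j}(\bfe_i,\bfe_{\ol j})=\sum_{(i,\ol j)\in T_2}\mu_{i\ol j}(\bfe_i,\bfe_{\ol j})$; since each coordinate of $x$ records the total weight incident to the corresponding node, $\lambda-\mu$ (extended by zero) is a circulation on $\bipartiteb$. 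If it is nonzero, every node of its support meets both a positive and a negative arc, so following arcs of alternating sign one extracts a cycle whose positive arcs lie in $T_1$ and whose negative arcs lie in $T_2$, i.e.\ an alternating cycle --- contradiction. Hence $\lambda=\mu$, the representation is unique, and $P_{T_1}\cap P_{T_2}=P_{T_1\cap T_2}$ is a common face. With that substitution your proof is complete and agrees with the cited one.
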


\begin{lemma}\label{lem:singlematching}
Let $I\subset\NN,\ol J\subset \ol\NN$ be finite subsets of the same cardinality. There exists a unique cyclically non-crossing matching in $\bipartiteij$.
\end{lemma}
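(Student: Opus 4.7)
The strategy is induction on the common cardinality $k=|I|=|\ol J|$; the case $k=0$ (empty matching) is vacuous and $k=1$ is trivial. For the inductive step I plan to single out a distinguished arc that must appear in every cyclically non-crossing matching, reducing the problem to a smaller bipartite graph of the same form.

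Concretely, pick a pair $(i^*,\ol j^*)\in I\times\ol J$ minimizing the cyclic length $\lambda(i,\ol j):=j-i\pmod{n+1}$, taking representatives in $\{0,1,\ldots,n\}$ (ties broken arbitrarily). The heart of the argument will be two claims:
\textbf{(a)} every cyclically non-crossing matching of $\bipartiteij$ contains $(i^*,\ol j^*)$; and
\textbf{(b)} for any cyclically non-crossing matching $M'$ of $\bipartitep{I\setminus\{i^*\}}{\ol J\setminus\{\ol j^*\}}$, the graph $M'\cup\{(i^*,\ol j^*)\}$ remains cyclically non-crossing. Granted these two, the inductive hypothesis applied to $\bipartitep{I\setminus\{i^*\}}{\ol J\setminus\{\ol j^*\}}$ immediately yields both existence of a cyclically non-crossing matching (via \textbf{(b)}) and its uniqueness (via \textbf{(a)} followed by induction on the remainder).

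For \textbf{(a)}, if a non-crossing matching $M$ omits $(i^*,\ol j^*)$, then it contains arcs $(i^*,\ol j')$ and $(i',\ol j^*)$ with $\ol j'\neq\ol j^*$ and $i'\neq i^*$. By the global minimality of $\lambda(i^*,\ol j^*)$ one has $\lambda(i^*,\ol j^*)\leq\lambda(i^*,\ol j')$ and $\lambda(i^*,\ol j^*)\leq\lambda(i',\ol j^*)$; moreover both inequalities are strict, since a congruence such as $j'\equiv j^*\pmod{n+1}$ with $j',j^*\in\{0,\ldots,n\}$ would force $j'=j^*$ (and analogously for $i'$, $i^*$). Unpacking, $j^*-i^*<j'-i^*\pmod{n+1}$ and $j^*-i^*<j^*-i'\pmod{n+1}$, which are precisely the two inequalities of clause~\eqref{eq:cycliccross2} for the ordered pair of arcs $(i^*,\ol j')$ and $(i',\ol j^*)$. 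Hence these arcs cyclically cross, contradicting the assumption on $M$. Claim \textbf{(b)} follows by a symmetric use of minimality: a purported crossing between $(i^*,\ol j^*)$ and some $(i'',\ol j'')\in M'$ would, by \eqref{eq:cycliccross1} or~\eqref{eq:cycliccross2}, entail $j^*-i''<j^*-i^*$ or $j''-i^*<j^*-i^*\pmod{n+1}$, each contradicting the choice of $(i^*,\ol j^*)$.

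The main subtlety I foresee is keeping the modular arithmetic straight, in particular ensuring that the weak minimality inequalities can be upgraded to strict ones when the minimizer is not unique. This is resolved by the observation that $i,i',j,j'$ all lie in $\{0,\ldots,n\}$, so congruence modulo $n+1$ forces equality as integers; once this bookkeeping is in place the induction is routine.
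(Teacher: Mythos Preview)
Your argument is correct. The key observations---that the arc of minimum cyclic length must appear in every cyclically non-crossing matching, and that it cannot cross any other arc---follow cleanly from the two clauses~\eqref{eq:cycliccross1} and~\eqref{eq:cycliccross2}, and your remark that distinct indices in $\{0,\dots,n\}$ cannot be congruent modulo $n+1$ handles the strictness issue.

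Your route, however, is genuinely different from the paper's. The paper does not induct on a shortest arc; instead it first cyclically rotates $I\sqcup\ol J$ so that every prefix contains at least as many elements of $I$ as of $\ol J$ (a Cycle Lemma / ballot-sequence normalization), and then constructs the matching \emph{globally} by sending each $i\in I$ to the smallest $\ol j$ with $|I\cap[i,\ol j]|=|\ol J\cap[i,\ol j]|$. Uniqueness is then argued directly by a counting contradiction. The trade-off: your induction is shorter and entirely local, requiring no normalization step; the paper's argument, in exchange, produces an explicit closed-form description of the unique matching (the ``balanced parentheses'' rule), which can be useful when one needs to identify the matching concretely rather than just assert its existence.
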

\begin{proof}
We can assume that every prefix of $I\sqcup\ol J$ has at least as many elements in $I$ as in $\ol J$. Indeed, cyclic rotations do not change non-crossing matchings, and we can cyclically rotate $I\sqcup \ol J$ so that for every prefix is dominated by $I$. There is always at least one such rotation, but there can be many.
 
Now consider the matching that matches each element $i\in I$ to the smallest $\ol j$ such that the interval $[i,\ol j]$ fulfills $| I\cap [i,\ol j] | =| \ol J\cap [i,\ol j] |$. Notice that this matching is always non-crossing by construction: If $\ij$ is an arc, every $i'\in [i,\ol j]\cap I$ is matched to a $\ol j'\in [i,\ol j]\cap \ol J$.

This is the only cyclically non-crossing matching with this support. If $i$ was matched with some $\ol j'\prec \ol j$, then some $i'\in [i,\ol j']\cap I$ would have to have a neighbor outside this interval (there are not enough elements of $\ol J$ in the interval to match all of them), inducing a crossing. Similarly, if $i$ was matched with some $\ol j'\succ \ol j$, then $\ol j$ would have to be matched to some $i'\succ i$, and a symmetric argument would apply.
\end{proof}

\begin{proof}[Proof of Proposition~\ref{prop:cycIsTri}]
 We use the characterization of triangulations of subpolytopes of $\product$ from Lemma~\ref{lem:bipartite}. By Lemma~\ref{lem:cyclicIJtreesaretrees}, cyclic $([n],[\ol n])$-trees are spanning trees of $\bipartitebp{n}{n}$, so they index maximal dimensional simplices in $\productn$.
 
 To check that they intersect properly, we need to verify that no pair of cyclic $([n],[\ol n])$-trees spans an alternating cycle (a cycle that alternates between arcs of the two trees). Assume there was such a cycle. The two trees would then induce a pair of disjoint perfect matchings on the support of the cycle. These would be cyclically non-crossings because the trees are. This is a contradiction because there is a single such a matching by Lemma~\ref{lem:singlematching}.
 
 To finish the proof, observe that there are exactly $\binom{2n}{n}$ cyclic $([n],[\ol n])$-trees. Indeed, the number of cs-triangulations of $P_{2n+2}$ is given by $n+1$ choices of the main diagonal of $P_{2n+2}$ times $\frac{1}{n+1}\binom{2n}{n}$ triangulations of $P_{n+2}$ for every choice of main diagonal. 
 By the unimodularity of $\productn$ (cf.~\cite[Prop.~6.2.11]{DeLoeraRambauSantos2010}), every triangulation of $\productn$ has the same number of facets, so they must form a triangulation.
 
Finally, note that the triangulation is flag because its minimal nonfaces are cyclically crossing pairs of arcs of $\bipartitebn$.
\end{proof}

%%%%%%%%%%%%%%%%%%%%%%%%%%%%%%%%%%%%%%%%%%%%%%%%%%%%%%%%%%%%%%%%%%

\subsection{The \texorpdfstring{$n$}{n}-cyclohedral triangulation is a regular triangulation}\label{sec:regularity}

To study the regularity of $\cyctri{n}$, let us define the \defn{length $\lth\ij$} of an arc $\ij$ as: 
\begin{equation}
 \lth\ij=j-i \pmod {n+1}.
\end{equation}
Thus, $\lth\ij=j-i$ if $j\geq i$ and $\lth\ij=n+1+(j-i)$ if $j<i$. Note that $\lth+1$ is the distance from $i$ to $\ol j$, moving counterclockwisely in our pictures  (cf. Figure~\ref{fig:cstriang2tree}). Therefore, we can extend it to arbitrary pairs of nodes. 

\begin{proposition}
\label{prop:cycheights}
Let $\hb\colon\{(i,\ol j)\in [n]\times[\ol n]\}\to \RR$ be a height function. The regular triangulation of $\productn$ induced by $\hb$ is $\cyctri{n}$ if and only if: 
\begin{equation}\label{eq:cycheights}
\hb(i,\ol j)+\hb(i',\ol{j'})<\hb(i',\ol j)+\hb(i',\ol{j})
\end{equation}
whenever $(i,\ol j),(i',\ol{j'})$ are cyclically non-crossing.  We say any such height function is \defn{cyclically non-crossing}.
\end{proposition}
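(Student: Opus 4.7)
The plan is to invoke the general regularity criterion for triangulations of $\productn$ (the content of Lemma~\ref{lem:regularitycharacterization} borrowed from~\cite{SantosStumpWelker2014}, which was already the key ingredient in Proposition~\ref{prop:assheights}). Recall that the minimal affine dependencies among the vertices of $\productn$ are indexed by $4$-cycles in $\bipartitebn$: for every choice of indices $i\ne i'\in[n]$ and $\ol j\ne \ol j'\in[\ol n]$, the identity
\[
(\bfe_i,\bfe_{\ol j})+(\bfe_{i'},\bfe_{\ol j'}) \;=\; (\bfe_i,\bfe_{\ol j'})+(\bfe_{i'},\bfe_{\ol j})
\]
defines a circuit whose two sides are the two perfect matchings of the $4$-cycle on $\{i,i',\ol j,\ol j'\}$. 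A generic height function $\hb$ induces a regular triangulation $\cT$ of $\productn$ if and only if, for every such circuit, the matching that appears as an edge of $\cT$ is the one whose two arcs have smaller total $\hb$-weight.

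First I would identify, for each such $4$-cycle, which of the two matchings is an edge of $\cyctri{n}$. By Proposition~\ref{prop:cycIsTri}, $\cyctri{n}$ is a flag triangulation, so its edges are exactly the cyclically non-crossing pairs of arcs of $\bipartitebn$. By Lemma~\ref{lem:singlematching} applied to $I=\{i,i'\}$ and $\ol J=\{\ol j,\ol j'\}$, exactly one of the two matchings of each such $4$-cycle is cyclically non-crossing. Moreover, any cyclically non-crossing pair extends to a maximal cyclically non-crossing subgraph of $\bipartitebn$, i.e.\ to a cyclic $([n],[\ol n])$-tree, so it really is realized as an edge of $\cyctri{n}$.

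Feeding this dichotomy into the regularity criterion, $\hb$ induces $\cyctri{n}$ precisely when, for every $4$-tuple $i\ne i',\ \ol j\ne \ol j'$ whose non-crossing matching is $\{(i,\ol j),(i',\ol j')\}$, one has
\[
\hb(i,\ol j)+\hb(i',\ol{j'}) \;<\; \hb(i,\ol{j'})+\hb(i',\ol{j}),
\]
which is the inequality \eqref{eq:cycheights} (after correcting the evident typo in the second term on its right-hand side). Thus the characterization follows immediately.

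The main obstacle is really only bookkeeping: one must verify that Lemma~\ref{lem:regularitycharacterization} applies in the form used here (circuit-by-circuit, with the ``positive'' side prescribed by membership in $\cyctri{n}$), and that Lemma~\ref{lem:singlematching} indeed singles out a unique cyclically non-crossing matching on every $4$-cycle. The latter is immediate once one observes that the hypotheses of Lemma~\ref{lem:singlematching} (equal cardinalities $|I|=|\ol J|=2$) are satisfied, and the former is exactly the case of a flag regular triangulation of a product of simplices. All remaining content is a direct translation of the general criterion into the language of cyclic non-crossing matchings, in complete parallel with the proof of Proposition~\ref{prop:assheights}.
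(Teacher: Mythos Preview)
Your proposal is correct and follows essentially the same approach as the paper: the paper simply states that the result ``immediately follows'' from Lemma~\ref{lem:regularitycharacterization}, which is exactly the criterion you invoke. Your version is more explicit in spelling out why the edges of $\cyctri{n}$ are precisely the cyclically non-crossing pairs (via flagness from Proposition~\ref{prop:cycIsTri} and the uniqueness in Lemma~\ref{lem:singlematching}), and you correctly flag the typo in \eqref{eq:cycheights}.
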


This result 
immediately follows from the following characterization for regular flag unimodular triangulations of lattice polytopes~\cite[Lemma~3.3]{SantosStumpWelker2014}, which we present directly specialized to triangulations of $\product$.

\begin{lemma}[{\cite[Lemma~3.3]{SantosStumpWelker2014}}]\label{lem:regularitycharacterization}
A flag triangulation $T$ of $\product$ is the regular triangulation corresponding to the height vector $h\colon [n]\times [\ol m]\to \RR$ if and only if for every pair $\left((i,\ol j),(i',\ol {j'})\right)$ forming an edge of the complex $T$, we have
\begin{equation*}
h(i,\ol j)+h(i',\ol {j'})<h(i',\ol {j})+h(i,\ol{j'}).
\end{equation*}
\end{lemma}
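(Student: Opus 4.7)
The plan is to exploit the very special structure of $\product$: its minimal affine dependencies (circuits) are all of the same shape and involve only four vertices. Concretely, for any $i\neq i'$ and $\ol j\neq \ol{j'}$ the four vertices $(\bfe_i,\bfe_{\ol j}),(\bfe_{i'},\bfe_{\ol{j'}}),(\bfe_{i'},\bfe_{\ol j}),(\bfe_i,\bfe_{\ol{j'}})$ satisfy the single affine relation
\[
(\bfe_i,\bfe_{\ol j})+(\bfe_{i'},\bfe_{\ol{j'}})=(\bfe_{i'},\bfe_{\ol j})+(\bfe_i,\bfe_{\ol{j'}}),
\]
and (up to the usual $\pm$) these ``$2\times 2$ squares'' exhaust all circuits of the vertex configuration of $\product$. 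Geometrically, each such quadruple spans a $2$-face of $\product$ (fix the two factors to $\{\bfe_i,\bfe_{i'}\}$ and $\{\bfe_{\ol j},\bfe_{\ol{j'}}\}$), which admits exactly two triangulations. Hence any triangulation of $\product$, restricted to a square, picks one of the two diagonals $\{(i,\ol j),(i',\ol{j'})\}$ or $\{(i',\ol j),(i,\ol{j'})\}$. Every other pair of vertices of $\product$ shares either an $i$-coordinate or an $\ol j$-coordinate, and hence spans an edge of a simplex face of $\product$; such pairs are automatically edges of \emph{every} triangulation, and the inequality in question is trivially an equality for them.

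For the forward direction ($\Rightarrow$), I would use the standard description of regular triangulations via the lower envelope of the lifted configuration $\{(v,h(v))\}$. Restricted to any square as above, the lower envelope selects the diagonal with \emph{smaller} sum of heights; writing this out yields exactly the strict inequality $h(i,\ol j)+h(i',\ol{j'})<h(i',\ol j)+h(i,\ol{j'})$ whenever $\{(i,\ol j),(i',\ol{j'})\}$ is the diagonal selected by $T_h$. So if $T=T_h$, the inequality holds on each edge of $T$ of the relevant type.

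For the converse ($\Leftarrow$), the strict inequalities assumed on all such edges guarantee that $T_h$ is a genuine (non-degenerate) regular triangulation, since no circuit's lifted vertices are coplanar. I then compare the $1$-skeleta of $T$ and $T_h$ square by square: on each square, the unique diagonal in $T$ has smaller sum of heights by hypothesis, while the diagonal in $T_h$ is characterized by exactly the same inequality, so the two diagonals agree. Together with the trivial agreement on non-diagonal pairs, this gives $T$ and $T_h$ the same $1$-skeleton. To conclude $T=T_h$, I invoke flagness of $T$: every simplex of $T_h$ is a clique in its $1$-skeleton, hence a clique in the $1$-skeleton of $T$, hence (by flagness) a simplex of $T$. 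Thus $T_h\subseteq T$ as simplicial complexes, and since both are triangulations of the same polytope they must coincide.

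The main obstacle is the very last step of the sufficiency direction: a priori one worries that $T_h$ need not itself be flag, so agreement of $1$-skeleta is not obviously enough. The resolution---flagness of $T$ alone forces $T_h\subseteq T$, and then equality follows because both are triangulations of $\product$ (which admits no proper containment between triangulations)---is short but essential. The preliminary identification of \emph{all} circuits with $2\times 2$ squares, while standard, is also load-bearing: without it, the edge-by-edge comparison would not suffice to determine $T_h$.
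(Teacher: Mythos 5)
Your proof is correct. Note that the paper itself offers no proof of this lemma: it is imported verbatim from Santos--Stump--Welker (their Lemma~3.3, which is stated for general flag unimodular triangulations of lattice polytopes and specialized here to $\product$), so there is no internal argument to compare against. Your specialization is a clean, self-contained route: the identification of all circuits of the vertex configuration of $\product$ with the $2\times2$ squares is exactly the load-bearing fact (these correspond to the $4$-cycles of $K_{n+1,\ol m+1}$), the forward direction is the standard lower-envelope computation on each square $2$-face, and your converse correctly handles both delicate points --- that the strict inequalities on the diagonals chosen by $T$ force every circuit's lift to be non-coplanar, so that $T_h$ is a genuine triangulation, and that agreement of $1$-skeleta plus flagness of $T$ yields $T_h\subseteq T$, whence equality since no triangulation of a polytope properly contains another. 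The only cosmetic caveat is the one you already flag: pairs sharing a coordinate make the stated inequality a false strict inequality between equal quantities, so the condition must be read as ranging over pairs with $i\neq i'$ and $\ol j\neq\ol{j'}$; this is how the cited lemma is intended.
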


\begin{lemma}
\label{lem:cyc-non-crossing}
The height function $\hb\ij=f(\lth\ij)$ is cyclically non-crossing for every strictly increasing, strictly concave function $f$. Explicit examples of cyclically non-crossing height functions are $\hb\ij=\sqrt{\lth\ij}$ or $\hb\ij=-c^{-\lth\ij}$ for some $c>1$.
\end{lemma}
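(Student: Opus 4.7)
The plan is to reduce the target inequality $\hb(i,\ol j)+\hb(i',\ol{j'})<\hb(i',\ol j)+\hb(i,\ol{j'})$ (which is only nontrivial when $i\neq i'$ and $j\neq j'$, since both sides coincide otherwise) to a statement about the four arc-lengths $a:=\lth(i,\ol j)$, $b:=\lth(i',\ol{j'})$, $c:=\lth(i',\ol j)$, $d:=\lth(i,\ol{j'})$ in $\{0,\ldots,n\}$, namely $f(a)+f(b)<f(c)+f(d)$. The backbone is the integer identity $(j-i)+(j'-i')=(j-i')+(j'-i)$, which reduces modulo $n+1$ to $a+b\equiv c+d\pmod{n+1}$; thus the difference $(c+d)-(a+b)$ must lie in $\{-(n+1),\,0,\,n+1\}$. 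Alongside this, I would unpack Definition~\ref{def:cyclicIJtree} into the equivalent form ``$(i,\ol j),(i',\ol{j'})$ cyclically cross iff $\min(c,d)<\min(a,b)$''. The nondegeneracy assumption $i\neq i'$, $j\neq j'$ is exactly what forces $\{c,d\}\cap\{a,b\}=\emptyset$, so cyclic non-crossing in fact upgrades to $\min(c,d)>\min(a,b)$ strictly.

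Next I would eliminate the modular case $c+d=a+b-(n+1)$. WLOG assume $a\leq b$ and $c\leq d$; if $c\geq a$ held, then $c+d\geq 2a$ combined with $c+d=a+b-(n+1)$ would force $b\geq a+n+1>n$, contradicting $b\leq n$. Hence $c<a=\min(a,b)$, so the arcs cyclically cross, contrary to hypothesis. Thus for non-crossing pairs we are left with either $a+b=c+d$ or $c+d=a+b+(n+1)$.

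The proof then splits accordingly. If $a+b=c+d$: WLOG $a\leq b$ and $c\leq d$, and the strict inequality $c>a=\min(a,b)$ together with the equal sums forces $a<c\leq d<b$; strict concavity of $f$ then directly yields $f(a)+f(b)<f(c)+f(d)$. If $c+d=a+b+(n+1)$: the bound $d\leq n$ gives $c=(c+d)-d\geq a+b+1>b\geq a$, and likewise $d\geq c>b\geq a$, so both $c$ and $d$ strictly exceed both $a$ and $b$; strict monotonicity of $f$ then gives $f(c)>f(b)$ and $f(d)>f(a)$, which add to the desired inequality. Finally, both $\sqrt{x}$ and $-c^{-x}$ (for $c>1$) are easily verified to be strictly increasing and strictly concave on $[0,\infty)$, producing the two explicit examples. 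The main subtlety is the bookkeeping of the modular arithmetic and its interplay with the crossing condition; once the negative modular case is ruled out, the two remaining cases reduce to one-line applications of concavity and of monotonicity, respectively.
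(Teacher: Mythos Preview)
Your proof is correct and lands on the same two-case dichotomy as the paper: one case handled by strict monotonicity, the other by strict concavity (exactly as in Lemma~\ref{lem:non-crossing}). The difference is in how the case split is derived. The paper invokes cyclic invariance of lengths and of the crossing condition to normalize the configuration, then reads the two cases off a picture (Figure~\ref{fig:nc_regularity2}): in the first configuration one sees directly that $\lth(i,\ol j)<\lth(i',\ol j)$ and $\lth(i',\ol{j'})<\lth(i,\ol{j'})$, while the second configuration is the nested one from the type~$A$ lemma.

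You instead extract the same split algebraically from the congruence $a+b\equiv c+d\pmod{n+1}$, together with the reformulation of Definition~\ref{def:cyclicIJtree} as ``crossing $\Leftrightarrow \min(c,d)<\min(a,b)$''. This has the advantage of being fully self-contained (no figure needed) and makes transparent why the impossible third residue class $c+d=a+b-(n+1)$ is exactly the crossing case. The two arguments are equivalent in content; yours trades a short geometric normalization for a short modular computation. One cosmetic point: your double ``WLOG $a\le b$ and $c\le d$'' is legitimate because the target inequality and the crossing condition are both symmetric in $\{a,b\}$ and in $\{c,d\}$ separately, so the second assumption is just a naming convention for $\min(c,d)$ and $\max(c,d)$ rather than an additional symmetry reduction.
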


In particular, $\cyctri{n}$ is the pulling triangulation of $\productn$ with respect to every order of $[n]\times[\ol n]$ that extends the partial order $(i,\ol j)\prec (i',\ol j') \Leftrightarrow \lth(i,\ol j) > \lth (i',\ol j')$.

\begin{proof}
Since the crossing property and the length $\lth\ij$ are invariant under cyclic rotations, there are essentially two cases to consider which are illustrated in Figure~\ref{fig:nc_regularity2}. 
 \begin{figure}[htpb]
 \includegraphics[width=\textwidth]{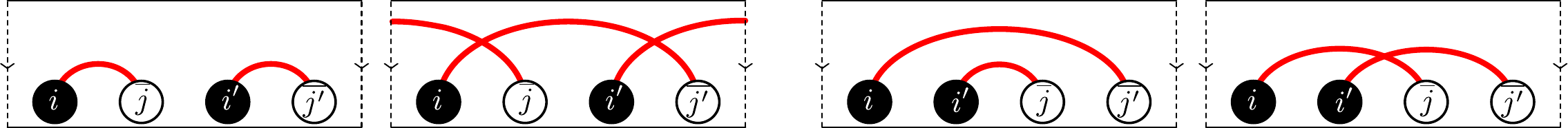}  
 \caption{The two matchings corresponding to each of the two configurations.}\label{fig:nc_is_regular}
 \label{fig:nc_regularity2}
 \end{figure}
In the first case (Figure~\ref{fig:nc_is_regular} left), the statement directly follows from monotonicity of $f$, because we have $\lth\ij<\lth(i',\ol{j})$ and $\lth(i',\ol{j'})<\lth(i,\ol {j'})$. 
The second case (Figure~\ref{fig:nc_is_regular} right) follows from $f$ being strictly concave, in exactly the same way as in the proof of Lemma~\ref{lem:non-crossing}.
\end{proof}

\begin{remark}[Non-crossing and non-nesting triangulations]

The triangulation $\asstri{n}$ was considered in~\cite{SantosStumpWelker2014} as a \defn{non-crossing complex}. Indeed, when restricted to $\{(i,\ol j)\colon i\prec \ol j\}$ only the case in Figure~\ref{fig:nc_is_regular} right is relevant, and the choice of the height function always selects the non-crossing matching over the non-nesting one.

They also consider \defn{non-nesting complexes}. If we restrict to $\{(i,\ol j)\colon i\prec \ol j\}$, then this is the well-known 
\defn{staircase triangulation} restricted to $\catblock{n}$. It amounts to choosing always the non-nesting matching over the non-crossing, which can be done by using as height function one that is convex on the length of the arcs.

The triangulation $\cyctri{n}$ is a type~$B_n$ analogue of $\asstri{n}$. It is obtained by cyclically rotating $\asstri{n}$.
The same procedure can be applied to get a type~$B_n$ analogue of the non-nesting triangulation. One obtains this way a full triangulation of $\productn$ that was presented in~\cite{CeballosPadrolSarmiento2015} under the name of \defn{Dyck path triangulation}.

While the cyclohedral triangulation (the type~$B_n$ non-crossing triangulation) is the pulling triangulation decreasing by length, the Dyck path triangulation (the type~$B_n$ non-nesting triangulation) is the pushing triangulation increasing by length. The first is the triangulation that prioritizes short arcs (counterclockwise), while the second is the triangulation that tries to avoid long arcs (counterclockwise). They have the same behavior in some cases (Figure~\ref{fig:nc_is_regular} right) but opposite in other cases (Figure~\ref{fig:nc_is_regular} left).

It is natural to ask for the other two cases: the pulling triangulation increasing by length and the pushing triangulation decreasing by length. However, observe that the counterclockwise length from $i$ to $\ol j$ is minus the clockwise length from $\ol j$ to $i$. Hence, this way one recovers mirror images of the same triangulations.
\end{remark}

%%%%%%%%%%%%%%%%%%%%%%%%%%%%%%%%%%%%%%%%%%%%%%%%%%%%%%%%%%%%%%%%%%
%%%%%%%%%%%%%%%%%%%%%%%%%%%%%%%%%%%%%%%%%%%%%%%%%%%%%%%%%%%%%%%%%%

\section{The cyclic \texorpdfstring{$\protect{\IJ}$}{IJ}-Tamari poset}\label{sec:cycTamari}

A type~$B_n$ analogue to the Tamari lattice was discovered independently by Thomas~\cite{Thomas06} and Reading~\cite{Reading06}. In this section we define a poset structure on cyclic $\IJ$-trees that extends the $\IJ$-Tamari lattice and generalizes the $B_n$ Tamari lattice. Specifically, Thomas' type $B_n$ Tamari lattice coincides with the cyclic $\IJ$-Tamari lattice when $I=\ol J$ (see Figure~\ref{fig:ThomasVSus}). However, as we will see, the $\IJ$-Tamari poset is not always lattice.

If $T$ and $T'$ are cyclic $\IJ$-trees related by a \defn{flip} (they share all arcs but one), 
we say that the flip is \defn{increasing} if it replaces an arc $(i,\ol j)$ with an arc $(i', j')$ with $i< i'$. In this case, we write $T <_{\IJ} T'$. In Figure~\ref{fig:cyclicflips}, we have schematically depicted the six possible increasing flips on a cyclic $\IJ$-tree.

\begin{figure}[htpb]
\centering 
 \includegraphics[width=.75\linewidth]{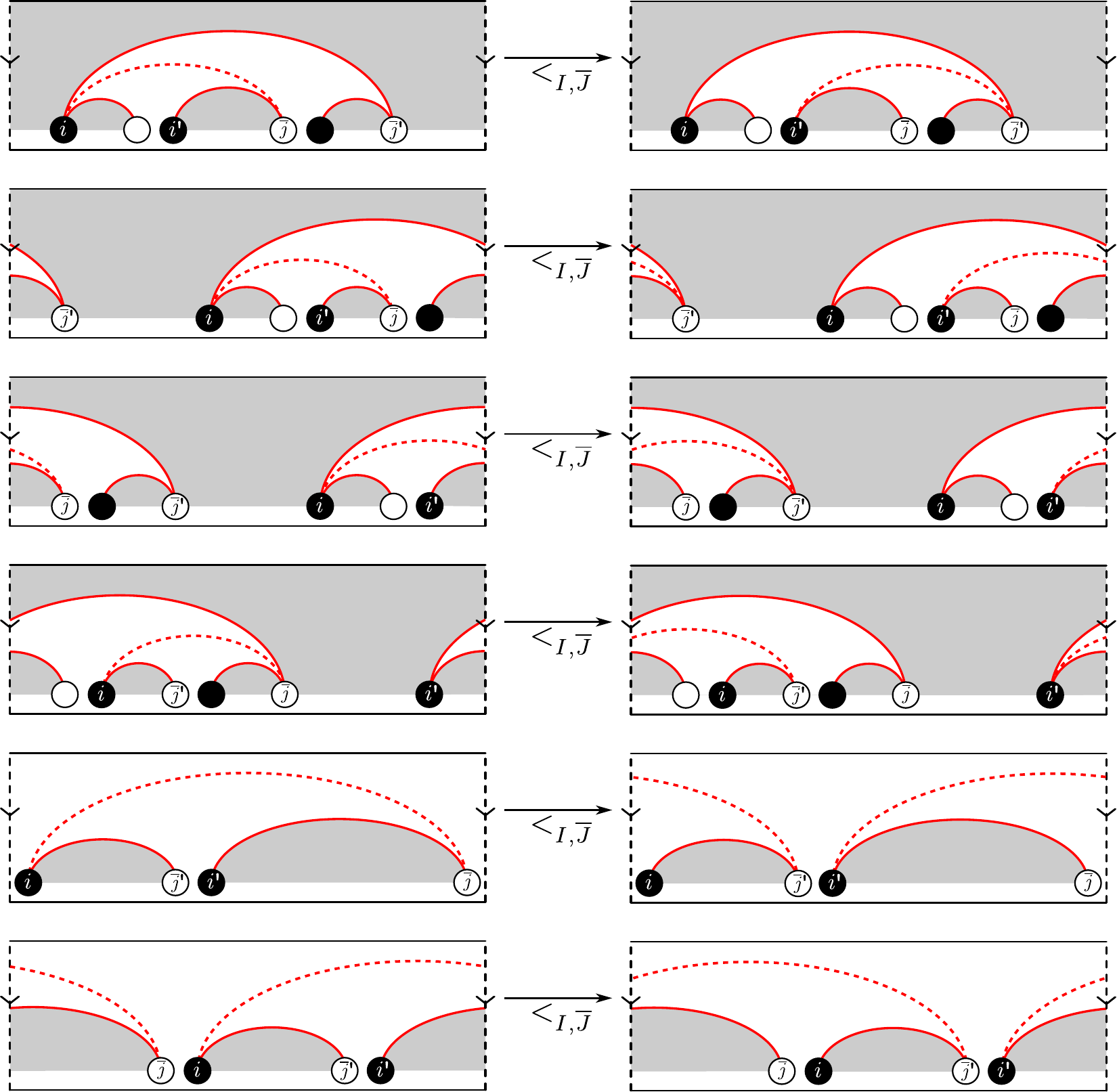}
 \caption{Schematic representations of possible cyclic flips on an $\IJ$-tree.}\label{fig:cyclicflips}
\end{figure}

\begin{lemma}
\label{lem:cyclicIJposet}
 The transitive closure of the relation $T <_{I,\ol J} T'$ is a partial order on the set of cyclic $(I,\ol J)$-trees. 
\end{lemma}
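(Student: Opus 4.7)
The plan is to prove antisymmetry of the transitive closure, since reflexivity and transitivity are built in. Equivalently, I will show that the directed graph on cyclic $(I,\ol J)$-trees whose arcs are the increasing flips is acyclic. The approach is to exhibit a strictly monotone integer statistic along every chain of covering relations, which immediately rules out directed cycles. This is the same style of argument as the one behind Lemma~\ref{lem:IJposet}, but it has to work uniformly over the six flip types schematized in Figure~\ref{fig:cyclicflips}, where the $\ol J$-endpoint may behave quite differently from the associahedral case.

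The natural candidate for such a statistic is the sum of the $I$-coordinates of the arcs of the tree. Concretely, define
\[
 \Phi(T) := \sum_{(i,\ol j)\in T} i.
\]
If $T <_{I,\ol J} T'$ is a covering relation, then $T'$ is obtained from $T$ by replacing a single arc $(i,\ol j)$ with an arc $(i',\ol j')$, with $i<i'$ by the definition of increasing flip. All other arcs are shared, so
\[
 \Phi(T')-\Phi(T) \;=\; i'-i \;>\; 0.
\]
Therefore $\Phi$ is strictly increasing along every covering relation, and hence strictly increasing along every chain obtained by iterating covering relations.

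Since cyclic $(I,\ol J)$-trees are subgraphs of the finite graph $\bipartiteij$, there are only finitely many of them, so $\Phi$ takes only finitely many values. A strictly increasing integer statistic on a finite set forbids directed cycles, so the reflexive and transitive closure of $<_{I,\ol J}$ is antisymmetric, and therefore a partial order on cyclic $(I,\ol J)$-trees.

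The only point requiring care is that the definition of \emph{increasing} refers to the standard linear order on $I\subseteq\NN$, not to a cyclic order, even though the underlying combinatorics of the trees is cyclic; once this is observed, the statistic $\Phi$ is unambiguous and the argument is uniform across all flip types, including those that involve winding. I expect this conceptual check to be the main (and in fact only) obstacle, since the monotonicity step itself is a one-line calculation.
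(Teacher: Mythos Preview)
Your proof is correct and is essentially identical to the paper's own argument: the paper also observes that $\sum_{(i,\ol j)\in T} i$ strictly increases along each covering relation, which forces acyclicity. You have simply written out the details more explicitly.
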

\begin{proof}
 This is a consequence of the acyclicity of $<_{I,\ol J}$, which follows from the fact that if $T <_{I,\ol J} T'$ then $\sum_{(i,\ol j)\in T} i < \sum_{(i',\ol j')\in T'} i' $.
\end{proof}

We denote this partial order $\Tam{I,\ol J}^B$, the \defn{cyclic $\IJ$-Tamari poset}.

\begin{figure}[htpb]
\includegraphics[width=\linewidth]{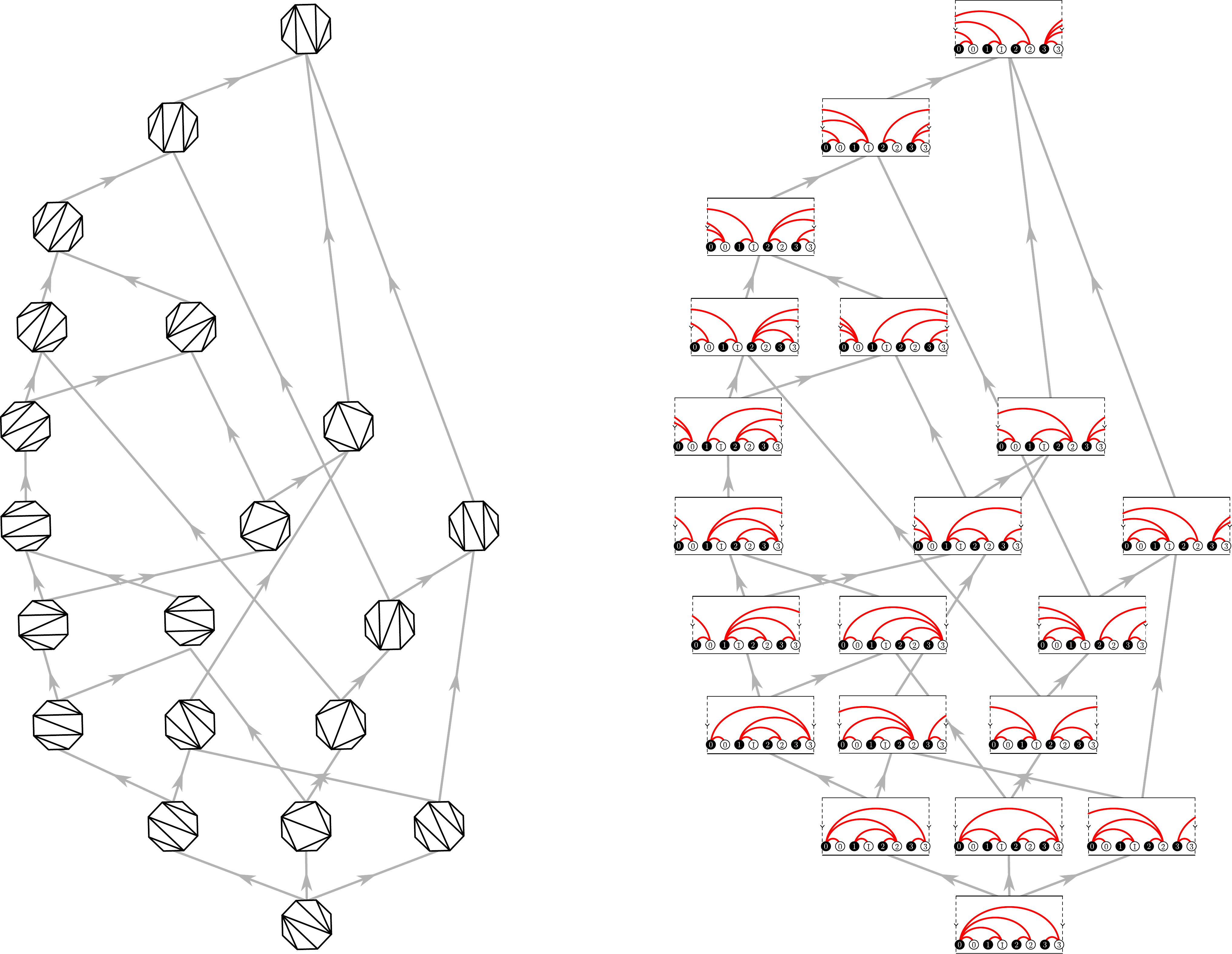}
 \caption{The Hasse diagram of the type $B_n$ Tamari lattice for $n=3$ from~\cite[Figure~5]{Thomas06} on the left, and the Hasse diagram of the cyclic $([3],[\ol 3])$-Tamari lattice on the right.}\label{fig:ThomasVSus}
\end{figure}

Both definitions of the covering relation, for Thomas' $B_n$ Tamari lattice and for the cyclic $\IJ$-Tamari poset, are stable under removal of vertices not involved in the flip. Since every flip involves at most eight vertices of the polygon, if the partial orders coincide for cs-triangulations of an octagon then they coincide for any $(2n+2)$-gon. This can be checked in Figure~\ref{fig:ThomasVSus}. As a consequence: 
 
\begin{lemma}
\label{lem:ThomasEqualsIJ}
The $B_n$ Tamari lattice of~\cite{Thomas06} is isomorphic to $\Tam{[n],[\ol n]}^B$, the cyclic $\nn$-Tamari poset.
\end{lemma}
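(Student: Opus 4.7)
The plan is to follow the three-step reduction sketched just before the statement, making each step explicit.

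First, I would invoke the bijection between cyclic $([n],[\ol n])$-trees and centrally symmetric triangulations of $P_{2n+2}$ established in Section~\ref{sec:cyctri} (the construction illustrated in Figure~\ref{fig:cstriang2tree}, formalized via Lemma~\ref{lem:cyclicIJtreesaretrees} and Proposition~\ref{prop:cycIsTri}). This gives a canonical bijection $\Phi$ between the ground sets of $\Tam{[n],[\ol n]}^B$ and Thomas' $B_n$ Tamari lattice. Moreover, under $\Phi$ a single arc flip in a cyclic tree corresponds to a single diagonal flip in the associated cs-triangulation (and the antipodal diagonal flip, when the flipped arc is not centrally self-paired): this is exactly how the triangulation/tree correspondence was set up.

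Second, I would make precise the locality observation. A covering relation in $\Tam{[n],[\ol n]}^B$ is determined by a single arc $(i,\ol j)$ being replaced by $(i',\ol j')$, together with its antipodal partner; by the description of the six flip types in Figure~\ref{fig:cyclicflips}, the statement that such a flip is \emph{increasing} in the sense of Section~\ref{sec:cycTamari} depends only on the labels $i,i',\ol j,\ol j'$ (and their antipodes). The analogous statement for Thomas' cover relation in~\cite{Thomas06} is also local: it involves only the vertices of the flipped (anti-symmetric pair of) quadrilateral(s). In particular, both relations are preserved by the operation of deleting from $P_{2n+2}$ any centrally symmetric pair of vertices that is not involved in the flip, and they are also insensitive to inserting such pairs. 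Thus, a covering relation is determined by a centrally symmetric subconfiguration of at most four pairs of vertices of the polygon, i.e.\ by (an anti-symmetric pair of) configuration(s) fitting inside a $2\cdot 4$-gon.

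Third, I would reduce to a finite check. By the locality established in the previous step, $\Phi$ is an isomorphism of posets as soon as its restriction is an isomorphism on every cs-triangulation of the octagon $P_8$ (the case $n=3$). The authors provide exactly this verification visually in Figure~\ref{fig:ThomasVSus}: comparing Thomas' Hasse diagram with that of $\Tam{[3],[\ol 3]}^B$, one sees that $\Phi$ sends covers to covers in both directions. A careful cover-by-cover match there completes the proof.

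The main obstacle is being fully rigorous about the second step: one has to confirm that Thomas' rotation-based definition of a $B_n$ cover can indeed be localized to the same small window of vertices as our definition, so that the octagon check is exhaustive. Once this localization is in place, the finite verification is routine and can be deferred to inspection of Figure~\ref{fig:ThomasVSus}.
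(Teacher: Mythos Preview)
Your proposal is correct and follows essentially the same approach as the paper: the paper's argument (given in the paragraph immediately preceding the lemma) is precisely the locality-plus-finite-check strategy you describe, reducing to the octagon case $n=3$ via Figure~\ref{fig:ThomasVSus}. You have simply made the bijection step and the localization caveat more explicit than the paper does.
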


\begin{remark}
One can define the canopy of a cyclic $\nn$-tree as the set of nodes not forming a leaf, the same way we did in Section~\ref{sec:canopy}. The fibers of the canopy map partition the set of cyclic $\nn$-trees, and there is a bijection between cyclic $\IJ$-trees and $\nn$-trees of canopy $I\sqcup\ol J$. The order relation $<_{I,\ol J}$ for the cyclic $\IJ$-Tamari poset is the one induced by the cyclic $\nn$-Tamari poset (i.e. the $B_n$ Tamari lattice) under this identification.
\end{remark}

\begin{remark}
\label{rem:cyclicNotNice}
Unfortunately, it so happens that the agreeable properties of the $\IJ$-Tamari poset are in general not shared by its cyclic counterpart. On the one hand, Figure~\ref{fig:notInterval} shows a 4-element chain in the cyclic $([4],[\ol 4])$-Tamari poset whose smallest and largest elements have the canopy $I=\{0,4\}, \ol J=\{\ol 1, \ol 2, \ol 3\}$, whereas its two intermediate elements have the canopy $I=\{0,2,4\}, \ol J=\{\ol 1, \ol 3\}$. Thus, we cannot possibly expect cyclic $\IJ$-Tamari posets to be intervals of larger cyclic $\nn$-Tamari posets.

On the other hand, it is not difficult to come up with cyclic $\IJ$-Tamari posets that have multiple minima or maxima, so we also have to rule out the possibilities that general cyclic $\IJ$-Tamari posets are either lattices, meet semi-lattices or join semi-lattices (see Figures~\ref{fig:manyExtrema} and~\ref{fig:IJcyclohedron}). It would be interesting to characterize those pairs of subsets $I\subseteq[n],\ol J\subseteq[\ol n]$ for which the cyclic $\IJ$-Tamari poset displays any of these (semi-)lattice structures (as for example happens in Figure~\ref{fig:IJcyclohedron}).
 \begin{figure}[htpb]
 \centering
 \begin{subfigure}{0.4\textwidth}
 \centering
\includegraphics[width=0.5\linewidth]{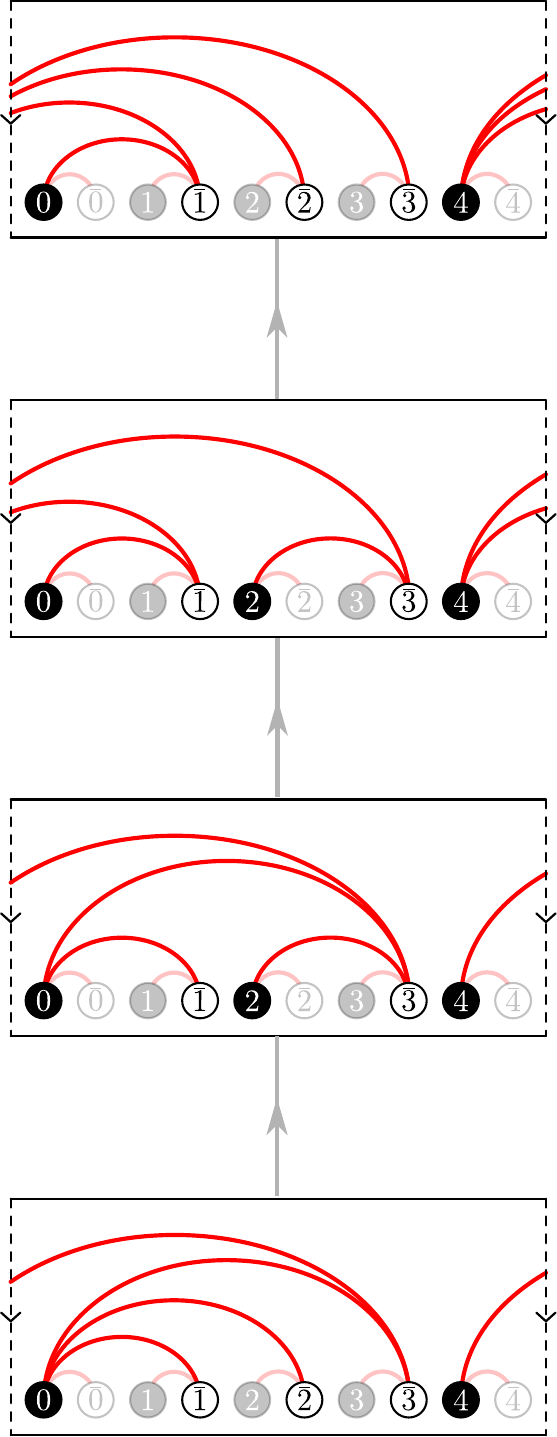}
\caption{A 4-element chain in the cyclic $([4,\ol 4])$-Tamari poset whose elements alternate between two canopies.}\label{fig:notInterval}
\end{subfigure}%
 \begin{subfigure}{0.6\textwidth}
 \centering
\includegraphics[width=.5\linewidth]{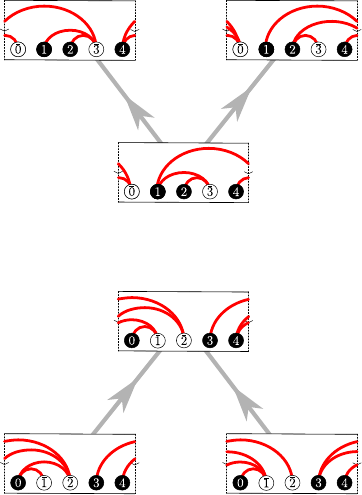}
 \caption{The cyclic $\IJ$-poset for $I=\{1,2,4\}, \ol J=\{\ol 0,\ol 3\}$ has two maxima (top), whereas for $I=\{0,3,4\}, \ol J=\{\ol 1,\ol 2\}$ it has two minima (bottom).}\label{fig:manyExtrema}
\end{subfigure}
\caption{Cyclic $\IJ$ posets need not be intervals nor (semi-)lattices.}
\end{figure}
\end{remark}

\begin{remark}
In general, applying the reversing operation taking a pair $\IJ$ to $\reverse\IJ$ (see Section~\ref{sec:canopy}) does not produce a poset dual to the cyclic $\IJ$-Tamari poset $\Tam{I,\ol J}^B$ (in contrast with what happened with $\Tam{I,\ol J}$, cf. Proposition~\ref{prop:reverse}). For example, while for $I=\{1,2,4\},\ol J=\{\ol 0,\ol 3\}$ the poset $\Tam{I,\ol J}^B$ consists one minimum and two maxima (see Figure~\ref{fig:manyExtrema}), $\Tam{\reverse{I,\ol J}}^B$ is a $3$-element chain. Even though the triangulations $\cyctri{I,\ol J}$ and $\cyctri{\reverse{I,\ol J}}$ are essentially equivalent (because cyclic $\reverse{\IJ}$-trees are mirror images of cyclic $\IJ$-trees), the covering relation of the cyclic $\IJ$-Tamari lattice depends subtly on the relative position of $I$ and $\ol J$. 

This symmetry operation, as well as doing cyclic rotations on the pair $\IJ$, alter the poset structure but not the underlying graph of its Hasse diagram.  
It would be interesting to understand the relation between the corresponding cyclic $\IJ$-Tamari posets.
\end{remark}

\begin{question}
 Is there a partial order on cyclic $\IJ$-trees (different to ours) possessing the structure of a lattice? Can this be made in such a way that its Hasse diagram is the edge graph of the $\IJ$-cyclohedron introduced below (see Section~\ref{sec:cyclicnuassociahedron})?
\end{question}

%%%%%%%%%%%%%%%%%%%%%%%%%%%%%%%%%%%%%%%%%%%%%%%%%%%%%%%%%%%%%%%%%%
%%%%%%%%%%%%%%%%%%%%%%%%%%%%%%%%%%%%%%%%%%%%%%%%%%%%%%%%%%%%%%%%%%

\section{The cyclic \texorpdfstring{$\IJ$}{IJ}-Tamari complex}

Throughout this subsection, we fix two nonempty subsets $I\subseteq [n], \ol J\subseteq [\ol n]$ for some $n\in\NN$. 

\begin{definition}
\label{def:CyclicTamariComplex}
The \defn{cyclic $\IJ$-Tamari complex $\cyccomp{I,\ol J}$} is the flag simplicial complex of cyclic $\IJ$-forests. That is $\cyccomp{I,\ol J}$ is the simplicial complex on the arcs of $\bipartitep{I}{J}$ whose minimal non-faces are pairs of cyclically non-crossing arcs (cf.~Definition~\ref{def:cyclicIJtree}).
\end{definition}

\begin{proposition}
\label{prop:bijectioncyclic}
$\cyccomp{[n],[\ol n]}$ is isomorphic to the join of an $n$-simplex and the boundary complex of a simplicial $n$-cyclohedron.
\end{proposition}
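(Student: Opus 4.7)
The plan is to exhibit the $n$-simplex factor as a collection of $n+1$ cone points of $\cyccomp{[n],[\ol n]}$ and then to identify the remaining flag subcomplex with the boundary of the simplicial $n$-cyclohedron.

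First I would check that each arc $(i,\ol i)$ with $i\in [n]$ is a cone point. Since $\lth(i,\ol i)=0$, in any application of Definition~\ref{def:cyclicIJtree} with $(i,\ol i)$ playing the role of one of the two arcs, the relevant right-hand side ($j-i$ or $j'-i'$) vanishes while the corresponding left-hand side is a nonnegative number in $\{0,\ldots,n\}$; thus neither~\eqref{eq:cycliccross1} nor~\eqref{eq:cycliccross2} can hold. Consequently $(i,\ol i)$ does not cyclically cross any other arc, and hence lies in every cyclic $\nn$-tree by maximality. The $n+1$ such arcs span an $n$-simplex $\Delta^{n}$ contained in every facet, giving a join decomposition
\begin{equation*}
\cyccomp{[n],[\ol n]}\;\cong\;\Delta^{n}\ast K,
\end{equation*}
where $K$ is the flag subcomplex induced by the $n(n+1)$ arcs of positive length.

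Next I would identify $K$ with the boundary complex of the simplicial $n$-cyclohedron. The construction preceding Theorem~\ref{thm:cyclohedrontriangulation} (illustrated in Figure~\ref{fig:cstriang2tree}) sets up a bijection $\phi$ between positive-length arcs of $[n]\times[\ol n]$ and orbits of diagonals of $P_{2n+2}$ under the central symmetry action: each centrally symmetric pair of non-main diagonals is sent to a single arc of length in $\{1,\ldots,n-1\}$, while each of the $n+1$ main diagonals is sent to one of the $n+1$ length-$n$ arcs $(i,\ol{i+n})$ with indices taken modulo $n+1$. A direct count verifies that both sides have cardinality $n(n+1)$. By Lemma~\ref{lem:cyclicIJtreesaretrees} together with the construction, $\phi$ takes the facets of $K$ to cyclic $\nn$-trees, and hence to cs-triangulations of $P_{2n+2}$, which are precisely the facets of the boundary of the simplicial $n$-cyclohedron.

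Since both $K$ and the boundary of the simplicial $n$-cyclohedron are flag complexes, it then suffices to verify that $\phi$ sends minimal non-faces to minimal non-faces: two positive-length arcs cyclically cross in the sense of Definition~\ref{def:cyclicIJtree} if and only if the two associated diagonal orbits admit no pair of non-crossing representatives in $P_{2n+2}$. This combinatorial-to-geometric translation is in my view the main technical step; the key observation is that conditions~\eqref{eq:cycliccross1} and~\eqref{eq:cycliccross2} record precisely the two topologically distinct ways two diagonals of $P_{2n+2}$ can obstruct each other under radial projection from the center of the polygon (meeting within one half, versus meeting across the center), so once the bookkeeping in the construction is carried out the desired equivalence drops out.
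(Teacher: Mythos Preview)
Your proposal is correct and is exactly the argument the paper leaves implicit. In the paper the proposition is stated without proof, relying on the construction preceding Theorem~\ref{thm:cyclohedrontriangulation}: that construction is precisely your bijection $\phi$ between positive-length arcs and diagonal orbits of $P_{2n+2}$, and the paper's ``two copies'' remark together with the mod-$(n+1)$ labeling is what makes a centrally symmetric pair of diagonals collapse to a single arc. Your identification of the $n+1$ cone points $(i,\ol i)$ is the cyclic analogue of what the paper spells out just before Proposition~\ref{prop:bijectionpolygon} in type~$A$ (where there is one extra cone point $(0,\ol n)$, absent here because of the cyclic symmetry).

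One small remark on your last paragraph: conditions~\eqref{eq:cycliccross1} and~\eqref{eq:cycliccross2} are swapped into one another by exchanging $(i,\ol j)\leftrightarrow(i',\ol j')$, so they are not really the two geometrically distinct obstruction types you describe; rather, each one already captures both ``within one half'' and ``across the center'' crossings, just from the viewpoint of one of the two arcs. This does not affect your argument, since what you actually need---and correctly state---is that the single predicate ``$(i,\ol j)$ and $(i',\ol j')$ cyclically cross'' is equivalent under $\phi$ to ``the two diagonal orbits are incompatible in $P_{2n+2}$''. That equivalence is exactly what the construction of Section~\ref{sec:cyctri} is set up to guarantee, so your reduction to flagness plus matching minimal non-faces is the right way to finish.
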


\begin{lemma}\label{lem:cyclicinteriorsimplex}
The interior simplices of $\cyccomp{I,\ol J}$ are naturally indexed by cyclic \defn{$\IJ$-forests} (see Definition~\ref{def:cyclicIJtree}) that have no isolated nodes. We refer to such cyclic {$\IJ$-forests} as \defn{covering cyclic $\IJ$-forests}.
\end{lemma}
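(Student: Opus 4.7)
The plan is to leverage the fact, supplied by Corollary~\ref{IJcyclic_triangulation}, that $\cyccomp{I,\ol J}$ is the underlying abstract simplicial complex of the triangulation $\cyctri{I,\ol J}$ of the polytope $\productp{I}{\ol J}$. For any triangulation of a polytope, the simplicial complex boundary (codimension-$1$ faces contained in exactly one facet, together with all their faces) matches the geometric boundary: a codimension-$1$ simplex of a triangulation is shared by exactly two top-dimensional simplices when it is interior, and by exactly one when it lies on the boundary of the ambient polytope. So it suffices to characterize which simplices of the triangulation lie on the boundary of $\productp{I}{\ol J}$.

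To do so, I would use that $\productp{I}{\ol J}=\Delta_I\times\Delta_{\ol J}$, whose facets are of two types: $\Delta_{I\setminus v}\times\Delta_{\ol J}$ for $v\in I$, or $\Delta_I\times\Delta_{\ol J\setminus \ol v}$ for $\ol v\in \ol J$. The vertex $(\bfe_i,\bfe_{\ol j})$ of $\productp{I}{\ol J}$ lies in $\Delta_{I\setminus v}\times\Delta_{\ol J}$ if and only if $i\neq v$ (and symmetrically for the other type). Viewing a simplex $F$ of the triangulation as a set of arcs, $F$ is therefore contained in a boundary facet of $\productp{I}{\ol J}$ if and only if there exists some node $v\in I\sqcup\ol J$ that is not incident to any arc of~$F$, i.e., $v$ is isolated in $F$. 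Combining with the previous paragraph, $F$ is in the boundary of $\cyccomp{I,\ol J}$ precisely when $F$ has an isolated node, so the interior simplices are exactly the covering cyclic $\IJ$-forests.

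There is no real obstacle in this approach; the main point to be careful about is the equivalence between the purely simplicial notion of boundary (as stated just before Lemma~\ref{lem:interiorsimplex}) and the geometric boundary of the triangulated polytope, which for a genuine triangulation is routine. An alternative, more combinatorial route would be to parallel the argument of Lemma~\ref{lem:interiorsimplex}, characterizing the non-flippable arcs in a cyclic $\IJ$-tree and showing that they are exactly the \emph{leaf arcs} (i.e.~those with an endpoint of degree~$1$), with no analogue of the exceptional arc $(\min I,\max \ol J)$ that appeared in type~$A$. That claim is believable from the cs-triangulation model of Figure~\ref{fig:cstriang2tree} (every diagonal of a cs-triangulation supports a cs-flip), but verifying it cleanly for every non-leaf arc would be the main technical burden of that route; the geometric approach above bypasses this entirely.
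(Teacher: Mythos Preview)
Your argument is correct. The paper does not actually write out a proof of Lemma~\ref{lem:cyclicinteriorsimplex}; it relies on the reader adapting the type~$A$ argument of Lemma~\ref{lem:interiorsimplex}, which proceeds combinatorially by identifying the codimension-$1$ boundary faces as those obtained from an $\IJ$-tree by removing a non-flippable arc (a leaf arc, or in type~$A$ the exceptional arc $(\min I,\max\ol J)$). In the cyclic setting that adaptation amounts to checking that every non-leaf arc of a cyclic $\IJ$-tree is flippable, which is the ``technical burden'' you flag.

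Your geometric route is genuinely different and, in type~$B$, cleaner: because $\cyctri{I,\ol J}$ triangulates the \emph{full} product $\productp{I}{\ol J}$ rather than the subpolytope $\catblock{I,\ol J}$, the boundary of the ambient polytope is completely described by the node-deletion facets, and the exceptional arc of type~$A$ never enters. The combinatorial approach has the advantage of making the flip structure explicit (which is used elsewhere), but for the bare statement of the lemma your argument is shorter and avoids any case analysis. Either approach also works in type~$A$: there, the extra facets of $\catblock{I,\ol J}$ coming from the inequalities $i\prec\ol j$ are exactly what forces the arc $(\min I,\max\ol J)$ into every interior face.
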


Again, as in the classical situation, links of faces of cyclic $\IJ$-Tamari complexes are joins of smaller Tamari complexes and at most one cyclic Tamari complex. To give a precise result parallel to Lemma~\ref{lem:links}, we need some additional conventions to handle the cyclic nature of  cyclic Tamari complexes.

Let $F$ be a cyclic $\IJ$-forest. As in Lemma~\ref{lem:links}, we will write $\link_{\cyccomp{I,\ol J}}F$ as a join of complexes ranging over arcs of $F$, and the contributions of individual arcs  $\ij\in F$ will be determined by the restriction $I_{\ij}$ (resp. $\ol{J_{\ij}}$) of $I$ (resp. $\ol J$) to: 
\begin{equation*}
[i,\ol j]\setminus\left(\bigcup_{\substack{(i',\ol j')\in F\\ [i',\ol j']\subsetneq [i,\ol j]}}]i',\ol j'[\right).
\end{equation*}
Here, if $i\prec \ol j$, then  $[i,\ol j]$ and $]i,\ol j[$ denote the usual closed and open intervals, and we take $I_{\ij}$, $\ol{J_{\ij}}$ and $\asscomp{I_{\ij},\ol{J_{\ij}}}$ as in Lemma~\ref{lem:links}. On the other hand, if $\ol j\prec i$ we set $[i,\ol j]=[0,\ol j]\sqcup [i,\ol n]$, $]i, \ol j[=[0,\ol j[\ \sqcup\ ]i,\ol n] $, and interpret containment of intervals accordingly. We then define $\asscomp{I_{\ij},\ol{J_{\ij}}}$ as follows: First, we modify the restrictions $I_{\ij},\ol{J_{\ij}}$ by replacing every $i\in I_{\ij}$ such that $i'\prec i$ with $i'+n+1$ and every $\ol{j'}\in \ol{J_{\ij}}$ such that $\ol{j'}\prec i$ with $\ol{j'+n+1}$, where the addition is \textbf{not} in modular arithmetic. Then, we construct  $\asscomp{I_{\ij},\ol{J_{\ij}}}$ using these modified restrictions, and ultimately take the complexes obtained when regarding the index set $I_{\ij}\sqcup\ol{J_{\ij}}$ modulo $n+1$.

The proof of the next lemma is parallel to that of Lemma~\ref{lem:cycliclinks} following the schema depicted in Figure~\ref{fig:cyclicIJfaces}. We leave the details to the reader.
\begin{lemma}
\label{lem:cycliclinks}
Up to cone points, the link of a cyclic $\IJ$-forest $F$ in $\cyccomp{I,\ol J}$ is a join of Tamari complexes and at most one cyclic Tamari complex. Precisely:
\begin{equation}
\link_{\cyccomp{I,\ol J}}(F)\ast F\cong \left(\bigast_{(i,\ol j)\in F} \asscomp{I_{(i,\ol j)},\ol{J_{(i,\ol j)}}}\setminus (i,\ol j)\right)\ast\cyccomp{I',\ol{J'}},
\end{equation}
where $I'=I\setminus \bigcup_{(i',\ol j')\in F}]i',\ol{j'}[$ and $\ol{J'}=\ol J\setminus \bigcup_{(i',\ol j')\in F}]i',\ol{j'}[$. In particular, the link of an arc $\ij$ in $\cyccomp{I,\ol J}$ is up to cone points a join of a Tamari complex and possibly a cyclic Tamari complex:
\begin{equation}
\link_{\cyccomp{I,\ol J}}(\ij)\ast \ij\cong  \left( \asscomp{I_{\ij},\ol{J_{\ij}}}\setminus (i,\ol j) \right) \ast \cyccomp{I',\ol{J'}},
\end{equation}

Moreover, if $F$ is a covering cyclic $\IJ$-forest, then $\link_{\cyccomp{I,\ol J}} (F)$ is a join of boundary complexes of simplicial associahedra and at most one simplicial cyclohedron. 
\end{lemma}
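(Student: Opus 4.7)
The plan is to first establish the vertex-link formula (the case $F=\{\ij\}$) by a direct case analysis of the cyclic non-crossing relation, and then iterate to arbitrary forests by induction on $|F|$, following the schema of the type~$A$ proof of Lemma~\ref{lem:links}. The guiding intuition is that an arc $\ij$ cuts the cylinder into an \emph{interval} side $[i,\ol j]$ and a \emph{cyclic} (wrap-around) side; this explains both why iteration produces arbitrarily many associahedral factors and why at most one cyclic factor survives.

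For the vertex link, I would fix $\ij$ with $i\prec \ol j$ and classify the arcs $(i',\ol{j'})$ of $\bipartiteij$ that are cyclically non-crossing with $\ij$. A direct check of Conditions~\eqref{eq:cycliccross1} and~\eqref{eq:cycliccross2} shows that $(i',\ol{j'})$ is cyclically non-crossing with $\ij$ if and only if $\{i',\ol{j'}\}\subseteq [i,\ol j]$ or $\{i',\ol{j'}\}\subseteq (I\sqcup \ol J)\setminus\, ]i,\ol j[$. Consequently, any cyclic $\IJ$-forest containing $\ij$ splits, apart from the common arc $\ij$, as the disjoint union of an interior part, which is automatically an ordinary (non-cyclic) $(I_{\ij},\ol{J_{\ij}})$-forest in the sense of Definition~\ref{def:IJtree}, and an exterior part, which is a cyclic $(I',\ol{J'})$-forest; conversely, any such pair glues to a cyclic $\IJ$-forest. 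This yields the vertex-link formula, and the case $\ol j\prec i$ follows by a cyclic rotation, which preserves both the cyclic non-crossing relation and the ambient construction.

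For the inductive step, I would pick any arc $\ij\in F$, apply the vertex-link formula, and split $F\setminus \ij$ into its interior and exterior parts. The interior part lives inside an \emph{ordinary} $(I_{\ij},\ol{J_{\ij}})$-Tamari complex, where Lemma~\ref{lem:links} applies directly and produces a join of associahedral Tamari complexes. The exterior part lives in the smaller cyclic complex $\cyccomp{I',\ol{J'}}$, where the inductive hypothesis of the present lemma applies and produces a join of associahedral factors plus at most one cyclic factor. Combining the two decompositions yields the stated formula, with exactly one cyclic Tamari factor surviving. For the covering case, I would invoke the same alternation argument as at the end of the proof of Lemma~\ref{lem:links}: each $I_{\ij}\sqcup \ol{J_{\ij}}$ alternates between $I$ and $\ol J$ up to at most two consecutive endpoint elements (which become cone points), turning each associahedral Tamari factor into the boundary complex of a simplicial associahedron; a parallel alternation argument in the cyclic setting, combined with Proposition~\ref{prop:bijectioncyclic}, identifies the surviving cyclic factor $\cyccomp{I',\ol{J'}}$ up to cone points with the boundary complex of a simplicial cyclohedron.

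The main obstacle is the case analysis in the vertex-link step: Conditions~\eqref{eq:cycliccross1} and~\eqref{eq:cycliccross2} treat arcs asymmetrically depending on their length and their position on the cylinder, and one has to rule out exotic ``cyclic nesting'' configurations where an arc wraps all the way around. Once this classification is cleanly laid out, the remainder is a routine translation of the type~$A$ argument, with the cyclic Tamari factor keeping track of the piece of the cylinder that the peeling-off procedure cannot linearize.
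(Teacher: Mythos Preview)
Your proposal is correct and matches the paper's approach exactly: the paper omits this proof, stating only that ``modulo minor technicalities, the proof of Lemma~\ref{lem:cycliclinks} follows the same arguments as the proof of Lemma~\ref{lem:links}'', and your sketch carries out precisely that parallel argument (vertex link first, then induction, then the alternation argument for the covering case), with the correct identification of the one new feature---that an arc separates the cylinder into an interval side contributing an ordinary Tamari factor and a wrap-around side contributing the single cyclic factor. One small sharpening: your ``if and only if'' for the interior case should also record that an arc with both endpoints in $[i,\ol j]$ must be increasing within that interval (a wrap-around arc with both endpoints inside $[i,\ol j]$ does cross $\ij$), which is exactly the ``exotic cyclic nesting'' you already flag as the main obstacle.
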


Modulo minor technicalities, the proof of Lemma~\ref{lem:cycliclinks} follows the same arguments as the proof of Lemma~\ref{lem:links}, and is omitted.

\begin{figure}[htpb]
 \centering
\includegraphics[width=\linewidth]{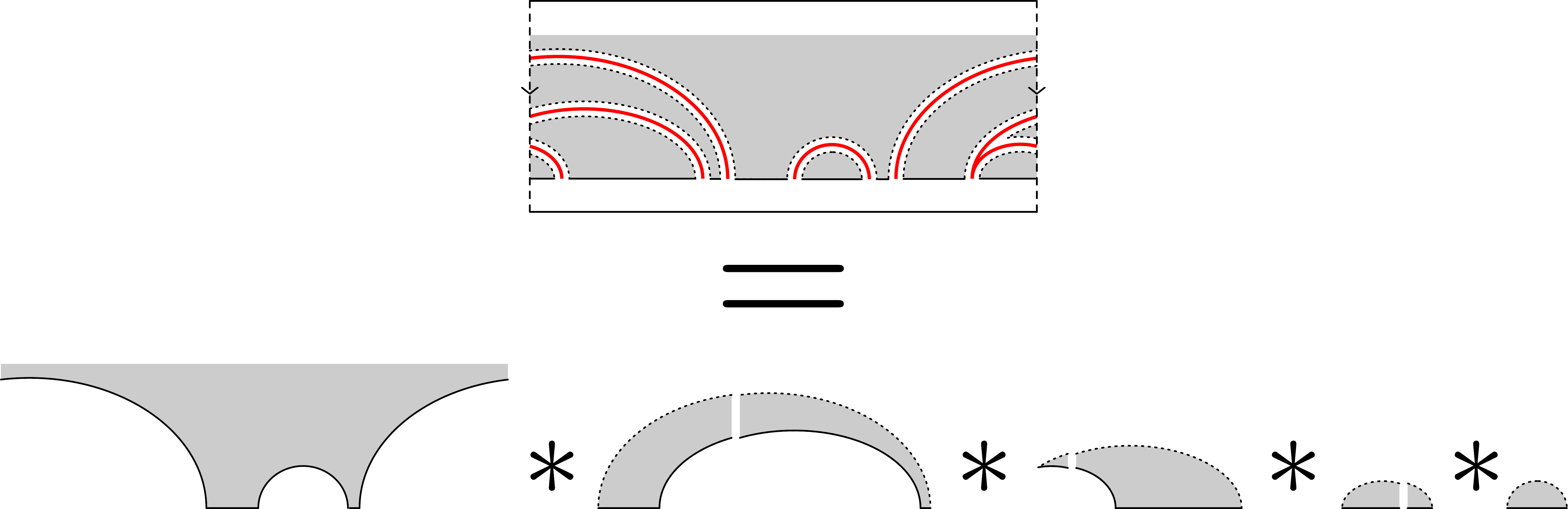}
 \caption{The link of cyclic $\IJ$-forest is a join of Tamari complexes and at most one cyclic Tamari complex (first complex in the join). Dashed arcs represent removed cone points.} 
 \label{fig:cyclicIJfaces}
\end{figure}

%%%%%%%%%%%%%%%%%%%%%%%%%%%%%%%%%%%%%%%%%%%%%%%%%%%%%%%%%%%%%%%%%%

\subsection{The \texorpdfstring{$h$}{h}-vector of \texorpdfstring{$\cyctri{I,\ol J}$}{C IJ}}
Even though we lack a nice interpretation of cyclic $\IJ$-trees in terms of lattice paths, to contrast with Theorem~\ref{thm:thehvector}, it is nevertheless easy to compute the $h$-vectors of~$\cyctri{I,\ol J}$.

\begin{theorem}
\label{thm:typeBhVector}
The $h$-vector of $\cyctri{I,\ol J}$ has entries
\[
h_k(\cyctri{I,\ol J})=\binom{|I|-1}{k}\binom{|\ol J|-1}{k}.
\]
\end{theorem}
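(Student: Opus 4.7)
The plan is to bypass any direct shelling of $\cyctri{I,\ol J}$ (which would be awkward, since by Remark~\ref{rem:cyclicNotNice} the cyclic $\IJ$-Tamari poset is not even always a lattice) and instead appeal to Stanley's theorem: for any unimodular triangulation $\cT$ of a lattice $d$-polytope $P$, the $h$-polynomial of $\cT$ viewed as a simplicial complex coincides with the $h^{*}$-polynomial of $P$. The problem then reduces entirely to the classical computation of $h^{*}$ for a product of simplices.

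My first step would be to verify unimodularity. The polytope $\product_{I,\ol J}$ is a Cartesian product of two simplices, hence a \emph{unimodular polytope} in the sense of~\cite[Prop.~6.2.11]{DeLoeraRambauSantos2010}: every lattice simplex spanned by its vertices has normalized volume one. Corollary~\ref{IJcyclic_triangulation} says that $\cyctri{I,\ol J}$ is a triangulation of $\product_{I,\ol J}$ whose facets are spanned by vertices of the polytope, so it is automatically unimodular. Stanley's theorem then gives
\[
\sum_{k\geq 0}h_{k}(\cyctri{I,\ol J})\,t^{k} \;=\; h^{*}_{\Delta_{|I|-1}\times\Delta_{|\ol J|-1}}(t).
\]

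The final step is the classical identity
\[
h^{*}_{\Delta_{a}\times\Delta_{b}}(t)\;=\;\sum_{k\geq 0}\binom{a}{k}\binom{b}{k}\,t^{k},
\]
which can be read off the Ehrhart series $\sum_{t\geq 0}\binom{t+a}{a}\binom{t+b}{b}z^{t}=h^{*}(z)/(1-z)^{a+b+1}$, or derived combinatorially as the $h$-vector of the staircase triangulation, whose facets correspond to monotone lattice paths from $(0,0)$ to $(a,b)$ with restriction sets of size $k$ counted by the Narayana-type statistic ``number of valleys''. Substituting $a=|I|-1$ and $b=|\ol J|-1$ yields the stated formula. The one conceptual point worth emphasizing, and what makes the argument feel almost free, is that the $h$-vector of a unimodular triangulation is an \emph{invariant of the underlying polytope}; the (possibly intricate) combinatorics of cyclic $\IJ$-trees need not be exploited at all. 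The only step that could hide a subtlety is checking that $\cyctri{I,\ol J}$ indeed uses only the vertices of $\product_{I,\ol J}$, but this is immediate from the very definition of cyclic $\IJ$-trees as subgraphs of $\bipartitep{I}{J}$.
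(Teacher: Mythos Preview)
Your argument is correct and is essentially the same as the paper's: both observe that the $h$-vector of any (unimodular) triangulation of $\productp{I}{\ol J}$ is an invariant of the polytope rather than of the particular triangulation, and then invoke the known formula for this invariant. The paper phrases this via equidecomposability and cites~\cite{BilleraCushmanSanders1988} directly, whereas you pass through Stanley's identification of the $h$-vector with the $h^{*}$-polynomial and sketch its computation, but the content is identical.
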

\begin{proof}
 The $h$-vector of a triangulation of an equidecomposable polytope, such as $\productp{I}{\ol J}$, does not depend on the triangulation~\cite[Section~8.5.3]{DeLoeraRambauSantos2010}.
 The $h$-vector of a triangulation of $\product$ is known to be (see~\cite{BilleraCushmanSanders1988})
 \begin{equation*}
  h_k=\binom{n}{k}\binom{m}{k}.
 \end{equation*}
\end{proof}

\begin{remark}
As we have seen in type $A$, any linear extension of the $\IJ$-Tamari lattice gives a shelling order of the $\IJ$-Tamari complex (Lemma~\ref{lem:shellingorder}). It turns out that the analogous result is not true in type $B$: consider the cyclic $\IJ$-Tamari poset illustrated at the bottom of Figure~\ref{fig:manyExtrema} for $I=\{0,3,4\}$ and $\ol J=\{\ol 1,\ol 2\}$. The $\IJ$-Tamari complex in this case is a three dimensional simplicial complex consisting of two tetrahedra $T_1$ and $T_2$ (the minimal elements of the poset) glued along an edge, and one tetrahedron $T_3$ (the maximal element of the poset) containing this common edge and one other vertex from each $T_1$ and $T_2$. The linear extension $T_1<T_2<T_3$ of the lattice is not a shelling order because $T_1\cap T_2$ is not a codimension 1 face of $T_2$.     
\end{remark}

Notice how, when $|I|=mn+1$ and $|\ol J|=n+1$, we recover the Fuss-Catalan analogues $\binom{mn}{k}\binom{n}{k}$ of the Narayana numbers of type $B_n$ (cf.~\cite[Section~10]{FominReading05}, \cite[Proposition~5.1]{Athanasiadis2005}).
\begin{definition}
We define the \defn{Fuss-Catalan cyclic Tamari complex} (resp. \defn{poset}) as the $\IJ$-Tamari complex (resp. poset) for any pair $\IJ$ giving the sequence $\bullet \circ (\bullet^m \circ)^n$.
\end{definition} 

Some examples of Fuss-Catalan cyclic Tamari posets are illustrated in Figure~\ref{fig:FussCatalanCyclohedra}.

%%%%%%%%%%%%%%%%%%%%%%%%%%%%%%%%%%%%%%%%%%%%%%%%%%%%%%%%%%%%%%%%%%
%%%%%%%%%%%%%%%%%%%%%%%%%%%%%%%%%%%%%%%%%%%%%%%%%%%%%%%%%%%%%%%%%%

\section{A tropical realization of the \texorpdfstring{\protect{$\IJ$}}{IJ}-cyclohedron}
\label{sec:cyclicnuassociahedron}

In the following, fix nonempty subsets $I\subseteq[n],\ \ol J \subseteq[\ol n]$, for some $n$, along with a cyclically non-crossing height function $\hb:I\times\ol J\to \RR$, that is, a height function that induces $\cyctri{I,\ol J}$ as a regular triangulation of $\productp{I}{\ol J}$ (cf. Proposition~\ref{prop:cycheights} and Lemma~\ref{lem:cyc-non-crossing}). 

In this section, we construct a polyhedral complex whose $1$-skeleton is isomorphic to the Hasse diagram of the cyclic $\IJ$-Tamari  poset. Like in Section~\ref{sec:nuassociahedron}, such complex is obtained by restricting the arrangement of tropical hyperplanes dual to the $\IJ$-cyclohedral triangulation $\cyctri{I,\ol J}$ to the subcomplex of bounded cells. However, the situation now is somewhat simpler because $\cyctri{I,\ol J}$ is a triangulation of $\productp{I}{\ol J}$, so we do not need to consider infinite heights and the corresponding tropical hyperplanes are all nondegenerate. We therefore make a condensed presentation omitting most of the proofs, which are completely parallel to those in Section~\ref{sec:nuassociahedron}.

Let $\arrgtB=(H_i)_{i\in I}$ be the arrangement of \emph{inverted tropical hyperplanes} given by  
\begin{equation*}
H_i=\{x\in \TP^{{|\ol J|}-1}: \max_{\ol j\in \ol J}\{ -\hb(i,\ol j)+x_{\ol j} \} \ \text{is attained twice}\}.
\end{equation*}

\begin{definition}\label{def:nucyclohedron}
The \defn{$\IJ$-cyclohedron $\Cyclo_{I,\ol J}(\hb)$} is the polyhedral complex of bounded cells induced by the arrangement of inverted tropical hyperplanes $\arrgtB$. We often omit mentioning $\hb$ when it is clear from the context.
\end{definition}

\begin{theorem}
\label{thm:cyclicnurealization}
The $\IJ$-cyclohedron $\Cyclo_{I,\ol J}(\hb)$ is a polyhedral complex whose poset of cells is anti-isomorphic to the poset of interior faces of the cyclic $\IJ$-Tamari complex. 
In particular, 
\begin{enumerate}
\item Its faces correspond to \emph{covering} cyclic $\IJ$-forests.
\item Its vertices correspond to cyclic $\IJ$-trees.
\item Two vertices are connected if and only if the corresponding cyclic trees are connected by a flip. That is, the edge graph of $\Cyclo_{I,\ol J}(\hb)$ is isomorphic to the Hasse diagram of the cyclic $\IJ$-Tamari poset.
\end{enumerate}
\end{theorem}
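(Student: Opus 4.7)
The plan is to invoke the duality between regular triangulations of products of simplices and tropical hyperplane arrangements (cf.~\cite{DevelinSturmfels2004, FinkRincon2015}) exactly as was done for Theorem~\ref{thm:nurealization} in the type~$A$ setting. By Lemma~\ref{lem:cyc-non-crossing} combined with Proposition~\ref{prop:cycheights}, the height function $\hb$ is cyclically non-crossing and hence induces the cyclohedral triangulation $\cyctri{I,\ol J}$ as a regular triangulation of $\productp{I}{\ol J}$ (Corollary~\ref{IJcyclic_triangulation}). The arrangement $\arrgtB$ of inverted tropical hyperplanes centered at the lifted points $v_i=(\hb(i,\ol j))_{\ol j\in\ol J}$ is the standard dual object, and the Develin--Sturmfels duality (as refined by Fink--Rinc\'on) asserts that the poset of bounded cells of $\arrgtB$ in $\TP^{|\ol J|-1}$ is anti-isomorphic to the poset of interior faces of this regular triangulation.

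One small simplification compared with the type~$A$ case is that here every $\hb(i,\ol j)$ is finite and $\cyctri{I,\ol J}$ triangulates all of $\productp{I}{\ol J}$, so all hyperplanes in $\arrgtB$ are non-degenerate and no cell escapes to infinity because a face of $\productp{I}{\ol J}$ fails to belong to an underlying subpolytope (as happened with $\catblock{I,\ol J}\subsetneq \productp{I}{\ol J}$ in type~$A$). Dehomogenizing by intersecting with $\{x_{\ol j_{\max}}=0\}$ then produces $\Cyclo_{I,\ol J}(\hb)$ as the complex of bounded cells.

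It remains to translate the duality through combinatorial identifications already available. By Lemma~\ref{lem:cyclicinteriorsimplex}, interior faces of $\cyctri{I,\ol J}$ are precisely the covering cyclic $\IJ$-forests, which yields~(1). Top-dimensional interior faces are the facets of $\cyctri{I,\ol J}$, i.e.\ cyclic $\IJ$-trees, and these correspond dually to the $0$-dimensional bounded cells of $\arrgtB$, proving~(2). Codimension-$1$ interior faces correspond dually to $1$-dimensional bounded cells (edges); by definition, an interior codimension-$1$ face lies in exactly two facets of the triangulation, and these two facets must differ by a single arc---this is precisely a flip, as defined in Section~\ref{sec:cycTamari}. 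Hence the edges of $\Cyclo_{I,\ol J}(\hb)$ record exactly the flips between cyclic $\IJ$-trees, which are the covering relations of $\Tam{I,\ol J}^B$, proving~(3).

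The main obstacle will be a clean invocation of the duality from \cite{DevelinSturmfels2004, FinkRincon2015} in our setting together with a careful bookkeeping of which codimension-$1$ faces of $\cyctri{I,\ol J}$ are interior; the first follows from the type~$A$ argument once non-degeneracy of the hyperplanes is recorded, and the second is essentially Lemma~\ref{lem:cyclicinteriorsimplex}. A more hands-on verification, parallel to Definition~\ref{def:cells}, Lemma~\ref{lem:verticesgF}, and Lemma~\ref{lem:coordinates}, can be carried out by defining cells $\sfg(F)$ from the inequalities $x_{\ol k}-x_{\ol j}\leq \hb(i,\ol k)-\hb(i,\ol j)$ for $(i,\ol j)\in F$, showing non-emptiness iff $F$ is a cyclic $\IJ$-forest (the cyclic crossing inequalities~\eqref{eq:cycliccross1}--\eqref{eq:cycliccross2} making pairs of constraints incompatible via~\eqref{eq:cycheights}), boundedness iff $F$ is covering (every $\ol j\in\ol J$ appears as an endpoint, and no direction of increase remains), and giving explicit vertex coordinates as telescoping sums along the unique path in a cyclic $\IJ$-tree---which exists by Lemma~\ref{lem:cyclicIJtreesaretrees}---to conclude that vertices correspond bijectively to cyclic $\IJ$-trees and edges to flips.
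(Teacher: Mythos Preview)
Your proposal is correct and follows essentially the same approach as the paper: invoke the Develin--Sturmfels/Fink--Rinc\'on duality between regular triangulations and bounded cells of the dual tropical arrangement, note the simplification that all heights are finite (so the hyperplanes are non-degenerate and the triangulation covers all of $\productp{I}{\ol J}$), and then translate via Lemma~\ref{lem:cyclicinteriorsimplex}. The paper in fact omits the proof entirely, stating that it is parallel to Section~\ref{sec:nuassociahedron} with this simplification; your write-up supplies exactly those parallel details.
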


\begin{theorem}\label{thm:cyclicverticesandcoordinates}
Each cyclic $\IJ$-tree $T$ labels a vertex $\sfg(T)$ of $\Cyclo_{I,\ol J}(\hb)$ whose coordinate $\ol k$ is:
\begin{equation}\label{eq:g_treecyclic}
\sfg(T)_{\ol k}:=\sum_{(i,\ol j)\in P(\ol k)}\pm \ha({i,\ol j}),\qquad \ol k\in\ol J\setminus\{\ol j_{\max}\},
\end{equation}
where $P({\ol k})$ is the sequence of arcs traversed in the unique oriented path from~$\ol k$ to~$\ol j_{\max}$ in $T$ and 
the sign of each summand is positive if $(i,\ol j)$ is traversed from $\ol j$ to $i$ and negative otherwise.

Each covering cyclic $(I,\ol J)$-forest $F$ labels a cell $\sfg(F)$ of $\Cyclo_{I,\ol J}(\hb)$ that is a (bounded) convex polytope of dimension one less than the number of connected components of $F$, and whose vertices correspond to the cyclic $\IJ$-trees containing $F$:
\begin{align*}
\sfg(F)&=\conv\left\{ \sfg(T)\colon \text{ $T$ is a cyclic $(I,\ol J)$-tree containing $F$} \right\}\\
       &=\tilde \sfg(F)\cap \{ x_{\ol j_{\max}} = 0\},
\end{align*}
where $\sfg(F)$ and $\tilde \sfg(F)$ are as in Definition~\ref{def:cells}.
\end{theorem}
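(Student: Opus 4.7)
The plan is to mirror the proof strategy of the type~$A$ version (Lemmas~\ref{lem:emptycrossing}, \ref{lem:verticesgF}, \ref{lem:coordinates} and \ref{lem:proofcoordinates}), adapting each step to the cyclic setting. The foundational observation is that regularity of $\cyctri{I,\ol J}$ with respect to $\hb$ means, via Lemma~\ref{lem:regularitycharacterization}, that the cells of $\Cyclo_{I,\ol J}(\hb)$ are exactly the bounded regions $\sfg(G)$ for subgraphs $G$ of $\bipartiteij$ with no isolated node in $I$, where $\sfg(G)$ and $\tilde\sfg(G)$ are defined by exactly the same system of inequalities as in Definition~\ref{def:cells} (but now with the finite height function $\hb$ in place of $\ha$).

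First I would prove the cyclic analogue of Lemma~\ref{lem:emptycrossing}: $\tilde\sfg(G)$ is nonempty if and only if $G$ is cyclically non-crossing. For the "only if" direction, if $(i,\ol j)$ and $(i',\ol{j'})$ cyclically cross, then inequality~\eqref{eq:cycheights} ensures that combining the inequalities $x_{\ol{j'}}-x_{\ol j}\leq \hb(i,\ol{j'})-\hb(i,\ol j)$ and $x_{\ol j}-x_{\ol{j'}}\leq \hb(i',\ol j)-\hb(i',\ol{j'})$ produces a strict contradiction $0<0$. For the "if" direction, it suffices to complete any cyclic $\IJ$-forest $F$ into a cyclic $\IJ$-tree $T$ and exhibit the candidate vertex $\sfg(T)$ from equation~\eqref{eq:g_treecyclic} as a feasible point; well-definedness of $\sfg(T)$ uses that $T$ is a tree (Lemma~\ref{lem:cyclicIJtreesaretrees}), so the oriented path $P(\ol k)$ from $\ol k$ to $\ol{j_{\max}}$ is unique.

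Next, I would establish the cyclic version of Lemma~\ref{lem:proofcoordinates}, namely that for $(i,\ol j)\in T$ and any $\ol k\in \ol J$, one has $\sfg(T)_{\ol k}-\sfg(T)_{\ol j}\leq \hb(i,\ol k)-\hb(i,\ol j)$, with equality iff $(i,\ol k)\in T$. As in type~$A$, the path $P_T(\ol k,\ol j)$ decomposes as an alternating zig-zag, and its cost can be reduced step-by-step by repeatedly applying~\eqref{eq:cycheights} to consecutive zig-zag segments: each application replaces a four-arc subpath with a single direct arc whose cost is strictly larger, until only the direct arc $\ol k\to i\to \ol j$ (or its degenerate cyclic counterpart) remains. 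From these inequalities, standard LP arguments immediately yield that $\sfg(T)$ lies in $\tilde\sfg(T)$, is in fact its unique point (so $T$ labels a vertex of $\Cyclo_{I,\ol J}(\hb)$), and that the supporting hyperplanes of $\sfg(F)$ at $\sfg(T)$ are exactly those indexed by arcs of $T$. Boundedness of $\sfg(F)$ for a covering cyclic $\IJ$-forest $F$ then follows, since every $\ol j\in \ol J$ is covered by some arc, giving both upper and lower finite bounds on each $x_{\ol j}-x_{\ol{j_{\max}}}$; this is even easier than in type~$A$ because $\cyctri{I,\ol J}$ triangulates all of $\productp{I}{\ol J}$. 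The dimension statement follows from \cite[Prop.~17]{DevelinSturmfels2004}, and the vertex description follows from the incidence structure $\sfg(F)\cap \sfg(F')=\sfg(F\cup F')$ together with maximality of cyclic $\IJ$-trees among cyclic $\IJ$-forests.

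The main obstacle will be the cyclic bookkeeping in the analogue of Lemma~\ref{lem:proofcoordinates}: in type~$A$, the path $P_T(\ol k,\ol j)$ has an explicit zig-zag description in a totally ordered index set, but in the cyclic case the tree can wind around the cylinder, so either of the crossing conditions~\eqref{eq:cycliccross1} or~\eqref{eq:cycliccross2} may be invoked at each reduction step. I expect to organize the argument by fixing a linear representative of the cyclic order starting at $\ol{j_{\max}}$ and verifying case-by-case that every zig-zag replacement is legitimate (i.e.\ that the four arcs involved are pairwise cyclically non-crossing, so that~\eqref{eq:cycheights} applies with the correct sign), so that the telescoping of the path cost proceeds monotonically toward $\hb(i,\ol k)-\hb(i,\ol j)$.
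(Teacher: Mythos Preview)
Your proposal is correct and follows exactly the approach the paper intends: the authors explicitly omit the proof of this theorem, stating that it is ``completely parallel'' to the type~$A$ arguments in Section~\ref{sec:nuassociahedron}, and your plan reproduces precisely those parallel steps (the cyclic analogues of Lemmas~\ref{lem:emptycrossing}, \ref{lem:verticesgF}, \ref{lem:coordinates} and~\ref{lem:proofcoordinates}), correctly noting that boundedness is simpler here since all heights are finite. Your identification of the cyclic bookkeeping in the cost-reduction argument as the only genuine technical wrinkle is accurate; one small slip is that each reduction step replaces a three-arc zig-zag (not four) with a single arc, but this does not affect the argument.
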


Again, this reduces to a tropical realization of the classical cyclohedron when $\IJ=([n],[\ol n])$.
\begin{corollary}\label{cor:classicalcyclohedron}
$\Cyclo([n],[\ol n])$ is a classical $n$-dimensional cyclohedron.
\end{corollary}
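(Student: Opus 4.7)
The plan is to combine Theorem~\ref{thm:cyclicnurealization}, Proposition~\ref{prop:bijectioncyclic}, and a cone-point analysis, in close analogy with the (omitted) proof of Corollary~\ref{cor:classicalassociahedron}. By Theorem~\ref{thm:cyclicnurealization} together with Lemma~\ref{lem:cyclicinteriorsimplex}, the face poset of $\Cyclo([n],[\ol n])$ is anti-isomorphic to the poset of covering cyclic $\nn$-forests; on the other hand, by Proposition~\ref{prop:bijectioncyclic}, $\cyccomp{[n],[\ol n]}$ is the join of the $n$-simplex on the cone points $\{(i,\ol i):i=0,\dots,n\}$ with the boundary complex of a simplicial $n$-cyclohedron.

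The crux is to show that every covering cyclic $\nn$-forest $F$ contains all $n+1$ cone points. Suppose otherwise, say $(k,\ol k)\notin F$ for some $k$. Since $F$ covers every node, there must exist an arc $(k,\ol j)\in F$ with $j\ne k$ (to cover $k$) and an arc $(i,\ol k)\in F$ with $i\ne k$ (to cover $\ol k$). Applying clause~\eqref{eq:cycliccross2} of Definition~\ref{def:cyclicIJtree} to the pair $(i_1,\ol{j_1})=(k,\ol j)$ and $(i_2,\ol{j_2})=(i,\ol k)$, one computes $j_2-i_1=k-k\equiv 0\pmod{n+1}$, while both $j_1-i_1=j-k$ and $j_2-i_2=k-i$ are congruent to nonzero elements of $\{1,\dots,n\}$ modulo $n+1$. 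Hence $j_2-i_1<j_1-i_1\pmod{n+1}$ and $j_2-i_1<j_2-i_2\pmod{n+1}$, so $(k,\ol j)$ and $(i,\ol k)$ cyclically cross, contradicting that $F$ is a cyclic forest.

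With this established, deleting the $n+1$ cone points sets up a poset isomorphism between the covering cyclic $\nn$-forests and the face poset of the boundary complex of the simplicial $n$-cyclohedron. Polar duality between simplicial and simple polytopes then converts the anti-isomorphism from Theorem~\ref{thm:cyclicnurealization} into an isomorphism between the face poset of $\Cyclo([n],[\ol n])$ and that of the simple $n$-cyclohedron. Finally, to see that $\Cyclo([n],[\ol n])$ is genuinely a single $n$-polytope rather than a more intricate polyhedral complex, I would invoke Theorem~\ref{thm:cyclicverticesandcoordinates}: the dimension of $\sfg(F)$ is one less than the number of connected components of $F$, which is maximized precisely when $F=\{(i,\ol i)\}_{i=0}^{n}$, giving a unique top-dimensional cell of dimension $n$ whose proper faces recover all other cells of $\Cyclo([n],[\ol n])$. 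The main technical obstacle is the cyclic crossing check carried out in the middle paragraph.
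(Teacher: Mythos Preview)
Your proposal is correct and follows exactly the line of reasoning the paper has in mind; the paper states this corollary without proof, treating it as immediate from Theorem~\ref{thm:cyclicnurealization}, Lemma~\ref{lem:cyclicinteriorsimplex}, and Proposition~\ref{prop:bijectioncyclic}, and you have simply supplied the details (the key one being that the arcs $(i,\ol i)$ are forced into every covering cyclic $\nn$-forest, which is precisely the cyclic analogue of the observation made just before Proposition~\ref{prop:interval}). Your cyclic-crossing computation is sound, and your identification of the unique minimal covering forest $\{(i,\ol i)\}_{i=0}^{n}$ as the cell of maximal dimension~$n$ is exactly what guarantees $\Cyclo([n],[\ol n])$ is a single polytope rather than a more general complex.
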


\begin{remark}
\label{ex:nonorientable}
The $\IJ$-cyclohedron for $I=\{0,1,3,4,6,7\}$ and $\ol J=\{\ol 2, \ol 5, \ol 8\}$ is shown in Figure~\ref{fig:IJcyclohedron}. Observe that, in contrast with the $\IJ$-associahedron, the edges of the $\IJ$-cyclohedron cannot be oriented in accordance with the cyclic $\IJ$-Tamari order using a linear functional. 
% This example also illustrates that the cyclic $\IJ$-Tamari poset may have several maxima, so that in general it is not a lattice.
 \begin{figure}[hptb]
  \includegraphics[width=\linewidth]{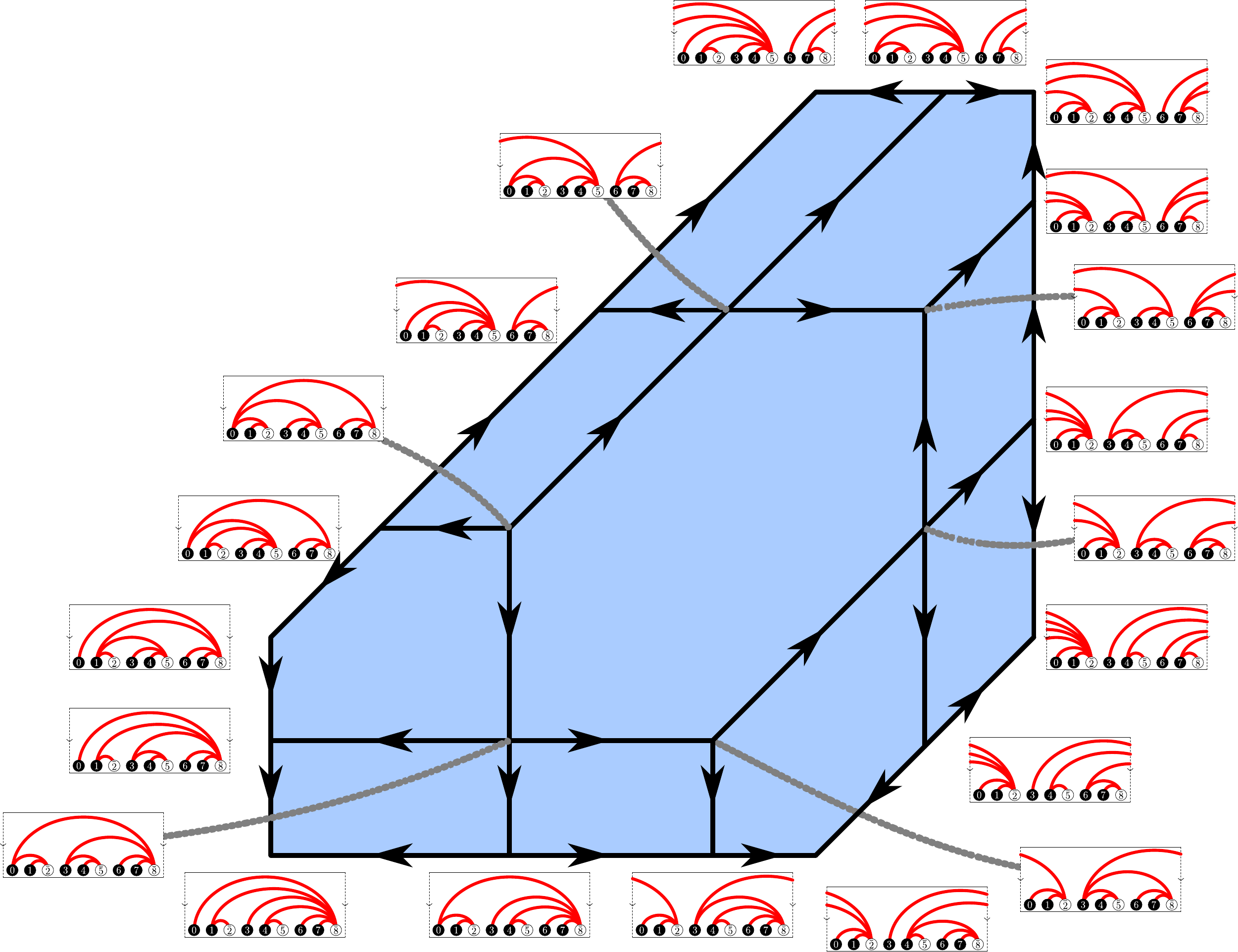}
  \caption{$IJ$-cyclohedron for $I=\{0,1,3,4,6,7\}, \ol J=\{\ol 2, \ol 5, \ol 8\}$.}\label{fig:IJcyclohedron}
 \end{figure}
\end{remark}

\begin{proposition}
 $\Cyclo_{I,\ol J}$ is piecewise-affinely isomorphic to $\Cyclo_{\reverse{I,\ol J}}$.
\end{proposition}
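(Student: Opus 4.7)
The plan is to mirror the argument sketched in Remark~\ref{rem:affineasso}(2), now transposed to the cyclohedral setting, and based on the duality between regular triangulations of the Cayley polytope $\productp{I}{\ol J}$ and pairs of tropical hyperplane arrangements established by Develin and Sturmfels. The two factors of $\productp{I}{\ol J}$ play symmetric roles, and the cyclohedral triangulation is essentially invariant (up to the relabeling defining $\reverse{\IJ}$) under the swap exchanging these factors.

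The first step is to verify that the reverse operation $(i,\ol j)\mapsto (n-j,\ol{n-i})$ preserves arc lengths modulo $n+1$: indeed $\lth(n-j,\ol{n-i})=(n-i)-(n-j)=j-i=\lth(i,\ol j) \pmod{n+1}$. Consequently, it preserves cyclic crossings (cf.~\eqref{eq:cycliccross1} and \eqref{eq:cycliccross2}), and hence transforms any cyclically non-crossing height function $\hb$ on $I\times\ol J$ into a cyclically non-crossing height function on $\reverse{\IJ}$, given by $\reverse{\hb}(n-j,\ol{n-i}):=\hb(i,\ol j)$. In particular, $\reverse{\hb}$ induces $\cyctri{\reverse{I,\ol J}}$ as a regular triangulation of $\productp{\reverse{I,\ol J}}$.

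Next, I would invoke \cite[Lemma~22]{DevelinSturmfels2004}: any regular triangulation of $\productp{I}{\ol J}$ determines a pair of tropical hyperplane arrangements, one of $|I|$ hyperplanes in $\TP^{|\ol J|-1}$ and one of $|\ol J|$ hyperplanes in $\TP^{|I|-1}$, whose subcomplexes of bounded cells are piecewise-affinely isomorphic. Applied to $\cyctri{I,\ol J}$ with height function $\hb$, the first arrangement is precisely the one producing $\Cyclo_{I,\ol J}(\hb)$ by Definition~\ref{def:nucyclohedron}. The second arrangement, up to the reverse relabeling $(i,\ol j)\mapsto (n-j,\ol{n-i})$, coincides with the arrangement $\arrgtB$ defining $\Cyclo_{\reverse{I,\ol J}}(\reverse{\hb})$. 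Composing the Develin--Sturmfels duality with this relabeling yields the desired piecewise-affine isomorphism.

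The main (mild) obstacle is the bookkeeping step: one has to check that the dual arrangement in $\TP^{|I|-1}$, consisting of inverted tropical hyperplanes centered at the points $(\hb(i,\ol j))_{i\in I}$ for $\ol j\in\ol J$, matches under the reverse reindexing the inverted tropical hyperplanes appearing in the arrangement associated with $\reverse{\IJ}$ and $\reverse{\hb}$. This is a direct verification once one tracks carefully how Develin--Sturmfels duality exchanges the roles of the two factors of the Cayley polytope; no new ingredient beyond the type~$A$ argument of Remark~\ref{rem:affineasso}(2) is required, since the cyclohedral nature of the triangulation only enters through the choice of (cyclically non-crossing) height function, whose compatibility with the reverse operation was secured in the first step.
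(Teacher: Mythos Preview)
Your proposal is correct and follows exactly the approach the paper intends: the proposition is stated without proof in Section~\ref{sec:cyclicnuassociahedron}, where the authors explicitly say the omitted proofs are ``completely parallel to those in Section~\ref{sec:nuassociahedron}'', and the type~$A$ analogue is precisely Remark~\ref{rem:affineasso}(2), which invokes \cite[Lemma~22]{DevelinSturmfels2004} to exchange the two factors of the product. Your extra verification that the reverse map preserves arc lengths (and hence swaps conditions~\eqref{eq:cycliccross1} and~\eqref{eq:cycliccross2}, preserving cyclic crossings) is the one small type~$B$ adaptation needed, and it is correct.
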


The $\IJ$-cyclohedron is neither always pure or convex. In fact, the $\IJ$-cyclohedron for $\IJ=(\{0,1,2,3\},\{\ol 4, \ol 5, \ol 6\})$ coincides with
the $\IJ$-associahedron depicted in Figure~\ref{fig:degenerate}. 

However, once more we have that for specially nice pairs $\IJ$, the support is just a classical cyclohedron.
The condition here is that $\IJ$ does not have two cyclically consecutive elements of $\ol J$ (when $|I|\geq 2$ and $|\ol J|\geq 3$), where two elements are cyclically consecutive if they are either consecutive or the first and last elements of $I\sqcup \ol J$. 
Actually, this is slightly simpler than the associahedral case (Theorem~\ref{thm:supportAssociahedron}) because the first and last element are not considered special and there is no need to work with special elements of $\ol J$. 

There are some trivial cases which we consider separately:  
If $|I|=1$ or $|\ol J|=1$ then $\CycloIJ(\hb)$ is a point; if $|I|>1$ and $|\ol J|=2$ then $\CycloIJ(\hb)$ is a subdivision of a segment. The other cases are covered by the following result.

\begin{theorem}\label{thm:supportCyclohedron}
Let $I,\ol J$ be finite subsets of $\NN$ with $|I|\geq 2$ and $|\ol J|\geq 3$. Then $\supp(\CycloIJ(\hb))$ is convex if and only if 
$\ol J$ does not have a cyclically consecutive pair of elements.
In this case, $\CycloIJ(\hb)$ is a regular polyhedral subdivision of a classical cyclohedron of dimension $(|\ol J|-1)$.
\end{theorem}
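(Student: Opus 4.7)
The plan is to mimic the proof of Theorem~\ref{thm:supportAssociahedron} with appropriate cyclic adjustments, handling the two directions of the ``if and only if'' separately, and then deducing regularity from a common refinement argument.

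For the forward (non-convexity) direction, suppose $\ol j_1, \ol j_2 \in \ol J$ are cyclically consecutive in $I \sqcup \ol J$. Using cyclic rotation invariance of $\CycloIJ$ (which follows from the cyclic symmetry built into $\cyctri{n}$), I would assume without loss of generality that $\ol j_1, \ol j_2$ are ordinarily consecutive. Then I would essentially reproduce the argument from the proof of Theorem~\ref{thm:supportAssociahedron}: for every covering cyclic $\IJ$-forest $F$, either $(i_1,\ol j_2) \in F$ for some specific ``leftmost'' $i_1$, forcing one inequality on $x_{\ol j_2}$, or else $(i,\ol j_2) \in F$ for some $i$ with $i \prec \ol j_1 \prec \ol j_2$, forcing an incompatible inequality by cyclic non-crossingness and Equation~\eqref{eq:cycheights}. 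Constructing two cyclic $\IJ$-trees $T_1, T_2$ attaining these opposite extremes via Lemma~\ref{lem:proofcoordinates} (suitably adapted to cyclic paths using Equation~\eqref{eq:g_treecyclic}), the midpoint of $\sfg(T_1)$ and $\sfg(T_2)$ will violate both inequalities and hence escape $\supp(\CycloIJ)$.

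For the converse, assume no two elements of $\ol J$ are cyclically consecutive. The proof proceeds by induction on the number of cyclically consecutive pairs of elements of $I$ in $I \sqcup \ol J$. The \textbf{base case} is when $I$ and $\ol J$ alternate cyclically; up to removing redundant cone-point vertices (in the spirit of Remark~\ref{rem:affineasso}), this forces $|I|=|\ol J|$ and $\IJ$ becomes cyclically alternating, so $\CycloIJ(\hb)$ is a classical $(|\ol J|-1)$-cyclohedron by Corollary~\ref{cor:classicalcyclohedron}. For the \textbf{inductive step}, pick some $i_0 \in I$ that is cyclically adjacent to another element of $I$, and set $I' := I \setminus i_0$. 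By induction, $\Cyclo_{I',\ol J}(\hb)$ is a subdivision of a classical cyclohedron. I would then show that $\CycloIJ(\hb)$ is obtained as the common refinement of $\Cyclo_{I',\ol J}(\hb)$ with the subdivision $S$ of $\RR^{|\ol J|-1}$ whose maximal cells are the regions $S_{\ol j}=\sfg(i_0, \ol j)$ for $\ol j \in \ol J$ (associated to the tropical hyperplane $H_{i_0}$). The key claim to verify is that removing all arcs incident to $i_0$ from any covering cyclic $\IJ$-forest $F$ yields a covering cyclic $(I',\ol J)$-forest: no $\ol j \in \ol J$ can become isolated, because the assumption that $i_0$ is cyclically adjacent to another element of $I$ (and $\ol J$ has no cyclically consecutive pair) guarantees that every neighbour of $i_0$ in $F$ has an alternative non-$i_0$ neighbour in $F$ via the cyclically nearest element of $I$.

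Regularity of the resulting subdivision follows from the standard fact that the common refinement of two regular polyhedral subdivisions is regular (the sum of the two piecewise-affine lifting functions provides a regular lifting). The main obstacle I expect is bookkeeping the cyclic non-crossing condition in the inductive refinement step — specifically, verifying that no $\ol j$ becomes isolated after deleting $i_0$ requires care, because cyclically ``leftmost'' and ``rightmost'' arguments from the type~$A$ proof have no direct analogue. One has to use the hypothesis that $\ol J$ has no cyclically consecutive pair at precisely this point, to ensure the cyclically adjacent element of $I$ to $i_0$ remains available as a replacement endpoint; this is the cyclic analogue of the type~$A$ condition that the cyclically consecutive $I$-element is not followed by an element of $\ol J$.
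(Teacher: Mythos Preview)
Your proposal follows the same architecture as the paper's proof---non-convexity via an escaping midpoint, and cyclohedral support via induction on the number of consecutive $I$-pairs together with a common refinement argument---and the second half (the inductive refinement, including the key claim that removing $i_0$ from a covering cyclic $\IJ$-forest yields a covering cyclic $(I',\ol J)$-forest) matches the paper essentially verbatim.

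The non-convexity direction, however, has a genuine gap. You propose to cyclically rotate so that $\ol j_1,\ol j_2$ are ordinarily consecutive and then reuse the type~$A$ dichotomy from Theorem~\ref{thm:supportAssociahedron}: either $(i_1,\ol j_2)\in F$ for a ``leftmost'' $i_1$, or $(i,\ol j_2)\in F$ with $i\prec\ol j_1\prec\ol j_2$. But in the cyclic setting arcs may wrap around: an arc $(i,\ol j_2)$ with $i\succ\ol j_2$ in the linear order after your rotation is perfectly legal and falls into neither branch of your dichotomy, and there is no canonical $\min I$ to anchor the first case (the type~$A$ inequality~\eqref{eq:forestcase1} uses $\ha(i_1,\ol j_{\max})$, which has no cyclic analogue). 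The paper handles this by splitting into two subcases. If all elements of $\ol J$ are cyclically consecutive, then up to relabeling $\CycloIJ$ \emph{is} an $\IJ$-associahedron (the long arc has only one possible $\ol J\to I$ transition to sit in), and the type~$A$ proof applies verbatim. Otherwise, the paper locates a specific cyclic pattern---$\ol j_2,\ol j_3\in\ol J$ consecutive, followed by a block of $I$ starting at $i_1$, then a block of $\ol J$ starting at $\ol j_1$, then $i_2\in I$---and bases the dichotomy on whether the neighbor $i$ of $\ol j_3$ lies cyclically in $[i_1,i_2)$ or in $[i_2,\ldots)$; the two resulting inequalities involve $x_{\ol j_1}-x_{\ol j_3}$ and $x_{\ol j_2}-x_{\ol j_3}$ respectively, using \emph{three} elements of $\ol J$ rather than two. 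This third coordinate $\ol j_1$ is precisely what absorbs the wrap-around case that your two-variable type~$A$ argument cannot cover.
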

\begin{proof}
We prove convexity and cyclohedral support in two steps.
\item
\emph{Convexity:} Assume that $|I|\geq 2$ and $|\ol J|\geq 3$ and that there are two cyclically consecutive elements of $\ol J$.
It is not hard to see that if all the elements in $\ol J$ are (cyclically) consecutive, then up to relabeling $\CycloIJ(\hb)$ is an $\IJ$-Associahedron (because the great arc can only be placed in a $\ol J$ to $I$ transition, and in this case there is only one). Hence the proof of Theorem~\ref{thm:supportAssociahedron} holds. 

We assume now that $\ol J$ has two cyclically consecutive elements, but that they are not all in a row. Then there is a cyclic sequence starting with two elements of $\ol J$ ($\ol j_2,\ol j_3$), followed by a non-empty sequence of elements of $I$ (starting with $i_1$), then a non-empty sequence of elements of $\ol J$ (starting with $\ol j_1$), and finishing with an element of $I$ ($i_2$). After a cyclic shift, we can assume that $i_1$ is the smallest element of $I\sqcup \ol J$.
\begin{center}
\includegraphics[width=.5\linewidth]{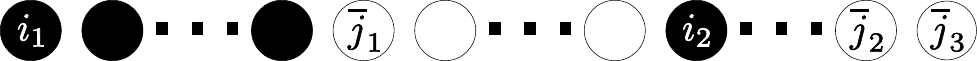} 
\end{center}

Now let $F$ be a covering $\IJ$-forest, and let $(i,\ol j_3)\in F$ be an arc adjacent to~$\ol j_3$.

If $i_1\preceq i\prec i_2$, then $i_1\preceq i\prec \ol j_1\prec \ol j_3$ and every point of $\sfg(F)$ fulfills
\begin{equation}\label{eq:cycforestcase1}
 x_{\ol j_1}-x_{\ol j_3}\leq \hb(i,\ol j_1)-\hb(i,\ol j_3)\stackrel{\eqref{eq:cycheights}}{\leq}\hb(i_1,\ol j_1)-\hb(i_1,\ol j_3).
\end{equation}
Otherwise, if $i_2\preceq i$, then $i_2\preceq i\prec \ol j_2\prec \ol j_3$ and every point of $\sfg(F)$ fulfills
\begin{equation}\label{eq:cycforestcase2}
 x_{\ol j_2}-x_{\ol j_3}\leq \hb(i,\ol j_2)-\hb(i,\ol j_3)\stackrel{\eqref{eq:cycheights}}{\leq}\hb(i_2,\ol j_2)-\hb(i_2,\ol j_3).
\end{equation}

Now we consider the non-crossing arcs $(i_1,\ol j_1), (i_1,\ol j_2), (i_1,\ol j_3)$ and complete them to a cyclic $\IJ$-tree $T_1$. It fulfills
\begin{align}
 \sfg(T_1)_{\ol j_1}-\sfg(T_1)_{\ol j_3}&=\hb(i_1,\ol j_1)-\hb(i_1,\ol j_3), \text{ and }\\
 \sfg(T_1)_{\ol j_2}-\sfg(T_1)_{\ol j_3}&=\hb(i_1,\ol j_2)-\hb(i_1,\ol j_3)\stackrel{\eqref{eq:cycheights}}{>}\hb(i_2,\ol j_2)-\hb(i_2,\ol j_3).
\end{align}
Analogously, complete the non-crossing arcs $(i_2,\ol j_1), (i_2,\ol j_2), (i_2,\ol j_3)$ to a cyclic $\IJ$-tree $T_2$, which fulfills
\begin{align}
 \sfg(T_2)_{\ol j_1}-\sfg(T_1)_{\ol j_3}&=\hb(i_2,\ol j_1)-\hb(i_2,\ol j_3)\stackrel{\eqref{eq:cycheights}}{>}\hb(i_1,\ol j_1)-\hb(i_1,\ol j_3), \text{ and }\\
 \sfg(T_2)_{\ol j_2}-\sfg(T_1)_{\ol j_3}&=\hb(i_2,\ol j_2)-\hb(i_2,\ol j_3).
\end{align}
Therefore, the midpoint of $\sfg(T_1)$ and $\sfg(T_2)$ does not fulfill neither \eqref{eq:cycforestcase1} nor \eqref{eq:cycforestcase2} and does not belong to $\supp(\CycloIJ(\hb))$.

\item
\emph{Cyclohedral support:} We only sketch the proof, which follows very closely that of (the second part of) Theorem~\ref{thm:supportAssociahedron}. Assume that $i_0$ immediately followed by an element of $I$ in the cyclic order, and let $I'=I\setminus i_0$. Then by induction 
$\Cyclo_{I',\ol J}$ is a regular subdivision of a $(|\ol J|-1)$-dimensional cyclohedron. Each cell of $\Cyclo_{I',\ol J}$ is refined into at most $|\ol J|$ cells by intersecting with the cones $\sfg(i_0,\ol j)$.

It remains to see that each cell of $\CycloIJ$ arises this way; that is, that removing $i_0$ from a covering cyclic $\IJ$-forest always
produces a covering cyclic $(I',\ol J)$-forest. Indeed, 
removing an arc $(i_0,\ol j)$ does not isolate $\ol j$ because it must be connected to its immediately preceding element (which belongs to $I$ and cannot be $i_0$).
\end{proof}

We finish the section like we did Section~\ref{sec:nuassociahedron}, studying the cells of $\CycloIJ$.

\begin{proposition}\label{prop:cells_IJcyclohedron}
The polyhedral cells of the polyhedral complex $\CycloIJ$ are isomorphic to Cartesian products of (classical) associahedra and at most one (classical) cyclohedron.
\end{proposition}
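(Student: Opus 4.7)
The plan is to follow the same strategy used for the type $A$ analogue Proposition~\ref{prop:cells}: reduce the claim to the already-established structural result for links in the cyclic $\IJ$-Tamari complex, namely Lemma~\ref{lem:cycliclinks}. All the substantive combinatorial work has been done there; what remains is to translate it through the duality between the $\IJ$-cyclohedron and the $\IJ$-Tamari complex.

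First, I would use Theorem~\ref{thm:cyclicnurealization}, which identifies the poset of cells of $\CycloIJ$ with the opposite of the poset of interior faces of $\cyccomp{I,\ol J}$. Under this identification, a cell $\sfg(F)$ corresponds to a covering cyclic $\IJ$-forest $F$, and its face poset is anti-isomorphic to the face poset of $\link_{\cyccomp{I,\ol J}}(F)$. In other words, $\sfg(F)$ is combinatorially polar dual to the simplicial complex $\link_{\cyccomp{I,\ol J}}(F)$.

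Next, I would invoke Lemma~\ref{lem:cycliclinks}. Since $F$ is a covering cyclic $\IJ$-forest, that lemma states that, up to cone points, $\link_{\cyccomp{I,\ol J}}(F)$ is a join of boundary complexes of simplicial associahedra and at most one simplicial cyclohedron. Finally I would pass to the dual: the polar dual of a join of boundary complexes of simplicial polytopes is the Cartesian product of the corresponding simple polytopes, while cone points in the simplicial complex correspond to pyramidal apexes that have no effect on the Cartesian product structure of the dual. Since the classical simple associahedron is dual to the boundary of the simplicial associahedron, and similarly for the cyclohedron (cf.~Propositions~\ref{prop:bijectionpolygon} and~\ref{prop:bijectioncyclic}), we conclude that $\sfg(F)$ is combinatorially a Cartesian product of classical associahedra and at most one classical cyclohedron, as required.

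I do not expect any genuine obstacle, since all the work is carried out in Lemma~\ref{lem:cycliclinks}. The only bookkeeping worth checking is that the dimensions match up: Theorem~\ref{thm:cyclicverticesandcoordinates} gives $\dim \sfg(F) = c(F)-1$ where $c(F)$ is the number of connected components of $F$, and this agrees with the sum of the dimensions of the nontrivial factors appearing in the join decomposition provided by Lemma~\ref{lem:cycliclinks}, because each nontrivial factor corresponds to one ``gap'' between arcs of $F$ and hence to one component in the decomposition of $F$.
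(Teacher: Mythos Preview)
Your approach is correct and is exactly the route the paper intends: the paper does not give a separate proof of this proposition but presents it as the type~$B$ parallel of Proposition~\ref{prop:cells}, which in turn is said to follow directly from Lemma~\ref{lem:links} via the duality of Theorem~\ref{thm:nurealization}; replacing these by Lemma~\ref{lem:cycliclinks} and Theorem~\ref{thm:cyclicnurealization} is precisely what you do. One small cleanup: for \emph{covering} cyclic $\IJ$-forests the last sentence of Lemma~\ref{lem:cycliclinks} already asserts that the link is a join of boundary complexes with no residual cone points, so your remark about cone points is unnecessary (and the link is genuinely a sphere, as it must be for an interior face).
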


%%%%%%%%%%%%%%%%%%%%%%%%%%%%%%%%%%%%%%%%%%%%%%%%%%%%%%%%%%%%%%%%%%
%%%%%%%%%%%%%%%%%%%%%%%%%%%%%%%%%%%%%%%%%%%%%%%%%%%%%%%%%%%%%%%%%%

\section*{Acknowledgements}
We want to thank Fr\'ed\'eric Chapoton for showing us a beautiful picture of the $2$-Tamari lattice for $n=4$, posted in Fran\c{c}ois Bergeron's web page, which impulsed this project; Fran\c{c}ois Bergeron for letting us reproduce his drawings in Figure~\ref{fig:Bergeron}; and Michael Joswig, Georg Loho,Vincent Pilaud, Francisco Santos and Christian Stump for many interesting discussions. 
We also thank an anonymous referee for his/her valuable comments and suggestions, and specially for pointing out two important observations in Remark~\ref{rem:othergeometricrealization} and Remark~\ref{rem:Reading_latticecongruences}.
We have greatly benefited from \texttt{polymake}~\cite{polymake} and \texttt{SageMath}~\cite{sagemath} to produce visualizations of three-dimensional objects, and would like to sincerely thank their development teams and contributors.

% \section{Bibliographical references}\label{references}

% \bibliographystyle{amsplain}
% \bibliography{productsSimplices}

\providecommand{\bysame}{\leavevmode\hbox to3em{\hrulefill}\thinspace}
\providecommand{\MR}{\relax\ifhmode\unskip\space\fi MR }
% \MRhref is called by the amsart/book/proc definition of \MR.
\providecommand{\MRhref}[2]{%
  \href{http://www.ams.org/mathscinet-getitem?mr=#1}{#2}
}
\providecommand{\href}[2]{#2}

\end{document}